\documentclass[11pt]{amsart}
\numberwithin{equation}{section}
\usepackage[english]{babel}
\usepackage[T1]{fontenc}
\usepackage[latin1]{inputenc}
\usepackage{indentfirst}
\usepackage{enumitem}
\usepackage{amsmath,amssymb, amsbsy}
\usepackage{comment}
\usepackage{amsfonts}
\usepackage{hyperref}
\usepackage{cleveref}
\usepackage{esint}
\usepackage{latexsym}
\usepackage{amsthm}
\usepackage[dvips]{graphicx}
\usepackage{xcolor}
\usepackage{tikz}
\usepackage{pgfplots}
\usetikzlibrary{intersections}
\usepackage{tikz-3dplot}
\usepackage[outline]{contour}
\DeclareGraphicsExtensions{.pdf,.png,.jpg,.eps}

\usepackage{bbm}

\usepackage[font=small,labelfont=bf]{caption}

\usepackage{amsthm}

\usepackage{hyperref}

\usepackage{xcolor}
\usepackage[latin1]{inputenc}
\usepackage[active]{srcltx}
\setlength{\topmargin}{16pt} \setlength{\headheight}{20pt}
\setlength{\headsep}{30pt}
\setlength{\textwidth}{15cm}
\setlength{\textheight}{19cm}
\setlength{\oddsidemargin}{1cm} 
\setlength{\evensidemargin}{1cm} 
\definecolor{Arancio}{cmyk}{0,0.61,0.87,0}

\definecolor{blus}{RGB}{0,102,204}

\newcommand{\capa}{{\rm Cap}}
\usepackage{doi}

\urlstyle{same}
\let\arXiv\arxiv

\newcommand{\brd}[1]{\mathbb{#1}}
\newcommand{\R}{\brd{R}}

\newcommand{\N}{\brd{N}}
\newcommand{\Z}{\brd{Z}}
\newcommand{\e}{\varepsilon}

\newcommand{\eps}{\varepsilon}
\newcommand{\be}{\begin{equation}}
\newcommand{\ee}{\end{equation}}

\newtheorem{teo}{Theorem}[section]
\newtheorem{Corollary}[teo]{Corollary}
\newtheorem{Lemma}[teo]{Lemma}
\newtheorem{Theorem}[teo]{Theorem}
\newtheorem{Proposition}[teo]{Proposition}
\theoremstyle{definition}
\newtheorem{Definition}[teo]{Definition}

\newtheorem{remark}[teo]{Remark}

\allowdisplaybreaks
\newcommand{\supp}{\operatorname{spt}}

\newcommand{\D}{\nabla}
\newcommand{\dive}{\operatorname{div}}
\DeclareRobustCommand{\rchi}{{\mathpalette\irchi\relax}}
\newcommand{\irchi}[2]{\raisebox{\depth}{$#1\chi$}}
\newcommand{\dist}{\operatorname{dist}}

\usepackage{amsmath}
\usepackage{tikz}
\usepackage{mathdots}
\usepackage{yhmath}
\usepackage{cancel}
\usepackage{color}
\usepackage{siunitx}
\usepackage{array}
\usepackage{multirow}
\usepackage{amssymb}
\usepackage{textcomp}
\usepackage{gensymb}
\usepackage{tabularx}
\usepackage{extarrows}
\usepackage{booktabs}
\usetikzlibrary{fadings}
\usetikzlibrary{patterns}
\usetikzlibrary{shadows.blur}
\usetikzlibrary{shapes}
\captionsetup[figure]{hypcap=false}

\textwidth=18cm
\hoffset=-1.75cm

\pgfplotsset{compat=1.18} 
\begin{document}

\subjclass[2020] {35B65, 35J70, 35J75, 35B40, 35B44, 35B45, 35B53}
\keywords{Weighted elliptic equations; Degenerate ellipticity; Schauder regularity estimates; Lower dimensional boundaries; Liouville Theorems; Perforated domains.}
   % \thanks{\emph{Fundings}. The authors are research fellows of Istituto Nazionale di Alta Matematica INDAM group GNAMPA and supported by the GNAMPA project E5324001950001 \emph{PDE ellittiche che degenerano su variet\`a di dimensione bassa e frontiere libere molto sottili}. G.C. is supported by the PRIN project 20227HX33Z \emph{Pattern formation in nonlinear phenomena}. S.V. is supported by the PRIN project 2022R537CS \emph{$NO^3$ - Nodal Optimization, NOnlinear elliptic equations, NOnlocal geometric problems, with a focus on regularity}.}

\title[Elliptic equations degenerating on lower dimensional manifolds]
{Schauder estimates for elliptic equations degenerating on lower dimensional manifolds}

\author{Gabriele Cora, Gabriele Fioravanti, Stefano Vita}

\address{G. Cora, D\'epartement de Math\'ematique, Universit\'e Libre de Bruxelles, Boulevard du Triomphe 155, 1050, Brussels, Belgium}
\email{gabriele.cora@ulb.be}

\address{G. Fioravanti, Dipartimento di Matematica "G. Peano", Universit\`a degli Studi di Torino, Via Carlo Alberto 10, 10124, Torino, Italy}
\email{gabriele.fioravanti@unito.it}

\address{S. Vita, Dipartimento di Matematica "F. Casorati", Universit\`a di Pavia, Via Ferrata 5, 27100, Pavia, Italy} \email{stefano.vita@unipv.it}

%\address{Gabriele Cora\newline\indent
%D\'epartement de Math\'ematique
%\newline\indent
%Universit\'e Libre de Bruxelles
%\newline\indent
%Boulevard du Triomphe 155, 1050, Brussels, Belgium}
%\email{gabriele.cora@ulb.be}

%\address{Gabriele Fioravanti\newline\indent
%Dipartimento di Matematica "G. Peano"
%\newline\indent
%Universit\`a degli Studi di Torino
%\newline\indent
%Via Carlo Alberto 10, 10124, Torino, Italy}
%\email{gabriele.fioravanti@unito.it}

%\address{Stefano Vita\newline\indent
%Dipartimento di Matematica "F. Casorati"
%\newline\indent
%Universit\`a di Pavia
%\newline\indent
%Via Ferrata 5, 27100, Pavia, Italy}
%\email{stefano.vita@unipv.it}

\begin{abstract}
In this paper we begin exploring a local regularity theory for elliptic equations having coefficients which are degenerate or singular on some lower dimensional manifold
\begin{equation*}
-\mathrm{div}(|y|^aA(x,y)\nabla u)=|y|^af+\mathrm{div}(|y|^aF)\qquad\mathrm{in \ } B_1\subset\mathbb R^d,
\end{equation*}
where $z=(x,y)\in\mathbb R^{d-n}\times\mathbb R^n$, $2\leq n\leq d$ are two integers and $a\in\mathbb R$. Such equations are a prototypical example of elliptic equations spoiling their uniform ellipticity on the (possibly very) thin characteristic manifold $\Sigma_0=\{|y|=0\}$ of dimension $0\leq d-n\leq d-2$, having
$$\lambda|y|^a|\xi|^2\leq |y|^aA(x,y)\xi\cdot\xi\leq\Lambda|y|^a|\xi|^2.$$
Whenever $a+n>0$, the weak solutions with a homogeneous conormal boundary condition at $\Sigma_0$ are provided to be $C^{0,\alpha}$ or even $C^{1,\alpha}$ regular up to $\Sigma_0$. Our approach relies on a regularization-approximation scheme which employs domain perforation, very fine blow-up procedures, and a new Liouville theorem in the perforated space. 
Our theory extends to the case of equations degenerating on suitably smooth curved manifolds.
\end{abstract}

\maketitle

\section{Introduction}
Aim of this work is the study of the local regularity properties, H\"older $C^{0,\alpha}$ and Schauder $C^{1,\alpha}$ estimates, of weak solutions to
\begin{equation}\label{generalPDE}
-\mathrm{div}(|y|^aA(x,y)\nabla u)=|y|^af+\mathrm{div}(|y|^aF)\qquad\mathrm{in \ } B_1\subset\R^d.
\end{equation}
Here $z=(x,y)\in\R^{d-n}\times\R^n$, $2\leq n\leq d$ are two integers and $a\in\R$. The second order equation in divergence form above is uniformly elliptic
far from a characteristic flat manifold of low dimension $0\leq d-n\leq d-2$
$$\Sigma_0=\{z=(x,y)\in\R^d \, \mid \, |y|=0\},$$
and the weight term is a power of the distance to $\Sigma_0$; that is, $|y|=\mathrm{dist}(z,\Sigma_0)$. In other words, there exist $0<\lambda\leq\Lambda$ such that the symmetric $(d\times d)$-dimensional matrix $A(z)=(a_{ij}(z))_{i,j=1,...,d}$ satisfies 
\[
\lambda|y|^a|\xi|^2\leq |y|^aA(z)\xi\cdot\xi\leq\Lambda|y|^a|\xi|^2\qquad\mathrm{for \ almost \ any \ }z\in B_1, \, \mathrm{for \ any \ }\xi\in\R^d.
\]
The weak solutions to the above problem are elements of the weighed Sobolev space $H^{1,a}(B_1):=H^1(B_1,|y|^adz)$ which satisfy
\begin{equation}\label{weakconormal}
\int_{B_1}|y|^aA\nabla u\cdot\nabla\phi=\int_{B_1}|y|^a(f\phi-F\cdot\nabla\phi)\qquad\mathrm{for \ every \ }\phi\in C^{\infty}_c(B_1).
\end{equation}
The equation is satisfied \emph{across} the thin manifold $\Sigma_0$, and this implies a formal homogeneous conormal condition at $\Sigma_0$
\begin{equation}\label{conormalweighted}
\lim_{|y|\to0}|y|^{a+n-1}(A\nabla u+F)\cdot\frac{y}{|y|}=0,
\end{equation}
see Remark \ref{rem:conormal} for the precise meaning of the above expression. Notice also that, when $A=\mathbb I$ and $F=0$, \eqref{conormalweighted} corresponds to a vanishing weighted radial derivative with respect to the degenerate variables.

The weighted $H^{1,a}$-capacity of the thin manifold $\Sigma_0$ is key to understanding what kind of solutions one may face. Depending on the value of the parameter $a+n\in\R$, the local weighed capacity of the characteristic manifold is infinite when $a+n\leq0$, positive and finite when $a+n\in(0,2)$ and zero when $a+n\geq2$. Consequently, weak solutions must vanish at $\Sigma_0$ in the \emph{supersingular} case $a+n\leq0$, and naturally satisfy \eqref{conormalweighted} in the \emph{superdegenerate} case $a+n\geq2$. Finally, in the \emph{mid-range} case $a+n\in(0,2)$, many different boundary conditions can be prescribed at $\Sigma_0$, and this corresponds to inhomogeneous Dirichlet and inhomogeneous conormal boundary problems (oblique derivative type conditions).

In this paper, we mostly work under the assumption $a+n>0$ - which makes the weight term locally integrable - and deal with functions which solve \eqref{generalPDE} across $\Sigma_0$ in the sense of \eqref{weakconormal}; that is, we study the homogeneous conormal boundary problem. The regularity for the inhomogeneous conormal problem in the mid-range $a+n\in(0,2)$ follows as a consequence of the theory for the homogeneous one and is treated in the subsequent \cite{CorFioVit25}. Concerning Dirichlet type boundary conditions, the inhomogeneous problem in the mid-range $a+n\in(0,2)$ is studied in the companion paper \cite{Fio24}, while the analysis of the homogeneous case, when $a+n<2$, is partially carried out in \cite{CorFioVit25} via a boundary Harnack type principle.

\subsection*{Background and motivations}
The local regularity theory for weighted degenerate elliptic equations 
starts with the seminal works by Fabes-Jerison-Kenig-Serapioni \cite{FabJerKen82,FabJerKen83,FabKenSer82}. Among their motivations for such a regularity theory, the connection with the study of fine qualitative properties of harmonic functions at the boundary of rough domains through quasiconformal mappings from a ball. In \cite{FabKenSer82}, the authors extend the De Giorgi-Nash-Moser theory to degenerate elliptic equations where the degeneracy/singularity is carried out by a weight term $\omega$ which arises either from quasiconformal mappings or belongs to the $A_2$-Muckenhoupt class. The latter indicates a combined local integrability property between the weight and its reciprocal. 
Their regularity theory includes Harnack inequalities and local H\"older continuity of solutions. These results rely primarily in the validity of a general functional framework in weighted Sobolev spaces comprehending Poincar\'e-Wirtinger and Sobolev inequalities. Later, the conditions which provide this setting have been summarized into four main properties of the weight, characterizing the so-called $2$-admissible weights, see for instance \cite{HeiKilMar06} and later discussions in Section \ref{sec:2admissible}. The general regularity theory for weighted equations also ends with \cite{FabKenSer82}, since H\"older continuity is the optimal regularity if no further geometric assumption on the zero/infinity set $\Sigma_0=\{\omega(z)=0 \, \vee \, \omega(z)=\infty\}$ is done.

\smallskip

In recent years, there have been significant contributions to the study of degenerate elliptic equations where the weight behaves like a power of the distance from a $(d-1)$-dimensional manifold.
The model equation is
\begin{equation}\label{eq:codim:1}
-\mathrm{div}(y^aA(x,y)\nabla u)=RHS \qquad\mathrm{in \ } B_1^+\subset\R^d_+.
\end{equation}
Here $z=(x,y)\in\R^{d-1}\times\R_+=\R^d_+$, $B_1^+=B_1\cap\{y>0\}$, $a\in\R$ and $\Sigma_0=\{y=0\}$ is an hyperplane, i.e. it has codimension $n=1$. This theory has profound connections with the edge calculus developed in \cite{Maz91,MazVer14}, see also \cite{Lin89,HanXie24a,HanXie24b} and the references therein. However, the recent interest in this problem primarily relies in the connection with fractional Laplacians due to an extension theory via Dirichlet-to-Neumann maps \cite{CafSil07}; that is, for $a\in(-1,1)$
\begin{equation*}
\begin{cases}
-\mathrm{div}(y^a\nabla u)=0 &\mathrm{in \ } \R^d_+\\
-\lim_{y\to0^+}y^a\partial_yu(x,y)=C_{a,d}(-\Delta)^{\frac{1-a}{2}}u(x,0) &\mathrm{in \ } \Sigma_0,
\end{cases}
\end{equation*}
see also \cite{ChaGon11} for the curved analogous in conformal geometry.

Concerning the general equation \eqref{eq:codim:1}, it is crucial to comment here its possible connection with Grushin operators. As it is highlighted in \cite{CafSil07}, a change of coordinates moves the operator into a Grushin type one. However, this is true only for a particular small range of the exponent $a$, and in any case both the transformation and the power term in the Grushin operator are not smooth. So, hypoellipticity and the H\"ormander regularity theory are not valid in general. In fact, the indicial root $y^{1-a}$ always solves the homogeneous ($RHS=0$) isotropic ($A=\mathbb I$) equation \eqref{eq:codim:1} whenever $a<1$. This prevents general \emph{harmonic functions for the degenerate operator} from being smooth.

However, the regularity of the solutions may strongly improve by imposing certain boundary conditions at $\Sigma_0$. In fact, solutions with a homogeneous conormal boundary condition at $\Sigma_0$ enjoy a Schauder theory in $C^{k,\alpha}$ spaces whenever $a>-1$, which in turns leads to smoothness if data are smooth. The full regularity theory in H\"older spaces is carried out in \cite{SirTerVit21a,SirTerVit21b,TerTorVit24a,DonJeoVit24}, see also \cite{AudFioVit24a,AudFioVit24b} for the parabolic case and \cite{DonPha21,DonPha23} for regularity estimates in Sobolev spaces. Notably, this theory finds nice applications in higher order boundary Harnack principles \cite{DesSav15} for uniformly elliptic equations \cite{TerTorVit24a,TerTorVit24b,Zha24}. The literature on the regularity in the mid-range case $a\in(-1,1)$ is extensive, primarly due again to the connection with fractional Laplacians. We cannot provide a comprehensive list of all contributions on this topic here; however, concerning Schauder type estimates, a notable reference can be found in \cite{CafSti16}.

\smallskip

Let us consider now the higher codimensional case presented in the present paper. The study of \eqref{generalPDE} can be seen at a first glance as a natural continuation of the codimension $1$ theory. However, there are many other motivations for developing such a regularity theory. We would like to focus our attention on some topics which strongly relates to the degenerate equation \eqref{generalPDE} and may have further developments due to our results and approach. Some of them are known topics, and have a literature: harmonic maps with prescribed singularities in general relativity, the Dirichlet problem and harmonic measure on lower dimensional boundaries. Some other connections are new: critical points for Caffarelli-Kohn-Nirenberg inequalities, higher codimensional extensions of fractional Laplacians, very thin free boundary problems. We will spend some other words on these topics in the last part of this introduction. Finally, we would like to mention some unique continuation results for Grushin-type operators which are degenerate on lower dimensional manifolds, see for instance \cite{AbaFerLuz24,Gar93,HuWuYanZho25} and many references therein.

\subsection*{Main results and strategies}
%\rred{toglierei la subsection e la frase introduttiva} In the present section, we would like to state our main results and discuss the strategies involved in their proofs. 

%\subsubsection*{The homogeneous conormal problem}

The main results we are presenting here concern the homogeneous conormal problem for \eqref{generalPDE} in the range $a+n>0$. The solutions we are referring to are elements of $H^{1,a}(B_1)$ which weakly solve \eqref{weakconormal}.
Our main objectives are certain H\"older $C^{0,\alpha}$ and Schauder $C^{1,\alpha}$ local regularity estimates up to $\Sigma_0$ for this class of weak solutions. In order to state precisely our main results, we need to be more specific regarding the uniform ellipticity properties of the variable coefficient matrix $A$. The latter is a symmetric $(d\times d)$-dimensional matrix satisfying the \emph{global} uniform ellipticity condition
\begin{equation}\label{eq:unif:ell}
    \lambda|\xi|^2\le A(z)\xi\cdot\xi\le\Lambda|\xi|^2,\qquad\text{for a.e. } z\in B_1 \text{ and for all } \xi\in \R^d,
\end{equation}
for some ellipticity constants $0<\lambda\leq\Lambda$. Moreover, $A$ satisfies the \emph{restricted-to-$\Sigma_0$} uniform ellipticity condition 
\begin{equation}\label{eq:unif:ell:loc}
    \lambda_*|\xi|^2\le A_3(x,0)\xi\cdot\xi\le\Lambda_*|\xi|^2,\qquad\text{for a.e. } (x,0)\in B_1\cap\Sigma_0 \text{ and for all } \xi\in \R^n,
\end{equation}
with ellipticity constants $0<\lambda\leq\lambda_*\leq\Lambda_*\leq\Lambda$, where $A_3$ is the $(n\times n)$-dimensional block located in the lower-right corner of the matrix $A$, see \eqref{Blocks}. Then, let us introduce the exponent
\begin{equation}\label{alphastar2}
    \alpha_*=\alpha_*(a,n,\lambda_*/\Lambda_*)= \frac{2-a-n + \sqrt{(2-a-n)^2 + 4\mu_* }}{2},
\end{equation}
with
\[
\mu_*=\mu_*(a,n,\lambda_*/\Lambda_*) =
\begin{cases}
\displaystyle \Big(\frac{\lambda_*}{\Lambda_*}\Big)^{\frac{|a|}{2}}(n-1) & \text{ if }n \geq 3\,,\\
 \displaystyle\Big({\frac{4}{\pi}}\arctan 
 \Big(\frac{\lambda_*}{\Lambda_*}\Big)^{\frac{|a|}{4}} 
 \Big)^2\, & \text{ if }n =2\,. 
\end{cases}
\]
Notice that both $\alpha_*$ and $\mu_*$ are monotone increasing with respect to $0<\lambda_*/\Lambda_*\leq1$. Notice also that in case $\lambda_*/\Lambda_*=1$ (which happens in particular if $A=\mathbb I$), $\mu_*=n-1$ which corresponds to the first positive eigenvalue of the Laplace-Beltrami operator on $\mathbb S^{n-1}$. Then, our first main result is the following.

\begin{teo}[H\"older $C^{0,\alpha}$ estimate]\label{T:0:alpha}
    Let $a+n>0$, $p>(d+a_+)/2$, $q>d+a_+$. Let $A$ be a uniformly elliptic matrix satisfying \eqref{eq:unif:ell} and \eqref{eq:unif:ell:loc} with constants $0<\lambda\leq\lambda_*\leq\Lambda_*\leq\Lambda$.
    Let $\alpha_*=\alpha_*(n,a,\lambda_*/\Lambda_*)$ be defined as in \eqref{alphastar2}. Let
\begin{equation}\label{eq:0:alpha:reg1}
       \alpha\in (0,1)\cap(0,\alpha_*)\cap(0,2-(d+a_+)/p]\cap(0,1-(d+a_+)/q].
\end{equation} 
Let $A$ be continuous with $$\|A\|_{C^{0,\omega}(B_1)}:=\|A\|_{L^\infty(B_1)}+\sup_{\substack{z,\zeta\in B_1 \\ z\neq \zeta}}\frac{|A(z)-A(\zeta)|}{\omega(|z-\zeta|)}\le L\qquad\mathrm{for \ some \ modulus \ of \ continuity \ }\omega,$$

$f\in L^{p,a}(B_1)$, $F\in L^{q,a}(B_1)^d$ and $u$ be a weak solution to \eqref{generalPDE} in $B_1$.

Then, $u\in C^{0,\alpha}_{\rm loc}(B_{1})$. Moreover, there exists a constant $C>0$ depending only on $d$, $n$, $a$, $\lambda$, $\Lambda$, $p$, $q$, $L$ and $\alpha$ such that
\[
\|u\|_{C^{0,\alpha}(B_{1/2})}\le C\big(
\|u\|_{L^{2,a}(B_{1})}
+\|f\|_{L^{p,a}(B_{1})}
+\|F\|_{L^{q,a}(B_{1})}
\big).
\]

\end{teo}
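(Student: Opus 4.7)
\smallskip

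\emph{Overall strategy: perforation and blow-up.} The plan is to combine a domain-perforation approximation with a contradiction/blow-up argument closed by a new Liouville theorem on the perforated space, as advertised in the abstract. For $\epsilon>0$, let $\Omega_\epsilon = B_1 \setminus \{|y|<\epsilon\}$ and consider the perforated problem associated with \eqref{generalPDE}, where the conormal condition \eqref{conormalweighted} is replaced by a homogeneous Neumann condition on $\{|y|=\epsilon\}$; call its solution $u_\epsilon$. On $\Omega_\epsilon$ the weight $|y|^a$ is bounded between positive constants (depending on $\epsilon$), so the equation is classically uniformly elliptic, and a global energy estimate gives $L^{2,a}$ control on $u_\epsilon$ uniform in $\epsilon$. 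The target is a $C^{0,\alpha}$ bound for $u_\epsilon$ with constants \emph{independent} of $\epsilon$; the conclusion for $u$ then follows by extracting a locally uniform limit as $\epsilon\to 0$, relying on $a+n>0$ to give $|y|^a \in L^1_{\mathrm{loc}}$ and identifying the limit with the weak solution of \eqref{weakconormal}.

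\smallskip

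\emph{Contradiction and rescaling.} Suppose the uniform estimate fails: there exist sequences $(A_k, f_k, F_k, u_k, \epsilon_k)$ with normalized data yet $[u_k]_{C^{0,\alpha}(B_{1/2})} \to \infty$. A Campanato--Morrey-type selection produces centers $z_k=(x_k,y_k)$, radii $r_k\to 0$, subtracted constants $m_k$ and normalizing factors $M_k$ such that $v_k(z) = (u_k(z_k+r_k z)-m_k)/(M_k r_k^\alpha)$ has H\"older content pinched between quantities bounded from above and from below on a unit annulus. Passing to subsequences in the two ratios $|y_k|/r_k$ and $\epsilon_k/r_k$, and using classical interior/boundary H\"older regularity of $v_k$ away from the approaching characteristic set, we extract a locally uniform limit $v_\infty$ on a limiting domain $\Omega_\infty$ solving a constant-coefficient version of the problem. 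Three geometries arise: (i) all of $\R^d$ with uniform ellipticity (characteristic set escapes); (ii) all of $\R^d$ with weight $|y|^a$ and homogeneous conormal condition on $\Sigma_0$; (iii) a perforated space $\R^d\setminus\{|y|<\epsilon_\infty\}$ with homogeneous Neumann datum on $\{|y|=\epsilon_\infty\}$. In every case, $v_\infty$ inherits $|v_\infty(z)|\le C(1+|z|^\alpha)$ together with a nontrivial H\"older oscillation.

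\smallskip

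\emph{Liouville rigidity and the threshold $\alpha_*$.} The contradiction is closed by Liouville-type rigidity. Case (i) is classical. Cases (ii)--(iii) rely on the new Liouville theorem: any solution of the constant-coefficient problem on $\R^d$ (resp.\ on the perforated space) with the appropriate boundary condition and growth strictly less than $|z|^{\alpha_*}$ must be affine in $x$ and constant in $y$. The exponent $\alpha_*$ comes from the indicial equation $\alpha(\alpha+a+n-2)=\mu$ for separated homogeneous solutions $|z|^\alpha\psi(z/|z|)$, where $\mu$ is an eigenvalue of a weighted Laplace--Beltrami operator on $\mathbb{S}^{d-1}$ driven, close to $\Sigma_0$, by the restricted block $A_3$. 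The first nontrivial eigenvalue is bounded below by the $\mu_*$ in \eqref{alphastar2}, reducing to the classical first eigenvalue $n-1$ on $\mathbb{S}^{n-1}$ when $\lambda_*/\Lambda_*=1$ and degrading sharply (and differently for $n=2$ versus $n\ge3$) under anisotropy. Expanding $v_\infty$ in these eigenmodes, the hypothesis $\alpha<\alpha_*$ forces all nontrivial modes to vanish, contradicting the pinched blow-up profile.

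\smallskip

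\emph{Expected main obstacle.} The core difficulty is the Liouville theorem on the perforated space, and in particular the sharp eigenvalue lower bound yielding $\mu_*$. The separate treatments for $n=2$ and $n\ge3$ suggest a delicate spherical eigenvalue computation, likely via a trigonometric or conformal straightening of the anisotropy that controls the Rayleigh quotient on $\mathbb{S}^{n-1}$ variationally in terms of $\lambda_*/\Lambda_*$ uniformly in the weight. A secondary technical point is the joint bookkeeping of the small parameters $\epsilon_k$ and $r_k$ during the blow-up, together with uniform weighted Caccioppoli-type bounds needed both to pass to the limit as $\epsilon\to 0$ and to extract the blow-up subsequence.
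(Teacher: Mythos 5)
Your overall strategy (domain perforation, contradiction/blow-up, and closure via a Liouville theorem with the threshold exponent $\alpha_*$ coming from a weighted spherical eigenvalue problem driven by $A_3$) matches the paper's approach, and your identification of the indicial equation and the $n=2$ versus $n\ge 3$ dichotomy in $\mu_*$ is on the mark. However, there are three concrete gaps in your sketch, and two of them are significant.

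First, you perforate with the isotropic set $\{|y|<\epsilon\}$. The paper instead uses the anisotropic hole $\Sigma_\epsilon^A = \{A_3^{-1}(x,y)\,y\cdot y \le \epsilon^2\}$, and this is not cosmetic: the Liouville theorem on the perforated space is proved by the substitution $\xi = A^{-1/2}_3 y$, which radializes the constant-coefficient operator while simultaneously carrying $\Sigma_{\bar\epsilon}^{\bar A}$ onto a round ball, so that the Fourier-type decomposition on $\mathbb{S}^{n-1}$ can be carried out. If the hole in the blow-up limit is the Euclidean ball $\{|y|<\epsilon_\infty\}$ but the limit matrix $\bar A \ne \mathbb{I}$, the change of variables destroys the spherical shape of the boundary and the mode expansion no longer closes. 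The anisotropic perforation is precisely what keeps the geometry of the hole aligned with the anisotropy of the coefficients through the blow-up.

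Second, you assume the theorem's continuity hypothesis $A \in C^0$ suffices to run the perforation argument directly. It does not: for $\partial\Sigma_\epsilon^A$ to be a $C^1$ hypersurface on which the conormal condition makes sense and the interior/boundary Schauder theory applies, you need $A$ (really the block $A_3$) to be $C^1$. The paper resolves this mismatch by a two-layer regularization: the perforation argument proves stable $C^{0,\alpha}$ estimates under $A\in C^1$ (Theorem~\ref{T:0:1:alpha:eps}), and then a separate a priori estimate (Proposition~\ref{P:a:priori}, proved by the \emph{unperforated} blow-up) combined with mollification of the coefficients (Lemma~\ref{L:approximation:smooth}) reduces the hypothesis from $C^1$ back to $C^0$. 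Your sketch collapses both steps into one and therefore implicitly proves only the $C^1$-coefficient version.

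Third, and less serious, your classification of blow-up limit domains omits the half-space with homogeneous Neumann condition. This is the regime where the boundary of the hole enters the blow-up window but its center does not ($d_k/r_k$ bounded while $|y_k|/r_k\to\infty$), so the rescaled boundary flattens. You need the classical Liouville theorem in a half-space with Neumann data to close this case; the weighted Liouville theorem alone does not apply because the limiting weight is identically $1$ there. You also cannot fold this into your case (i), since the limit is not entire but confined to a half-space with a boundary condition.
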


Let us notice here that H\"older regularity for some small implicit exponent follows by \cite{FabKenSer82} in the $A_2$-Muckenhoupt range $a+n\in(0,2n)$ or by the $2$-admissibility of the weight (see \cite{HeiKilMar06}) even in the full range $a+n>0$ again by \cite{FabKenSer82}, once the $2$-admissibility condition is established in Section \ref{sec:2admissible}. The latter is true even when the variable coefficient matrix is only assumed to be bounded measurable. However, the above result proves an estimate which comes with an explicit H\"older exponent. As we will see, this additional information is due to the homogeneity property of the weight term together with the peculiar geometry of its nodal set $\Sigma_0$.

Then, our second result, which is actually the main core of the paper, is the following Schauder estimate, which holds true any time the exponent $\alpha_*$ exceeds $1$, see Remark \ref{rem:alphastar}.

\begin{teo}[Schauder $C^{1,\alpha}$ estimate]\label{T:1:alpha}
    Let $a+n>0$, $p>d+a_+$. Let $A$ be a uniformly elliptic matrix satisfying \eqref{eq:unif:ell} and \eqref{eq:unif:ell:loc} with constants $0<\lambda\leq\lambda_*\leq\Lambda_*\leq\Lambda$.
    Assume that $\alpha_*=\alpha_*(n,a,\lambda_*/\Lambda_*)$ defined as in \eqref{alphastar2} is such that $\alpha_*>1$. Let
\begin{equation}\label{eq:1:alpha:reg:1}
    \alpha\in (0,1)\cap(0,\alpha_*-1)\cap(0,1-(d+a_+)/{p}].
\end{equation}
    Let $A$ be $\alpha$-H\"older continuous with $\|A\|_{C^{0,\alpha}(B_1)}\le L$, $f\in L^{p,a}(B_1)$, $F\in C^{0,\alpha}(B_1)$ and $u$ be a weak solution to \eqref{generalPDE} in $B_1$.
    
    Then, $u\in C^{1,\alpha}_{\rm loc}(B_{1})$ and satisfies 
\begin{equation}\label{eq:c1:boundary:condition}
    (A\D u + F)\cdot e_{y_i}=0,\quad\text{ on } \Sigma_0\cap B_{1},\quad\text{for every } i=1,\dots,n.
\end{equation}
Moreover, there exists a constant $C>0$ depending only on $d$, $n$, $a$, $\lambda$, $\Lambda$, $p$, $L$ and $\alpha$ such that
\begin{equation}\label{eq:stima:c1}
\|u\|_{C^{1,\alpha}(B_{1/2})}\le C\big(
\|u\|_{L^{2,a}(B_{1})}
+\|f\|_{L^{p,a}(B_{1})}
+\|F\|_{C^{0,\alpha}(B_1)}
\big).
\end{equation}
\end{teo}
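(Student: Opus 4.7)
The plan is to derive the Schauder estimate from a Campanato-type decay at points of $\Sigma_0$, implemented through a blow-up contradiction argument that reduces matters to the Liouville rigidity announced in the introduction. Classical Schauder theory already handles points at positive distance from $\Sigma_0$ after a standard scaling argument, so it suffices to prove the decay at base points $z_0 \in \Sigma_0 \cap \overline{B_{1/2}}$. Freezing $A$ at $z_0$ and using that $A$ is $\alpha$-H\"older turns the coefficient error into a controlled perturbation, while $|y|^a F$ contributes an explicit linear compensation encoded in the conormal condition. The core statement to establish is then: for every $r$ small and for a suitable affine function $P_r(z) = c_r + b_r\cdot x$ (depending only on $x$, hence compatible with \eqref{conormalweighted}),
\[
\Big(\intn_{B_r(z_0)} |u-P_r|^2\,|y|^a\,dz\Big)^{1/2} \le C r^{1+\alpha} N,
\]
where $N$ denotes the data norm appearing on the right of \eqref{eq:stima:c1}.

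Assuming the decay fails, a standard normalization produces sequences $u_k$, $z_k \to z_\infty \in \Sigma_0$, radii $r_k \to 0$, constant matrices $A_k \to A_\infty$ respecting \eqref{eq:unif:ell}--\eqref{eq:unif:ell:loc}, and rescalings
\[
v_k(z) := \frac{u_k(z_k + r_k z) - P_k(z)}{\omega_k\, r_k^{1+\alpha}}, \qquad \omega_k := \Big(\intn_{B_{r_k}(z_k)}|u_k-P_k|^2 |y|^a\Big)^{1/2},
\]
with $P_k$ the best affine-in-$x$ $L^{2,a}$-approximation of $u_k$ at scale $r_k$. The usual iteration shows that $v_k$ has unit $L^{2,a}$ norm on $B_1$, vanishing best affine-in-$x$ approximation at the origin, solves a normalized equation whose forcing terms tend to zero, and enjoys the dyadic growth bound $\|v_k\|_{L^{2,a}(B_R)} \le C R^{1+\alpha}$ on admissible balls. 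Compactness of $\{v_k\}$ is supplied by Theorem~\ref{T:0:alpha} applied at the blow-up scale; here the regularization via domain perforation intervenes, since to make the argument rigorous one first considers the approximating problems on $B_1 \setminus \{|y|<\delta\}$, on which the operator is uniformly elliptic for each fixed $\delta>0$, and then carefully tracks the relative scale $\delta/r_k$ in tandem with $|y_k|/r_k$.

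Passing to the limit one obtains an entire $v_\infty$ solving $-\mathrm{div}(|y|^a A_\infty \nabla v_\infty)=0$ on $\mathbb{R}^d$ with homogeneous conormal condition at $\Sigma_0$, vanishing best affine-in-$x$ approximation at the origin, and polynomial growth $\|v_\infty\|_{L^{2,a}(B_R)} \lesssim R^{1+\alpha}$ at scales $R\ge 1$, where $\alpha < \alpha_*-1$. The new Liouville theorem in the (possibly) perforated space, whose proof is the main independent input of the paper, asserts precisely that such a $v_\infty$ must be an affine function of $x$ alone. It therefore coincides with its best affine-in-$x$ approximation, forcing $v_\infty \equiv 0$ and contradicting $\|v_\infty\|_{L^{2,a}(B_1)}=1$. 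Iterating the resulting one-step decay and combining with classical interior Schauder in the uniformly elliptic region yields $u\in C^{1,\alpha}_{\rm loc}$ together with the quantitative bound \eqref{eq:stima:c1}; the pointwise conormal condition \eqref{eq:c1:boundary:condition} then follows from the weak formulation \eqref{weakconormal} once $u$ is known to be $C^1$ up to $\Sigma_0$, by integration by parts against test functions concentrated near $\Sigma_0$.

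The main obstacle is the Liouville theorem itself. Beyond formulating the correct notion of admissible entire solution on the perforated space (with the appropriate conormal condition on the added tube), one must classify the homogeneous solutions via a spectral analysis on $\mathbb{S}^{n-1}$ of the Laplace--Beltrami type eigenvalue problem associated with $A_\infty$: the exponent $\alpha_*$ in \eqref{alphastar2} is precisely the first nontrivial characteristic exponent at $\Sigma_0$, and the hypothesis $\alpha_*>1$ is what rules out, at the limit, slowly-growing modes with non-trivial dependence on $y$. A secondary difficulty is that the base point $z_k$ may drift so that $|y_k|/r_k \to \ell \in [0,\infty]$, giving qualitatively different limit equations (purely degenerate at $\Sigma_0$, perforated at scale $\ell$, or classically uniformly elliptic away from $\Sigma_0$); the ``very fine blow-up'' alluded to in the abstract amounts to handling these three regimes within a single Liouville-type statement in the perforated space.
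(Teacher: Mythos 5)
Your overall architecture --- freeze the coefficient, prove a Campanato-type decay at points of $\Sigma_0$ by a normalize-and-blow-up contradiction, invoke the Liouville theorem on the (possibly) perforated space to kill the blow-up limit, iterate --- is a legitimate alternative to the paper's route. The paper instead follows Simon's scheme: the contradiction sequence is built from two nearby points where the H\"older seminorm of $\nabla(\eta u_k)$ is almost attained, and the estimate is proved first for the \emph{perforated} problems uniformly in $\varepsilon$ (Theorem~\ref{T:0:1:alpha:eps}), then transported to the limit by two separate approximation lemmas (domain perforation, Lemma~\ref{L:approximation}, to reach $C^{1,\alpha}$ data, and mollification, Lemma~\ref{L:approximation:smooth}, to relax the coefficients back to $C^{0,\alpha}$). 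Both roads end at Theorem~\ref{L:Liouville}. That said, your proposal has two concrete problems, one of which is fatal as written.

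The fatal one is the choice of the affine corrector $P_r(z)=c_r+b_r\cdot x$, declared to depend only on $x$. The effective conormal condition \eqref{eq:c1:boundary:condition} does \emph{not} say $\nabla_y u=0$ on $\Sigma_0$; it says $\nabla_y u = -A_3^{-1}(A_2^\top\nabla_x u + F_y)$ there, which is nonzero whenever $A_2\neq 0$ or $F_y\neq 0$. Consistently, the Liouville theorem~\ref{L:Liouville} classifies the admissible blow-up limits as $c+\alpha\cdot x - (A_3^{-1}A_2^\top\alpha)\cdot y$, so the limit \emph{does} carry a $y$-slope slaved to the $x$-slope. If you subtract only the best affine-in-$x$ approximation, the normalized remainder $v_\infty$ can be a nonzero function of the form $d\cdot y$ and the iteration does not close. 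Moreover, $u-P_r$ no longer solves the homogeneous equation: $\mathrm{div}(|y|^a A\nabla P_r)=a|y|^{a-2}\,y\cdot A_2^\top b_r$ is a genuine singular forcing unless $A_2=0$. The corrector must instead be taken among the \emph{conormal-admissible} affines (the paper's $P_k$, which in Case 3 is $A^{-1}(\tilde z_k)\big((\eta A\nabla u_k)_1(z_k^0)-(\eta F)_2(z_k^0)\big)$), or among all affines with the $y$-slope as a free parameter to be optimized.

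The second issue is the regularization: you perforate by the isotropic tube $\{|y|<\delta\}$, while the paper uses the anisotropic tube $\Sigma^A_\varepsilon=\{A_3^{-1}y\cdot y\le\varepsilon^2\}$. This is not a cosmetic choice. In the Case~3 blow-up the limit equation is a constant-coefficient degenerate equation on $\mathbb{R}^d\setminus\Sigma^{\bar A}_{\bar\varepsilon}$; Lemma~\ref{L:Liouvillen} rests on a spectral decomposition on $\mathbb{S}^{n-1}$ via the linear change of variables $\xi=A_3^{-1/2}y$, and this transform simultaneously isotropizes the operator \emph{and} rounds the hole. If the hole is $\{|y|<\delta\}$ and $A$ is not a multiple of the identity, the transformed hole is an ellipsoid and the Fourier-type analysis in the proof of Lemma~\ref{L:Liouvillen} does not go through, so the Liouville step breaks. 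One also has to mollify $A$ first (as in Lemma~\ref{L:approximation:smooth}), because $\partial\Sigma_\varepsilon^A$ is only as smooth as $A_3^{-1}$; with your isotropic tube this particular point is moot, but it resurfaces if you switch to the anisotropic tube as you must.

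Finally, the claim that the pointwise condition \eqref{eq:c1:boundary:condition} follows ``from the weak formulation once $u$ is $C^1$, by integration by parts against test functions concentrated near $\Sigma_0$'' is correct in substance when $a\neq 0$ (which the hypothesis $\alpha_*>1$ guarantees), but it is not automatic: the naive boundary integral $\int_{\partial\Sigma_\delta}|y|^a(A\nabla u+F)\cdot\tfrac{y}{|y|}\phi$ is killed at leading order by odd symmetry regardless of the boundary value. One must test with functions that are genuinely odd in one $y$-coordinate (e.g.\ $\phi=\psi(x)\rho(|y|)y_i$) and use that $a\neq 0$ to recover \eqref{eq:c1:boundary:condition}. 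The paper instead reads the condition off from the $\varepsilon$-stable boundary estimate \eqref{eq:BC:stable:1} obtained for the perforated problems, which is cleaner and does not need a separate case analysis in $a+n$.
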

Let us now comment on the above results and explain the techniques involved. The main idea is to take advantage of a $\varepsilon$-regularization-approximation scheme, which aims to address the loss of uniform ellipticity on the characteristic manifold by approximating with uniformly elliptic problems. In this way, the estimates become available for the $\varepsilon$-approximating problems, and the goal is to prove their uniformity (stability) as the regularization parameter $\varepsilon\to 0$. Returning to the codimension $1$ case, in \cite{SirTerVit21a} the regularization proposed is at the level of the weight term. Specifically, replacing $|y|^a$ with $\rho_\varepsilon^a(y)=(\varepsilon^2+|y|^2)^{\frac{a}{2}}$ one immediately obtains a uniformly elliptic problem, and stability can be established as $\varepsilon\to0$. This approach is not the most effective in this context. To approximate a given solution with a possible boundary condition at $\Sigma_0$, the same boundary condition must be prescribed at the level of the regularized equation. However, the local $H^1(\rho_\varepsilon^a)$-capacity of $\Sigma_0$ - which coincides with the classical local unweighted $H^1$-capacity - is always zero. As a result, no boundary condition can be imposed on the thin set. This fact becomes particularly evident in the mid-range $a+n\in(0,2)$, especially in case of inhomogeneous conditions.

Next, we propose a general strategy suitable for any boundary value problem and valid across different capacitary ranges. The regularization we introduce is performed at the domain level, using $\varepsilon$-perforations around $\Sigma_0$. In the isotropic case $A=\mathbb I$, the approximating problems are described as follows
\begin{equation}\label{approxPDE}
\begin{cases}
-\mathrm{div}(|y|^a\nabla u)=|y|^af+\mathrm{div}(|y|^aF) &\mathrm{in \ } B_1\setminus\Sigma_\varepsilon\\
(\nabla u+F)\cdot\nu=0 &\mathrm{on \ } B_1\cap\partial\Sigma_\varepsilon,
\end{cases}
\end{equation}
where $$\Sigma_\varepsilon=\{|y|\leq\varepsilon\}\qquad \mathrm{and}\qquad \partial\Sigma_\varepsilon=\{|y|=\varepsilon\}.$$
As one can see, the conormal boundary condition is now prescribed on a codimension $1$ boundary that shrinks toward $\Sigma_0$. Given a particular solution of the limiting problem, one can construct an approximating sequence of solutions for the regularized problems as $\varepsilon\to0$. If the regularity estimates for \eqref{approxPDE} are shown to be uniform in $\varepsilon$, the same regularity is transferred to the limit.

Adding variable coefficients makes things harder, as anisotropy directly affects the approximation. As we will see, the stability of the estimates relies on Liouville theorems on the complementary of a hole, after blow-up. For general variable coefficients, adjusting the shape of this hole to the anisotropy proves to be convenient, and this requires the domain to be perforated accordingly at a macroscopic scale; that is, 
$$\Sigma^A_\varepsilon=\{A_3^{-1}(x,y)y\cdot y \leq\varepsilon^2\} \qquad\mathrm{and} \qquad\partial\Sigma^A_\varepsilon=\{ A_3^{-1}(x,y)y\cdot y =\varepsilon^2\}.$$
However, a preliminary regularization of the coefficients through convolution with mollifiers is also needed at this point. In fact, the approximating problems
\begin{equation}\label{approxPDEA}
\begin{cases}
-\mathrm{div}(|y|^aA\nabla u)=|y|^af+\mathrm{div}(|y|^aF) &\mathrm{in \ } B_1\setminus\Sigma^A_\varepsilon\\
(A\nabla u+F)\cdot\nu=0 &\mathrm{on \ } B_1\cap\partial\Sigma^A_\varepsilon,
\end{cases}
\end{equation}
enjoy the desired H\"older $C^{0,\alpha}$ and Schauder $C^{1,\alpha}$ estimates only requiring additional regularity of coefficients, respectively $C^1$ and $C^{1,\alpha}$, since the latter ensures the right regularity of the shrinking boundaries (see Remark \ref{R:C1:boundary}). This is suboptimal when compared with the requirements stated in Theorems \ref{T:0:alpha} and \ref{T:1:alpha}. However, this additional regularity is necessary to ensure stability of the estimates with respect to domain perforation. On the other hand, stability with respect to mollification does not require any perforation and holds true under the optimal requirements on coefficients; that is, respectively the $C^0$ and $C^{0,\alpha}$ regularity imposed by the natural scaling of the equation.

The validity of Theorems \ref{T:0:alpha} and \ref{T:1:alpha} is based on the following main result.

\begin{teo}[Stable regularity estimates in perforated domains]\label{T:0:1:alpha:eps}
    Let $a+n>0$. Let $A$ be a uniformly elliptic matrix satisfying \eqref{eq:unif:ell} and \eqref{eq:unif:ell:loc} with constants $0<\lambda\leq\lambda_*\leq\Lambda_*\leq\Lambda$.
    Let $\alpha_*=\alpha_*(n,a,\lambda_*/\Lambda_*)$ be defined as in \eqref{alphastar2}. Then the following points hold true:
    \begin{itemize}
        \item[i)] Let $p>(d+a_+)/2$, $q>d+a_+$. Let $\alpha\in(0,1)$ satisfying \eqref{eq:0:alpha:reg1}. Let $A$ be $C^1$ with $\|A\|_{C^{1,\omega}(B_1)}\le L$ for some modulus of continuity $\omega$, $f\in L^{p,a}(B_1)$ and $F\in L^{q,a}(B_1)^d$. Then there exists $\eps_0 <1$ and a constant $C>0$ depending only on $d$, $n$, $a$, $\lambda$, $\Lambda$, $p$, $q$, $L$, $\varepsilon_0$ and $\alpha$ such that for every $\eps \in (0, \eps_0)$ and for every solution $u_\eps$ to \eqref{approxPDEA} it holds 
\[
\|u_\e\|_{C^{0,\alpha}(B_{1/2}\setminus\Sigma_\e^A)}\le C\big(
\|u_\e\|_{L^{2,a}(B_{1}\setminus\Sigma_\e^A)}
+\|f\|_{L^{p,a}(B_{1})}
+\|F\|_{L^{q,a}(B_{1})}
\big)\,.
\]  
        \item[ii)] Let $p>d+a_+$. Let assume that $\alpha_*>1$. Let $\alpha\in(0,1)$ satisfying \eqref{eq:1:alpha:reg:1}. Let $A$ be $C^{1,\alpha}$ with $\|A\|_{C^{1,\alpha}(B_1)}\le L$, $f\in L^{p,a}(B_1)$ and $F\in C^{0,\alpha}(B_1)$.
        Then there exists $\eps_0 <1$ and a constant $C>0$ depending only on $d$, $n$, $a$, $\lambda$, $\Lambda$, $p$, $L$, $\varepsilon_0$ and $\alpha$ such that
for every $\eps \in (0, \eps_0)$ and for every solution $u_\eps$ to \eqref{approxPDEA} it holds
\begin{equation*}
\|u_\e\|_{C^{1,\alpha}(B_{1/2}\setminus\Sigma_\e^A)}\le C\big(
\|u_\e\|_{L^{2,a}(B_{1}\setminus\Sigma_\e^A)}
+\|f\|_{L^{p,a}(B_{1})}
+\|F\|_{C^{0,\alpha}(B_1)}
\big).
\end{equation*}
Moreover, for every points $z \in \partial \Sigma_\e^A \cap B_{1/2}$ and for every $i=1,\dots,n$, it holds
\begin{equation}\label{eq:BC:stable:1}
    |(A\D u_\e +F)(z)\cdot e_{y_i}|\le C \e^\alpha \big(
\|u_\e\|_{L^{2,a}(B_{1}\setminus\Sigma_\e^A)}
+\|f\|_{L^{p,a}(B_{1})}
+\|F\|_{C^{0,\alpha}(B_1)}
\big).
\end{equation}

\end{itemize}

\end{teo}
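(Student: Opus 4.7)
The plan is a contradiction--compactness argument that reduces stability of the estimates as $\e\to 0$ to classical Liouville theorems together with the new Liouville theorem in the perforated ambient space alluded to in the introduction. For each fixed $\e>0$ the weight $|y|^a$ is bounded from above and below on $B_1\setminus\Sigma^A_\e$, hence the equation is uniformly elliptic with $\e$-dependent constants; coupled with the fact that $\partial\Sigma^A_\e$ inherits its $C^1$ (resp.\ $C^{1,\alpha}$) regularity from $A_3$ (this is exactly why the hypothesis on $A$ here is stronger than in Theorems \ref{T:0:alpha} and \ref{T:1:alpha}), classical interior and boundary Schauder/De Giorgi theory gives the desired $C^{0,\alpha}$ (resp.\ $C^{1,\alpha}$) bound for each individual $\e$. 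The real issue is purely quantitative, namely producing constants that are uniform in $\e\in(0,\e_0)$.

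For part (i), suppose by contradiction that there exist $\e_k\to 0$, data $f_k,F_k$ and solutions $u_k$ with
\[
\|u_k\|_{L^{2,a}(B_1\setminus\Sigma^A_{\e_k})}+\|f_k\|_{L^{p,a}(B_1)}+\|F_k\|_{L^{q,a}(B_1)}\le 1,
\]
whose $C^{0,\alpha}(B_{1/2}\setminus\Sigma^A_{\e_k})$ seminorms diverge. A standard near-maximization procedure (of Campanato/Simon type) produces centers $z_k$, scales $r_k\to 0$ and blow-up amplitudes $M_k\to\infty$ such that the rescaled functions
\[
v_k(z)=\frac{u_k(z_k+r_k z)-u_k(z_k)}{M_k\,r_k^\alpha}
\]
have uniformly bounded $C^{0,\alpha}$ seminorm, vanish at $0$, and satisfy a rescaled equation whose right-hand side vanishes in the limit. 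Up to subsequences, the geometry of the limit falls in one of three regimes according to $\dist(z_k,\Sigma_0)/r_k$ and $r_k/\e_k$. \emph{Regime (a):} if $\dist(z_k,\Sigma_0)/r_k\to+\infty$ the rescaled equation stays uniformly elliptic on $\R^d$ with frozen $C^{0,\alpha}$ coefficients, and the classical Liouville forces $v_\infty$ constant. \emph{Regime (b):} if $\dist(z_k,\Sigma_0)/r_k$ stays bounded and $r_k/\e_k\to+\infty$, the perforation disappears in the limit and $v_\infty$ solves the homogeneous frozen-coefficient weighted equation $-\mathrm{div}(|y|^a A_\infty\nabla v)=0$ across a translated copy of $\Sigma_0$ on the whole of $\R^d$; the standard Liouville for such degenerate operators, whose first non-trivial positive homogeneity is precisely $\alpha_*$, forces $v_\infty$ constant because $\alpha<\alpha_*$. \emph{Regime (c):} if $r_k/\e_k\to s\in(0,+\infty)$, the limit domain becomes $\R^d$ minus the fixed ellipsoidal hole $\{A_3^{-1}(x_\infty,0)y\cdot y\le s^{-2}\}$ and $v_\infty$ solves the frozen-coefficient weighted equation on its complement with homogeneous conormal condition on the boundary of the hole; the new Liouville theorem in the perforated space, to be proved separately as a result of independent interest, asserts that any such solution with sub-$\alpha_*$ polynomial growth must be constant. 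In each case one contradicts the normalization $[v_k]_{C^{0,\alpha}}\sim 1$. Regime (c) is the core obstacle and is exactly what drives the need for the new perforated Liouville.

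For part (ii) the very same scheme is carried out one order higher: one subtracts from $u_k$ the first-order Taylor polynomial at $z_k$ before dividing by $M_k r_k^{1+\alpha}$. Since $\alpha<\alpha_*-1$, i.e.\ $1+\alpha<\alpha_*$, the three Liouville statements used above now force $v_\infty$ to be affine in each regime; combined with $v_\infty(0)=0$ and $\nabla v_\infty(0)=0$ this contradicts the blow-up normalization. Estimate \eqref{eq:BC:stable:1} follows a posteriori from the $C^{1,\alpha}$ bound just established: on $\partial\Sigma^A_\e\cap B_{1/2}$ the outward unit normal is a $C^\alpha$ perturbation of direction $A_3^{-1}y/|A_3^{-1}y|$ of size $O(\e)$ and $|y|=O(\e)$ there; inserting these into the exact conormal identity $(A\D u_\e+F)\cdot\nu=0$ on $\partial\Sigma^A_\e$ and using the already-proved Hölder continuity of $A\D u_\e+F$ yields the claimed $\e^\alpha$ decay of the components $(A\D u_\e+F)\cdot e_{y_i}$.
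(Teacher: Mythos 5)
Your overall strategy (fix‐$\eps$ classical estimates, contradiction, blow‐up, Liouville) matches the paper, but the classification of blow‐up regimes contains a genuine error that leaves out a case, and hence leaves out the Liouville theorem that handles it.

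The relevant rescaled distances are two, not one: the distance $|y_k|=\dist(z_k,\Sigma_0)$ to the characteristic manifold, and $d_k=\dist(z_k,\partial\Sigma^A_{\eps_k})$ to the boundary of the hole; since $d_k\leq|y_k|$, these are different in general. Your Regime~(a), triggered by $\dist(z_k,\Sigma_0)/r_k\to\infty$, asserts that the limit problem lives on all of $\R^d$ and can be killed by the classical Liouville. This is false when, in addition, $d_k/r_k$ remains bounded: one then has $|y_k|/r_k\to\infty$ but $\eps_k/r_k\to\infty$ as well, the hole boundary is visible at the blow‐up scale, its curvature flattens, and the limit domain is a \emph{half‐space} with a homogeneous conormal boundary condition. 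On that half‐space the weight trivializes but the boundary condition does not, and the Liouville theorem you need is the half‐space conormal one (cited in the paper as \cite[Theorem 1.6]{TerTorVit24a}), not the whole‐space one. The paper therefore splits into three cases precisely according to whether $d_k/r_k\to\infty$ (whole space), $d_k/r_k$ bounded with $|y_k|/r_k\to\infty$ (half‐space), or $|y_k|/r_k$ bounded (possibly perforated weighted space). Your Regimes~(b) and~(c) correctly correspond to the third case with $\bar\eps=\lim\eps_k/r_k=0$ or $\bar\eps>0$, respectively, but the half‐space case is missing entirely. This is not a cosmetic omission: the half‐space Liouville is a distinct ingredient and the rescaled conormal condition on the flattening hole boundary needs to be passed to the limit, which is where the quantitative estimate on $(A\D u_\e+F)\cdot e_{y_i}$ on $\partial\Sigma^A_\e$ actually enters \emph{during} the proof (via Lemma~\ref{L:BC}), not only a posteriori as in your last paragraph.

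Two further points for part~(ii) are glossed over. First, $r_k\to 0$ is not automatic once one works with the $C^{1,\alpha}$ seminorm; the paper needs a separate argument (its Step~5) to exclude $r_k\to\bar r>0$ in the perforated regime. Second, controlling the inhomogeneous term coming from the subtracted affine polynomial (the term $I_4$ in the paper's notation) requires a bootstrap in two rounds: one first proves a suboptimal $C^{1,\alpha'}$ stable estimate ($\alpha'<\alpha$), which yields a uniform $L^\infty$ bound on $\nabla u_k$, and only then reruns the argument at the optimal exponent. Simply expanding to first order and invoking $1+\alpha<\alpha_*$ does not by itself make this term vanish. Finally, the choice of blow‐up center should depend on the regime (the paper centers at $z_k$, at the projection $z_k^0$ onto $\partial\Sigma^A_{\eps_k}$, or at $(x_k,0)$, respectively), otherwise the limit hole/half‐space drifts; this is manageable but your uniform centering at $z_k$ would need to be adjusted accordingly.
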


First of all, we would like to point out that the above result remains valid even if the additional $C^1$ or $C^{1,\alpha}$ regularity is only assumed for the block $A_3$, see Remark \ref{R:A3regularity}. However, to simplify the notation, we assume the higher regularity for all entries of the matrix $A$.

Let us also observe that the quantitative condition \eqref{eq:BC:stable:1} is the key for obtaining the effective conormal boundary condition \eqref{eq:c1:boundary:condition} in Theorem \ref{T:1:alpha}, which is a much stronger information compared to the weighted conormal condition \eqref{conormalweighted} enjoyed by weak solutions.

The above result, as far as we know, is new even in the case of the Laplacian ($a=0$ and $A=\mathbb I$), and provides stability of classical H\"older estimates in perforated domains with Neumann boundary condition on the boundary of the hole. Since our estimates can be extended to second order elliptic equations with general lower order terms, Theorem \ref{T:0:1:alpha:eps} $i)$ has a remarkable link with the quantitative spectral stability for the Laplacian in domains with holes with prescribed Neumann boundary conditions, see for instance \cite{FelLivOgn25,Oza85,RauTay75} and many references therein. In particular, we can imply stable $\alpha$-H\"older bounds (for any $\alpha\in(0,1)$) for eigenfunctions in the Neumann-perforated domains, see Remark \ref{R:stability:eigenvalue}.

Notice that the stability of the Schauder estimate can not be valid for the Laplacian, since the effective conormal condition that would follow, i.e. $\nabla_yu=0$ on $\Sigma_0$, is not a general property of harmonic functions. The latter fact is not in contradiction with our result, since one needs $a<0$ in order to make $\alpha_*>1$, see Remark \ref{rem:alphastar}.

In the spirit of the work of Simon \cite{Sim97}, the stable estimates are obtained, in both the regularization procedures (Theorem \ref{T:0:1:alpha:eps} and Proposition \ref{P:a:priori}), by a contradiction argument involving a fine blow-up procedure and the classification of entire profiles. The latter is expressed in terms of rigidity results; that is, Liouville type theorems for both uniformly and degenerate elliptic problems on the blow-up domain, which can be the entire space, the half-space, or the space minus an unbounded cylinder around $\Sigma_0$. The latter result is of independent interest and can be stated as follows.

\begin{Theorem}[Liouville]\label{L:Liouville}
Let $a+n>0$, $\eps \geq 0$, and let $A$ be a constant coefficient $(d\times d)$-dimensional matrix which satisfies \eqref{eq:unif:ell}. Let us denote 
\begin{equation*}
\gamma_1^+=\gamma_1^+(a,n,A) = \frac{2-a-n + \sqrt{(2-a-n)^2 + 4\mu_1 }}{2}\,,
\end{equation*}
where $\mu_1=\mu_1(a,n,A)$ is a constant, which will be precisely defined in Lemma \ref{L:spectral}. 
Let $u$ be an entire solution (see Definitions \ref{def:weak:sol:hom:0} and \ref{def:weak:sol:hom:eps}) to 
\begin{equation}\label{eq:Lioueq}
\begin{cases}
-{\rm div}(|y|^a A \nabla u)= 0 & \mathrm{in \ }\R^d \setminus\Sigma^A_\e\\
|y|^a A\nabla u\cdot\nu=0     & \mathrm{ on \ } \partial\Sigma^A_\e,
\end{cases}
\end{equation}
and assume that there exist two constants $c>0$ and $ \gamma \in (0,\gamma_1^+)$ such that 
\begin{equation}\label{eq:Lgrowth}
|u(z)| \leq c(1 + |z|^\gamma)\,.
\end{equation}
If $\gamma <2$, then $u$ is linear. Moreover, if $\gamma < 1$, then $u$ is constant.
\end{Theorem}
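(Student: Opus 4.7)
My plan is to treat the dilation-invariant case $\varepsilon = 0$ by separation of variables on $\mathbb{S}^{d-1}$, and then reduce the perforated case $\varepsilon > 0$ to it through a blow-down exploiting the uniform H\"older estimates of Theorem \ref{T:0:1:alpha:eps}(i). In both regimes, the strict bound $\gamma < \gamma_1^+$ cuts off the spherical spectrum above the constant and linear-in-$x$ modes, which are the only ones compatible with the growth assumption.

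\textbf{Case $\varepsilon = 0$.} In polar coordinates $z = r\theta$, $r = |z|$, $\theta \in \mathbb{S}^{d-1}$, the operator $-\mathrm{div}(|y|^a A \nabla \cdot)$ with constant $A$ decouples into a radial Euler part and the spherical operator encoded in Lemma \ref{L:spectral}. Let $\{(\psi_k, \mu_k)\}_{k \geq 0}$ be its $L^2(\mathbb{S}^{d-1}, |y|^a\,d\sigma)$-orthonormal eigensystem with $\mu_0 = 0$. Expanding $u(r\theta) = \sum_k c_k(r)\psi_k(\theta)$, each radial coefficient solves an Euler ODE whose two indicial roots $\gamma_k^\pm$ satisfy $\gamma_k^\pm(\gamma_k^\pm + a + n - 2) = \mu_k$. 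The hypothesis $a + n > 0$ makes $|y|^a$ locally integrable, so $u \in H^{1,a}_{\rm loc}$ near the origin rules out the singular branch and leaves $c_k(r) = A_k r^{\gamma_k^+}$. Orthonormality together with the pointwise growth bound gives
\[
|A_k|\,r^{\gamma_k^+} = \left| \int_{\mathbb{S}^{d-1}} u(r\theta)\,\psi_k(\theta)\,|y|^a\,d\sigma \right| \leq C(1 + r^\gamma)\qquad \text{for all } r > 0,
\]
forcing $A_k = 0$ whenever $\gamma_k^+ > \gamma$. By the construction of $\mu_1$ in Lemma \ref{L:spectral}, the modes with $\gamma_k^+ < \gamma_1^+$ correspond precisely to the constant and linear-in-$x$ directions, of homogeneity $0$ and $1$ respectively. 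Hence $u$ is linear if $\gamma < 2$, and constant if further $\gamma < 1$.

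\textbf{Case $\varepsilon > 0$ and main obstacle.} I would argue by a blow-down: $u_R(z) := R^{-\gamma}u(Rz)$ solves the analogous equation on $\R^d \setminus \Sigma_{\varepsilon/R}^A$ with the Neumann condition on $\partial \Sigma_{\varepsilon/R}^A$, and the growth hypothesis yields $\|u_R\|_{L^\infty(B_\rho \setminus \Sigma_{\varepsilon/R}^A)} \leq C(1 + \rho^\gamma)$ uniformly for $R$ large. Applying Theorem \ref{T:0:1:alpha:eps}(i) at perforation scale $\varepsilon/R < \varepsilon_0$ yields uniform local H\"older estimates, hence $C^0_{\rm loc}$-precompactness; any cluster point $u_\infty$ is an entire solution of the $\varepsilon = 0$ problem with the same growth, and thus linear (resp.\ constant) by the previous step. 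The principal obstacle is promoting this asymptotic information about blow-downs to rigidity of $u$ itself, since \emph{a priori} the blow-down only captures the leading term. I would close this gap via an Almgren-type frequency function
\[
N(r) = \frac{r \int_{B_r \setminus \Sigma_\varepsilon^A} |y|^a A \nabla u \cdot \nabla u\,dz}{\int_{\partial B_r} |y|^a u^2\,d\sigma}, \qquad r > \varepsilon,
\]
and establish its monotonicity in $r$: the homogeneous Neumann condition on $\partial \Sigma_\varepsilon^A$ is precisely what kills the internal boundary contribution in the Pohozaev identity underlying the monotonicity. The growth hypothesis forces $N(\infty) < \gamma_1^+$; monotonicity and the admissible spectrum $\{\gamma_k^+\}$ then pin $N(\infty)$ to $\{0, 1\}$, and the rigid case of Almgren's formula yields exact homogeneity of $u$ at infinity, matching with the blow-down limit to conclude linearity (resp.\ constancy) of $u$. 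The delicate technical step is the sign analysis of the boundary contribution coming from $\partial \Sigma_\varepsilon^A$ in the differentiation of $N$, which ultimately rests on a convexity-type property of the hole with respect to the weighted $A$-geometry; an alternative fallback is to iterate the blow-down on the remainder $u - L_\infty$, where each iteration lowers the effective growth exponent until a standard bounded-Liouville argument in the perforated domain applies.
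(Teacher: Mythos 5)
Your plan for $\varepsilon = 0$ has the right flavor but runs into the anisotropy of $A$: the operator $-\mathrm{div}(|y|^a A\nabla\,\cdot\,)$ does \emph{not} decouple into radial plus angular parts on $\mathbb{S}^{d-1}$ when $A$ is a general constant matrix, because $A$ mixes $e_r$ with the tangential directions (and in particular the off-diagonal block $A_2$ couples the $x$- and $y$-variables). You would first need a linear change of variables to isotropize $A$, but for $d>n$ this tilts the weight to something like $|(A^{1/2}\sigma)_y|^a$ on the sphere, a degenerate weight vanishing on a subsphere, and turns the hole into a tilted cylinder; the resulting spherical spectral problem is not the one encoded in $\mu_1$. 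The paper instead first uses translation invariance in $x$ (Proposition \ref{P:diffquot}) to show $u$ is a polynomial in $x$ of degree $\le\lfloor\gamma\rfloor$, then reduces to the $d=n$ case, where $A^{-1/2}$ acts on the whole of $\R^n$ and the weight $|A^{1/2}\sigma|^a$ on $\mathbb{S}^{n-1}$ is positive and smooth — that is precisely what makes Lemma \ref{L:spectral} clean. Also, the identification of the low modes as ``constant and linear-in-$x$'' is not quite correct: for $A_2\neq0$ the linear solutions have a nontrivial $y$-part, namely $u^*(x,y)=\alpha\cdot x - (A_3^{-1}A_2^\top\alpha)\cdot y$; your spherical expansion would have to account for this.

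For $\varepsilon>0$ your proposal is circular: Theorem \ref{T:0:1:alpha:eps} is proved in Section 6 by a blow-up/compactness argument whose final step (``Case 3'') invokes precisely Theorem \ref{L:Liouville}. Hence the uniform H\"older estimate you want to use for precompactness of the blow-downs is not available at this stage of the logical chain, and the Almgren monotonicity you sketch as an alternative — and acknowledge you have not proved — would be a separate substantial result in perforated domains. The paper avoids all of this: once reduced to $d=n$, the Neumann condition on $\partial\Sigma^A_\varepsilon$ enters the Fourier-mode ODEs on $[\varepsilon,\infty)$ directly; testing with $f\in C^\infty_c([\varepsilon,\infty))$, $f(\varepsilon)=1$, gives $c^-_{k,j}\gamma_k^-\,\varepsilon^{a+n-2+\gamma_k^-}=0$, which kills the singular branch for $k\ge1$, and the growth bound kills the $c^+$ branch. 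This is a purely ODE-level argument, no blow-down and no monotonicity formula.
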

The above result is derived by using spherical coordinates with respect to the degenerate variable $y$ and expressing the solution through a Fourier-type decomposition. This approach requires the anisotropy of the coefficients to align with the geometry of the hole, as radial-in-$y$ (axial) symmetry must emerge after a linear change of coordinates.

Let us elaborate on the regularity results in connection with the Liouville theorem discussed above. The exponent $\alpha_*$ is a threshold for local regularity. It represents the lower bound of the possible homogeneity degrees $\gamma_1^+$ that one may encounter at the microscopic scale of the blow-up. Then, we would like to highlight the unexpected similarity between $\alpha_*$ and the optimal H\"older continuity exponent of solutions to equations with bounded measurable (possibly discontinuous) coefficients, see \cite{PicSpa72}. In the present case, the (restricted-to-$\Sigma_0$) ellipticity ratio $\lambda_*/\Lambda_*$ also plays a role in the expression for $\alpha_*$, reducing its value compared to the isotropic case $A=\mathbb I$. It remains an open question whether this interference is intrinsic or not, as we currently lack examples of solutions with such reduced regularity.

\smallskip

Let us now comment on the possible higher regularity of solutions.

\begin{remark}\label{rem:alphastar}
    The exponent $\alpha_*$ is always strictly positive. In general, given an integer $k\geq1$, $\alpha_*>k$ if and only if $\mu_*>k(k+a+n-2)$. This implies in particular that $\alpha_*>1$ if and only if $a<\mu_*+1-n\leq0$, since $0<\mu_*\leq n-1$.
\end{remark}

As $a$ becomes more and more negative, depending also on an increasing codimension $n$, $\alpha_*$ can be made larger than any integer $k\geq2$. Therefore, $C^{k,\alpha}$ regularity of solutions is expected for any $k\geq2$, under suitable assumptions on the data and within certain ranges of $(a,n)$. 

Unfortunately, we currently lack a general strategy for higher order Schauder estimates, which remains an open problem. On one hand, the regularity estimates obtained in Theorems \ref{T:0:alpha} and \ref{T:1:alpha} cannot be directly iterated on derivatives, since the operator is translation invariant only in the tangential directions to $\Sigma_0$ (the $x$ variables). This implies that the operator commutes only with the derivatives in $x\in\R^{d-n}$ and not in $y\in\R^n$. However, if one knows additionally that the solution is axially symmetric with respect to $\Sigma_0$ (radial-in-$y$), i.e. $u(x,y)=u(x,|y|)$, the iteration procedure works and one can prove smoothness at least in the isotropic homogeneous case, see \cite[Theorem 2.5]{CorFioVit25}. 

On the other hand, one could attempt to construct a regularization-approximation scheme for any order $k$, by smoothing the data, perforating the domain and finally shrinking at the characteristic manifold. However, this strategy also fails, as the stability of the $C^{2,\alpha}$ estimate with respect to $\varepsilon$-perforation does not hold, see Remark \ref{R:C2:fails}.

\smallskip

%\rred{anche qui toglierei la subsection}

%\subsubsection*{Curved characteristic thin manifolds}
As a corollary of our main theorems, we can deal with the case of curved regular characteristic thin manifolds. Let $2\le n<d$ and consider a $(d-n)$-dimensional $C^{1,\alpha}$ manifold $\Gamma$ locally embedded in $\R^d$ with $\alpha\in[0,1)$. Then Corollaries \ref{C:0:alpha:curve} and \ref{C:1:alpha:curve} provide the extension of the main Theorems \ref{T:0:alpha} and \ref{T:1:alpha} to weak solutions (see Definition \ref{D:weak:solution:curve}) to \begin{equation}\label{eq:weak:solution:curve}
    -\dive(\delta^a A \nabla u)= \delta^a f+\dive(\delta^aF), \quad \text{in }B_1,
\end{equation}
where the weight $\delta$ behaves as a particular distance function to $\Gamma$, chosen in accordance to the local parametrization of the thin manifold. See Definition \ref{D:admissible:weight} for detailed assumptions on the \emph{defining function} $\delta$.

\subsection*{Some further motivations and applications}
In the final part of this introduction, we aim to elaborate further on some noteworthy motivations and significant applications of our theory.

\subsubsection*{Harmonic maps with prescribed singularities in general relativity}
The study of axially symmetric stationary multi-black-hole configurations and the force between co-axially rotating black holes involves an analysis on the boundary regularity of the reduced singular harmonic maps, see \cite{Wei90,Wei92,LiTia91,LiTia92,LiTia93}. In \cite{Ngu11}, this analysis is carried out by considering those harmonic maps as solutions to some homogeneous divergence systems of partial differential equations with singular coefficients. Then, the model single equation of the system is given by
$$-\mathrm{div}(\omega(z)\nabla u) =0 \qquad\mathrm{in} \ B_1\setminus\Sigma_0,$$
where $\Sigma_0$ is a $(d-n)$-dimensional submanifold with $n\geq 2$, $\omega$ is a weight that satisfies $$\frac{1}{C} \mathrm{dist}(z,\Sigma_0)^{a} \leq \omega(z) \leq C\mathrm{dist}(z,\Sigma_0)^{a}$$ for some negative power $a<0$ and some constant $C>0$. It is worth noting here that the weight's singularity, caused by the negative exponent $a$, does not confine us to a particular capacitary range. Instead, all three ranges may be crossed since $a+n$ might be any real number.

\subsubsection*{The Dirichlet problem and harmonic measure on lower dimensional boundaries}
In a series of works, see \cite{DaiFenMay23,DavFenMay19,DavFenMay21,DavMay22} and many references therein, David-Feneuil-Mayboroda and collaborators start an elliptic theory on weighted non uniformly elliptic operators which naturally catch the capacity of very thin boundaries. The peculiar operator considered in these works has a weight behaving as
$\omega(z)=\mathrm{dist}(z,\Sigma_0)^{1-n}$ where $\Sigma_0$ is a $(d-n)$-dimensional manifold, i.e. the power of the weight belongs to the mid-range since $a+n=1$. Notice that with this choice of the exponent and when $n=1$ one recovers the classic Laplacian. The main target of the works quoted above is the study of the Dirichlet problem on the lower dimensional boundary, the associated harmonic measure, and its reciprocal absolute continuity with the $(d-n)$-dimensional Hausdorff measure \cite{DavFenMay19}, in the spirit of Dahlberg \cite{Dah77}. Although in general the interest in these works relies on rough boundaries, i.e. Lipschitz or less, if the manifold is more regular, $C^1$ or more, one may infer almost Lipschitz type regularity for the solutions of the inhomogeneous Dirichlet problem \cite{Fio24}, or even the sharp Lipschitz regularity for the homogeneous one when $n\geq4$, see \cite[Corollary 5.2 and Remark 5.3]{CorFioVit25}.

\subsubsection*{Critical points for Caffarelli-Kohn-Nirenberg inequalities}
In the particular case $n=d$, semilinear equations involving degenerate weights appear in connection with Caffarelli-Kohn-Nirenberg inequalities \cite{CafKohNir84}, i.e. for certain values of $\alpha,\beta,p$ equations like
\begin{equation*}
-\mathrm{div}(|z|^{-2\alpha}\nabla u)=|z|^{-p\beta}|u|^{p-2}u.
\end{equation*}
Here $-2\alpha+n>2$, and so the weight is superdegenerate and solutions naturally satisfy a homogeneous conormal boundary condition at $\Sigma_0=\{0\}$.
Even if the minimizers in the space $\R^d$ are classified and they are bubbles, the study of the above degenerate equation allows us to imply regularity properties for sign-changing solutions too and for minimizers in domains attaining the singularity at the boundary of a domain \cite{CheLin10}. %We will study these problems in a forthcoming paper \cite{CorFioVit25}.

\subsubsection*{Higher codimensional extensions of fractional Laplacians} 
Motivated by \cite{CafSil07}, one might wonder whether it is possible to extend functions $u$ defined in $\R^{d-n}$ having a well-defined $s$-fractional Laplacian ($s\in(0,1)$) to the whole of $\R^d$, introducing $n$ additional variables.
Under the assumption $d-n>2s$, which allows for the definition of suitable energy spaces, in \cite{CorFioVit25} we show that such an extension is given by the convolution $u * P$ with the Poisson-type kernel 
\[P(x,y)=P(|x|,|y|)=\frac{\Gamma(\frac{d-n + 2s}{2})}{\pi^{\frac{d-n}{2}}\Gamma(s)}\frac{|y|^{2s}}{(|x|^2+|y|^2)^{\frac{d-n+2s}{2}}},\qquad (x,y)\in\R^{d-n}\times\R^n.
\]
The extension is a radial-in-$y$ solution (axisymmetric with respect to $\Sigma_0=\R^{d-n}$) to 
$$\begin{cases}
-\mathrm{div}(|y|^a\nabla u)=0 &\mathrm{in \ }\R^d\setminus\Sigma_0\\
\displaystyle-\lim_{|y|\to0}|y|^{a+n-1}\nabla u\cdot\frac{y}{|y|}=d_{a,n}(-\Delta)^su(x,0) &\mathrm{on \ } \Sigma_0,
\end{cases}$$
where $a+n=2-2s\in(0,2)$, and $d_{a,n}$ is an explicit positive constant. As can be seen through the change of variables $|y| = r$, we exactly recover the classical Caffarelli-Silvestre extension.

\subsubsection*{Very thin free boundary problems}
Due to the natural property of these degenerate operators to capture the capacity of very thin sets in the mid-range $a+n\in(0,2)$, it is possible to formulate obstacle-type free boundary problems with an obstacle of arbitrary dimension $d-n \in \{0,\dots , d-2 \}$. In other words, it is possible to minimize the energy functional
\[
\int_{B_1} |y|^a |\nabla u|^2 \, dz,
\]
where $u$ satisfies a trace condition $u = g$ on $\partial B_1$ and the obstacle condition $u \geq \psi$ on the $(d-n)$-dimensional set $\Sigma_0$. We would like to cite \cite{FerJha21}, where the authors propose an alternative definition of a \emph{very thin obstacle problem} (obstacles of dimension $d-2$), related to the Caffarelli-Silvestre extension operator in the regime $a\in (-1,0)$.

\subsection*{Structure of the paper}
In Section \ref{sec:2} we introduce the weighted functional framework for the problem, and we provide many functional inequalities. In Section \ref{sec:3}, we extend the results of the previous section to a setting for perforated domains. In Section \ref{sec:4} we introduce the notion of solutions, and we provide some approximation lemmas for both the regularization procedures. Moreover, we provide stable $L^\infty$ bounds of solutions in perforated domains. Section \ref{sec:5} is dedicated to the proof of the main Liouville Theorem \ref{L:Liouville}. Section \ref{sec:6} is the main core of the paper: it contains the proofs of the main $C^{0,\alpha}$ and $C^{1,\alpha}$ stable estimates in perforated domains, i.e. Theorem \ref{T:0:1:alpha:eps}. In Section \ref{sec:7} we prove the a priori estimates of Proposition \ref{P:a:priori}, which imply stable estimates with respect to standard smoothing of data. This leads to the proof of the main Theorems \ref{T:0:alpha} and \ref{T:1:alpha}. Then, in Section \ref{sec:8} we generalize our results to the case of curved characteristic thin manifolds. In Appendix \ref{sec:A} we collect many important technical results concerning the geometry of the perforated domains.

\subsection*{Notation}
We establish the notation that will be used throughout the paper.

\begin{itemize}[left=0pt]

    \item[$\cdot$] Let $2\leq n\leq d$ be two integers and consider the coordinates $z=(x,y)\in\R^{d-n}\times\R^n$.

    \item [$\cdot$] We denote by $\{e_{z_\ell}\}$, where $\ell=1,\dots,d$, the canonical basis of $\R^d$. To distinguish between the variables $x$ and $y$, we will often denote the basis as $\{e_{x_j}\,, e_{y_i}\}$, where $j=1,\dots,d-n$ and $i=1,\dots,n$.

    \item [$\cdot$] For a vector $G = (G_x, G_y) \in \R^{d-n} \times \R^n$, we write $G = G_1 + G_2$, where $G_1 = (G_x, 0)$ and $G_2 = (0, G_y)$.
  \item [$\cdot$] We express $d$-dimensional symmetric uniformly elliptic matrix $A:\Omega \to \R^{d,d}$ in block form as 
    \begin{equation}\label{Blocks}
    A = \begin{pmatrix} 
    A_1 & A_2 \\
    A_2^\top & A_3
    \end{pmatrix}\,,
    \end{equation}
    where $A_1:\Omega \to \R^{d-n,d-n}$, $A_2:\Omega \to \R^{d-n,n}$ and $A_3:\Omega \to \R^{n,n}$.
    
    \item [$\cdot$] For $m \in \N$ we denote by $B_R^{m}(\zeta)$ the open $m$-dimensional ball of radius $R>0$ centered at $\zeta \in \R^m$. To ease the notation, we simply write $B_R^m= B^m_R(0)$ when $\zeta = 0$, and $B_R(\zeta)= B^d_R(\zeta)$, when the dimension $m=d$. In particular, $B_R = B^d_R(0)$.

    \item [$\cdot$] We write $0<\e\ll1$ to denote that there exists a small $\e_0>0$ such that $\e \in (0,\e_0)$.
\item [$\cdot$]  Throughout the paper, any positive constant whose value is not important is denoted by $c$. It may take different values at different places.

\end{itemize}

\section{Functional setting}\label{sec:2}
\subsection{Weighted Sobolev spaces}\label{S:sobolev:spaces}

Given a smooth domain $\Omega\subseteq\R^d$, we define the Sobolev space $H^{1,a}(\Omega)=H^1(\Omega,|y|^adz)$ as the completion of $C^{\infty}(\overline \Omega)$ with respect to the norm
\begin{equation}\label{Hnorm}
\|u \|_{H^{1,a}(\Omega)}   = \Big(\int_{\Omega}|y|^a \big(|u|^2 + |\nabla u|^2 \big) \, dz\Big)^{\frac12}\,.
\end{equation}
Here, $C^{\infty}(\overline \Omega)$ consists of the restrictions to $\overline\Omega$ of smooth functions in $\R^d$. For our purposes, $\Omega$ will be mostly a ball $B_R$ with radius $R>0$. Therefore, from now on we will focus on the space $H^{1,a}(B_R)$. Moreover, we say that $u \in H^{1, a}_{\rm loc}(\R^d)$ if $u \in H^{1,a}(B_R)$ for every $R>0$.  

Next, we define the homogeneous space $H^{1,a}_0(B_R)$ as the completion of $C^{\infty}_c(B_R)$ with respect to the norm given above. In light of the Poincar\'e inequality (see Proposition \ref{P:Poincare}), $H^{1,a}_0(B_R)$ can be equivalently defined as the completion of $C^{\infty}_c(B_R)$ with respect to the Dirichlet seminorm
\[
[u]_{H^{1,a}(B_R)}  = \|\nabla u\|_{L^{2,a}(B_R)} =\Big( \int_{B_R}|y|^a |\nabla u|^2 \, dz \Big)^{\frac12}\,.
\]

The weight term $|y|^a$ is locally integrable at $\Sigma_0$ whenever $a+n>0$. Conversely, when $a+n\leq0$ the weight is \emph{supersingular}; that is, it is not integrable at $\Sigma_0$. As we will see in Proposition \ref{P:densitysupersingular}, this lack of integrability forces elements in the Sobolev space to vanish on the characteristic manifold in order to maintain finite energy. 

\begin{remark}\label{R:muck}
We recall that a weight $\omega \in L^1_{\rm loc}(\R^d)$, $\omega \geq 0$ is said to belong to the $A_2$-Muckenhoupt class if and only if 
\[
M_\omega:=\sup_{z_0 \in \R^d,\, R>0}\Big(\frac{1}{|B_R(z_0)|} \int_{B_R(z_0)} \omega\,dz \Big) \Big(\frac{1}{|B_R(z_0)|} \int_{B_R(z_0)}\omega^{-1} \,dz \Big) < \infty\,.
\]
It is straightforward to verify that the weight $\omega = |y|^a$ belongs to the $A_2$-Muckenhoupt class if and only if $0<a+n<2n$. 
\end{remark}

\subsection{Hardy-Poincar\'e and Poincar\'e inequalities}

The following Hardy-Poincar\'e-type inequality is well known (see, for instance, \cite[\S 2.1.7, Corollary 2]{Maz11}). The proof of this result is postponed, as it will be included in the proof of the more general Proposition \ref{L:Hardy_eps}.

\begin{Proposition}[Hardy-Poincar\'e inequality]\label{P:Hardy}

Let $R>0$ and $\delta \in \R \setminus \{-n\}$. Then 
\begin{enumerate}
\item[$i)$] if $\delta +n > 0$, then for every $u \in C^\infty(\overline {B_R})$ it holds that
\[
\Big(\frac{\delta+n}{2}\Big)^2 \int_{B_R}|y|^\delta|u|^2 dz \leq \frac{ \delta +n}{2}\int_{\partial B_R}|y|^{\delta+1}|u|^2 ds  + \int_{B_R}|y|^{\delta+2}|\nabla u |^2 dz\,;
\]
\item[$ii)$] if $\delta + n < 0$, then for every $u \in C^\infty_c(\overline{B_R}\setminus \Sigma_0)$ it holds that
\[
\Big(\frac{\delta+n }{2}\Big)^2 \int_{B_R}|y|^{\delta}|u|^2 dz \leq \int_{B_R}|y|^{\delta+2}| \nabla u |^2 dz\,.
\]

\end{enumerate}
\end{Proposition}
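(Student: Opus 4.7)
The plan is to derive a master integral identity by rewriting the weight as a divergence and integrating by parts on $B_R$, then to specialise the sign of $\delta+n$. The starting observation is $\mathrm{div}\bigl(0,|y|^\delta y\bigr)=(\delta+n)|y|^\delta$ on $\R^d\setminus\Sigma_0$, where the vector field is written in the splitting $\R^{d-n}\times\R^n$. Multiplying by $u^2$, integrating over $B_R$, and applying the divergence theorem (legitimate in (i) since everything is smooth up to $\partial B_R$, and in (ii) because the support of $u$ stays away from $\Sigma_0$), one computes $V\cdot\nu=|y|^{\delta+2}/R$ on $\partial B_R$ using the outward normal $\nu=z/R$, and $V\cdot\nabla(u^2)=2u\,|y|^\delta\,y\cdot\nabla_y u$ in the bulk. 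This produces the master identity
\[
(\delta+n)\int_{B_R}|y|^\delta u^2\,dz = \int_{\partial B_R}\frac{|y|^{\delta+2}}{R}\,u^2\,ds - 2\int_{B_R}|y|^\delta\,u\,(y\cdot\nabla_y u)\,dz,
\]
which holds in both ranges of $\delta+n$.

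For case (i), with $\delta+n>0$, I would bound the cross-term via $|y\cdot\nabla_y u|\le |y|\,|\nabla u|$ and Young's inequality with balancing parameter $\alpha=(\delta+n)/2$, so that the $|y|^\delta u^2$ contribution produced can be absorbed into the left-hand side. The boundary term is handled by the pointwise estimate $|y|^{\delta+2}/R\le |y|^{\delta+1}$ on $\partial B_R$ (since $|y|\le |z|=R$ there). Multiplying the resulting inequality by $(\delta+n)/2$ produces the squared prefactor and yields the claim.

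For case (ii), with $\delta+n<0$, after transposing the master identity so that the cross-term sits alone on one side, both remaining pieces are favourable in sign: $-(\delta+n)\int_{B_R}|y|^\delta u^2$ is positive and the boundary term $\int_{\partial B_R}|y|^{\delta+2}/R\,u^2\,ds$ is non-negative, so the latter can be discarded. Applying Cauchy--Schwarz to the cross-term and squaring then gives the clean inequality without any boundary data.

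The only non-routine point is the rigour of the integration by parts in (ii), where $|y|^\delta$ is not locally integrable across $\Sigma_0$; this is precisely why one assumes $u\in C^\infty_c(\overline{B_R}\setminus\Sigma_0)$, so that the singular set is removed from the support of $u$ and all boundary traces at $\Sigma_0$ vanish identically. The same idea will drive the perforated version in Proposition \ref{L:Hardy_eps}, where one repeats the computation on $B_R\setminus\Sigma_\varepsilon^A$ and must track an extra contribution along the inner boundary $\partial\Sigma_\varepsilon^A$; this is the step where the argument requires the most care in its general form.
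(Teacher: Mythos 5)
Your proof is correct and follows essentially the same route as the paper: the authors derive the same master identity by integrating by parts (they do it once for all in the perforated version, Proposition~\ref{L:Hardy_eps}, which specialises to this statement at $\varepsilon=0$), then apply Young's inequality in case $i)$ and Cauchy--Schwarz in case $ii)$. The one small difference is the choice of ``generating'' function: you take the vector field $V=(0,|y|^\delta y)$ and use $\dive V=(\delta+n)|y|^\delta$ directly, whereas the paper starts from $\Delta|y|^{\delta+2}=(\delta+2)(\delta+n)|y|^\delta$ and must dispose of the degenerate value $\delta=-2$ by a limiting argument; your choice avoids that distinction altogether, but the identities coincide once the factor $(\delta+2)$ cancels.
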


As a corollary, we also recover the Poincar\'e inequality. The proof of this result is similarly postponed, as it will be included in the proof of the more general Proposition \ref{P:Poincare_eps}.
 
\begin{Proposition}[Poincar\'e inequality]\label{P:Poincare}
Let $R>0$ and $a\in \R$. Then
\begin{enumerate}
\item[$i)$] if $a+ n>0$ and $u\in C^\infty_c(B_R)$, it holds that
\[
    \Big(\frac{a+n}{2R}\Big)^2\int_{B_R}|y|^a|u|^2\, dz\leq\int_{B_R}|y|^a|\nabla u|^2dz\,,
\]
\item[$ii)$] if $a+ n <2$ and $u\in C^\infty_c(\overline{B_R}\setminus \Sigma_0)$, it holds that
\[
    \Big(\frac{a+n-2}{2R}\Big)^2\int_{B_R}|y|^a|u|^2\, dz \leq\int_{B_R}|y|^a|\nabla u|^2\, dz\,.
\]
\end{enumerate}
\end{Proposition}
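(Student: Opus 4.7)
The plan is to derive both inequalities directly from the Hardy-Poincar\'e estimate (Proposition \ref{P:Hardy}), using the elementary bound $|y|\le R$ on $B_R$ to trade adjacent powers of the weight. In each case, the choice of the Hardy-Poincar\'e exponent $\delta$ is dictated by the requirement that the gradient term on the right-hand side carry the weight $|y|^a$.

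For part $(i)$, I would apply Proposition \ref{P:Hardy}$(i)$ with the choice $\delta=a$, so that the assumption $a+n>0$ is exactly $\delta+n>0$. Since any $u\in C_c^\infty(B_R)$ can be viewed as an element of $C^\infty(\overline{B_R})$ that vanishes on $\partial B_R$, the surface integral in Hardy-Poincar\'e drops out, and we are left with
\[
\Big(\frac{a+n}{2}\Big)^2\int_{B_R}|y|^a u^2\, dz \le \int_{B_R}|y|^{a+2}|\nabla u|^2\, dz.
\]
Combining with the pointwise estimate $|y|^{a+2}\le R^2|y|^a$ on $B_R$ and dividing by $R^2$ yields the claimed inequality with the sharp constant $((a+n)/(2R))^2$.

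For part $(ii)$, the hypothesis $a+n<2$ is precisely the condition $\delta+n<0$ with $\delta=a-2$, so I would apply Proposition \ref{P:Hardy}$(ii)$ with this value of $\delta$. The admissible class $C_c^\infty(\overline{B_R}\setminus\Sigma_0)$ matches the one required there, and the estimate reads
\[
\Big(\frac{a+n-2}{2}\Big)^2\int_{B_R}|y|^{a-2} u^2\, dz \le \int_{B_R}|y|^a|\nabla u|^2\, dz.
\]
Using $|y|^{a-2}=|y|^a/|y|^2\ge R^{-2}|y|^a$ on $B_R$ to bound the left-hand side from below then produces the desired constant $((a+n-2)/(2R))^2$.

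There is essentially no obstacle here: both parts are one-step reductions to Proposition \ref{P:Hardy}, paired with the trivial comparison of $|y|^{a\pm 2}$ and $|y|^a$ on the ball. The only detail to verify is that the function classes line up correctly, namely that test functions in $C_c^\infty(B_R)$ annihilate the boundary term in case $(i)$ (which fails for generic elements of $C^\infty(\overline{B_R})$), and that the support condition away from $\Sigma_0$ in case $(ii)$ is precisely the one demanded by the supersingular case of Hardy-Poincar\'e.
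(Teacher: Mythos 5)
Your proof is correct and matches the paper's own argument: the authors derive Proposition \ref{P:Poincare} as the $\eps=0$ case of Proposition \ref{P:Poincare_eps}, which is obtained exactly by applying the Hardy--Poincar\'e inequality with $\delta=a$ for part $(i)$ (the boundary term vanishing by compact support) and $\delta=a-2$ for part $(ii)$, then using $|y|\le R$ to exchange $|y|^{a\pm 2}$ for $R^{\pm 2}|y|^a$.
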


\subsection{Weighted capacity}

In this section, we examine how the properties of the weighted Sobolev space $ H^{1,a}(B_R) $ depend on the natural weighted capacity of the characteristic manifold.

Given $R>0$ and a bounded domain $\Omega$ such that $\Sigma_0 \cap B_R \subset \Omega$, we define the local weighted capacity of $\Sigma_0$ in the box $\Omega$ as
\[
\begin{aligned}
\capa_{a}(\Sigma_0\cap B_R; \Omega) = \inf\Big\{ \, \int_{\Omega}|y|^a |\nabla u|^2 dz \ \mid \ u\in C_c^\infty(\Omega),\ u = 1 \text{ on }\Sigma_0\cap B_R \Big\}\,.
\end{aligned}
\]
Thanks to Proposition \ref{P:Poincare}, one could equivalently consider, in the definition above, the minimization of the full norm in \eqref{Hnorm}.
As next Lemma shows, we can identify three distinct capacitary ranges: when the weighted capacity of the characteristic manifold is infinite, i.e. $a+n\leq0$, we call the weight $|y|^a$ \emph{supersingular}, as we said before; when the weighted capacity is zero, i.e. $a+n\geq2$, we call the weight $|y|^a$ \emph{superdegenerate}; finally, when the weighted capacity is locally finite and positive, i.e. $0<a+n<2$, we call the weight $|y|^a$ \emph{mid-range}. Notice that, since $n\geq 2$, the $A_2$-Muckenhoupt range $0<a+n<2n$ intersects both the mid-range and the superdegenerate intervals. We also note that, within the mid-range interval, the capacity depends significantly on both the radius $ R $ and the diameter of the box $\Omega$.

\begin{Lemma}[Capacitary ranges]\label{L:capa}
Let $R >0$, and let $\Omega$ be a open set such that $\Sigma_0 \cap B_R \subset \Omega$.
\begin{enumerate}
\item[i)] if $a + n \leq 0$, then 
\[
\capa_{a}(\Sigma_0\cap B_R; \Omega) = \infty \,;
\]
\item[ii)] if $a+n \geq 2$,  then 
\[
\capa_{a}(\Sigma_0 \cap B_R; \Omega) = 0\,;
\]
\item[iii)] if $0 < a+n < 2$, then there exists a constant $c >0$ depending only on $a$, $d$, $n$ such that
\[
0 < c\ {\rm diam}(\Omega)^{a+n-2}R^{d-n} \leq  \capa_{a}(\Sigma_0 \cap B_R; \Omega) < \infty\,.
\]
\end{enumerate}
\end{Lemma}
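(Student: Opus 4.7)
The plan is to handle the three capacitary ranges separately, using different strategies tailored to each regime.

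\textbf{Part (i), $a+n\leq 0$.} The strategy is to slice in the tangential direction $x$. Given any admissible $u\in C_c^\infty(\Omega)$ with $u\equiv 1$ on $\Sigma_0\cap B_R$, uniform continuity of $u$ forces the slice $g_y(x):=u(x,y)$ to satisfy $g_y\geq 1/2$ on $B_{R/2}^{d-n}$ for every sufficiently small $|y|$, while $g_y$ is supported in a fixed bounded set $K\subset\R^{d-n}$ (the $x$-projection of $\mathrm{supp}\,u$). The classical Poincar\'e inequality in $K$ then yields $\int_{\R^{d-n}}|\nabla_x g_y|^2\,dx\geq c>0$ uniformly in such $y$. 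Integrating against the weight over a small set $\{|y|<\delta\}$,
\[
\int_\Omega|y|^a|\nabla u|^2\,dz\;\geq\; c\int_{|y|<\delta}|y|^a\,dy \;=\; c\,\omega_{n-1}\int_0^\delta r^{a+n-1}\,dr,
\]
and the right-hand side diverges exactly when $a+n\leq 0$. (Note this argument implicitly uses $d-n\geq 1$ so as to support a nontrivial tangential Poincar\'e inequality.)

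\textbf{Part (ii), $a+n\geq 2$.} The plan is to exhibit explicit admissible families with vanishing energy. Set $u_\varepsilon(x,y):=\psi(x)\phi_\varepsilon(|y|)$, where $\psi\in C_c^\infty(\R^{d-n})$ is a fixed cutoff with $\psi\equiv 1$ on $B_R^{d-n}$ and thin enough $y$-support to fit inside $\Omega$ (compactness of $\Sigma_0\cap\overline{B_R}\subset\Omega$ makes this possible), and $\phi_\varepsilon\in C_c^\infty([0,\infty))$ satisfies $\phi_\varepsilon(0)=1$ with shrinking support: a linear cutoff on $[\varepsilon,2\varepsilon]$ suffices when $a+n>2$, whereas the critical case $a+n=2$ requires a logarithmic cutoff $\phi_\varepsilon(r)=\log(R/r)/\log(R/\varepsilon)$ on $[\varepsilon,R]$ glued smoothly to $1$ on $[0,\varepsilon]$. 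Splitting $|\nabla u_\varepsilon|^2=|\nabla\psi|^2\phi_\varepsilon^2+\psi^2|\phi_\varepsilon'|^2$ and integrating, both resulting terms tend to $0$: the first because $|y|^a$ is locally integrable (as $a+n>0$) and $\phi_\varepsilon$ has shrinking support, the second by direct estimation of $\int_0^R r^{a+n-1}|\phi_\varepsilon'|^2\,dr$ (polynomial decay in the strict superdegenerate range, logarithmic decay at criticality).

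\textbf{Part (iii), $0<a+n<2$.} The upper bound $\capa_a(\Sigma_0\cap B_R;\Omega)<\infty$ is immediate from the product test function $u=\psi(x)\phi(|y|)$ with any fixed smooth radial $\phi$ satisfying $\phi(0)=1$: local integrability of $|y|^a$ keeps the energy finite. For the quantitative lower bound, the plan is a one-dimensional Cauchy-Schwarz along rays emanating from $\Sigma_0$. For admissible $u$ and each $(x,0)\in\Sigma_0\cap B_R$ and $\omega\in S^{n-1}$, set $g(r):=u(x,r\omega)$: then $g(0)=1$ while $g(D)=0$ for some $D\leq \mathrm{diam}(\Omega)$. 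Cauchy-Schwarz, exploiting that $1-a-n>-1$ so that the weight integrates near $0$, yields
\[
1=|g(0)-g(D)|^2\leq \Bigl(\int_0^D r^{1-a-n}\,dr\Bigr)\Bigl(\int_0^D r^{a+n-1}|g'(r)|^2\,dr\Bigr)\leq \frac{\mathrm{diam}(\Omega)^{2-a-n}}{2-a-n}\int_0^D r^{a+n-1}|g'|^2\,dr.
\]
Integrating over $\omega\in S^{n-1}$ and $x\in B_R^{d-n}$, combined with the pointwise bound $|\nabla u|^2\geq|\partial_r u|^2$ for the radial derivative in $y$, delivers the claimed estimate $\capa_a(\Sigma_0\cap B_R;\Omega)\geq c\,\mathrm{diam}(\Omega)^{a+n-2}R^{d-n}$.

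The main obstacle I foresee is the critical threshold $a+n=2$ in part (ii): the naive linear cutoff fails to produce vanishing energy and one must resort to a logarithmic construction (smoothed so as to remain in $C_c^\infty$), mirroring the classical zero $W^{1,n}$-capacity of a point in $\R^n$. A secondary subtlety is that the construction of the cutoff $\psi$ in parts (ii)-(iii) requires $\Omega$ to contain $\Sigma_0\cap B_R$ with positive distance to $\partial\Omega$, which is automatic only when $\Sigma_0\cap \overline{B_R}$ is compact and contained in the open set $\Omega$, as assumed.
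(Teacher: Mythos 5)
Your parts (ii) and (iii) are essentially the paper's own arguments. Part (iii) is line-for-line the paper's ray-wise Cauchy--Schwarz estimate (the paper uses $\rho = \mathrm{diam}(\Omega)$ and integrates over $\mathbb{S}^{n-1}$ and $B_R^{d-n}$ exactly as you do). Part (ii) uses the same "shrink a cutoff in $y$" idea; the only difference is that the paper employs a single logarithmic cutoff $\eta_h(y) = \eta(-\log|y|/h)$ that handles $a+n>2$ and $a+n=2$ simultaneously, whereas you split into a linear truncation for $a+n>2$ and a logarithmic one for $a+n=2$. Both work; the paper's choice is marginally cleaner.

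Part (i) is where you take a genuinely different route, and it is a clean one. The paper argues qualitatively: non-integrability of $|y|^a$ near $\Sigma_0$ forces $\nabla u=0$ on $\Sigma_0\cap\Omega$, hence $u(\cdot,0)$ is locally constant and $u\equiv 1$ propagates along the connected component of $\Sigma_0\cap\Omega$ containing $\Sigma_0\cap B_R$, contradicting compact support. You instead slice in the tangential variable, apply a Poincar\'e inequality in $x$ on a fixed bounded set containing the $x$-projection of $\mathrm{supp}\,u$ to get a uniform lower bound $\int_{\R^{d-n}}|\nabla_x u(\cdot,y)|^2\,dx\ge c>0$ for $|y|$ small, and conclude directly from $\int_{\{|y|<\delta\}}|y|^a\,dy=\infty$. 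This is more quantitative and does not require tracking a connected component to $\partial\Omega$, which is a real simplification.

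You correctly flag that your slicing argument needs $d-n\ge 1$. Worth knowing: the paper's proof of (i) has the same implicit restriction. Its final step, "$u\equiv 1$ on $\Sigma_0\cap\Omega$ therefore $u$ cannot be compactly supported," is vacuous when $d=n$, since $\Sigma_0\cap\Omega=\{0\}$ is a single point and $u(0)=1$ is compatible with compact support. In fact, when $d=n$ one can take a radial test function $u(y)=\eta(|y|)$ with $\eta(0)=1$ and $\eta$ flat to infinite order at $0$; then $|\nabla u|$ vanishes faster than any power of $|y|$ at the origin and $\int_\Omega|y|^a|\nabla u|^2\,dz<\infty$ for every $a$, so the claimed infinite capacity fails in that case. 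The restriction $d>n$ is therefore not a shortcoming of your method but a constraint inherent to the statement, and your acknowledging it is the right instinct. It would have been worth noting explicitly that the same restriction is needed for the upper-bound mechanism in the paper's argument as well.
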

\begin{proof}
The proof of $i)$ is straightforward since, when $a + n \leq 0$, it holds that 
\[
\int_{\Omega}|y|^a |\nabla u|^2 dz = \infty \quad \text{ for all } u\in C_c^\infty(\Omega),\ u = 1 \text{ on }\Sigma_0\cap B_R\,.
\]
Indeed, assume by contradiction that there exists $u \in C^\infty_c(\Omega)$ such that $u = 1$ on $\Sigma_0 \cap B_R$ and $\int_{\Omega}|y|^a|\nabla u|dz < \infty$. Since $a+n \leq 0$ implies that $|y|^a \not \in L^1_{\rm loc}(\Omega)$, we infer that $\nabla u =0$ on $\Sigma_0 \cap \Omega$. Therefore, $u = 1$ on $\Sigma_0 \cap \Omega$, which means that $u$ can not be compactly supported on $\Omega$, leading to a contradiction.  

\smallskip

Let now prove $ii)$. Call $\Omega_0 = \{x \in \R^{d-n} \mid (x, 0) \in \Omega\}$, and take $\varphi \in C^\infty_c(\Omega_0)$ such that $\varphi(x) = 1$ for every $x \in B_R^{d-n} \subset \Omega_0$. Moreover, let $\eta \in C^\infty(\R)$ be such that $0\leq \eta\leq 1$, $\eta(t) = 1$ for $t \in (2, \infty)$ and $\eta(t) = 0$ for $t \in (-\infty,1)$. For $h \in \N$, we introduce
\begin{equation}\label{Psih}
\Psi_h (z) = \varphi(x)\eta_h(y) \qquad \text{ where } \qquad \eta_h(y) = \eta\Big( \frac{-\log|y|}{h}\Big)\,.
\end{equation}
One can readily see that, for $h$ sufficiently large, $\Psi_h \in C^{\infty}_c(\Omega)$, $\Psi_h(x, y) = 1$ if $|x| \leq R$ and $|y| \leq e^{-2h}$ (and in particular $\Psi_h = 1$ on $\Sigma_0 \cap B_R$) and $\Psi_h(x,y) = 0$ if $|y| \geq e^{-h}$. Moreover, 
\[
|\nabla_y \Psi_h (z)| = |\varphi(x)|\Big| \eta'  \Big( \frac{-\log|y|}{h}\Big) \frac{y}{h|y|^2}\Big| \leq \frac{\|\varphi\|_{L^\infty(\R^{d-n})}\|\eta'\|_{L^\infty(\R)}}{h|y|}\rchi_{\{e^{-2h}\leq |y|\leq e^{-h}\}}\,.
\]
Using cylindrical coordinates we obtain
\[
\begin{aligned}
 \int_{\Omega} |y|^a |\nabla \Psi_h|^2 dz = \int_{\Omega_0}|\nabla_x \varphi|^2 dx \int_{\R^n}|y|^a|\eta_h|^2 dy &+  \int_{\Omega_0}|\varphi|^2 dx \int_{\R^n}|y|^a|\nabla \eta_h|^2 dy\\
&\leq c\int_{0}^{e^{-h}}r^{a+n-1}dr + \frac{c}{h^2}\int_{e^{-2h}}^{e^{-h}}r^{a+n-3}dr\,.
\end{aligned}
\]
Thus, since $a+n \geq 2$, it holds that $\|\Psi_h\|_{H^{1,a}(\Omega)} \to 0$ as $h \to \infty$, which proves $ii)$. 

 \smallskip

Finally, we prove $iii)$. Let $u \in C^\infty_c(\Omega)$ be such that $u = 1 $ on $\Sigma_0 \cap B_R$, that is, $u(x, 0) = 1$ whenever $|x| \leq R$. Since $|y|^a \in L^1_{\rm loc}(\Omega)$ it is clear that $\|u\|_{H^{1,a}} < \infty$, and thus $\capa(\Sigma_0 \cap B_R) < \infty$. 

Next, let $\rho = {\rm diam} (\Omega)$, so that $\Omega \subset B_\rho$. Fix $x \in B_R^{d-n} $. By the fundamental theorem of calculus and H\"older's inequality we have that, for every $\sigma \in \mathbb{S}^{n-1}$, it holds
\[
\begin{aligned}
1 = |u(x,\rho \sigma) - 1|^2 = |u(x,\rho \sigma) - u(x, 0)|^2 = \Big|\int_0^\rho \nabla u(x, s\sigma)\cdot \sigma d s \Big|^2 \leq \Big(\int_0^\rho |\nabla u|(x, s\sigma) d s \Big)^2& \\
\leq \int_0^\rho s^{n+a-1}|\nabla u|^2(x, s\sigma) d s \int_0^\rho s^{1-a-n}d s \leq c \rho^{2-a-n}\int_0^\infty s^{a+n-1}|\nabla u|^2(x, s\sigma) d s&\,,
\end{aligned}
\]
where $c >0$ depends only on $a$, $n$. As a consequence we have that
\[
n \omega_n = \int_{\mathbb{S}^{n-1}}d \sigma \leq c\rho^{2-a-n}\int_{\mathbb{S}^{n-1}}\int_0^\infty s^{n+a-1}|\nabla u|^2(x, s\sigma) d sd\sigma = c\rho^{2-a-n}\int_{\R^n}|y|^a|\nabla u|^2 dy\,,
\]
and finally, integrating over $ x \in B^{d-n}_{R}$ we find
\[
n \omega_n|B^{d-n}_{R}|\leq c\rho^{2-a-n}\int_{B^{d-n}_R \times \R^n}|y|^a|\nabla u|^2 dz \leq  c\rho^{2-a-n}\int_{\R^d}|y|^a|\nabla u|^2 dz\,.
\]
The conclusion readily follows. 
\end{proof}

As we have previously remarked, in the supersingular setting $a+n\leq0$, the weight term is not locally integrable. This phenomenon leads to the following result.

\begin{Proposition}[Density of smooth functions in the supersingular setting]\label{P:densitysupersingular}
    Let $a+n\leq0$. The space $H^{1,a}(B_R)$ can be equivalently defined as the completion of $C^{\infty}_c(\overline{B_R}\setminus\Sigma_0)$ with respect to the norm given in \eqref{Hnorm}.
\end{Proposition}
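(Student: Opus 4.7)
The plan is to show that $C^\infty_c(\overline{B_R}\setminus\Sigma_0)$ is dense in $C^\infty(\overline{B_R})\cap H^{1,a}(B_R)$ with respect to the norm \eqref{Hnorm}. The inclusion $C^\infty_c(\overline{B_R}\setminus\Sigma_0)\subset C^\infty(\overline{B_R})$ (via extension by zero across $\Sigma_0$) already identifies the first completion as a closed subspace of $H^{1,a}(B_R)$, so the substance lies in the approximation step: starting from any $u\in C^\infty(\overline{B_R})$ with $\|u\|_{H^{1,a}(B_R)}<\infty$, I would construct a sequence $u_\delta\in C^\infty_c(\overline{B_R}\setminus\Sigma_0)$ with $u_\delta\to u$ in $H^{1,a}(B_R)$.

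The crucial preliminary observation is that, in the supersingular range $a+n\le 0$, any such $u$ must vanish to sufficiently high order on $\Sigma_0\cap\overline{B_R}$. By item $i)$ of Lemma~\ref{L:capa}, $|y|^a$ is not locally integrable near $\Sigma_0$; hence if $\partial_y^\alpha u(x_0,0)\neq 0$ at some point of $\Sigma_0\cap B_R$ for a multi-index $\alpha$ with $a+2|\alpha|+n\le 0$, then, by continuity of $\partial_y^\alpha u$ and a Taylor expansion in $y$, one would have $|y|^a|u|^2\gtrsim |y|^{a+2|\alpha|}$ on a neighborhood of $(x_0,0)$, contradicting $\int_{B_R}|y|^a|u|^2\,dz<\infty$. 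Applying the same reasoning to $\nabla u$, one obtains an integer $K$ strictly greater than $1-(a+n)/2$ such that $|u(x,y)|\le C|y|^K$ throughout $\overline{B_R}$.

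With this in hand, I would take a radial-in-$y$ cutoff $\varphi_\delta\in C^\infty(\R^d)$ with $\varphi_\delta\equiv 0$ on $\{|y|\le\delta\}$, $\varphi_\delta\equiv 1$ on $\{|y|\ge 2\delta\}$, and $|\nabla\varphi_\delta|\le c/\delta$, and set $u_\delta:=u\varphi_\delta\in C^\infty_c(\overline{B_R}\setminus\Sigma_0)$. The tails $\int_{B_R}|y|^a|u|^2(1-\varphi_\delta)^2\,dz$ and $\int_{B_R}|y|^a|\nabla u|^2(1-\varphi_\delta)^2\,dz$ vanish as $\delta\to 0$ by dominated convergence; the only delicate contribution, coming from the Leibniz rule, is
\[
\int_{B_R}|y|^a|u|^2|\nabla\varphi_\delta|^2\,dz \;\le\; \frac{c}{\delta^2}\int_{B_R\cap\{\delta\le |y|\le 2\delta\}}|y|^a|u|^2\,dz \;\le\; C\,\delta^{a+2K+n-2},
\]
where the last step uses $|u|\le C|y|^K$ together with cylindrical coordinates. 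Since $K>1-(a+n)/2$ makes the exponent $a+2K+n-2$ strictly positive, this term also vanishes as $\delta\to 0$. A generic Cauchy sequence $\{u_k\}\subset C^\infty(\overline{B_R})$ representing an arbitrary element of $H^{1,a}(B_R)$ is then handled termwise by choosing $\delta_k\to 0$ sufficiently fast.

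The main obstacle is precisely the first step---extracting pointwise order-of-vanishing information on $\Sigma_0$ out of the mere finiteness of the weighted norm---after which the cutoff truncation is essentially standard for weighted Sobolev spaces.
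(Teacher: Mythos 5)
Your proof is correct in substance but takes a genuinely different route from the paper's. You truncate spatially, setting $u_\delta = u\,\varphi_\delta$ with a cutoff $\varphi_\delta$ vanishing on $\{|y|\le\delta\}$, and to control the Leibniz term $\int|y|^a|u|^2|\nabla\varphi_\delta|^2$ you first extract from the finiteness of the weighted norm a uniform order of vanishing $|u|\le C|y|^K$ with $K>1-(a+n)/2$ along $\Sigma_0$, via a Taylor argument. The paper instead truncates in the \emph{range} of $u$: it sets $u_\eps=\eps\,\eta(\eps^{-1}u)$ where $\eta$ vanishes near $0$ and agrees with the identity for $|t|\ge 2$, so that $u_\eps=0$ on $\{|u|<\eps\}\supset\Sigma_0\cap B_R$. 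The only consequence of $a+n\le 0$ the paper uses is the weaker statement that $u=0$ on $\Sigma_0\cap B_R$ (forced by $|y|^a\notin L^1_{\rm loc}$), and convergence follows from dominated convergence combined with the observation that $\{|u|<2\eps\}\setminus\{u=0,|\nabla u|=0\}$ decreases to the Lebesgue-null set $\{u=0,|\nabla u|\ne 0\}$ (Hausdorff dimension at most $d-1$). The range truncation thus avoids the order-of-vanishing analysis entirely; your spatial cutoff is closer to the standard toolkit for weighted Sobolev spaces and has the virtue of making the decay rate at $\Sigma_0$ explicit. One detail worth tightening in your first step: if $m=|\alpha|$ is the minimal order with $\partial_y^\alpha u(x_0,0)\ne 0$, the leading Taylor term is a nonzero homogeneous polynomial in $y$ of degree $m$ which may itself vanish along some directions, so $|u|\gtrsim|y|^m$ holds only on a $y$-cone of positive aperture near $(x_0,0)$, not on a full neighborhood; the integral $\int|y|^{a+2m}$ over such a cone (times a small $x$-ball in $B_R$) still diverges when $a+2m+n\le 0$, so the conclusion $m>1-(a+n)/2$ stands, but the justification should be phrased in terms of that cone.
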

\begin{proof}
It suffices to show that for every $u \in C^\infty(\overline{B_R}) \cap H^{1,a}(B_R)$, there exists a sequence $\{u_\eps\}$ in $C^{\infty}_c(\overline{B_R}\setminus\Sigma_0)$ such that $\|u_\eps - u\|_{H^{1,a}(B_R)}\to 0$ as $\eps \to 0$.

Let $u \in C^\infty(\overline{B_R})$ be such that $\|u\|_{H^{1,a}(B_R)}<\infty$. Since $|y|^{a} \not \in L^1_{\rm loc}(B_R)$, it follows that $u = 0$ on $B_R \cap \Sigma_0$. 

Now, let $\eta \in C^\infty(\R)$ be such that $\eta(t) = 0$ for $|t|<1$, $\eta(t) = t$ for $|t|>2$, and $|\eta(t)| \leq |t|$ for any $t \in \R$. Define
\[
u_\eps(z) = \eps \eta(\eps^{-1}u(z))\,.
\]
Clearly, $u_\eps = 0 $ whenever $|u|<\eps$, which implies that $u_\eps \in C^\infty_c(\overline{B_R} \setminus \Sigma_0)$. Moreover, $u_\eps = u$ whenever $|u|> 2\eps$, and $\nabla u_\eps = \eta'(\eps^{-1}u)\nabla u$. Thus
\[
\int_{B_R}|y|^a|u_\eps - u|^2 dz \leq 4\int_{B_R\cap\{|u| < 2\eps\}\setminus\{u=0\}}|y|^a|u|^2 dz  \to 0 \quad \text{as }\eps \to 0\,, 
\]
since $|B_R\cap\{|u| < 2\eps\}\setminus\{u=0\}|\to 0$. We can estimate the gradient part as
\[
\int_{B_R}|y|^a|\nabla(u_\eps - u)|^2 dz \leq c\int_{B_R\cap\{|u| < 2\eps\}\setminus\{u=0,|\nabla u|=0\}}|y|^a|\nabla u|^2 dz \to 0 \quad \text{as }\eps \to 0\,, 
\]
since $|B_R\cap\{|u| < 2\eps\}\setminus\{u=0,|\nabla u|=0\}|\to 0$, given that $B_R\cap\{u=0,|\nabla u|\neq0\}$ is a set of Hausdorff dimension at most $d-1$. 

To prove this, assume $u\in C^1$ and define $\Omega_k:=\overline{B_R}\cap\{u=0,|\nabla u|\geq\frac{1}{k}\}$, which is a compact set. For each $z\in\Omega_k$, there exists $r_z>0$ such that $B_{r_z}(z)\cap\Omega_k$ is a $(d-1)$-dimensional regular hypersurface by the implicit function theorem. Then, selecting a finite subcovering of $\Omega_k\subset\bigcup_{i=1}^{k_0}B_{r_{z_i}}(z_i)\cap\Omega_k$, we have that for any arbitrarily small $\delta>0$ it holds
\[
\mathcal H^{d-1+\delta}(\Omega_k)=0.
\]
Thus, since $\overline{B_R}\cap\{u=0,|\nabla u|\neq0\}=\bigcup_{k\in\mathbb N}\Omega_k$, we have
$$\mathcal H^{d-1+\delta}(\overline{B_R}\cap\{u=0,|\nabla u|\neq0\})=0.$$
The arbitrariness of $\delta>0$ implies that $\overline{B_R}\cap\{u=0,|\nabla u|\neq0\}$ is $(d-1)$-dimensional. Notice that the $(d-1)$-Hausdorff measure could be infinite.
\end{proof}

Similarly to the supersingular setting, i.e. Proposition \ref{P:densitysupersingular}, whenever the weight is superdegenerate one has the following density result. 

\begin{Proposition}[Density of smooth functions in the superdegenerate setting]\label{P:densitysuperdegenerate}
    Let $a+n\geq2$. Then the space $H^{1,a}(B_R)$ can be equivalently defined as the completion of $C^{\infty}_c(\overline{B_R}\setminus\Sigma_0)$ with respect to the norm in \eqref{Hnorm}.
\end{Proposition}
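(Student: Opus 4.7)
The strategy parallels that of Proposition \ref{P:densitysupersingular}, but the underlying mechanism is different: whereas there one exploited the fact that $u$ is forced to vanish on $\Sigma_0$, here I would exploit the vanishing weighted capacity of $\Sigma_0$ established in Lemma \ref{L:capa} $ii)$. By a standard density argument it suffices to approximate an arbitrary $u \in C^{\infty}(\overline{B_R})$ by a sequence in $C^{\infty}_c(\overline{B_R}\setminus \Sigma_0)$ in the $H^{1,a}(B_R)$-norm.

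I would employ the logarithmic cutoff $\Psi_h(z) = \eta_h(y)$ with $\eta_h$ as in \eqref{Psih}, dropping the $\varphi(x)$-factor since we no longer need compact support in the $x$-variable (we only need support in $\overline{B_R}\setminus\Sigma_0$). By construction $\Psi_h \equiv 1$ on $\{|y| \leq e^{-2h}\}$ and $\Psi_h \equiv 0$ on $\{|y| \geq e^{-h}\}$, so $\Psi_h$ equals $1$ in a neighborhood of $\Sigma_0 \cap \overline{B_R}$. Repeating the computation from the proof of Lemma \ref{L:capa} $ii)$, but now integrating the cylindrical integral over $B^{d-n}_R \times \R^n$ instead of $\Omega_0\times \R^n$, one checks that $\|\Psi_h\|_{H^{1,a}(B_R)} \to 0$ as $h \to \infty$. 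I would then set $u_h := u(1-\Psi_h)$, which is smooth and vanishes identically on a neighborhood of $\Sigma_0 \cap \overline{B_R}$, so $u_h \in C^{\infty}_c(\overline{B_R}\setminus \Sigma_0)$.

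Convergence $u_h \to u$ reduces to showing $u\Psi_h \to 0$ in $H^{1,a}(B_R)$: the $L^{2,a}$-part is bounded by $\|u\|_{L^\infty(B_R)}\|\Psi_h\|_{L^{2,a}(B_R)}$, and the gradient part splits, via Leibniz and Young, into $\|\Psi_h\, \nabla u\|_{L^{2,a}(B_R)}^2$ and $\|u\, \nabla \Psi_h\|_{L^{2,a}(B_R)}^2$. The first tends to zero by dominated convergence, since $\Psi_h \to 0$ pointwise on $B_R\setminus \Sigma_0$, $|\Psi_h|\leq 1$, and $|y|^a|\nabla u|^2 \in L^1(B_R)$; the second is controlled by $\|u\|_{L^\infty(B_R)}^2 \|\nabla \Psi_h\|_{L^{2,a}(B_R)}^2$ and vanishes by the capacity estimate.

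The only genuinely delicate point is the borderline case $a+n=2$, in which the integral $\int_{\{e^{-2h}\leq |y|\leq e^{-h}\}}|y|^{a-2}\,dy$ diverges logarithmically in $h$; convergence of $\|\nabla \Psi_h\|_{L^{2,a}(B_R)}$ to zero is rescued precisely by the $1/h^2$ factor coming from the chain rule applied to $\eta(-\log|y|/h)$, producing an overall $O(1/h)$ decay. For $a+n>2$ the decay is already exponential in $h$ and no such bookkeeping is needed. This borderline balancing is the only step where the superdegenerate threshold $a+n\geq 2$ is used in a sharp way.
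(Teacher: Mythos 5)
Your proposal is correct and follows essentially the same route as the paper: multiply by $1-\Psi_h$, where $\Psi_h$ is the logarithmic cutoff from the proof of Lemma \ref{L:capa}, and use $\|\Psi_h\|_{H^{1,a}(B_R)}\to 0$ (valid precisely for $a+n\geq 2$) to conclude. The small variations — dropping the $\varphi(x)$ factor, and invoking dominated convergence for $\|\Psi_h\,\nabla u\|_{L^{2,a}}$ rather than bounding it by $\|\nabla u\|_{L^\infty}\|\Psi_h\|_{L^{2,a}}$ — are cosmetic and do not change the argument.
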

\begin{proof}
It suffices to prove that for every $u \in C^\infty(\overline{B_R}) \cap H^{1,a}(B_R)$ there exists a sequence $u_h \in C^{\infty}_c(\overline{B_R}\setminus\Sigma_0)$ such that $\|u_h - u\|_{H^{1,a}(B_R)}\to 0$ as $h \to \infty$.

Let $\Psi_h \in C^\infty(\overline{B_R})$ be the family of functions introduced in \eqref{Psih}. We recall that $\Psi_h = 1$ in $B_R \cap \{|y| \leq e^{-2h}\}$ and $\Psi_h= 0$ in $B_R\cap\{|y| \geq e^{-h}\}$. Moreover, if $a+n \geq 2$ then   $\|\Psi_h\|_{H^{1,a}(B_R)} \to 0$ as $h \to \infty$.

Let us define 
\[
    u_h = u (1-\Psi_h).
\]
Since obviously $u_h \in C^{\infty}_c(\overline{B_R}\setminus\Sigma_0)$ and 
\[
\|u_h - u\|_{H^{1,a}(B_R)} \leq c \|\Psi_h\|_{H^{1,a}(B_R)}\,,
\]
for a constant $ c>0$ which depends on $\|u\|_{L^\infty(B_R)}$, $\|\nabla u\|_{L^\infty(B_R)}$, the proof is complete. 
\end{proof}

\subsection{H=W property}
Sobolev spaces can be defined in terms of weak derivatives in $L^p $ spaces. In the unweighted case, this definition is equivalent to the one based on the density of smooth functions (which we used to introduce $H^{1,a}(B_R)$). However, establishing this equivalence in the context of weighted Sobolev spaces is not as straightforward.

Following the approach of \cite{Zhi98}, we introduce the set 
\[
\tilde W = \{ u \in W^{1,1}_{\rm loc}(B_R) \mid \|u\|_{H^{1,a}(B_R)} < \infty \}.
\]
We then define
\[
W^{1,a}(B_R) := \text{the completion of $\tilde W$ with respect to $\|\cdot\|_{H^{1,a}(B_R)}$}.
\]
We have the following characterization. Note that the two cases intersect.
\begin{Lemma}\label{L:Wchar}
Let $a \in \R$, $R>0$. It holds
\begin{enumerate}
\item[$i)$] if $a + n <2n$, then 
\[
W^{1,a}(B_R) = \{ u \in W^{1,1}_{\rm loc}(B_R) \mid \|u\|_{H^{1,a}(B_R)}< \infty\}\,,
\]
\item[$ii)$] if $a + n \geq 2$, then 
\[
W^{1,a}(B_R) = \{ u \in W^{1,1}_{\rm loc}(B_R \setminus \Sigma_0) \mid \|u\|_{H^{1,a}(B_R)}< \infty\}\,.
\]
\end{enumerate}
\end{Lemma}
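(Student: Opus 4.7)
The plan is to verify both characterizations by two steps: completeness of the right-hand side under the $\|\cdot\|_{H^{1,a}(B_R)}$-norm, and density of $C^\infty(\overline{B_R})$ inside it. Once these are established, the completion of $\tilde W$ coincides with the right-hand side, since $\tilde W$ already contains $C^\infty(\overline{B_R})$, which is dense in the right-hand side.

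For case $i)$, the hypothesis $a+n<2n$ is equivalent to $a<n$, which makes $|y|^{-a}$ locally integrable on $B_R$. A Cauchy-Schwarz estimate
\[
\int_K(|v|+|\nabla v|)\,dz\le c(K)\,\|v\|_{H^{1,a}(B_R)},\qquad K\Subset B_R,
\]
then shows that any $H^{1,a}$-Cauchy sequence in $\tilde W$ is $W^{1,1}_{\rm loc}(B_R)$-Cauchy, so the limit lies in the right-hand side. For density I would split on capacitary ranges. In the Muckenhoupt sub-range $0<a+n<2n$ the weight belongs to the $A_2$-class by Remark \ref{R:muck}, and $C^\infty(\overline{B_R})$-density is the classical $H=W$ theorem for $A_2$ weights, obtained via standard mollification whose convergence in $L^{2,a}$ rests on the boundedness of the Hardy-Littlewood maximal operator in $L^{2,a}$. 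In the supersingular sub-range $a+n\le 0$, Proposition \ref{P:densitysupersingular} reduces the task to approximating $u$ by functions in $C^\infty_c(\overline{B_R}\setminus\Sigma_0)$. I would then execute a truncate-cutoff-mollify scheme: first, replace $u$ by its value-truncation $T_k(u)=\max(-k,\min(k,u))$, which tends to $u$ in $H^{1,a}$ by dominated convergence; second, multiply by $(1-\Psi_h)$, with $\Psi_h$ the logarithmic cutoff from \eqref{Psih}, and control the critical gradient-of-cutoff term
\[
\int_{B_R}|y|^a|T_k(u)|^2|\nabla\Psi_h|^2\,dz\le\frac{c}{h^2}\int_{B_R\cap\{e^{-2h}\le|y|\le e^{-h}\}}|y|^{a-2}|T_k(u)|^2\,dz,
\]
which vanishes as $h\to\infty$ provided $|y|^{a-2}|T_k(u)|^2\in L^1(B_R)$ --- the latter following from Proposition \ref{P:Hardy}$(ii)$ applied with $\delta=a-2$, so that $\delta+n=a+n-2<0$; third, $(1-\Psi_h)T_k(u)$ has compact support in $B_R\setminus\Sigma_0$, where standard mollification yields smooth approximants.

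Case $ii)$ proceeds by the same two-step scheme, but $W^{1,1}$-regularity is imposed only on $B_R\setminus\Sigma_0$, reflecting the possible failure of local integrability of $|y|^{-a}$ when $a\ge n$. Completeness of the right-hand side follows from the same Cauchy-Schwarz argument, performed on compact subsets of $B_R\setminus\Sigma_0$ where $|y|$ is bounded away from zero. For density, Proposition \ref{P:densitysuperdegenerate} reduces the task to approximating by elements of $C^\infty_c(\overline{B_R}\setminus\Sigma_0)$, achieved via the same truncate-cutoff-mollify scheme. Here the bound on the gradient-of-cutoff term simplifies via the explicit annular computation of the proof of Proposition \ref{P:densitysuperdegenerate} to
\[
\int_{B_R}|y|^a|T_k(u)|^2|\nabla\Psi_h|^2\,dz\le\frac{c\,k^2}{h^2}\int_{B_R\cap\{e^{-2h}\le|y|\le e^{-h}\}}|y|^{a-2}\,dz\le\frac{c\,k^2}{h},
\]
where the last inequality crucially uses $a+n\ge 2$. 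A diagonal extraction with $k_j^2/h_j\to 0$ yields the desired approximating sequence.

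The main obstacle, and the step I would execute most carefully, is justifying the vanishing of the gradient-of-cutoff term. In case $ii)$ the value-truncation alone suffices, thanks to the explicit annular estimate. In the supersingular sub-range of case $i)$, instead, the value-truncation is not enough: one genuinely needs Proposition \ref{P:Hardy}$(ii)$ applied to $T_k(u)$, and since that proposition is stated only for $C^\infty_c(\overline{B_R}\setminus\Sigma_0)$, extending it to $T_k(u)\in W^{1,1}_{\rm loc}(B_R)$ with finite $H^{1,a}$-norm demands a preliminary mollification on compacts of $B_R\setminus\Sigma_0$ followed by a careful passage to the limit in $h$ and $k$, which is the technical core of the argument.
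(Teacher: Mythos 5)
Your proposal attempts to prove more than the lemma asks for, and this is where the difficulty arises. You frame the task as ``show the right-hand side is complete and $C^\infty(\overline{B_R})$ is dense in it,'' but density of smooth functions is precisely the $H=W$ content of Lemma~\ref{L:H=W}, which the paper deliberately proves \emph{after} Lemma~\ref{L:Wchar} and by \emph{using} it. For case $i)$ this density step is entirely superfluous: the right-hand side \emph{is} $\tilde W$ by definition, so $W^{1,a}(B_R) = $ RHS reduces to showing that $\tilde W$ is closed under $H^{1,a}$-limits, which is exactly your completeness step (and exactly the paper's argument). For case $ii)$ the right-hand side is strictly larger than $\tilde W$, but what one needs is density of $\tilde W$ in it, not density of $C^\infty(\overline{B_R})$; the truncate--cutoff construction already lands in $W^{1,1}_{\rm loc}(B_R)$, so the final mollification you add is never used -- the paper simply stops at $\hat u(1-\Psi_h) \in \tilde W$.

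The cost of this over-ambition shows up concretely in the supersingular sub-range $a+n\le 0$ of case $i)$, where you attempt to establish density of smooth functions by truncate--cutoff--mollify and need $\int |y|^{a-2}|T_k(u)|^2 <\infty$. You correctly flag that Proposition~\ref{P:Hardy}$(ii)$ is stated only for $C^\infty_c(\overline{B_R}\setminus\Sigma_0)$, but the fix you sketch -- mollify on compacts of $B_R\setminus\Sigma_0$ and pass to the limit in $h$ -- is circular. After mollification one still needs compact support away from $\Sigma_0$ to invoke Hardy, hence a cutoff; expanding $|\nabla(T_k(u)(1-\Psi_h))|^2$ produces the term $\int|y|^a|T_k(u)|^2|\nabla\Psi_h|^2$, which is the very quantity you are trying to show vanishes, and the resulting recursion ($Z_h \le c + ch^{-2}Z_{2h}$ with $Z_h$ the truncated Hardy integral) does not close. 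The statement is in fact true and can be proved by a different route -- slice in the radial $|y|$-direction via Fubini and apply the one-dimensional Hardy inequality $\int_0^R r^{\alpha}|f|^2 \le c\int_0^R r^{\alpha+2}|f'|^2$ ($\alpha<-1$, $f(R)=0$), noting that the boundary term at $r=\varepsilon$ has a favourable sign so no vanishing at $0$ is needed -- but that is a different argument, and none of this is required for the lemma as stated. The paper avoids the issue entirely by never asserting density of smooth functions in Lemma~\ref{L:Wchar}. Your completeness estimate for $i)$ and your key annular estimate $\int|y|^a|T_k(u)|^2|\nabla\Psi_h|^2 \le c k^2/h$ for $ii)$ are both correct and coincide with the paper's; the gap is confined to the step that should not be there.
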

\begin{proof}
Let us prove $i)$. The inclusion $\tilde W  \subset W^{1,a}(B_R)$ is straightforward. 

To prove the reverse inclusion, let us take a Cauchy sequence $u_k \subset \tilde W$. We note that $u_k$ and $\nabla u_k$ are Cauchy sequences in $L^{2,a}(B_R)$ and $L^{2,a}(B_R)^d$, respectively. Due to the completeness of $L^{2,a}(B_R)$, there exist $u \in L^{2,a}(B_R)$ and $V \in L^{2,a}(B_R)^d$ such that
\begin{equation}\label{eq:Wchar_1}
    u_k \to u   \text{ in }L^{2,a}(B_R)\,, \qquad \nabla u_k \to V  \text{ in }L^{2,a}(B_R)^d\,.
\end{equation}
In particular, we note that
\[
\int_{B_R}|y|^a |u|^2 dz + \int_{B_R}|y|^a |V|^2 dz < \infty.  
\]
By H\"older inequality, we have that for every compact set $K \subset B_R$ it holds that
\[
\int_K |u| dz \leq \Big( \int_K |y|^a |u|^2 dz\Big)^{\frac12} \Big( \int_K |y|^{-a} dz\Big)^{\frac12} < \infty\,.
\]
Here we used that $|y|^{-a} \in L^1_{\rm loc}(B_R)$ due to the assumption $a + n < 2n$. 
Performing the same computation for $V$, we obtain that $u, V \in L^{1}_{\rm loc}(B_R)$. 

To conclude, it remains to show that $V$ is the weak gradient of $u$. Fix $\varphi \in C^{\infty}_c(B_R)$. Using \eqref{eq:Wchar_1} we get

\[
\begin{aligned}
    &\Big|\int_{B_R}(u_k - u)\nabla\varphi dz \Big| \leq \Big( \int_{B_R} |y|^a |u_k - u|^2 dz\Big)^{\frac12} \Big( \int_{B_R} |y|^{-a} |\nabla \varphi|^2 dz\Big)^{\frac12}  \to 0\,, \\
    &\Big|\int_{B_R}(\nabla u_k - V)\varphi dz \Big| \leq \Big( \int_{B_R} |y|^a |\nabla u_k - V|^2 dz\Big)^{\frac12} \Big( \int_{B_R} |y|^{-a} |\varphi|^2 dz\Big)^{\frac12}  \to 0\,.
\end{aligned}
\]
Hence, since $u_k \in W^{1,1}_{\rm loc}(B_R)$, we have
\[
\int_{B_R} (u \nabla \varphi + \varphi V )dz = \int_{B_R} (u_k \nabla \varphi + \varphi \nabla u_k )dz + o(1) = o(1)\,.
\]
Therefore, $V = \nabla u $, and $i)$ is proved. 

\smallskip

Let us now prove $ii)$. To show that 
\[
W^{1,a}(B_R) \subset \{ u \in W^{1,1}_{\rm loc}(B_R\setminus \Sigma_0) \mid \|u\|_{H^{1,a}(B_R)}< \infty\} 
\]
we can proceed in a manner similar to the proof of $i)$, with a few minor modifications. Note that if $a+ n \geq 2$, then in general $|y|^{-a} \not \in L^1_{\rm }(B_R)$; however, it is always true that $|y|^{-a} \in L^1_{\rm loc}(B_R\setminus \Sigma_0)$.

Next, let us prove that 
\[
\{ u \in W^{1,1}_{\rm loc}(B_R\setminus \Sigma_0) \mid \|u\|_{H^{1,a}(B_R)}< \infty\} \subset W^{1,a}(B_R)\,.
\]
Let $u \in W^{1,1}_{\rm loc}(B_R\setminus \Sigma_0)$ with $\|u\|_{H^{1,a}(B_R)}< \infty $ be fixed. Moreover, let $\eps >0$. For $k \in \N$, consider the sequence $u_k$ defined as
\[
u_k = 
\begin{cases}
k & \text{ if }u >k\\
u & \text{ if }-k \leq u \leq k \\
-k & \text{ if }u <-k
\end{cases}
\]
It is standard to see that $u_k \in L^\infty(B_R) \cap W^{1,1}_{\rm loc}(B_R\setminus \Sigma_0)$ and  $\|u - u_k\|_{H^{1,a}(B_R)} \to 0$ as $k \to \infty$. In particular, there exists $\hat u := u_{k_0}$ for some sufficiently large $k_0$ such that $\|u - \hat u\|_{H^{1,a}(B_R)} < {\eps}/{2}$.

Let $\Psi_h \in C^\infty(\overline{B_R})$ be the family of functions introduced in \eqref{Psih}. We recall that $0 \leq \Psi_h \leq 1$, $\Psi_h = 1$ in $B_R \cap \{|y| \leq e^{-2h}\}$ and $\Psi_h= 0$ in $B_R\cap\{|y| \geq e^{-h}\}$. Moreover, since $a+n \geq 2$ then $\|\Psi_h\|_{H^{1,a}(B_R)} \to 0$ as $h \to \infty$.

Define
\[
u_h = \hat u(1-\Psi_h).
\]
We point out that ${\rm spt }(u_h) \subset\{|y|\geq  e^{-h}\}$. As a consequence, $u_h, |\nabla u_h| \in L^{1}_{\rm loc}(B_R)$, even when $|y|^{-a} \not \in L^{1}_{\rm loc}(B_R)$. Moreover, for every $\varphi \in C^{\infty}_c(B_R)$ it holds
\[
\int_{B_R}(u_h \nabla \varphi + \varphi \nabla u_h) dz = \int_{B_R}\hat u \nabla[(1-\Psi_h)\varphi] + [(1 - \Psi_h) \varphi]\nabla\hat u\,dz=0\,,
\]
where we used that $(1-\Psi_h)\varphi \in C^{\infty}_c(B_R \setminus \Sigma_0)$ and $\hat u \in W^{1,1}_{\rm loc}(B_R \setminus \Sigma_0)$. 
Hence, $u_h \in W^{1,1}_{\rm loc}(B_R)$.

By Lebesgue dominated convergence theorem (recall that $|\Psi_h| \leq 1 $ and $\Psi_h \to 0$ a.e. on $B_R$ as $h \to \infty$) and since $\|\Psi_h\|_{H^{1,a}(B_R)} \to 0$ we obtain, as $h \to \infty$,
\[
\begin{aligned}
\|\hat u - u_h\|_{H^{1,a}(B_R)}^ 2 &\leq 2\int_{B_R}|y|^a (|\nabla \hat u|^2 + |\hat u|^2)|\Psi_h|^2 dz + 2\int_{B_R}|y|^a|\hat u|^2 |\nabla \Psi_h|^2 dz \\
&\leq o(1) + 2\|\hat u\|_{L^\infty(B_R)}\|\psi_h\|_{H^{1,a}(B_R)} = o(1)\,.
\end{aligned}
\]
In particular, fix $h_0 \in \N$ such that $\|\hat u - u_{h_0}\|_{H^{1,a}(B_R)} < \frac{\eps}{2}$.

Since $u_h \in W^{1,1}_{\rm loc}(B_R)$ and
\[
\|u - u_{h_0}\|_{H^{1,a}(B_R)} \leq \|u - \hat u\|_{H^{1,a}(B_R)} +\|\hat u - u_{h_0}\|_{H^{1,a}(B_R)} < \eps\,,
\]
thanks to the arbitrariness of $\eps$ we conclude that $u$ belongs $W^{1,a} (B_R)$, thus completing the proof. 
\end{proof}

It is now straightforward to show that the $H=W$ property holds, at least when $a+ n >0$. 

\begin{Lemma}[H=W]\label{L:H=W}
If $a + n >0$, then 
\[
H^{1,a}(B_R) = W^{1,a}(B_R).
\]
\end{Lemma}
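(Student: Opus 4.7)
The inclusion $H^{1,a}(B_R)\subseteq W^{1,a}(B_R)$ is immediate, since $C^\infty(\overline{B_R})\subset \tilde W$ and both spaces are completions under the common norm $\|\cdot\|_{H^{1,a}(B_R)}$. For the opposite inclusion, the plan is to show that every $u\in\tilde W$ can be approximated in $H^{1,a}$-norm by elements of $C^\infty(\overline{B_R})$. Since $a+n>0$, I would split the argument into the two (overlapping) regimes identified by Lemma \ref{L:Wchar}: the \emph{Muckenhoupt regime} $0<a+n<2n$, where Remark \ref{R:muck} guarantees $|y|^a\in A_2$, and the \emph{superdegenerate regime} $a+n\geq 2$, where Proposition \ref{P:densitysuperdegenerate} allows one to approximate by functions in $C^\infty_c(\overline{B_R}\setminus\Sigma_0)\subset C^\infty(\overline{B_R})$.

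In the Muckenhoupt regime, Lemma \ref{L:Wchar}$(i)$ gives $u\in W^{1,1}_{\rm loc}(B_R)$. I would first dilate: setting $u_\sigma(z)=u(\sigma z)$ for $\sigma\in(0,1)$ produces a function of class $W^{1,1}_{\rm loc}(B_{R/\sigma})$ with $B_{R/\sigma}\supsetneq \overline{B_R}$, and a change of variables combined with continuity of translations in $L^{2,a}$ (valid in the $A_2$ setting) yields $u_\sigma\to u$ in $H^{1,a}(B_R)$ as $\sigma\to 1^-$. Fixing $\sigma$ close to $1$ and convolving $u_\sigma$ with a standard mollifier $\eta_\delta$ of radius $\delta<R(\sigma^{-1}-1)$ then produces functions $u_\sigma\ast\eta_\delta\in C^\infty(\overline{B_R})$. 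The key nontrivial ingredient is the convergence $u_\sigma\ast\eta_\delta\to u_\sigma$ and $(\nabla u_\sigma)\ast\eta_\delta\to\nabla u_\sigma$ in $L^{2,a}$ as $\delta\to 0$: this is the Muckenhoupt theorem, which ensures that the Hardy--Littlewood maximal operator is bounded on $L^{2,a}$ and that standard mollification forms an approximate identity in $L^{2,a}$. A diagonal extraction in $(\sigma,\delta)$ then produces a smooth sequence converging to $u$ in $H^{1,a}$.

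In the superdegenerate regime, by Proposition \ref{P:densitysuperdegenerate} it suffices to approximate $u$ by functions in $C^\infty_c(\overline{B_R}\setminus\Sigma_0)$. I would reuse the three-step strategy already deployed in the proofs of Lemma \ref{L:Wchar}$(ii)$ and Proposition \ref{P:densitysuperdegenerate}: first, truncate $u$ at a sufficiently high level $k_0$ to obtain a bounded $\hat u\in L^\infty(B_R)\cap W^{1,1}_{\rm loc}(B_R\setminus\Sigma_0)$ with $\|u-\hat u\|_{H^{1,a}}$ arbitrarily small; second, set $\hat u_h=\hat u\,(1-\Psi_h)$ with $\Psi_h$ as in \eqref{Psih}, so that $\hat u_h\to\hat u$ in $H^{1,a}$ by the decay $\|\Psi_h\|_{H^{1,a}}\to 0$ combined with the $L^\infty$-bound on $\hat u$; third, observe that $\hat u_h$ is supported in $\overline{B_R}\cap\{|y|\geq e^{-h}\}$, a region where the weight $|y|^a$ is smooth and bounded above and away from zero, so a standard mollification of sufficiently small radius produces $C^\infty_c(\overline{B_R}\setminus\Sigma_0)$-approximants converging to $\hat u_h$ in unweighted $W^{1,2}$, and hence in $H^{1,a}$. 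The main analytic obstacle is the Muckenhoupt case, where controlling the interaction between standard mollifiers and the weight requires invoking $A_2$-theory; the superdegenerate case is essentially a reassembly of tools already built in the preceding lemma and proposition.
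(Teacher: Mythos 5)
Your proposal is correct and takes essentially the same route as the paper: split into the Muckenhoupt regime $0<a+n<2n$ (where you reproduce, rather than cite as the paper does, the standard $A_2$ dilation-plus-mollification argument) and the superdegenerate regime $a+n\geq2$ (where you reuse the truncation, the cut-off $\Psi_h$, and mollification exactly as in the paper's proof, which itself defers to Lemma~\ref{L:Wchar}\,$ii)$ and Proposition~\ref{P:densitysuperdegenerate}). The two cases cover $a+n>0$ because $n\geq2$, as you observe.
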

\begin{proof}
If $0 < a+ n< 2n$ the weight $|y|^a$ belongs to the Muckenhoupt class $A_2$, see Remark \ref{R:muck}. Therefore, the result is classical; see for instance \cite[Theorem 2.5]{Kil94} or \cite[$\S$4]{Zhi98}.

If $a + n \geq 2$, we first note that every $u \in W^{1,a}(B_R)$ can be approximated by functions $u_h \in W^{1,1}_{\rm loc}(B_R)$ which are supported away from $\Sigma_0$; see the proof of Lemma \ref{L:Wchar}, $ii)$. In turn, the functions $u_h$ can be approximated by functions in $C^{\infty}_c (\overline{B_R} \setminus \Sigma_0)$ using a standard mollification technique.
\end{proof}

\subsection{Trace inequalities and compact embeddings}

Next we show a trace-type inequality. To this aim, we introduce the projection
\[
\Pi: \R^d \to \R^n \qquad \Pi(z) = y\,,
\]
and we notice that, using polar coordinates, for any $z = r \sigma$, $r=|z| >0$, $\sigma=\frac{z}{|z|} \in \mathbb{S}^{d-1}$, it holds
\[
y = r \Pi \sigma\,.
\]
\begin{Lemma}[Trace inequality]\label{L:tracePo}
Let $R>0$ and $a+n> 0$. Then for any $u \in C^\infty(\overline{B_R})$ and $0 < r\leq R$ it holds
\[
c\int_{\partial B_r}|y|^a |u|^2 ds \leq r\int_{B_R}|y|^a |\nabla u|^2 dz+ r^{-1}\int_{B_R}|y|^a |u|^2 dz\,,
\]
for a constant $c>0$ depending only on $d$ and $a$.
\end{Lemma}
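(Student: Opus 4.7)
The plan is to pass to spherical coordinates and compare the boundary integral at radius $r$ with an average on a nearby radial interval. Introducing the projection $\Pi:\R^d\to\R^n$, $\Pi(x,y)=y$, so that $|y(\rho\sigma)| = \rho|\Pi\sigma|$ for $z = \rho\sigma$ with $\sigma\in\mathbb S^{d-1}$, I set
\[
\phi(\rho) := \int_{\partial B_\rho}|y|^a u^2\,ds = \rho^{d-1+a}\int_{\mathbb S^{d-1}}|\Pi\sigma|^a u(\rho\sigma)^2\,d\sigma\,,\qquad \rho\in(0,R].
\]
The hypothesis $a+n>0$ guarantees that $\int_{\mathbb S^{d-1}}|\Pi\sigma|^a d\sigma<\infty$ (the distance of $\sigma$ to $\Sigma_0\cap\mathbb S^{d-1}$ behaves like $|\Pi\sigma|$, with a codimension-$n$ singular set on the sphere) and that $|y|^a\in L^1_{\rm loc}(\R^d)$, so $\phi(\rho)$ is finite and continuous.

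The first step is to choose, by the mean value theorem for integrals applied to $\phi$ on the inner annular interval $[r/2,r]\subset(0,R]$, a point $\rho_0\in[r/2,r]$ such that
\[
\phi(\rho_0) \leq \frac{2}{r}\int_{r/2}^r \phi(\rho)\,d\rho = \frac{2}{r}\int_{B_r\setminus B_{r/2}}|y|^a u^2\,dz \leq \frac{2}{r}\int_{B_R}|y|^a u^2\,dz\,.
\]
Since $u$ is smooth up to the boundary, a direct differentiation of the polar expression for $\phi$ gives
\[
\phi'(\rho) = \frac{d-1+a}{\rho}\phi(\rho) + 2\int_{\partial B_\rho}|y|^a\, u\,\partial_\nu u\,ds\,,
\]
and the fundamental theorem of calculus yields $\phi(r) = \phi(\rho_0) + \int_{\rho_0}^r \phi'(s)\,ds$, so that $\phi(r)\leq \phi(\rho_0)+\int_{r/2}^r|\phi'(s)|\,ds$.

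The two contributions in $|\phi'|$ are handled separately. The first is controlled by
\[
|d-1+a|\int_{r/2}^r \frac{\phi(s)}{s}\,ds \leq \frac{2|d-1+a|}{r}\int_{r/2}^r \phi(s)\,ds \leq \frac{2|d-1+a|}{r}\int_{B_R}|y|^a u^2\,dz\,.
\]
For the second, I would integrate back in $s$ via the coarea formula and apply the Young inequality $2|u||\nabla u|\leq r^{-1}u^2 + r|\nabla u|^2$, obtaining
\[
2\int_{r/2}^r\!\int_{\partial B_s}|y|^a|u||\nabla u|\,ds'\,ds = 2\int_{B_r\setminus B_{r/2}}|y|^a|u||\nabla u|\,dz \leq \frac{1}{r}\int_{B_R}|y|^a u^2\,dz + r\int_{B_R}|y|^a|\nabla u|^2\,dz\,.
\]
Combining the three bounds gives the claim with a constant depending only on $d$ and $a$.

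The minor subtlety is that one must take the \emph{inner} averaging interval $[r/2,r]$ rather than an outer one such as $[r,2r]$, as the latter would force the restriction $2r\leq R$ and require a separate treatment for $r$ close to $R$. With the inner choice the argument applies uniformly to every $r\in(0,R]$ without a case distinction, and no further obstacle arises.
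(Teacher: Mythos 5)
Your proof is correct: the derivative formula $\phi'(\rho)=\frac{d-1+a}{\rho}\phi(\rho)+2\int_{\partial B_\rho}|y|^a u\,\partial_\nu u\,ds$ is accurate, $\phi$ is indeed continuous on $(0,R]$ because $a+n>0$ makes $\int_{\mathbb S^{d-1}}|\Pi\sigma|^a d\sigma$ finite, and the mean-value choice of $\rho_0$, the coarea conversion to a volume integral, and the Young inequality with parameter $r$ all combine to give the stated bound with $c^{-1}=3+2|d-1+a|$, which depends only on $d$ and $a$.

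The route differs from the paper's in a genuine, if minor, way. The paper applies the fundamental theorem of calculus directly to $u(r\sigma)-u(\rho\sigma)$ along rays, uses H\"older's inequality with the radially weighted measure $\tau^{a+d-1}d\tau$, and then averages over the base radius $\rho$ across the \emph{entire} interval $[0,r]$ against the weight $\rho^{a+d-1}$; this converts the spherical integral at radius $\rho$ into the full volume integral $\int_{B_r}|y|^a|u|^2$ without having to select a good slice, at the cost of an extra $\rho$-integral computation (requiring $a+d>0$ for convergence, which is guaranteed by $a+n>0$). Your argument instead works at the level of the slice integral $\phi(\rho)$: you select one good slice $\rho_0\in[r/2,r]$ by averaging, then integrate $\phi'$ from $\rho_0$ to $r$, controlling the two terms of $\phi'$ separately via the coarea formula and Young's inequality. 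The averaging interval stays bounded away from zero, so no convergence issue at $\rho=0$ arises and the power of $\rho$ never becomes singular; your remark about using $[r/2,r]$ rather than $[r,2r]$ is also well taken, since the latter would require $2r\leq R$. Both proofs are equally elementary; yours is arguably closer in spirit to the standard trace-inequality-by-averaging argument for unweighted Sobolev spaces, while the paper's H\"older-then-average strategy streamlines into a single computation and more readily generalizes to the perforated version (Lemma \ref{L:trace:e}) where the FTC-plus-H\"older structure is reused.
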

\begin{proof}
Let $0 <\rho < r \leq R$. By fundamental theorem of calculus and H\"older's inequality we have that for any $\sigma \in \mathbb{S}^{d-1}$ it holds 
\[
|u(r \sigma) - u(\rho \sigma)|^2 \leq \Big(\int_{\rho}^r |\nabla u|(\tau\sigma) d\tau \Big)^2 \leq \int_{\rho}^r\tau^{1-a-d}d\tau\int_{\rho}^r\tau^{a+d-1} |\nabla u|^2(\tau\sigma) d\tau\,.
\]
Therefore
\[
\begin{aligned}
    \int_{\mathbb{S}^{d-1}}|\Pi \sigma|^a |u(r \sigma) - u(\rho \sigma)|^2 d\sigma \leq \int_{\rho}^r \tau^{1-a-d}d\tau\int_{B_r \setminus B_\rho}|y|^a|\nabla u|^2 dz \leq \int_{\rho}^r \tau^{1-a-d}d\tau\int_{B_R}|y|^a|\nabla u|^2 dz\,,
\end{aligned}
\]
and thus
\[
\begin{aligned}
r^{1-a-d}&\int_{\partial B_r}|y|^a |u|^2ds = \int_{\mathbb{S}^{d-1}}|\Pi \sigma|^a |u(r \sigma)|^2 d\sigma \\
&\leq 2\int_{\mathbb{S}^{d-1}}|\Pi \sigma|^a |u(r \sigma) - u(\rho \sigma)|^2 d\sigma+ 2 \int_{\mathbb{S}^{d-1}}|\Pi \sigma|^a |u(\rho \sigma)|^2d\sigma\\
&\leq 2(r-\rho)\max\{r^{1-a-d}, \rho^{1-a-d}\}\int_{B_R}|y|^a|\nabla u|^2dz + 2 \int_{\mathbb{S}^{d-1}}|\Pi \sigma|^a |u(\rho \sigma)|^2d\sigma\,.
\end{aligned}
\]
Next, we multiply both sides by $\rho^{a+ d -1}$ and integrate over $\rho \in [0,r]$, using that $a+ d \geq a+n >0$. Since
\[
\int_0^r\rho^{a+ d -1}\int_{\mathbb{S}^{d-1}}|\Pi \sigma|^a |u(\rho \sigma)|^2d\sigma d\rho = \int_{B_r}|y|^a |u|^2 dz\,,
\]
the proof is complete. 
\end{proof}

\begin{Lemma}[Compact embedding $H^{1,a}\hookrightarrow L^{2,a}$]\label{L:compactness}
Let $R>0$ and $a+ n >0$. Let $\{u_k\}, u$ in $H^{1,a}(B_R)$ be such that $u_k \rightharpoonup u$ in $H^{1,a}(B_R)$.  Then, up to subsequences, $ u_k \to u $ in $ L^{2,a}(B_R) $.
\end{Lemma}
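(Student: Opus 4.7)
The strategy is the classical two-step Rellich scheme adapted to the weighted setting: strong convergence on compact subsets of $B_R\setminus\Sigma_0$, together with a uniform equi-integrability estimate for $|y|^a|v_k|^2$ at the degeneracy manifold $\Sigma_0$. Replacing $u_k$ by $v_k := u_k-u$, the assumption becomes $v_k\rightharpoonup 0$ in $H^{1,a}(B_R)$ with $\|v_k\|_{H^{1,a}(B_R)}\le M$ uniformly, and the goal is a subsequence along which $v_k\to 0$ in $L^{2,a}(B_R)$.

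\emph{Step 1: Rellich away from $\Sigma_0$.} For each $m\in\N$, set $\Omega_m := B_R\cap\{|y|>1/m\}$. On $\Omega_m$ the weight $|y|^a$ is bounded between two positive constants (depending on $m$), hence $\{v_k\}$ is bounded in the classical Sobolev space $H^1(\Omega_m)$. The unweighted Rellich-Kondrachov theorem, together with a diagonal extraction in $m$, produces a single subsequence (still denoted $\{v_k\}$) such that $v_k\to 0$ in $L^{2,a}(\Omega_m)$ for every $m\in\N$.

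\emph{Step 2: Equi-integrability at $\Sigma_0$.} The heart of the argument is the uniform estimate
\[
\lim_{\delta\to 0^+}\sup_k I_k(\delta) = 0,\qquad I_k(\delta):=\int_{B_R\cap\{|y|<\delta\}}|y|^a|v_k|^2\,dz.
\]
Since $a+n>0$, the weighted volume of the slab satisfies $\int_{B_R\cap\{|y|<\delta\}}|y|^a\,dz\le cR^{d-n}\delta^{a+n}$. Combining this with a weighted Sobolev embedding $H^{1,a}(B_R)\hookrightarrow L^{p,a}(B_R)$ for some $p>2$ --- available via the Muckenhoupt $A_2$-theory in the range $a+n\in(0,2n)$ (see Remark \ref{R:muck}), and handled through the density Proposition \ref{P:densitysuperdegenerate} when $a+n\ge 2n$, where $|y|^a$ is bounded and one may reduce to classical Sobolev embeddings for functions supported away from $\Sigma_0$ --- H\"older's inequality yields
\[
I_k(\delta)\le c\,\delta^{(a+n)(1-2/p)}\|v_k\|_{L^{p,a}(B_R)}^2 \le c'M^2\,\delta^{(a+n)(1-2/p)},
\]
which tends to $0$ uniformly in $k$ as $\delta\to 0^+$.

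\emph{Step 3: Conclusion and main obstacle.} Given $\eps>0$, Step 2 provides $\delta$ with $\sup_k I_k(\delta)<\eps/2$; Step 1 then yields $k_0$ such that $\int_{\Omega_{\lceil 1/\delta\rceil}}|y|^a|v_k|^2\,dz<\eps/2$ for $k\ge k_0$. Splitting $B_R$ into $\Omega_{\lceil 1/\delta\rceil}$ and the thin slab $B_R\cap\{|y|<\delta\}$ gives $\|v_k\|_{L^{2,a}(B_R)}^2<\eps$ for $k\ge k_0$. The main difficulty lies in Step 2: for each fixed $k$, absolute continuity of $\delta\mapsto I_k(\delta)$ is immediate, but upgrading this to a rate uniform in the sequence is the substantive point and requires either the weighted Sobolev embedding above or an equivalent higher-integrability mechanism.
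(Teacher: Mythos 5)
Your two-step Rellich structure --- strong convergence on compacts of $B_R\setminus\Sigma_0$ plus a uniform tail estimate near $\Sigma_0$ --- mirrors the paper's, but your Step 2 takes a genuinely different route. The paper does not invoke any weighted Sobolev embedding. Instead, it observes that on $\{|y|<2\delta\}$ one has $1\le c\,\delta^\sigma|y|^{-\sigma}$ and then controls the more singular quantity $\int_{B_R}|y|^{a-\sigma}|u_k|^2\,dz$ directly by the $H^{1,a}$-norm, using the Hardy--Poincar\'e inequality (Proposition~\ref{P:Hardy}, part $i)$, with exponent $a-\sigma$ for a suitably small $\sigma>0$ so that $a-\sigma+n>0$) together with the trace inequality Lemma~\ref{L:tracePo}. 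This gives higher integrability \emph{in the weight} rather than in the exponent, is entirely elementary, and covers the whole range $a+n>0$ in one stroke.

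Your route --- higher integrability \emph{in the exponent} via $H^{1,a}\hookrightarrow L^{p,a}$ for some $p>2$, then H\"older against the weighted slab volume --- is viable in principle, but your justification of the embedding has a genuine gap. For $a+n\in(0,2n)$ you can cite Muckenhoupt $A_2$-theory, although note that the paper's own Sobolev inequality, Lemma~\ref{L:Sstab}, appears only in the next section, so at this point in the text this is a forward reference or an external citation. The real problem is the superdegenerate case $a+n\ge 2n$. You argue that, since $|y|^a$ is bounded above, Proposition~\ref{P:densitysuperdegenerate} lets you reduce to the unweighted Sobolev embedding for functions supported away from $\Sigma_0$. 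This does not close: boundedness of $|y|^a$ from above yields only the continuous inclusion $H^1(B_R)\hookrightarrow H^{1,a}(B_R)$, which is the wrong direction. For $u$ supported in $\{|y|\ge\varepsilon\}$ one has $\|u\|_{H^1(B_R)}\le \varepsilon^{-a/2}\|u\|_{H^{1,a}(B_R)}$, so the classical Sobolev constant blows up as the support approaches $\Sigma_0$ and the density argument produces no uniform weighted embedding constant. To make your Step~2 rigorous in that range you would effectively have to prove the weighted embedding from scratch (as the paper eventually does in Lemma~\ref{L:Sstab}), which is considerably heavier machinery than the Hardy--Poincar\'e substitute and defeats the purpose of having a short compactness proof. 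Replacing your Step~2 with the Hardy--Poincar\'e estimate both fixes the gap and keeps the proof self-contained.
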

\begin{proof}
Whitout loss of generality, we can assume that $u_k \rightharpoonup 0$ in $H^{1,a}(B_R)$. Morever, since $u_k$ is weakly convergent in the Hilbert space $H^{1,a}(B_R)$, it is bounded.

Let $\delta > 0$ be fixed, and consider $\varphi_\delta \in C^\infty(\overline{B_R})$ such that $0 \leq \varphi_\delta \leq 1$ with $\varphi = 1$ when $|y|\leq \delta$ and $\varphi = 0$ when $|y| \geq 2 \delta$. In particular, on the support of $\varphi_\delta$ and for every $\sigma >0$ it holds $1 \leq 2^\sigma\delta^\sigma |y|^{-\sigma}$.

We compute 
\[
\int_{B_R}|y|^a|u_k|^2 dz \leq 2\int_{B_R}|y|^a(1-\varphi_\delta)^2|u_k|^2  + 2\int_{B_R}|y|^a \varphi_\delta^2|u_k|^2.
\]
Fix $0<\sigma<2$ such that $ a - \sigma +n>0$. Using Proposition \ref{P:Hardy}, $i)$ with $\delta = a-\sigma$, we find
\[
\int_{B_R}|y|^a\varphi_\delta^2|u_k|^2 \leq \delta^\sigma \int_{B_R}|y|^{a-\sigma}|u_k|^2 \leq c \delta^\sigma R^{2-\sigma} \|u_k\|_{H^{1,a}(B_R)}\,,
\]
where in the last inequality we also used Lemma \ref{L:tracePo}.

In particular, since $\{u_k\}$ is a bounded sequence in $H^{1,a}(B_R)$, we infer that 
\[
\int_{B_R}|y|^a|u_k|^2 dz \leq 2\int_{B_R}|y|^a (1-\varphi_\delta)^2|u_k|^2  + c \delta ^\sigma\,,
\]
for a constant $c >0$ not depending on $k$. 

Next, we notice that $(1-\varphi_\delta)u_k$ is a sequence supported in $\overline{B_R \setminus \Sigma_\delta}$, where the weight $|y|^a$ is bounded and bounded away from zero. Thus, via a standard argument, we can see that $(1-\varphi_\delta)u_k \rightharpoonup 0$ in the unweighted Sobolev space $ H^{1}(B_R) $, and therefore
\[
\int_{B_R} |y|^a(1-\varphi_\delta)^2 |u_k|^2 \leq c \int_{B_R} (1-\varphi_\delta)^2 |u_k|^2 \to 0 \quad \text{as } k \to \infty,
\]
by the classical Rellich-Kondrakov theorem.
Summing up, we have proved that
\[
\lim_{k \to \infty} \int_{B_R} |y|^a |u_k|^2 \, dz \leq c \delta^\sigma,
\]
and the conclusion easily follows due to the arbitrariness of $\delta$.
\end{proof}

As a consequence of the previous two results we have the following trace theorem. 

\begin{Lemma}[Trace operator]\label{L:H^1_0:trace}
Let $R>0$ and $a+n>0$. Then there exists a unique bounded linear operator
\[
T_R : H^{1,a}(B_R) \to L^{2,a}(\partial B_R)\,,
\]
such that for all $u \in C^\infty(\overline{B_R})$
\[
T_R u = u _{|\partial B_R}.
\]
Moreover, the following characterization holds
\[
H^{1,a}_0(B_R) = \{ u \in H^{1,a}(B_R) \mid T_R u = 0 \}\,.
\]
\end{Lemma}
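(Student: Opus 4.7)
The construction of $T_R$ is immediate from Lemma \ref{L:tracePo}: applying it with $r=R$ gives $\|u|_{\partial B_R}\|^2_{L^{2,a}(\partial B_R)} \leq C\|u\|_{H^{1,a}(B_R)}^2$ for every $u \in C^\infty(\overline{B_R})$, so the restriction map extends by continuity and density to a bounded linear operator on $H^{1,a}(B_R)$. The inclusion $H^{1,a}_0(B_R) \subseteq \ker T_R$ is then immediate, since smooth compactly supported functions have vanishing trace and $T_R$ is continuous. The rest of the plan is devoted to the reverse inclusion, i.e., approximating any $u \in H^{1,a}(B_R)$ with $T_R u=0$ by functions in $C^\infty_c(B_R)$.

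The crucial ingredient is a weighted Hardy-type inequality near $\partial B_R$: for $u \in C^\infty(\overline{B_R})$ with $u|_{\partial B_R}=0$ and $\delta \in (0, R/2)$, I claim
\[
\int_{B_R \setminus B_{R-\delta}} |y|^a |u|^2 \, dz \leq C\delta^2 \int_{B_R \setminus B_{R-\delta}} |y|^a |\nabla u|^2 \, dz.
\]
The proof starts from $u(r\sigma) = -\int_r^R \nabla u(\tau\sigma) \cdot \sigma \, d\tau$ and applies Cauchy-Schwarz against the weight $\tau^{a+d-1}$, giving
\[
|u(r\sigma)|^2 \leq \Big(\int_r^R \tau^{1-a-d} \, d\tau\Big)\int_r^R \tau^{a+d-1}|\nabla u(\tau\sigma)|^2 \, d\tau \leq C R^{1-a-d}(R-r)\int_r^R \tau^{a+d-1}|\nabla u(\tau\sigma)|^2 \, d\tau
\]
for $r \in (R/2, R)$, where the bound on the first factor follows from the mean value theorem since $r \geq R/2$. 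Multiplying by $r^{a+d-1}|\Pi\sigma|^a$ so that the left-hand side matches the integrand of $|y|^a|u|^2$ in polar coordinates (recall $|y|=r|\Pi\sigma|$, hence $|y|^a \, dz = r^{a+d-1}|\Pi\sigma|^a \, dr \, d\sigma$), and integrating over $\sigma \in \mathbb{S}^{d-1}$ and then $r \in (R-\delta, R)$, the quadratic gain $\int_{R-\delta}^R (R-r) \, dr = \delta^2/2$ produces the claim. The inequality then extends to every $u \in H^{1,a}(B_R)$ with $T_R u=0$ by density of $C^\infty(\overline{B_R})$ and continuity of $T_R$.

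Equipped with the Hardy inequality, fix $u \in H^{1,a}(B_R)$ with $T_R u = 0$ and choose a cutoff $\eta_\delta \in C^\infty(\overline{B_R})$ equal to $1$ on $B_{R-2\delta}$, to $0$ outside $B_{R-\delta}$, with $|\nabla\eta_\delta|\leq C/\delta$. Dominated convergence handles $\|u-\eta_\delta u\|_{L^{2,a}(B_R)} \to 0$ and $\|(1-\eta_\delta)\nabla u\|_{L^{2,a}(B_R)} \to 0$, while the dangerous cross term is controlled via
\[
\|u\,\nabla\eta_\delta\|^2_{L^{2,a}(B_R)} \leq \frac{C}{\delta^2}\int_{B_R\setminus B_{R-\delta}}|y|^a|u|^2 \, dz \leq C\int_{B_R\setminus B_{R-\delta}}|y|^a|\nabla u|^2 \, dz \to 0.
\]
Thus $\eta_\delta u \to u$ in $H^{1,a}(B_R)$, with each $\eta_\delta u$ supported in $\overline{B_{R-\delta}} \subset B_R$. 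To approximate the compactly supported $\eta_\delta u$ by $C^\infty_c(B_R)$ functions, pick $\zeta \in C^\infty_c(B_R)$ with $\zeta=1$ on $\overline{B_{R-\delta}}$ and approximate $\eta_\delta u$ in $H^{1,a}(B_R)$ by $\psi_k \in C^\infty(\overline{B_R})$ (using the defining density); then $\zeta\psi_k \in C^\infty_c(B_R)$ and continuity of multiplication by the fixed smooth bounded function $\zeta$ yields $\zeta\psi_k \to \zeta\,\eta_\delta u = \eta_\delta u$ in $H^{1,a}(B_R)$. Hence every $\eta_\delta u$ lies in $H^{1,a}_0(B_R)$, and closedness of $H^{1,a}_0(B_R)$ concludes $u \in H^{1,a}_0(B_R)$. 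The main obstacle is the Hardy inequality: the anisotropic weight $|y|^a$ obstructs a naive one-dimensional estimate, but the factorization $|y|^a = r^a|\Pi\sigma|^a$ decouples the radial and angular parts cleanly, and the radial argument proceeds exactly as in the unweighted case.
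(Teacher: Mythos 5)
Your construction of $T_R$ matches the paper's, but your proof of the characterization $H^{1,a}_0(B_R)=\ker T_R$ takes a genuinely different route. The paper extends $u$ by zero outside $B_R$ and uses the dilations $u_\lambda(z)=u(\lambda z)$, $\lambda\downarrow 1$, whose supports retreat to $\overline{B_{R/\lambda}}\subset B_R$; it then verifies $u_\lambda\in H^{1,a}_0(B_R)$ via Lemmas~\ref{L:Wchar} and~\ref{L:H=W}, and upgrades the resulting weak convergence $u_\lambda\rightharpoonup u$ to strong convergence by combining the compact embedding of Lemma~\ref{L:compactness} with convergence of the norms $\|u_\lambda\|_{H^{1,a}}\to\|u\|_{H^{1,a}}$. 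You instead establish a boundary Hardy inequality
\[
\int_{B_R\setminus B_{R-\delta}}|y|^a|u|^2\,dz\leq C\delta^2\int_{B_R\setminus B_{R-\delta}}|y|^a|\nabla u|^2\,dz
\]
for $u$ with vanishing trace, and use it to control the singular cross term in the direct cutoff approximation $\eta_\delta u\to u$. Both approaches are sound. One minor bookkeeping point in yours: when you extend the Hardy inequality from smooth $u$ with $u|_{\partial B_R}=0$ to arbitrary $u$ with $T_Ru=0$, you should first prove the intermediate inequality with the boundary term $\int_{\partial B_R}|y|^a|u|^2\,ds$ present on the right and then pass to the limit (since smooth functions vanishing on $\partial B_R$ are not a priori dense in $\ker T_R$); this is exactly what "continuity of $T_R$" accomplishes but the form of the density argument should be made explicit. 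Also, the support of $\nabla\eta_\delta$ is $B_{R-\delta}\setminus B_{R-2\delta}$, so the Hardy inequality should be invoked at width $2\delta$ rather than $\delta$. Your route is slightly more self-contained, trading the paper's reliance on the $H=W$ lemma and the compactness theorem for one additional (but elementary and local) Hardy-type estimate; the paper's dilation trick is shorter given the machinery it already has in hand.
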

\begin{proof}
Since $C^{\infty}(\overline{B_R})$ is dense in $H^{1,a}(B_R)$, the first part of the lemma is a direct consequence of Lemma \ref{L:tracePo} and a density argument.  

As for the second part, it is obvious by the above definition that every $u \in H^{1,a}_0(B_R)$ satisfies $T_R(u) = 0$. Let then $u \in H^{1,a}(B_R)$ be such that $T_R u =  0$. Since $T_R u = 0$, $u$ can be trivially extended to zero outside of $B_R$. 

Now let us consider, for $\lambda \in (1,2)$ the functions 
\[
u_\lambda (z) = u(\lambda z)\,,
\]
which, by construction, are such that $u_\lambda (z) = 0$ if $\frac{R}{\lambda} \leq |z|\leq R$. 
As a consequence, one can easily see that $\|u_\lambda\|_{H^{1,a}(B_R)} \leq c \|u\|_{H^{1,a}(B_R)}$ for a constant $c>0$ not depending on $\lambda$. Thus $u_\lambda \in W^{1,a}(B_R)$ by Lemma \ref{L:Wchar}, and in fact $u_\lambda \in H^{1,a}(B_R)$ by Lemma \ref{L:H=W}. Moreover, since $u_\lambda (z) = 0$ if $\frac{R}{\lambda} \leq |z|\leq R$, 
each $u_\lambda$ can be approximated by functions in $C^\infty_c(B_R)$, that is, $u_\lambda \in H^{1,a}_0(B_R)$.

Since $H^{1,a}_0(B_R)$ is complete, to conclude it suffices to show that $\|u_\lambda - u\|_{H^{1,a}(B_R)}\to 0 $ as $\lambda \to 1^+$.
Since $u_\lambda$ is bounded in $H^{1,a}_0(B_R)$, it converges weakly to $v \in H^{1,a}_0(B_R)$ and, thanks to Lemma \ref{L:compactness}, it also holds $u_\lambda \to v$ a.e. in $B_R$. In fact, since $u_\lambda \to u$ a.e. in $B_R$ by definition, $v = u$. Finally, we observe that $\|u_\lambda\|_{H^{1,a}(B_R)} \to \|u\|_{H^{1,a}(B_R)}$ as $\lambda \to 1^+$, and this is enough to obtain strong convergence. The proof is complete. 
\end{proof}

\subsection{Poincar\'e-Wirtinger inequality and 2-admissible weights}\label{sec:2admissible}
In this section we establish the $2$-admissibility of the weight term $|y|^a$ whenever $a+n>0$. The definition of $2$-admissible weights can be found in \cite[\S 1.1]{HeiKilMar06}, and can be resumed into four properties, in order: the doubling condition on the measure $d\mu=|y|^adz$, a condition on the well-definedness of the weak gradient, a Sobolev inequality, which is proved later in Lemma \ref{L:Sstab} in a more general form, and the Poincar\'e-Wirtinger inequality, which is stated and proved below. Checking the first two conditions is easy and we omit the proofs.

\begin{Proposition}[Poincar\'e-Wirtinger inequality]
    Let $a+n>0$ and $R>0$. Then there exists a positive constant $c>0$ depending only on $d$, $a$ such that for any $u\in H^{1,a}(B_R)$
    \begin{equation}\label{poinwirtR}
    c\int_{B_R}|y|^a|u-\langle u\rangle_R^a|^2 \,dz\leq R^2\int_{B_R}|y|^a|\nabla u|^2 \,dz
\end{equation}
where
\begin{equation*}
    \langle u\rangle_R^a:=\frac{1}{\mu_a(B_R)}\int_{B_R}|y|^a u \,dz,\qquad \mathrm{and}\qquad  \mu_a(B_R)=\int_{B_R}|y|^a \,dz.
\end{equation*}
\end{Proposition}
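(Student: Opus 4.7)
The plan is to argue by contradiction using the compact embedding from Lemma \ref{L:compactness}. First I would reduce to the scale-free case $R=1$: setting $\tilde u(z)=u(Rz)$, the substitution $z\mapsto Rz$ transforms $\mu_a(B_R)=R^{d+a}\mu_a(B_1)$, $\langle u\rangle_R^a=\langle \tilde u\rangle_1^a$, and both sides of \eqref{poinwirtR} scale by the common factor $R^{d+a}$, with the $R^2$ on the right coming from the gradient. Hence it suffices to prove the inequality on $B_1$.

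Suppose by contradiction that there is no constant $c>0$ making \eqref{poinwirtR} hold on $B_1$. Then there exists a sequence $\{u_k\}\subset H^{1,a}(B_1)$ such that
\[
\int_{B_1}|y|^a|u_k-\langle u_k\rangle_1^a|^2\,dz=1\qquad\text{and}\qquad \int_{B_1}|y|^a|\nabla u_k|^2\,dz\to 0.
\]
Replacing $u_k$ by $v_k:=u_k-\langle u_k\rangle_1^a$ (which has $\langle v_k\rangle_1^a=0$ and the same gradient), one obtains a sequence with $\|v_k\|_{L^{2,a}(B_1)}=1$, $\langle v_k\rangle_1^a=0$, and $\|\nabla v_k\|_{L^{2,a}(B_1)}\to 0$. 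In particular $\{v_k\}$ is bounded in $H^{1,a}(B_1)$, so up to a subsequence $v_k\rightharpoonup v$ weakly in $H^{1,a}(B_1)$. By Lemma \ref{L:compactness} the convergence is strong in $L^{2,a}(B_1)$, hence $\|v\|_{L^{2,a}(B_1)}=1$, while weak convergence of the gradients together with $\|\nabla v_k\|_{L^{2,a}}\to 0$ gives $\nabla v\equiv 0$ in $L^{2,a}(B_1)$. Strong $L^{2,a}$ convergence also passes to the weighted mean, so $\langle v\rangle_1^a=0$.

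To close the argument, I need to conclude from $\nabla v=0$ (in the weighted sense) that $v$ is constant on $B_1$. Since $a+n>0$, Lemma \ref{L:H=W} yields $H^{1,a}(B_1)=W^{1,a}(B_1)$, so $v\in W^{1,a}(B_1)$. If $a+n<2n$, Lemma \ref{L:Wchar}\,$i)$ puts $v$ in $W^{1,1}_{\rm loc}(B_1)$ with $\nabla v=0$ a.e., whence $v$ is constant on the connected open set $B_1$. If instead $a+n\geq 2$, Lemma \ref{L:Wchar}\,$ii)$ gives $v\in W^{1,1}_{\rm loc}(B_1\setminus\Sigma_0)$ with $\nabla v=0$ a.e., so $v$ is constant on each connected component of $B_1\setminus\Sigma_0$; because $\dim\Sigma_0=d-n\leq d-2$, the set $B_1\setminus\Sigma_0$ is connected, and $v$ is again constant a.e. Combined with $\langle v\rangle_1^a=0$ this forces $v\equiv 0$, contradicting $\|v\|_{L^{2,a}(B_1)}=1$ and completing the proof.

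The only delicate step is the last one, where one must ensure that a vanishing weighted gradient genuinely forces $v$ to be constant across the characteristic manifold $\Sigma_0$; this is the reason for invoking $H=W$ together with both cases of Lemma \ref{L:Wchar} and the codimension bound $n\geq 2$. Everything else is a routine compactness-contradiction argument.
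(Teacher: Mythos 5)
Your proof is correct and follows essentially the same compactness-contradiction route as the paper: normalize to $B_1$ by scaling, build a sequence with unit weighted $L^2$ norm, zero weighted mean, and vanishing weighted gradient norm, invoke Lemma \ref{L:compactness} to pass to a strongly convergent limit, and conclude constancy from $\nabla v=0$ via Lemmas \ref{L:H=W} and \ref{L:Wchar} together with the connectedness of $B_1$ (resp.\ $B_1\setminus\Sigma_0$ when $n\geq 2$). The paper packages the last step a bit more tersely, but the substance and the reliance on codimension $n\geq 2$ are identical.
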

The proof is classical and can be found for instance in \cite[\S 5.8.1]{Eva10}. However, we report it in order to show why the inequality holds true whenever the weight is locally integrable and the codimension $n\geq2$ while it does not hold in the superdegenerate setting when $n=1$.
\begin{proof}
Let us prove the inequality in the unitary ball; that is,
   \begin{equation}\label{poinwirt1}
    c\int_{B_1}|y|^a|u-\langle u\rangle_1^a|^2 \,dz\leq \int_{B_1}|y|^a|\nabla u|^2 \,dz.
\end{equation}
Then, one can recover \eqref{poinwirtR} by scaling. Assume by contradiction that \eqref{poinwirt1} does not hold. Then, along a sequence $\{u_k\}\subset H^{1,a}(B_1)$
\begin{equation*}
    \|u_k-\langle u_k\rangle_1^a\|_{L^{2,a}(B_1)}>k\|\nabla u_k\|_{L^{2,a}(B_1)}.
\end{equation*}
Let
\begin{equation*}
    v_k=\frac{u_k-\langle u_k\rangle_1^a}{\|u_k-\langle u_k\rangle_1^a\|_{L^{2,a}(B_1)}}.
\end{equation*}
    Then
\[
        \|v_k\|_{L^{2,a}(B_1)}=1, \quad  \langle v_k\rangle_1^a=0, \quad\mathrm{and} \quad \|\nabla v_k\|_{L^{2,a}(B_1)}<1/k.
\]
    By Lemma \ref{L:compactness}, there exists $v\in H^{1,a}(B_1)$ such that, $v_k\rightharpoonup v$ in $H^{1,a}(B_1)$ and $v_k\to v$ in $L^{2,a}(B_1)$ up to subsequences, with $\|v\|_{L^{2,a}(B_1)}=1$, $\langle v\rangle_1^a=0$ and $\nabla v=0$ almost everywhere in $B_1$. Since the weak gradient is defined as a $L^1_{\rm loc}(B_1)$ function whenever $a+n<2n$ and as a $L^1_{\rm loc}(B_1\setminus\Sigma_0)$ function whenever $a+n\geq2$, in any case, due to the connectedness respectively of $B_1$ or $B_1\setminus\Sigma_0$, one can conclude that $v$ must be constant almost everywhere in $B_1$. Then, the two conditions $\|v\|_{L^{2,a}(B_1)}=1$ and $\langle v\rangle_1^a=0$ are in contradiction.
\end{proof}
Notice that, when $n=1$ and in the superdegenerate setting, i.e. $a\geq1$, $B_1\setminus\Sigma_0$ is not connected. Then, the proof above is not valid. In fact, the Poincar\'e-Wirtinger inequality is not valid too. Consider the jump function $u=1$ on $B_1^+$, $u=0$ on $B_1^-$, see \cite[Example 1.4]{SirTerVit21a}. However, we would like to stress the fact that in the codimension $1$ case, the weight $|y|^a$ is still $2$-admissible from one or the other side of the hyperplane $\Sigma_0$.

\section{Functional setting with stability in perforated domains}\label{sec:3}

Let us consider a symmetric matrix $ A \in C^1(B_R; \R^{d,d}) $ which satisfies the uniform ellipticity condition \eqref{eq:unif:ell} and express $A$ in blocks form as in \eqref{Blocks}; that is,
\begin{equation*}
    A = \begin{pmatrix} 
A_1 & A_2 \\
A_2^\top & A_3
\end{pmatrix}\,,
\end{equation*}
where $ A_1 \in C^1(B_R; \R^{d-n,d-n}) $, $ A_2 \in C^1(B_R; \R^{d-n,n}) $ and $ A_3 \in C^1(B_R; \R^{n,n}) $\,.

\begin{remark}\label{R:A3regularity}
    Throughout the paper, the regularity assumptions stated above can be relaxed as follows: $ A_1 \in C^0(B_R; \R^{d-n,d-n}) $, $ A_2 \in C^0(B_R; \R^{d-n,n}) $ and $ A_3 \in C^1(B_R; \R^{n,n}) $. In other words, the only block that actually requires one additional degree of regularity is $A_3$. However, since we will smooth the original coefficients by convolution with standard mollifiers and to simplify the notation, we will assume the stronger condition $A\in C^1(B_R; \R^{d,d})$. In the same way, when we assume $ A \in C^{1,\alpha}(B_R; \R^{d,d}) $ for some $\alpha\in(0,1)$ we could instead assume $ A_1 \in C^{0,\alpha}(B_R; \R^{d-n,d-n}) $, $ A_2 \in C^{0,\alpha}(B_R; \R^{d-n,n}) $ and $ A_3 \in C^{1,\alpha}(B_R; \R^{n,n}) $.
\end{remark}

Let us consider a small $\e_0>0$ such that Lemma \ref{L:normal} holds true in $B_R$.  The latter depends on $R$, $\lambda$, $\Lambda$ and $L$ where $\|A\|_{C^1(B_R)}\leq L$. For every $\e \in (0,\e_0)$, let us define 
    the $(\eps, A)$-neighborhood of $\Sigma_0$ as
    \[
     \Sigma^A_\e=\{(x,y)\in \R^{d-n}\times\R^n \mid {A_3^{-1}(x,y)y \cdot y}\le\e^2\}\,,
    \]
     and its boundary 
     \[
     \partial \Sigma^A_\e=\{(x,y)\in \R^{d-n}\times\R^n \mid {A_3^{-1}(x,y)y \cdot y}=\e^2\}\,.
    \]
    When $A=\mathbb{I}$ we simply write
\[
\Sigma_\e:=\Sigma_\e^{\mathbb{I}}= \{|y|\le \e\}\,\quad \partial\Sigma_\e:=\partial \Sigma_\e^{\mathbb{I}}= \{|y|= \e\}. 
\]
Moreover, let us point out that for every matrix $A$ one has that $\{y=0\}=\Sigma^A_0=\partial\Sigma_0^A$.

\begin{remark}\label{R:C1:boundary}
    
    The choice of $\e_0$ ensures that the normal vector to $\partial \Sigma^A_\e$ is continuous and well defined in $B_R$. Hence, the boundary $\partial \Sigma^A_\e$ is locally of class $C^1$ whenever $A\in C^1(B_R)$ and of class $C^{1,\alpha}$ whenever $A\in C^{1,\alpha}(B_R)$ for any given $\alpha\in(0,1)$. 
\end{remark}
In the rest of the paper we write $0<\e\ll1$ to denote that there exists a possibly small $\eps_0$ such that we consider $\e \in (0,\e_0)$. Moreover, in the context of perforated domains, we always assume that the matrix $A$ is at least $C^1$ to guarantee that the boundary $\partial \Sigma_\varepsilon^A$ is $C^1$.

\subsection{Functional spaces on perforated domains}
\subsubsection{Smooth functions}
For $R >0$, $0<\eps\ll 1$, we define the following spaces of smooth functions.
\begin{align*}
    &C^\infty(\overline{B_R\setminus\Sigma_\e^A})=\big\{
    u_{|_{B_R\setminus\Sigma_\e^A}} \ \mid \ u\in C^\infty(\R^d)
    \big\},\\
    &C_c^\infty(\overline{B_R}\setminus\Sigma_\e^A)=\big\{
    u_{|_{B_R\setminus\Sigma_\e^A}} \ \mid \ u\in C^\infty(\R^d) \text{ and }
    \supp(u) \subset\subset \R^d\setminus\Sigma_\e^A
    \big\},\\
    &C_c^\infty({B_R}\setminus\mathring{\Sigma}_\e^A)=
    \big\{
    u_{|_{B_R\setminus\Sigma_\e^A}} \ \mid \ u\in C^\infty(\R^d) \text{ and }
    \supp(u) \subset\subset B_R
    \big\},\\
    &C_c^\infty({B_R}\setminus\Sigma_\e^A)=\big\{
    u_{|_{B_R\setminus\Sigma_\e^A}} \ \mid \ u\in C^\infty(\R^d) \text{ and }
    \supp(u) \subset\subset B_R\setminus\Sigma_\e^A
    \big\}.
\end{align*}

\subsubsection{Weighted $L^p$ spaces}

For $a \in \R$, and $p\in[1,\infty)$, we define the spaces
$$L^{p,a}(B_R\setminus\Sigma_\e^A):= L^p(B_R\setminus\Sigma_\e^A, |y|^a dz), \qquad L^{p,a}(B_R\setminus\Sigma_\e^A)^d:= L^p(B_R\setminus\Sigma_\e^A, |y|^a dz)^d.$$ 
In both cases we denote the norm as $\|\cdot\|_{L^{p,a}(B_R\setminus\Sigma_\e^A)}$, for sake of simplicity.

\subsubsection{Weighted Sobolev spaces}

Let us define the norm

\begin{equation}\label{epsnorm}
\|u\|_{H^{1,a}(B_R\setminus\Sigma_\e^A)}=\Big(\int_{B_R\setminus \Sigma_\e^A} |y|^a (u^2 +|\nabla u|^2) dz\Big)^{\frac12}.
\end{equation}
We define the following Sobolev spaces:

\begin{itemize}
\item[] ${H}^{1,a}(B_R\setminus\Sigma_\e^A) = $ the completion of $C^\infty(\overline{B_R\setminus\Sigma_\e^A})$ with respect to the norm in \eqref{epsnorm};
\item[] $\tilde{H}^{1,a}(B_R\setminus\Sigma_\e^A) = $ the completion of $C_c^\infty(\overline{B_R}\setminus\Sigma_\e^A)$ with respect to the norm in \eqref{epsnorm};
\item[] $\hat{H}^{1,a}(B_R\setminus\Sigma_\e^A) = $ the completion of $C_c^\infty(B_R\setminus\mathring{\Sigma}_\e^A)$ with respect to the norm in \eqref{epsnorm};
\item[] ${H}^{1,a}_{0}(B_R\setminus\Sigma_\e^A) = $ the completion of $C_c^\infty({B_R}\setminus\Sigma_\e^A)$ with respect to the norm in \eqref{epsnorm}.
\end{itemize}
Note that the following inclusion of spaces hold true
\[ {H}^{1,a}_{0}(B_R\setminus\Sigma_\e^A)\subset \hat{H}^{1,a}(B_R\setminus\Sigma_\e^A) \subset {H}^{1,a}(B_R\setminus\Sigma_\e^A), \]
\[
{H}^{1,a}_{0}(B_R\setminus\Sigma_\e^A)\subset \tilde{H}^{1,a}(B_R\setminus\Sigma_\e^A) \subset {H}^{1,a}(B_R\setminus\Sigma_\e^A).
\]

\begin{remark}
We point out that for every $a \in \R$ it holds
\[
\begin{gathered}
H^{1,a}(B_R \setminus \Sigma_0) = H^{1,a}(B_R), \qquad \tilde H^{1,a}(B_R \setminus \Sigma_0) = \tilde H^{1,a}(B_R)\\
\hat H^{1,a}(B_R \setminus \Sigma_0) = H^{1,a}_0 (B_R \setminus \Sigma_0) = H^{1,a}_0(B_R)\,.  
\end{gathered}
\]
If $a+n\in(-\infty,0]\cup[2,\infty)$, by Proposition \ref{P:densitysupersingular} and Proposition \ref{P:densitysuperdegenerate} we also have 
\[
    H^{1,a}(B_R)= \tilde{H}^{1,a}(B_R\setminus\Sigma_0), \qquad {H}^{1,a}_0(B_R)={H}^{1,a}_0(B_R\setminus\Sigma_0)\,.
    \]
\end{remark}

\subsection{Stable Hardy-Poincar\'e and Poincar\'e inequalities}

\begin{Proposition}[$\varepsilon$-stable Hardy-Poincar\'e inequality]\label{L:Hardy_eps}

Let $R>0$, $0\le \eps\ll1 $ and $\delta \in \R \setminus \{-n\}$. Then 
\begin{enumerate}
\item[$i)$] if $\delta +n > 0$, for every $u \in C^\infty(\R^d)$ it holds
\[
\Big(\frac{\delta+n}{2}\Big)^2 \int_{B_R\setminus \Sigma_\eps^A}|y|^\delta|u|^2 dz \leq \frac{ \delta +n}{2}\int_{\partial B_R \setminus \Sigma_\eps^A}|y|^{\delta+1}|u|^2 ds  + \int_{B_R\setminus \Sigma_\eps^A}|y|^{\delta+2}|\nabla u |^2 dz
\]
\item[$ii)$] if $\delta + n < 0$, for every $u \in C^\infty_c(\R^d \setminus \Sigma_\eps^A)$ it holds
\[
\Big(\frac{\delta+n }{2}\Big)^2 \int_{B_R\setminus \Sigma_\eps^A}|y|^{\delta}|u|^2 dz \leq \int_{B_R\setminus \Sigma_\eps^A}|y|^{\delta+2}| \nabla u |^2 dz
\]

\end{enumerate}
\end{Proposition}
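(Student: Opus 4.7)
The plan is to adapt the classical proof of Proposition \ref{P:Hardy} by integrating by parts against the radial-in-$y$ vector field $Y(z) = (0, |y|^\delta y) \in \R^{d-n} \times \R^n$. A direct computation yields $\mathrm{div}\,Y = (\delta+n)|y|^\delta$ pointwise on $B_R \setminus \Sigma_\e^A$ (a set which avoids $\Sigma_0$), so the divergence theorem applied to $u^2 Y$ produces the identity
$$(\delta+n) \int_{B_R \setminus \Sigma_\e^A} |y|^\delta u^2\,dz = -2\int_{B_R \setminus \Sigma_\e^A} u\,|y|^\delta\, y \cdot \nabla_y u\,dz + \int_{\partial B_R \setminus \Sigma_\e^A} u^2 |y|^\delta (y\cdot \nu)\,ds + \int_{\partial \Sigma_\e^A \cap B_R} u^2 |y|^\delta (y \cdot \nu)\,ds\,.$$
On the outer sphere $\nu = z/R$, so the integrand there equals $|y|^{\delta+2}/R \leq |y|^{\delta+1}$, exactly matching the right-hand side of $i)$.

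The only genuinely new feature compared to Proposition \ref{P:Hardy} is the extra boundary contribution along $\partial \Sigma_\e^A \cap B_R$. Since the outer normal of $B_R \setminus \Sigma_\e^A$ on $\partial \Sigma_\e^A$ points \emph{into} the hole $\Sigma_\e^A$, it is parallel to $-\nabla_z\bigl(A_3^{-1}(x,y)y \cdot y\bigr)$. Expanding and using that $|y| \sim \e$ on $\partial \Sigma_\e^A$ (by uniform ellipticity of $A_3^{-1}$), the $y$-component of this gradient is $2 A_3^{-1}y + O(\e^2)$, so positive definiteness of $A_3^{-1}$ forces $y \cdot \nu \leq -c\e < 0$ for $0 < \e \ll 1$. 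The clean statement of this geometric fact is recorded in Lemma \ref{L:normal}, and it renders the hole-boundary integral non-positive.

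With this sign in hand, part $i)$ follows by dropping the non-positive hole-boundary term, applying Young's inequality $2|ab|\le \eta^{-1}a^2+\eta b^2$ to the cross term with $\eta=2/(\delta+n)$, and absorbing the resulting $|y|^\delta u^2$ on the left. For part $ii)$, the hole-boundary term vanishes identically because $u$ is supported away from $\Sigma_\e^A$; the outer-sphere term is non-negative, but because $\delta+n<0$ the sign of the left-hand side flips and this term can be discarded in the direction of the desired inequality, after which a Cauchy-Schwarz estimate closes the argument. The only real technical point is the sign of $y \cdot \nu$ on $\partial \Sigma_\e^A$, provided by Lemma \ref{L:normal}; the rest is a routine adaptation of the classical Hardy-Poincar\'e computation.
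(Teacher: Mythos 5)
Your argument is correct and mirrors the paper's own proof: both integrate the same radial-in-$y$ vector field against $u^2$ over $B_R\setminus\Sigma^A_\eps$, invoke Lemma~\ref{L:normal}~$ii)$ to conclude that the hole-boundary contribution has a favourable sign and may be discarded, and then close with Young's inequality (resp.\ Cauchy--Schwarz) for parts $i)$ and $ii)$. One small bonus of your formulation $\mathrm{div}\big(0,|y|^\delta y\big)=(\delta+n)|y|^\delta$ is that it is valid uniformly in $\delta$, whereas the paper's route via $\Delta|y|^{\delta+2}=(\delta+2)(\delta+n)|y|^\delta$ must exclude $\delta=-2$ and recover that case by a limiting argument.
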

\begin{proof}
First we prove $i)$. Let $\delta + n >0 $, $\delta \neq -2$. Using that $\Delta |y|^{\delta+2} = (\delta+2)(\delta+n)|y|^{\delta}$ and integrating by parts, we find that for every $u \in C^\infty(\R^d)$ it holds 
\begin{equation}\label{eq:Hard_pass}
\begin{aligned}
(\delta+n)\int_{B_R\setminus \Sigma_\eps^A}|y|^{\delta}|u|^2 dz &= -  2 \int_{B_R\setminus \Sigma_\eps^A} |y|^{\delta} u \nabla u\cdot y\, dz\\
&+ \int_{\partial B_R\setminus \Sigma_\eps^A}|y|^{\delta+1} |u|^2 d\sigma
-\int_{\partial \Sigma_\eps^A \cap B_R}|y|^{\delta} y \cdot \nu(z) |u|^2 d\sigma \\
& \leq 2 \int_{B_R\setminus \Sigma_\eps^A} |y|^{\delta+1} |u| |\nabla u|\, dz + \int_{\partial B_R\setminus \Sigma_\eps^A}|y|^{\delta+1} |u|^2 d\sigma\,,
\end{aligned}
\end{equation}
where we used that $\nu(z) \cdot y >0 $ on $\partial \Sigma_\eps^A$ by using Lemma \ref{L:normal}, $ii)$ in the ball $B_R$.
In fact, letting $\delta \to -2$ we see that \eqref{eq:Hard_pass} holds also for $\delta =-2$. 

Next we apply H\"older and Young inequalities to get
\[
2\int_{B_R\setminus \Sigma_\eps^A} |y|^{\delta+1} |u| |\nabla u|\, dz \leq  \frac{\delta+n}{2}\int_{B_R\setminus \Sigma_\eps^A}|y|^{\delta} |u|^2 dz
+ \frac{2}{\delta+n} \int_{B_R\setminus \Sigma_\eps^A}|y|^{\delta+2}|\nabla u|^2dz\,,
\]
and $i)$ follows. 

Let now $\delta + n < 0 $ and $u \in C^\infty_c(\R^d \setminus \Sigma_\eps^A)$. Arguing exactly as in the proof of \eqref{eq:Hard_pass} we find that
\[
\begin{aligned}
|\delta+n |\int_{B_R\setminus \Sigma_\eps^A}|y|^{\delta}|u|^2 dz = -\int_{\partial B_R\setminus \Sigma_\eps^A}|y|^{\delta+1} |u|^2 d\sigma + 2 \int_{B_R\setminus \Sigma_\eps^A}|y|^{\delta}u \nabla u \cdot y\, dz\leq  2 \int_{B_R\setminus \Sigma_\eps^A}|y|^{\delta+1}|u||\nabla u |dz\,,
\end{aligned}
\]
and thus we obtain $ii)$ via H\"older's inequality.
\end{proof}
 
\begin{Proposition}[$\varepsilon$-stable Poincar\'e inequality]\label{P:Poincare_eps}
Let $R>0$, $0\le \eps \ll 1$ and $a\in \R$. Then
\begin{enumerate}
\item[$i)$] if $a+ n>0$ and $u\in C^\infty_c(B_R)$ it holds
\begin{equation}\label{eq:poincare:eps:degenerate}
    \Big(\frac{a+n}{2R}\Big)^2\int_{B_R\setminus \Sigma^A_\eps}|y|^au^2\leq\int_{B_R\setminus \Sigma^A_\eps}|y|^a|\D u|^2.
\end{equation}
\item[$ii)$] if $a+ n <2 $ and $u\in C^\infty_c(\R^d\setminus \Sigma^A_\eps)$ it holds
\[
    \Big(\frac{a+n-2}{2R}\Big)^2\int_{B_R\setminus{\Sigma^A_\eps}}|y|^au^2\leq\int_{B_R\setminus \Sigma^A_\eps}|y|^a|\D u|^2.
\]
\end{enumerate}
\end{Proposition}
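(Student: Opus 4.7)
The plan is to derive both parts as almost immediate corollaries of the $\varepsilon$-stable Hardy--Poincar\'e inequality (Proposition \ref{L:Hardy_eps}), choosing the parameter $\delta$ so that the resulting weighted exponents coincide with $a$ (up to a factor of $|y|^{\pm 2}$), and then bounding the latter factor by $R^{\pm 2}$ using $|y|\leq R$ on $B_R$. The stability in $\varepsilon$ will transfer automatically because Proposition \ref{L:Hardy_eps} is already stable; no new geometric input on $\partial\Sigma_\varepsilon^A$ is needed beyond the sign of $\nu\cdot y$ already used there.

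For part $i)$, the natural choice is $\delta=a$, which gives $\delta+n=a+n>0$. Since $u\in C^\infty_c(B_R)$ extends by zero to an element of $C^\infty(\R^d)$, applying Proposition \ref{L:Hardy_eps} $i)$ yields
\[
\Big(\frac{a+n}{2}\Big)^2\int_{B_R\setminus\Sigma_\varepsilon^A}|y|^a u^2\,dz \;\leq\; \frac{a+n}{2}\int_{\partial B_R\setminus\Sigma_\varepsilon^A}|y|^{a+1}u^2\,ds + \int_{B_R\setminus\Sigma_\varepsilon^A}|y|^{a+2}|\nabla u|^2\,dz.
\]
The boundary term vanishes by the compact support of $u$ in $B_R$, and bounding $|y|^{a+2}=|y|^a|y|^2\leq R^2|y|^a$ on $B_R$ and dividing through by $R^2$ gives \eqref{eq:poincare:eps:degenerate}.

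For part $ii)$, the natural choice is $\delta=a-2$, which gives $\delta+n=a+n-2<0$ under the hypothesis. Since $u\in C^\infty_c(\R^d\setminus\Sigma_\varepsilon^A)$ is exactly the admissible class for Proposition \ref{L:Hardy_eps} $ii)$, we obtain
\[
\Big(\frac{a+n-2}{2}\Big)^2\int_{B_R\setminus\Sigma_\varepsilon^A}|y|^{a-2}u^2\,dz \;\leq\; \int_{B_R\setminus\Sigma_\varepsilon^A}|y|^a|\nabla u|^2\,dz.
\]
On $B_R$ we have $|y|^{a-2}=|y|^a|y|^{-2}\geq R^{-2}|y|^a$, and multiplying the left-hand side by this factor yields the claim. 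Note that we do not require $u$ to vanish on $\partial B_R$: in the proof of Proposition \ref{L:Hardy_eps} $ii)$ the resulting boundary contribution on $\partial B_R$ has sign $-\int_{\partial B_R\setminus\Sigma_\varepsilon^A}|y|^{\delta+1}u^2\,d\sigma\leq 0$ and is discarded.

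The main (and essentially only) thing to keep in mind is that the sign of $\nu\cdot y$ on $\partial\Sigma_\varepsilon^A$, guaranteed by Lemma \ref{L:normal} for $\varepsilon$ small, has already been absorbed in the statement of Proposition \ref{L:Hardy_eps}, so no extra work at the inner boundary is needed. Everything else is a bookkeeping exercise, and the constants come out with the sharp explicit dependence on $a$, $n$ and $R$ displayed in the statement.
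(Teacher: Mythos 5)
Your proof is correct and follows exactly the same route as the paper: part $i)$ is Proposition \ref{L:Hardy_eps} $i)$ with $\delta=a$ (the boundary term on $\partial B_R$ vanishes by compact support) and $|y|^{a+2}\leq R^2|y|^a$, while part $ii)$ is Proposition \ref{L:Hardy_eps} $ii)$ with $\delta=a-2$ and $|y|^{a-2}\geq R^{-2}|y|^a$. The paper records only the choices of $\delta$; you have spelled out the same bookkeeping.
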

\begin{proof}
Case $ i) $ follows from $ i) $ in Proposition \ref{L:Hardy_eps} with $ \delta = a $, while case $ ii) $ follows from $ ii) $ in Proposition \ref{L:Hardy_eps} with $ \delta = a - 2 $.
\end{proof}

\subsection{Stable trace inequalities on the boundary of the hole}

\begin{Lemma}[$\varepsilon$-stable trace inequality on the boundary of the hole]\label{L:trace:e}
Let $a+n>0$, $R>0$ given by Lemma \ref{L:appendix3} and $0<\e\ll1$. There exists a constant $c >0$ depends on $a$, $n$, $R$, $\lambda$, $\Lambda$, $\|A\|_{C^1(B_1)}$ such that 
\[
c \int_{\partial \Sigma_\eps^A\cap B_R}|y|^a |u|^2 d\sigma \leq G_\eps\int_{B_R \setminus \Sigma_\eps^A}|y|^a |\nabla u|^2 dz\,, \qquad  u \in {\hat H}^{1,a}(B_R\setminus \Sigma^A_\eps)\,,
\]
where
\[
G_\eps = 
\begin{cases}
\eps & a + n >2\\
\eps \log \eps & a + n = 2\\
\eps^{a+n-1} & 0 < a + n< 2 \,.
\end{cases}
\]
\end{Lemma}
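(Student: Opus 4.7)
The plan is to reduce the inequality to a one-dimensional fundamental theorem of calculus along the rays $r\mapsto(x,r\omega)$ emanating from the characteristic manifold $\Sigma_0$, combined with a weighted Cauchy–Schwarz inequality that produces the three regimes of $G_\eps$. By density of $C_c^\infty(B_R\setminus\mathring{\Sigma}_\eps^A)$ in $\hat H^{1,a}(B_R\setminus\Sigma_\eps^A)$, I may assume $u$ smooth and compactly supported in $B_R$, and extend it by zero outside. I then pass to cylindrical coordinates $(x,r,\omega)\in\R^{d-n}\times(0,\infty)\times\mathbb S^{n-1}$ with $y=r\omega$, so that $dz=r^{n-1}\,dx\,dr\,d\omega$ and $|y|^a\,dz = r^{a+n-1}\,dr\,dx\,d\omega$.

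The geometric input from the Appendix — namely the transversality $y\cdot\nu>0$ on $\partial\Sigma_\eps^A$ from Lemma \ref{L:normal}$(ii)$ together with Lemma \ref{L:appendix3} — ensures that, for $\eps$ small, each ray $r\mapsto(x,r\omega)$ meets $\partial\Sigma_\eps^A$ at a unique radius $r_\eps(x,\omega)$ with $\sqrt\lambda\,\eps\le r_\eps(x,\omega)\le\sqrt\Lambda\,\eps$, and that $\partial\Sigma_\eps^A\cap B_R$ admits the parametrization $(x,\omega)\mapsto(x,r_\eps(x,\omega)\omega)$ with surface element bounded by $c\,\eps^{n-1}\,dx\,d\omega$. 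Fixing $(x,\omega)$ and exploiting that $u$ has compact support in $B_R$, the fundamental theorem yields
\[
u(x,r_\eps\omega) = -\int_{r_\eps}^{+\infty} \partial_r u(x,r\omega)\,dr,
\]
and Cauchy–Schwarz with weight $r^{-(a+n-1)}$ gives
\[
|u(x,r_\eps\omega)|^2 \leq \Big(\int_{r_\eps}^{R} r^{-(a+n-1)}\,dr\Big)\Big(\int_{r_\eps}^{+\infty} r^{a+n-1}|\partial_r u(x,r\omega)|^2\,dr\Big),
\]
where $R$ bounds the $r$-support.

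A direct computation of the first factor, using $r_\eps\sim\eps$, produces a bound of order $\eps^{2-a-n}$ when $a+n>2$, of order $|\log\eps|$ when $a+n=2$, and of order $1$ (depending on $R$) when $0<a+n<2$. Multiplying by $|y|^a = r_\eps^a \sim \eps^a$ and by the surface Jacobian $\sim\eps^{n-1}$ produces exactly $G_\eps$ in each regime. Integrating the resulting pointwise inequality over $(x,\omega)$, using $|\partial_r u|\le|\nabla u|$, and converting back to Euclidean coordinates via $r^{a+n-1}\,dr\,dx\,d\omega = |y|^a\,dz$ yields the claimed bound. The principal technical obstacle is the anisotropic geometry of $\partial\Sigma_\eps^A$ for general $A$: one needs that the radial parametrization is uniformly well defined as $\eps\to 0$ and that its Jacobian, together with the $y$--component of the unit normal, is controlled uniformly in $\eps$ by constants depending only on $\lambda$, $\Lambda$, $L$. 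These facts are precisely the content of the Appendix lemmas, and they are what makes the constant $c$ in the inequality independent of $\eps$.
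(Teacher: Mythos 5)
Your isotropic core argument — fundamental theorem of calculus along the $y$-rays plus a weighted Cauchy--Schwarz with weight $r^{\pm(a+n-1)}$ — is exactly the paper's argument for the case $A=\mathbb{I}$, and your bookkeeping of the three regimes for $G_\eps$ is correct. Where you diverge is in handling the anisotropic hole: you try to work directly on $\partial\Sigma_\eps^A$ via a radial graph parametrization $(x,\omega)\mapsto(x,r_\eps(x,\omega)\omega)$, whereas the paper first proves the inequality for the flat hole $\partial\Sigma_\eps=\{|y|=\eps\}$ and then pulls everything back through the $C^1$-diffeomorphism $\Phi(z)=(x,A_3^{-1/2}(z)y)$ of Lemma~\ref{L:appendix3}, using parts $(ii)$--$(iv)$ of that lemma to control the interior Jacobian, the pulled-back Dirichlet form, and the surface Jacobian on $\partial\Sigma_\eps^A$.

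The gap in your route is the claim that the surface element of $\partial\Sigma_\eps^A$ in the radial parametrization is bounded by $c\,\eps^{n-1}\,dx\,d\omega$ and that this ``is precisely the content of the Appendix lemmas.'' It is not: Lemma~\ref{L:normal}$(ii)$ does give you transversality (hence a well-defined, unique $r_\eps(x,\omega)\in[\sqrt\lambda\,\eps,\sqrt\Lambda\,\eps]$ via \eqref{eq:app:prop}), but neither Lemma~\ref{L:normal} nor Lemma~\ref{L:appendix3} computes the Jacobian of the radial graph parametrization. Lemma~\ref{L:appendix3}$(iv)$ controls $\det(\Pi_\eps\circ J_\Phi^\top J_\Phi\circ\Pi_\eps)$, i.e.\ the surface Jacobian of $\Phi$ restricted to $\partial\Sigma_\eps^A$, which is a different object and is only useful if you actually change variables by $\Phi$, which you do not. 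To close your argument you would need to differentiate the implicit equation $\Psi(x,r_\eps\omega)=\eps$ (with $\Psi(z)=\sqrt{A_3^{-1}(z)y\cdot y}$) and show $|\partial_x r_\eps|\lesssim r_\eps$ and $|\nabla_\omega r_\eps|\lesssim r_\eps$ uniformly in $\eps$, from which $J\lesssim r_\eps^{n-1}\lesssim\eps^{n-1}$ follows; this is a short computation using $\|A_3^{-1}\|_{C^1}\le L$ and \eqref{eq:ueQ}, but it is genuinely missing from your write-up and is not supplied by the lemmas you cite. The paper's route via $\Phi$ is slightly cleaner precisely because the Appendix is tailored to it.
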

\begin{proof}
First, we consider the case $A = \mathbb{I}$. The proof is similar to the proof of Lemma \ref{L:tracePo}, so we omit some details. By density, it suffices to prove the result for $u \in C^{\infty}_c(B_R \setminus \mathring \Sigma_\eps)$. Let $x \in B_{\sqrt{R^2-\eps^2}}^{d-n}$ be fixed, and notice that $(x,y) \in {B_R \setminus \mathring \Sigma_\eps}$ if and only if $\eps \leq |y| < R_x $, where $R_x = \sqrt{R^2-|x|^2} \leq R$. We point out that $u(x, R_x \sigma) = 0$ for any $\sigma \in \mathbb{S}^{n-1}$. 
Thus, using the fundamental theorem of calculus and the H\"older inequality we get
\[
|u(x, \eps \sigma)|^2 = |u(x, R_x \sigma) - u(x, \eps \sigma) |^2 \leq \int^{R_x}_\eps 
\tau^{a+n-1}|\nabla u|^2(\tau \sigma)d\tau \int^{R}_\eps \tau^{1-a-n}d\tau\,.
\]
Integrating over $\sigma \in \mathbb{S}^{n-1}$ we find
\begin{equation}\label{eq:epsTr_pass}
\int_{\partial B_\eps^n}|y|^a |u|^2 d\sigma = \eps^{a+n-1}\int_{\mathbb{S}^{n-1}}|u(x, \eps \sigma)|^2 d\sigma \leq \Big( \eps^{a+n-1}\int^{R}_\eps \tau^{1-a-n}d\tau \Big)\int_{B^{n}_{R^x}\setminus B_\eps^n}|y|^a |\nabla u|^2 dy\,.
\end{equation}
Notice that 
\[
\eps^{a+n-1}\int^{R}_\eps \tau^{1-a-n}d\tau \leq c \,G_\eps
\]
for a constant $c>0$ depending only on $a$, $n$ and $R$. To conclude, we integrate \eqref{eq:epsTr_pass} over $x \in B^{d-n}_{\sqrt{R^{2} - \eps^2}}$.

Let us now consider the general case. Let $u \in C^\infty_c(B_R \setminus \mathring \Sigma_\eps^A)$, and define $\tilde u = u \circ \Phi^{-1}$, where $\Phi$ is the $C^1$-diffeomorphism from Lemma \ref{L:appendix3}. For every $z \in B_R$, we have $|\Phi(z)| \leq cR$ with $c>0$ depending only on $\lambda$ and  $\Lambda$. Thus $\tilde u \in C^1_c(B_{cR} \setminus \mathring \Sigma_\eps)$. Moreover, taking into account $ii)$ and $iii)$ from Lemma \ref{L:appendix3}, and using the change of variables $(x, \tau ) = \Phi(z)$, we obtain
\[
\int_{\R^d \setminus \Sigma_\eps} |\tau|^a |\nabla \tilde u(x, \tau)|^2 dx d\tau = \int_{\R^d \setminus \Sigma_\eps^A} |A^{-\frac12}_3(z)y|^a (J_\Phi^\top J_\Phi)^{-1}\nabla u \cdot \nabla u |\det J_\Phi| dz \leq c \int_{\R^d \setminus \Sigma_\eps^A} |y|^a |\nabla u|^2 dz\,.
\]
On the other hand, by taking the same change of variables in the integration over $\partial\Sigma_\e$, for instance see \cite[\S 11]{Maggi}, and using Lemma \ref{L:appendix3}, $iv)$, we get 
\[
\int_{\partial \Sigma_\eps} |\tau|^a |\tilde u(x, \tau)|^2 d\sigma(x,\tau) = \int_{\partial\Sigma_\eps^A} |A^{-\frac12}_3(z)y|^a |u|^2 |\det  (\Pi_\eps \circ J_\Phi^\top J_\Phi\circ\Pi_\eps)| d\sigma(z) \geq c \int_{\partial \Sigma_\eps^A} |y|^a |u|^2 d\sigma(z)\,.
\]
To conclude, apply to $\tilde u$ the previous step, which holds also for functions in $C^1_c(B_{cR} \setminus \mathring \Sigma_\eps)$.
\end{proof}

\subsection{Stable Sobolev inequalities}

\begin{Lemma}[$\eps$-stable Sobolev embeddings]\label{L:Sstab}
Let $a+n>0$, and let $R>0$ be as in Lemma \ref{L:appendix3}. Moreover, assume $0\le \e\ll1$. When $a_+ + d >2$, define 
\[
2^*_a = \frac{2(a_++d)}{a_++ d - 2}\,.
\]
\begin{itemize}
\item[$i)$]
if $a_+ + d >2$, then for any $2 \leq q \leq 2^*_a$ it holds
\[
c\Big(\int_{B_R \setminus \Sigma_\eps^A}|y|^a |u|^q dz  \Big)^{\frac{2}{q}} \leq \int_{B_R\setminus \Sigma_\eps^A} |y|^a |\nabla u|^2 dz\,, \qquad u \in \hat H^{1,a}(B_R \setminus \Sigma_\eps),
\]
for a constant $c>0$ which depends only on $d$, $n$, $a$, $\lambda$, $\Lambda$, $R$ and $\|A\|_{C^{1}(B_R)}$.
\item[$ii)$] if $d = n = 2$ and $a \leq 0$, the same inequality holds for all $q \in [2,\infty)$, with a constant $c>0$ which depends only on $d$, $n$, $a$, $\lambda$, $\Lambda$, $R$, $\|A\|_{C^{1}(B_R)}$ and $q$.
\end{itemize}
\end{Lemma}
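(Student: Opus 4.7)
\emph{Step 1: Reduction to $A=\mathbb{I}$.} By the $C^1$-diffeomorphism $\Phi$ from Lemma \ref{L:appendix3}, the anisotropic hole $\Sigma_\eps^A$ is mapped to the isotropic one $\Sigma_\eps$ with Jacobian bounded above and below uniformly in $\eps$ and with $|A_3^{-1/2}(z)y|^a\simeq |y|^a$. Hence both sides of the stated inequality change only by multiplicative constants depending on $\lambda,\Lambda,L$, and it suffices to prove the statement in the case $A=\mathbb{I}$ on the domain $B_R\setminus\Sigma_\eps$.

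\emph{Step 2: Sobolev inequality on the full ball.} By Section \ref{sec:2admissible}, the weight $|y|^a$ is $2$-admissible whenever $a+n>0$. The measure $d\mu=|y|^a\,dz$ is doubling with effective homogeneous dimension
\begin{equation*}
Q=a_++d,
\end{equation*}
as one sees by comparing the measure of balls centered on $\Sigma_0$, where $\mu(B_r)\simeq r^{a+d}$, with that of balls at distance $|y_0|\gg r$ from $\Sigma_0$, where $\mu(B_r)\simeq |y_0|^ar^d$; the worse of the two exponents governs the doubling constant. Combined with the Poincar\'e-Wirtinger inequality of Section \ref{sec:2admissible}, the general Sobolev-Poincar\'e theory for doubling measures \`a la Hajlasz-Koskela yields, for every $v\in H^{1,a}_0(B_{2R})$,
\begin{equation*}
\Big(\int_{B_{2R}}|y|^a|v|^q\,dz\Big)^{2/q}\le C\int_{B_{2R}}|y|^a|\nabla v|^2\,dz,\qquad 2\le q\le \frac{2Q}{Q-2}=2^*_a,
\end{equation*}
with $C=C(a,n,d,R)$. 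When $Q=2$, that is, $d=n=2$ and $a\le 0$, the borderline Trudinger-Moser regime provides the corresponding embedding into $L^q$ for every finite $q\ge 2$, proving item $ii)$.

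\emph{Step 3: Uniform-in-$\eps$ extension and conclusion.} The crux of the argument is the construction of an extension operator
\begin{equation*}
E_\eps\colon \hat H^{1,a}(B_R\setminus \Sigma_\eps)\longrightarrow H^{1,a}_0(B_{2R})
\end{equation*}
satisfying $E_\eps u=u$ on $B_R\setminus \Sigma_\eps$ and $\|E_\eps u\|_{H^{1,a}(B_{2R})}\le C\|u\|_{H^{1,a}(B_R\setminus\Sigma_\eps)}$ with $C$ independent of $\eps$. Since functions in $\hat H^{1,a}$ vanish on $\partial B_R$, extension by zero handles the exterior of $B_R$ and only the interior of the hole requires filling. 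In the superdegenerate range $a+n\ge 2$, I use the Kelvin inversion $y\mapsto \eps^2 y/|y|^2$ in the $y$-variable: a direct scaling computation shows that the $L^{2,a}$ bound follows from $a+n>0$ and the gradient bound follows from $a+n\ge 2$. In the complementary mid/supersingular range $0<a+n<2$ one can replace Kelvin by the weight-preserving radial reflection $\tilde u(x,r\sigma)=u(x,\phi(r)\sigma)$ with $\phi(r)=(2\eps^{a+n}-r^{a+n})^{1/(a+n)}$, tuned precisely so that $r^{a+n-1}\,dr$ is preserved; a bookkeeping of radial versus angular derivatives, combined with a one-dimensional Hardy-type argument in the radial variable, produces the desired gradient bound. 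Alternatively, in this range the weight $|y|^a$ is $A_2$-Muckenhoupt, so Chua's weighted extension theorem for Lipschitz domains applies, with constants depending only on the Lipschitz character of $\partial\Sigma_\eps$, itself uniform in $\eps$ by Lemma \ref{L:normal}. Applying the inequality of Step 2 to $E_\eps u$ and restricting the $L^{q,a}$ integration to $B_R\setminus \Sigma_\eps$ yields the claim, with constant independent of $\eps$ and $q$.

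\emph{Main obstacle.} The delicate point is precisely Step 3 in the range $0<a+n<2$, where the simple Kelvin inversion fails to be energy-decreasing for $|y|^a$; the construction of a uniform-in-$\eps$ extension then requires either the careful radial/angular decomposition described above, or the more abstract Chua-type extension theorem, which is where most of the technical work is concentrated.
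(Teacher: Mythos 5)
Your proposal takes a genuinely different route from the paper. The paper never constructs an extension operator; instead it first proves an $\eps$-stable $L^1$-CKN inequality on $\R^n\setminus B_\eps^n$ (Lemma \ref{L:CKNy}), observing that the best constant $S_{q,\eps}$ is scale-invariant so $S_{q,\eps}=S_{q,1}$, and bounds $S_{q,1}$ from below via a power-weighted Kelvin extension to $B_1^n$ together with the Hardy inequality and the classical CKN; it then combines this $L^1$ inequality in $y$ with an unweighted $L^1$-Sobolev in $x$ through the Coulhon--Grigor'yan--Levin product technique (Lemma \ref{L:L1sob}) and lifts $L^1\to L^2$ via the usual substitution $v=|u|^{q/r}$. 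Your Steps 1 and 2 (reduction to $A=\mathbb{I}$, and the Sobolev inequality on the full ball from doubling with effective dimension $Q=a_++d$ plus the Poincar\'e--Wirtinger inequality) are both sound and would give a more conceptual derivation of the $\eps=0$ case.

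The gap is in Step 3, and it is not merely technical. The weight-preserving radial reflection $\tilde u(x,r\sigma)=u(x,\phi(r)\sigma)$ with $\phi(r)=(2\eps^{a+n}-r^{a+n})^{1/(a+n)}$ preserves the $L^{2,a}$-norm exactly, but it does not control the gradient uniformly in $\eps$. Writing the Dirichlet energy in polar coordinates as $\int r^{a+n-1}\bigl(|\partial_r u|^2+r^{-2}|\nabla_\sigma u|^2\bigr)dr\,d\sigma$, the angular term for $\tilde u$ becomes (after the change of variables $\rho=\phi(r)$)
\[
\int_{\eps}^{2^{1/(a+n)}\eps}\frac{\rho^{a+n-1}}{\phi(\rho)^2}\,|\nabla_\sigma u|^2\,d\rho\,,
\]
and since $\phi(\rho)\to 0$ as $\rho\to 2^{1/(a+n)}\eps$ while $\rho\simeq \eps$, the factor $\rho^2/\phi(\rho)^2$ is unbounded: no Hardy inequality in the radial variable can absorb an unbounded angular multiplier. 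A similar computation shows the radial term also blows up for $a+n<1$. The same obstruction appears for the unweighted Kelvin inversion in $y$ in the range $0<a+n<2$ (where the radial term degenerates), and for a weighted Kelvin normalized to preserve the $L^{2,a}$ norm (where the gradient term degenerates). Your alternative via Chua's $A_2$-weighted extension theorem on uniform domains is the one plausible escape route, but there the uniformity in $\eps$ of the $(\varepsilon,\delta)$-constants for $B_R\setminus\Sigma_\eps^A$, together with the compatibility of Chua's construction with the vanishing trace on $\partial B_R$, would need an explicit proof — this is precisely the technical content that a complete argument would have to supply, and it is not a black box. The paper's route via $L^1$-CKN scale invariance is designed precisely to sidestep the construction of any extension operator.
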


The proof is a consequence of some lemmas. The first lemma is an $\eps$-stable $L^1$-version of the Caffarelli-Kohn-Nirenberg inequality.

\begin{Lemma}[$\varepsilon$-stable $L^1$-CKN]\label{L:CKNy}
Let $a + n >0$, $\eps \geq 0$. Then for any $1 \leq q \leq \frac{n}{n-1}$ it holds
\[
c\Big(\int_{\R^n \setminus B^n_\eps}|y|^a |u|^q dy  \Big)^{\frac{1}{q}} \leq \int_{\R^n\setminus B^n_\eps} |y|^{\frac{a+n}{q} - (n-1)} |\nabla u| dy\,, \qquad u \in C^{0,1}_c(\R^n),
\]
for a constant $c>0$ which depends only on $d$, $n$ and $a$.
\end{Lemma}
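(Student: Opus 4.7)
The plan is to prove this weighted $L^1$--Caffarelli--Kohn--Nirenberg inequality in three stages: a power change of variables to kill the weight $|y|^a$, a Kelvin-type reflection across $\partial B_\eps^n$ to fill in the hole, and invocation of the classical (unweighted) $L^1$--CKN inequality on $\R^n$.

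First, since $a+n>0$, I would set $\beta:=(a+n)/n>0$ and consider the bi-Lipschitz map $\phi:\R^n\setminus\{0\}\to\R^n\setminus\{0\}$, $\phi(y):=y|y|^{\beta-1}$, which satisfies $|\phi(y)|=|y|^\beta$, $|\det J\phi(y)|=\beta|y|^{(\beta-1)n}$, and whose operator norm $\|J\phi(y)\|_{op}$ is comparable to $|y|^{\beta-1}$ with constants depending only on $\beta$. The image of $\R^n\setminus B_\eps^n$ is $\R^n\setminus B_{\eps^\beta}^n$. Setting $\tilde u:=u\circ\phi^{-1}$ and using the crucial identity $(a+n)/\beta=n$, a direct substitution yields
\[
\int_{\R^n\setminus B_\eps^n}|y|^a|u|^q\,dy=\frac{1}{\beta}\int_{\R^n\setminus B_{\eps^\beta}^n}|\tilde u|^q\,dz,
\]
\[
\int_{\R^n\setminus B_\eps^n}|y|^{(a+n)/q-(n-1)}|\nabla u|\,dy\simeq_\beta\int_{\R^n\setminus B_{\eps^\beta}^n}|z|^{n/q-(n-1)}|\nabla\tilde u|\,dz.
\]
Hence the problem reduces, with $\rho:=\eps^\beta$ and $\kappa:=n/q-(n-1)\in[0,1]$, to the $\rho$-uniform unweighted inequality on the exterior of a ball.

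Second, to lift this to $\R^n$, I would extend $\tilde u$ by the Kelvin inversion $\psi(z):=\rho^2 z/|z|^2$, which fixes $\partial B_\rho^n$. Define
\[
\hat u(z):=\tilde u(z)\text{ for }|z|\ge\rho,\qquad \hat u(z):=\tilde u(\psi(z))\text{ for }|z|<\rho,
\]
so that $\hat u$ is Lipschitz across $\partial B_\rho^n$ and compactly supported. Using $|\nabla\hat u(z)|\le(\rho^2/|z|^2)|\nabla\tilde u(\psi(z))|$ on $B_\rho^n$ and the change of variables $w=\psi(z)$ (with $|z|=\rho^2/|w|$ and $dz=\rho^{2n}/|w|^{2n}\,dw$), a careful bookkeeping shows that the powers of $\rho$ coming from the weight $|z|^\kappa$ and from the Jacobian exactly balance, and the bound $|w|\ge\rho$ absorbs the remaining negative power of $|w|$ since $\kappa\ge 0$. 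This produces
\[
\|\hat u\|_{L^q(\R^n)}\le 2^{1/q}\|\tilde u\|_{L^q(\R^n\setminus B_\rho^n)},\qquad \int_{\R^n}|z|^\kappa|\nabla\hat u|\,dz\le 2\int_{\R^n\setminus B_\rho^n}|z|^\kappa|\nabla\tilde u|\,dz.
\]

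Third, combining the two previous steps, the claim follows from the classical $L^1$--Caffarelli--Kohn--Nirenberg estimate
\[
\Big(\int_{\R^n}|\hat u|^q\,dz\Big)^{1/q}\le c\int_{\R^n}|z|^\kappa|\nabla\hat u|\,dz,\qquad\hat u\in C^{0,1}_c(\R^n),\ 1\le q\le n/(n-1),
\]
which at $q=1$ is elementary (from $\mathrm{div}(z/n)=1$ and integration by parts), at $q=n/(n-1)$ is the classical $L^1$--Sobolev embedding, and for intermediate $q$ is a standard consequence of the coarea formula paired with the isoperimetric inequality for the power weight $|z|^\kappa$. The main technical obstacle is the Kelvin-reflection step: the exponent $\kappa=n/q-(n-1)$ is exactly the one for which the Jacobian $\rho^{2n}/|w|^{2n}$ balances the weight under inversion, producing a constant independent of $\rho$; this arithmetic is essentially a manifestation of the scale invariance of the inequality under the dilation $y\mapsto \lambda y$, and is what makes the stability in $\eps$ possible.
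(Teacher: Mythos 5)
Your proof is correct and takes a genuinely different route from the paper. The paper works directly with the weighted inequality: it normalizes the hole to radius $1$ by scaling, extends the test function into the hole by the \emph{weighted} Kelvin transform $\tilde u(y)=|y|^{-2(a+n)/q}u(I(y))$ (the power is forced by the requirement that the weighted $L^q$ norm be preserved under inversion), and then must invoke an auxiliary Hardy inequality to control the cross term produced by differentiating the power prefactor; finally it cites the classical CKN inequality for $\eps=0$. You instead first perform the radial power map $\phi(y)=y|y|^{\beta-1}$ with $\beta=(a+n)/n$, which uses precisely the hypothesis $a+n>0$ to eliminate the weight $|y|^a$ in the $L^q$ term and convert the gradient weight to $|z|^\kappa$ with $\kappa=n/q-(n-1)\in[0,1]$; the subsequent extension into the hole is then a plain conformal Kelvin reflection $\hat u=\tilde u\circ\psi$ with no prefactor, whose Jacobian arithmetic alone gives the uniform constants (indeed $\rho^{2\kappa-2+2n}|w|^{-(2\kappa-2+2n)}\le 1$ for $|w|\ge\rho$ since $\kappa\ge0$, $n\ge2$), so no Hardy inequality and no separate scaling step are needed. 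What you gain is a cleaner, more modular argument in which the weight-removal step is logically separated from the hole-filling step; what the paper's version buys is that it stays entirely within the weighted framework and hands off to the classical CKN statement with no intermediate reformulation. One minor point you should make explicit: after the power substitution, $\tilde u=u\circ\phi^{-1}$ need not be globally Lipschitz at the origin when $\beta\neq1$, but for $\eps>0$ both $\tilde u$ and the reflected $\hat u$ are only ever evaluated away from $0$ (and $\hat u$ vanishes near the origin because $u$ has compact support), while for $\eps=0$ you need no reflection at all, so the classical CKN applies directly; this keeps the argument honest at the edge case.
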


\begin{proof}
Let $1 \leq q \leq \frac{n}{n-1}$ be fixed. Set, for $\eps \geq 0$, 
\[
S_{q,\eps} = \inf_{u \in C_c^{0,1}(\R^n)} \frac{\int_{\R^n\setminus B^n_\eps} |y|^{\frac{a+n}{q} - (n-1)} |\nabla u| dy}{\Big(\int_{\R^n \setminus B^n_\eps}|y|^a |u|^q dy  \Big)^{\frac{1}{q}}}\,.
\]
Our aim is to show that $S_{q,\eps} \geq c >0$, for a constant $c>0$ not depending on $\eps$ or $q$. 

First we notice that, by a standard scaling argument, it holds $S_{q,\eps} = S_{q,1}$ for any $\eps >0$. 
Next we show that $S_{q, 0} \leq c S_{q,1}$ for a constant $c>0$ independent from $q$. Let $u \in C_c^{0,1}(\R^n)$ be fixed. Arguing as in the proof of Lemma \ref{L:Hardy_eps} (take $|u|$ instead of $|u|^2$ in \eqref{eq:Hard_pass}), we see that the following Hardy-type inequality holds: 
\begin{equation}\label{eq:p1hardy}
\frac{a+n}{q}\int_{\R^n\setminus B^n_1} |y|^{\frac{a+n}{q} - n} |u|dy \leq \int_{\R^n\setminus B^n_1} |y|^{\frac{a+n}{q} - (n-1)} |\nabla u|dy\,.
\end{equation}

Next, we extend $u$ in the unitary ball $B^n_1$ by the Kelvin transform
\[
\tilde u(y) = 
\begin{cases}
u(y) & |y| \geq 1\\
|y|^{-\frac{2(a+n)}{q}} u(I(y)) & |y| \leq 1\,,
\end{cases}
\]
where $I(y) = |y|^{-2}y$ is the inversion map. Clearly, $\tilde u \in C_c^{0,1}(\R^n)$. Since $J_IJ_I =|y|^{-4}{\mathbb I}_n$ and thus $|\det J_I| = |y|^{-2n}$, a standard computation yields
\[
\int_{\R^n} |y|^a |\tilde u|^qdy = 2\int_{\R^n\setminus B^n_1} |y|^a |u|^qdy\,.
\] 
Moreover, using also \eqref{eq:p1hardy}, we estimate
\[
\begin{aligned}
\int_{\R^n} |y|^{\frac{a+n}{q} - (n-1)} |\nabla \tilde u|dy &\leq 2 \Big(\int_{\R^n\setminus B^n_1} |y|^{\frac{a+n}{q} - (n-1)} |\nabla u|dy +\frac{a+n}{q} \int_{\R^n\setminus B^n_1} |y|^{\frac{a+n}{q} - n} |u|dy\Big)\\
& \leq 3 \int_{\R^n\setminus B^n_1} |y|^{\frac{a+n}{q} - (n-1)} |\nabla u|dy\,.
\end{aligned}
\]
Thus we infer that 
\[
S_{q,0} \leq \frac{\int_{\R^n} |y|^{\frac{a+n}{q} - (n-1)} |\nabla \tilde u| dy}{\Big(\int_{\R^n }|y|^a |\tilde u|^q dy  \Big)^{\frac{1}{q}}} \leq c \frac{\int_{\R^n \setminus B^n_1} |y|^{\frac{a+n}{q} - (n-1)} |\nabla u| dy}{\Big(\int_{\R^n\setminus B^n_1 }|y|^a |u|^q dy  \Big)^{\frac{1}{q}}}\,,
\]
for any $u \in C_c^{0,1}(\R^n)$, that is, $S_{q,0} \leq cS_{q,1}$. Since by the classical Caffarelli-Kohn-Nirenberg inequality \cite{CafKohNir84} we have $S_{q, 0} >0$ uniformly in $q$, the proof is complete.
\end{proof}

The second lemma we need is an equivalence between weighted Sobolev-type inequalities and the $ L^1 $-Moser weighted inequality. The unweighted version of this equivalence for functions defined on all the space is well known; see, e.g., \cite{BCM95} and the references therein. The presence of the weights and the restriction to subsets does not add any difficulty; nevertheless, we present the proof for the sake of completeness.

\begin{Lemma}\label{L:StoM}
Let $ d \geq 1 $ and let $A \subset \mathbb{R}^d$ be a set with smooth boundary and non empty interior.  Let $\mu, \nu \in L^1_{\rm loc}(A)$, $\mu, \nu \geq 0$. Moreover, let $q \geq 1$. Then
\begin{equation}\label{eq:Sd}
c_0\Big(\int_{A}\mu(z) |f|^q dz  \Big)^{\frac{1}{q}} \leq \int_{A}\nu(z)|\nabla f| dz\,, \quad \text{for every} \quad f \in C_c^{0,1}(\R^d)
\end{equation}
if and only if 
\begin{equation}\label{eq:Mos}
c_0\int_{A}\mu(z)|f|^{1+\frac{q-1}{q}} dz \leq \Big(\int_{A}\mu(z) |f| dz  \Big)^{\frac{q-1}{q}} \int_{A} \nu(z)|\nabla f| dz\, \quad \text{for every} \quad f \in C_c^{0,1}(\R^d)\,,
\end{equation}
for the same constant $c_0>0$.  
\end{Lemma}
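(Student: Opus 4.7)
The plan is to prove the two implications separately. The forward direction \eqref{eq:Sd} $\Rightarrow$ \eqref{eq:Mos} is a direct application of H\"older's inequality: from the factorization
\[
\mu |f|^{1+(q-1)/q} = (\mu|f|^q)^{1/q}(\mu|f|)^{(q-1)/q},
\]
H\"older's inequality with conjugate exponents $q$ and $q/(q-1)$ yields
\[
\int_A \mu |f|^{1+(q-1)/q}\,dz \leq \Bigl(\int_A \mu |f|^q\,dz\Bigr)^{1/q}\Bigl(\int_A \mu |f|\,dz\Bigr)^{(q-1)/q},
\]
and combining with \eqref{eq:Sd} produces \eqref{eq:Mos} with the same constant $c_0$.

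The reverse implication \eqref{eq:Mos} $\Rightarrow$ \eqref{eq:Sd} is the main content and follows a Federer--Fleming level-set argument. Replacing $f$ by $|f|$, I may assume $f\geq 0$. For parameters $t,\delta>0$ I plug into \eqref{eq:Mos} the Lipschitz truncation $g_{t,\delta}(z):=\phi_{t,\delta}(f(z))$, with $\phi_{t,\delta}(s):=\min\{1,\max(0,(s-t)/\delta)\}$; note $g_{t,\delta}\in C^{0,1}_c(\R^d)$, $0\leq g_{t,\delta}\leq 1$, and $|\nabla g_{t,\delta}|=\delta^{-1}\chi_{\{t<f<t+\delta\}}|\nabla f|$. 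Letting $\delta\to 0^+$, dominated convergence together with $\mu(\{f=t\})=0$ for a.e.\ $t>0$ (which in turn follows from $\int_0^\infty \mu(\{f=s\})\,ds=0$ by Fubini) implies that both $\int_A \mu\, g_{t,\delta}^{1+(q-1)/q}\,dz$ and $\int_A \mu\, g_{t,\delta}\,dz$ tend to $\mu(\{f>t\})$, where I write $\mu(E):=\int_E \mu\,dz$. For the gradient term, the weighted coarea formula gives
\[
\int_A \nu|\nabla g_{t,\delta}|\,dz = \frac{1}{\delta}\int_t^{t+\delta}\int_{\{f=s\}}\nu\,d\mathcal{H}^{d-1}\,ds,
\]
which tends to $\int_{\{f=t\}}\nu\,d\mathcal{H}^{d-1}$ as $\delta\to 0^+$ for a.e.\ $t>0$ by Lebesgue differentiation. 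Since $2-(1+(q-1)/q)=1/q$, dividing the resulting inequality by $\mu(\{f>t\})^{(q-1)/q}$ yields the relative isoperimetric-type estimate
\[
c_0\,\mu(\{f>t\})^{1/q}\leq \int_{\{f=t\}}\nu\,d\mathcal{H}^{d-1}\qquad \text{for a.e.\ } t>0.
\]

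The conclusion then follows by combining the layer-cake identity $f(z)=\int_0^\infty \chi_{\{f>t\}}(z)\,dt$, Minkowski's integral inequality in $L^q(\mu)$, the level-set estimate above, and the coarea formula:
\[
c_0\Bigl(\int_A \mu |f|^q\,dz\Bigr)^{1/q}\leq c_0\int_0^\infty \mu(\{f>t\})^{1/q}\,dt \leq \int_0^\infty\!\!\int_{\{f=t\}}\nu\,d\mathcal{H}^{d-1}\,dt = \int_A \nu|\nabla f|\,dz,
\]
which is exactly \eqref{eq:Sd} with the same constant $c_0$. I expect the main technical point to be the passage $\delta\to 0^+$ in the level-set step: this relies both on the coarea formula for Lipschitz $f$ with an $L^1_{\rm loc}$ weight $\nu$, and on the Lebesgue differentiation theorem applied to $s\mapsto \int_{\{f=s\}}\nu\,d\mathcal{H}^{d-1}$, which belongs to $L^1_{\rm loc}((0,\infty))$ precisely because $\int_A \nu|\nabla f|\,dz<\infty$ for any $f\in C^{0,1}_c(\R^d)$.
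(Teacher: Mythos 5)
Your forward implication (Hölder's inequality) matches the paper exactly. Your reverse implication is correct but takes a genuinely different route from the paper. You use a continuous Federer--Fleming level-set argument: you take a limit $\delta\to 0^+$ on continuous truncations $\phi_{t,\delta}(f)$ to extract, for a.e.\ $t>0$, the relative isoperimetric-type estimate $c_0\,\mu(\{f>t\})^{1/q}\leq \int_{\{f=t\}\cap A}\nu\,d\mathcal H^{d-1}$, and then reassemble via layer-cake, Minkowski's integral inequality in $L^q(\mu)$, and the coarea formula. The paper instead fixes a geometric ratio $\delta>1$, builds a discrete family of truncations $f_k$ at levels $\delta^k$, estimates the resulting sequences $a_k,b_k$ via \eqref{eq:Mos}, applies Hölder to the series $\sum b_k a_k^{(q-1)/q}$ together with $\sum b_k^q\leq(\sum b_k)^q$ (valid since $q\geq 1$), and recovers $c_0$ in the limit $\delta\to 1^+$. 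Your approach is more geometric and makes the Maz'ya--Federer--Fleming dictionary between functional and isoperimetric inequalities explicit, at the cost of invoking coarea, Lebesgue differentiation, and the fact that $\nabla f=0$ a.e.\ on level sets of Lipschitz $f$; the paper's argument is entirely elementary and measure-theoretically lighter (no $\mathcal H^{d-1}$, no coarea), which makes it more self-contained. Two small points in your write-up: the level-set integrals should read $\int_{\{f=t\}\cap A}\nu\,d\mathcal H^{d-1}$ rather than $\int_{\{f=t\}}\nu$ since $\nu$ lives on $A$; and the remark that $\mu(\{f=t\})=0$ for a.e.\ $t$ is unnecessary for the dominated-convergence step, since $g_{t,\delta}\to\chi_{\{f>t\}}$ pointwise everywhere. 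Neither affects correctness.
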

\begin{proof}
If $q = 1$ the two inequalities coincide. Then, let $q >1$.

Since \eqref{eq:Sd}$\Rightarrow$\eqref{eq:Mos} simply follows by the H\"older inequality, we only need to show \eqref{eq:Mos}$\Rightarrow$\eqref{eq:Sd}. Assume then that \eqref{eq:Mos} holds, and fix $f \in C_c^{0,1}(\R^n)$, $\delta >1$, and for $k \in \Z$ define
\[
f_k(z) = 
\begin{cases}
\delta^{k} & |f| \geq \delta^{k+1}\\
|f| - \delta^k & \delta^k \leq |f|< \delta^{k+1}\\
0 & |f| < \delta^k\,.
\end{cases}
\]
Notice that $f_k \in C_c^{0,1}(\R^n)$ and $f_k \geq 0$. Next we define
\[
a_k = \delta^{qk}\int_{A\cap\{\delta^k\leq|f| \}}\mu(z) dz\,, \quad \text{ and } \quad b_k = \int_{A\cap\{\delta^k\leq|f|< \delta^{k+1} \}}\nu(z)|\nabla f| dz\,.
\]
We have
\[
\int_{A}\mu(z)|f_k|^{1+\frac{q-1}{q}} dz \geq \delta^{k + k\frac{q-1}{q}- k(q-1)}a_{k+1}\,,\qquad
\int_{A}\mu(z) |f_k| \leq \delta^{k+1-qk}a_k\,.
\]
Therefore, applying \eqref{eq:Mos} to $f_k$ we infer
\[
a_{k+1} \leq c_0^{-1}\delta^{\frac{q^2+q-1}{q}}b_k a_k^{\frac{q-1}{q}}\,.
\] 
Summing over $k \in \Z$ and using the H\"older inequality we get 
\[
\sum_{k\in \Z}a_k = \sum_{k\in \Z}a_{k+1} \leq c_0^{-1}\delta^{\frac{q^2+q-1}{q}} \sum_{k\in \Z} b_k a_k^{\frac{q-1}{q}}\leq c_0^{-1}\delta^{\frac{q^2+q-1}{q}} \Big( \sum_{k \in \Z} b_k^q\Big)^{\frac{1}{q}} \Big(\sum_{k \in \Z}a_k\Big)^{\frac{q-1}{q}}\,,
\]
hence
\[
\Big(\sum_{k\in \Z}a_k\Big)^{\frac{1}{q}} \leq c_0^{-1}\delta^{\frac{q^2+q-1}{q}}\Big( \sum_{k \in \Z} b_k^q\Big)^{\frac{1}{q}} \leq c_0^{-1}\delta^{\frac{q^2+q-1}{q}}\sum_{k \in \Z} b_k = c_0^{-1}\delta^{\frac{q^2+q-1}{q}}\int_\Omega \nu(z) |\nabla f| dz\,.
\]
Finally, since
\[
\int_{\Omega}\mu(z) |f|^q dz = \sum_{k \in \Z} \int_{A\cap\{\delta^k\leq|f|< \delta^{k+1} \}}\mu(z) |f|^q dz \leq \delta^q \sum_{k \in \Z} a_k\,,
\]
we infer that
\[
c_0\Big(\int_{A}\mu(z) |f|^q dz  \Big)^{\frac{1}{q}} \leq \delta^{\frac{q^2+2q-1}{q}}\int_A \nu(z) |\nabla f| dz\,.
\]
Letting $\delta \to 1$ we obtain \eqref{eq:Sd} and complete the proof.
\end{proof}

The final preparatory lemma is an $L^1$-version of Lemma \ref{L:Sstab}, in the case $A = \mathbb{I}$. In its proof, a technique introduced in \cite{CouGriLev03} is crucial; see also \cite{Grig85}. This allows us to recover $L^1$-Sobolev inequalities on weighted spaces which are Cartesian products of weighted spaces. 

\begin{Lemma}\label{L:L1sob}
Let $a + n >0$, $R >0$, $\eps \in [0, R)$, and $1\leq q \leq \frac{a_+ + d}{a_+ + d-1}$. It holds
\[
c \Big(\int_{B_R \setminus \Sigma_\eps}|y|^a|u|^q dz \Big)^\frac{1}{q} \leq \int_{B_R \setminus \Sigma_\eps} |y|^a|\nabla u|dz\,, \qquad u \in C^\infty_c(B_R)\,,
\]
for a constant $c>0$ which depends on $d$, $n$, $R$ and $a$.

\end{Lemma}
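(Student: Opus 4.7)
The plan is to exploit the product structure $|y|^a dz = dx \otimes |y|^a dy$ together with the Sobolev/Moser equivalence provided by Lemma \ref{L:StoM}. First, since $\mu_a(B_R) := \int_{B_R}|y|^a dz < \infty$ under the hypothesis $a+n > 0$, H\"older's inequality yields for every $1 \leq q \leq q^* := (a_++d)/(a_++d-1)$
\[
\left(\int_{B_R \setminus \Sigma_\eps} |y|^a |u|^q dz\right)^{\frac{1}{q}} \leq \mu_a(B_R)^{\frac{1}{q}-\frac{1}{q^*}}\left(\int_{B_R \setminus \Sigma_\eps}|y|^a |u|^{q^*} dz\right)^{\frac{1}{q^*}},
\]
with the prefactor bounded uniformly in $q \in [1, q^*]$. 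It therefore suffices to treat the endpoint $q = q^*$, which by Lemma \ref{L:StoM} is equivalent to the Moser-type inequality
\[
c_0\int|y|^a|u|^{\frac{a_++d+1}{a_++d}} dz \leq \left(\int|y|^a|u| dz\right)^{\frac{1}{a_++d}}\int|y|^a|\nabla u| dz.
\]

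The first main step is to establish the $L^1$-Sobolev inequality with the same weight on both sides on the $y$-slice $(\R^n \setminus B^n_\eps, |y|^a dy)$:
\[
c\left(\int_{\R^n \setminus B^n_\eps}|y|^a |v|^{q_y^*} dy\right)^{\frac{1}{q_y^*}} \leq \int_{\R^n \setminus B^n_\eps}|y|^a |\nabla v| dy, \qquad v \in C_c^\infty(B^n_R),
\]
where $q_y^* = (a_++n)/(a_++n-1)$, with constant independent of $\eps$. For $a \geq 0$, I would apply Lemma \ref{L:CKNy} with $q = q_y^*$: the admissibility $q_y^* \leq n/(n-1)$ is equivalent to $a \geq 0$, and a direct computation shows that the RHS weight $|y|^{(a+n)/q_y^* - (n-1)}$ reduces exactly to $|y|^a$. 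For $a \leq 0$, so that $a_+ = 0$ and $q_y^* = n/(n-1)$, Lemma \ref{L:CKNy} produces the RHS weight $|y|^{(n-1)a/n}$; on the support $\{|y| \leq R\}$ the elementary comparison $|y|^{(n-1)a/n} = |y|^a \cdot |y|^{-a/n} \leq R^{-a/n}|y|^a$ (using $-a/n \geq 0$) absorbs the excess into a constant depending only on $R$. In either case, the constant is $\eps$-independent.

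The second main step combines this $y$-slice $L^1$-Sobolev with the classical $L^1$ Gagliardo-Nirenberg inequality on $\R^{d-n}$ (with exponent $(d-n)/(d-n-1)$ for $d - n \geq 2$, the $L^\infty$ endpoint for $d - n = 1$, and no step needed when $d = n$), via the product technique of \cite{CouGriLev03}. That technique combines $L^1$-Sobolev inequalities on two factors with effective dimensions $D_1, D_2$ into an $L^1$-Sobolev on the product space with effective dimension $D = D_1 + D_2$; with $D_1 = d-n$ and $D_2 = a_++n$, it yields $D = a_++d$ and precisely the desired exponent $q^* = D/(D-1)$. Operationally, one converts the slice inequalities into their Moser equivalents via Lemma \ref{L:StoM}, applies the $y$-slice Moser pointwise in $x$, integrates in $x$, and uses H\"older's inequality to rebalance the resulting mixed $L^1_x L^q_y$-norms against the Euclidean Moser on $\R^{d-n}$. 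The hard part is this mixed-norm balancing in the presence of a weight that lives only on the $y$-factor; once it is carried out, the $\eps$-independence of both slice constants propagates to the product constant, and a final application of Lemma \ref{L:StoM} recovers the endpoint Sobolev inequality, completing the proof.
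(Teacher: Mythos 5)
Your proposal is correct and essentially reproduces the paper's proof: both split into the product $\R^{d-n}\times(\R^n\setminus B^n_\eps)$, use Lemma~\ref{L:CKNy} on the $y$-factor, the classical $L^1$-Gagliardo--Nirenberg inequality on the $x$-factor, and the Coulhon--Grigor'yan--Levin product technique with the Sobolev--Moser equivalence of Lemma~\ref{L:StoM} as glue. The only differences are organizational --- you fix the endpoint exponent from the start, absorb the $R$-dependent weight mismatch on the $y$-slice so that both slice inequalities carry $|y|^a$ on both sides, and recover $q<q^*$ by H\"older, whereas the paper carries the free parameters $(p,q)$ and the weight $|y|^{(a+n)/q-(n-1)}$ through the combination and only specializes and does the weight comparison at the end --- and in both cases the actual mixed-norm combination is delegated to \cite{CouGriLev03} rather than carried out in detail.
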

\begin{proof}
The case $d - n= 0$ derives directly form Lemma \ref{L:CKNy} by noticing that for 
\[
1 \leq q \leq \min\Big\{\frac{n}{n-1}, \frac{a+ n}{a+ n-1}\Big\} = \frac{a_++ n}{a_++ n-1}
\]
it holds 
\[
|y|^{\frac{a + n}{q} - (n-1)} \leq |y|^a R^{\frac{a + n}{q} - (a + n-1)}\leq c |y|^a\,,
\]
where $c >0$ depends only on $a$, $n$, $R$.

Let then $d - n \geq 1$. By classical and well known results (one can use, for instance, the classical Caffarelli-Kohn-Nirenberg inequality when $d - n >1$ and \cite[Proposition 1]{Cas17} for the case $d - n = 1$) we have that for every $p \geq 1$, $(d-n-1) p \leq d-n$, it holds
\[
c_1\Big(\int_{\R^{d-n}}|f|^p dx \Big)^{\frac{1}{p}} \leq \int_{\R^{d-n}} |x|^{\frac{d-n}{p} - (d-n-1)} |\nabla f| dx\,, \qquad  f \in C_c^{0,1}(\R^{d-n})\,, 
\]
for a constant $c_1 >0$ not depending on $p$. Thus, using Lemma \ref{L:StoM} we get that
\begin{equation}\label{eq:L1mosx}
c_1\int_{\R^{d-n}}|f|^{1+\frac{p-1}{p}} dx \leq \Big(\int_{\R^{d-n}} |f| dx  \Big)^{\frac{p-1}{p}} \int_{\R^{d-n}} |x|^{\frac{d-n}{p} - (d-n -1)}|\nabla_x f| dx\,, \qquad f \in C_c^{0,1}(\R^{d-n})\,,
\end{equation}
whenever $p \geq 1$, $(d-n-1) p \leq d-n$.

On the other hand, by Lemma  \ref{L:CKNy} and Lemma \ref{L:StoM} we have that for every $g \in C_c^{0,1}(\R^{n})$ it holds
\begin{equation}\label{eq:L1mosy}
c_2\int_{\R^{n}\setminus B^n_\eps}|y|^a|g|^{1+\frac{q-1}{q}} dy \leq \Big(\int_{\R^n \setminus B^n_\eps}|y|^a |g| dy  \Big)^{\frac{q-1}{q}} \int_{\R^n \setminus B^n_\eps} |y|^{\frac{a+n}{q} - (n-1)}|\nabla_y g| dy\,,
\end{equation}
whenever $1 \leq q \leq \frac{n}{n-1}$, where the constant $c_2>0$ does not depend neither on $q$ nor $\eps$. 

The inequalities \eqref{eq:L1mosx} and \eqref{eq:L1mosy} are equivalent, respectively, to the $F$-Sobolev and $G$-Sobolev inequalities
\begin{equation}\label{eq:FGSob}
\begin{gathered}
\int_{\R^{d-n}}|f| F\Big(\frac{|f|}{\|f\|_{L^{1}(\R^{d-n})}}\Big) dx \leq \int_{\R^{d-n}} |x|^{\frac{d-n}{p} - (d-n -1)}|\nabla_x f| dx\, \quad \text{for every} \quad f \in C_c^{0,1}(\R^{d-n})\\
\int_{\R^n \setminus B_\eps^n}|y|^a|g| G\Big(\frac{|g|}{\|g\|_{L^{1,a}(\R^n\setminus B_\eps^n)}}\Big)dy \leq \int_{\R^n \setminus B^n_\eps} |y|^{\frac{a+n}{q} - (n-1)}|\nabla_y g| dy\, \quad \text{for every} \quad g \in C_c^{0,1}(\R^{n})\,,
\end{gathered}
\end{equation}
where
\[
F(s) = c_1 s^{\frac{p-1}{p}} \quad \text{ and } \quad G(t) = c_2 t^{\frac{q-1}{q}}\,.
\]
Let $u \in C^{0,1}_c(\R^d)$. Arguing as in the proof of \cite[Proposition 3.3]{CouGriLev03} with minor modifications, thanks to \eqref{eq:FGSob} we obtain 
\[
\int_{\R^d \setminus \Sigma_\eps}|y|^a|u|H\Big( \frac{|u|}{\|u\|_{L^{1,a}(\R^d \setminus \Sigma_\eps)}}\Big)dxdy \leq \int_{\R^d \setminus \Sigma_\eps} (|x|^{\frac{d-n}{p} - (d-n -1)}|y|^a + |y|^{\frac{a+n}{q} - (n-1)})|\nabla u| dxdy\,,
\]
where
\[
H(r) = \inf_{st = r}[F(s) + G(t)]\,.
\]
After some standard computations, we see that for $p \geq 1$, $(d-n-1) p \leq d-n$, $1 \leq q \leq \frac{n}{n-1}$, it holds
\[
H(r) \geq \min\{c_1, c_2\} r^{\frac{\gamma_{p,q}-1}{\gamma_{p,q}}}\,,
\]
where
\[
\gamma_{p,q} = \frac{p(q-1) + q(p-1)}{pq-1}\,, \qquad \gamma_{1,1} = 1\,.
\]
Thus
\[
\begin{aligned}
\min\{c_1, c_2\}&\int_{\R^d \setminus \Sigma_\eps}|y|^a|u|^{1 + \frac{\gamma_{p,q}-1}{\gamma_{p,q}}}dxdy \\
&\leq \Big(\int_{\R^d \setminus \Sigma_\eps}|y|^a |u| dy  \Big)^{\frac{\gamma_{p,q}-1}{\gamma{p,q}}} \int_{\R^d \setminus \Sigma_\eps} (|x|^{\frac{d-n}{p} - (d-n -1)}|y|^a + |y|^{\frac{a+n}{q} - (n-1)})|\nabla u| dxdy\,.
\end{aligned}
\]
Applying Lemma \ref{L:StoM} once again, we infer that, for every $u \in C^{0,1}_c(\R^d)$ it holds
\begin{equation}\label{eq:CKNsupergeneral}
c\Big(\int_{\R^d\setminus \Sigma_\eps}|y|^a|u|^{\gamma_{p,q}} dz\Big)^{\frac{1}{\gamma_{p,q}}}\leq \int_{\R^d\setminus \Sigma_\eps}(|x|^{\frac{d-n}{p} - (d-n -1)}|y|^a + |y|^{\frac{a+n}{q} - (n-1)})|\nabla u| dz\,,
\end{equation}
for some constant $c >0$ not depending on $p$, $q$, $\eps$.

Now, let us take $1 \leq \tau \leq \frac{d}{d-1}$, and put $q = \frac{(d-n)-(d-n-1)\tau}{(d-n+1) - (d-n)\tau}$ in \eqref{eq:CKNsupergeneral}. Further, let $p= \frac{d-n}{d-n-1}$ if $d-n >1$ or take the limit $p \to \infty$ if $d-n = 1$. We find that for every $1 \leq \tau \leq \frac{d}{d-1}$ and $u \in C^{0,1}_c(\R^d)$ it holds
\[
c\Big(\int_{\R^d \setminus \Sigma_\eps}|y|^a|u|^\tau dz\Big)^{\frac{1}{\tau}}\leq \int_{\R^d \setminus \Sigma_\eps} |y|^a(1+ |y|^{\frac{a + d - \tau(a + d-1)}{d-n - \tau(d-n-1)}})|\nabla u| dz\,.
\]

Let now $u \in C^\infty_c(B_R)$. To conclude it suffice to notice that for 
\[
1 \leq \tau \leq \min\Big\{\frac{d}{d-1}, \frac{a+ d}{a+ d-1}\Big\} = \frac{a_++ d}{a_++ d-1}
\]
it holds 
\[
|y|^{\frac{a + d - \tau(a + d-1)}{d-n - \tau(d-n-1)}} \leq R^{\frac{a + d - \tau(a + d-1)}{d-n - \tau(d-n-1)}} \leq c \,,
\]
for a constant $c>0$ not depending on $\eps$ nor $\tau$. 
\end{proof}

\begin{proof}[Proof of Lemma \ref{L:Sstab}] The proof is divided in two steps. 

\smallskip

\noindent
\emph{Step 1: The case $A = \mathbb{I}$.} Fix $q$ such that $2 \leq q \leq 2^*_a$ if $ a_+ + d >2 $ or $q \geq 2$ if $a_++d= 2$. Let us denote
\[
r = \frac{2q}{q+2}\,,
\]
and notice that $r < q$ and $1 \leq r \leq \frac{a_++d}{a_++d-1}$. 

We fix $ u \in C^\infty_c (B_R)$ and define $v = |u|^{\frac{q}{r}}$. Since $q >r$, then $v$ is smooth and we can compute
\[
\int_{B_R\setminus \Sigma_\eps}|y|^a |u|^q dz = \int_{B_R\setminus \Sigma_\eps}|y|^a |v|^r dz \leq c \Big( \int_{B_R\setminus \Sigma_\eps}|y|^a|\nabla v| dz \Big)^{r},
\]
where the last inequality holds true thanks to Lemma \ref{L:L1sob}.   

Next we use that $|\nabla v| = \frac{q}{r}|u|^{\frac{q-r}{r}}|\nabla u|$ and the H\"older inequality to obtain 
\[
\frac{c}{q+2} \Big(\int_{B_R\setminus \Sigma_\eps}|y|^a |u|^q dz\Big)^{\frac{1}{r}} \leq \Big( \int_{B_R\setminus \Sigma_\eps}|y|^a|\nabla u|^2 dz \Big)^{\frac{1}{2}} \Big( \int_{B_R\setminus \Sigma_\eps}|y|^a|u|^{\frac{2(q-r)}{r}} dz \Big)^{\frac{1}{2}},
\]
and since 
\[
\frac{2(q-r)}{r} = q \quad \text{ and }\quad \frac{1}{r} - \frac{1}{2} = \frac{1}{q}\,,
\]
we readily conclude. 

\smallskip

\emph{Step 2: the general case.} Let $u \in C^\infty_c(B_R)$, and define $\tilde u = u \circ \Phi^{-1}$, where $\Phi$ is the $C^1$-diffeomorphism from Lemma \ref{L:appendix3}. For every $z \in B_R$, we have $|\Phi(z)| \leq cR$ with $c>0$ depending only on $\lambda$, $\Lambda$. Thus $\tilde u \in C^1_c(B_{cR})$. Moreover, taking into account $ii)$ and $iii)$ from Lemma \ref{L:appendix3}, and using the change of variables $(x, \tau ) = \Phi(z)$, we obtain
\[
\int_{\R^d \setminus \Sigma_\eps} |\tau|^a |\tilde u(x, \tau)|^q dx d\tau = \int_{\R^d \setminus \Sigma_\eps^A} |A^{-\frac12}_3(z)y|^a |u|^q |\det J_\Phi| dz \geq c \int_{\R^d \setminus \Sigma_\eps^A} |y|^a |u|^q dz\,,
\]
\[
\int_{\R^d \setminus \Sigma_\eps} |\tau|^a |\nabla \tilde u(x, \tau)|^2 dx d\tau = \int_{\R^d \setminus \Sigma_\eps^A} |A^{-\frac12}_3(z)y|^a (J_\Phi^\top J_\Phi)^{-1}\nabla u \cdot \nabla u |\det J_\Phi| dz \leq c \int_{\R^d \setminus \Sigma_\eps^A} |y|^a |\nabla u|^2 dz\,.
\]
The conclusion follows applying to $\tilde u $ the previous step. Notice that the latter holds true for functions in $C^1$ by a standard density argument.
\end{proof}

\section{Solutions, stable \texorpdfstring{$L^\infty $}{L} estimates and approximation results}\label{sec:4}

In this section, we provide the notion of weak solutions to \eqref{generalPDE}. We define the regularized problems for \eqref{generalPDE}, and we prove some approximation lemmas. Moreover, we prove local stable $L^\infty$ estimates for the regularized solutions.

\subsection{Notions of solutions}

\begin{Definition}\label{def:weak:sol:hom:0}
Let $a+n>0$, $R>0$ and $A$ satisfying \eqref{eq:unif:ell}. Let $f\in L^{p,a}(B_R)$ where $p= (2^*_a)'$ if $d+a_+>2$ or $p>1$ if $d+a_+=2$ and $F\in L^{q,a}(B_R)^d$ where $q\ge2$. We say that $u$ is a weak solution to
\begin{equation}\label{eq:weak:sol:conormal:0}
-\dive(|y|^a A \nabla u)= |y|^a f+\dive(|y|^aF), \quad \text{in }B_R
\end{equation}
if $u\in H^{1,a}(B_R)$ and satisfies
\begin{equation}\label{eq:weak:sol:integral}
\int_{B_R}|y|^a A\D u\cdot \D\phi = \int_{B_R}|y|^a (f\phi-F\cdot\D\phi),
\end{equation}
for every $\phi\in C_c^\infty(B_R).$

We say that $u$ is an entire solution to 
\begin{equation*}
-\dive(|y|^a A \nabla u)= |y|^a f+\dive(|y|^aF), \quad \text{in }\R^d,
\end{equation*}
if $u$ is a weak solution to \eqref{eq:weak:sol:conormal:0} for every $R>0$.
\end{Definition}

\begin{remark}\label{rem:conormal}
  We highlight that our notion of weak solution implies a \emph{weighted conormal boundary condition} on the lower-dimensional set $\Sigma_0$. This is a consequence of the fact that the weak formulation \eqref{eq:weak:sol:integral} involves test functions whose support may touch the thin manifold $\Sigma_0$. We refer to these solutions as \emph{solutions across} $\Sigma_0$. Let us assume for sake of simplicity that $f = 0$, $F = 0$, and fix $\phi \in C_c^\infty(B_R)$. Multiplying \eqref{eq:weak:sol:conormal:0} by $\phi$ and integrating by parts in $B_R \setminus \Sigma_\varepsilon$, we obtain
\[
0 = \int_{B_R \setminus \Sigma_{\varepsilon}} -\dive(|y|^a A\nabla u) \phi \, dz = \int_{B_R \setminus \Sigma_{\varepsilon}} |y|^a A\nabla u \cdot \nabla \phi \, dz - \int_{\partial \Sigma_\varepsilon \cap B_R} |y|^a \phi A\nabla u \cdot \nu \, d\sigma.
\]  
Formally, taking the limit as $\varepsilon \downarrow 0$, we find
\[
\int_{B_R} |y|^a A\nabla u \cdot \nabla \phi \, dz  = \int_{B_R^{d-n}} \mathcal{D}_u(x) \phi(x, 0) \, dx,
\]  
where
\[
\mathcal{D}_u(x) := -\lim_{\varepsilon \downarrow 0} \varepsilon^{1-n} \int_{\partial B_{\varepsilon}^n} |y|^{a+n-2} A\nabla u \cdot y \, d\sigma(y).
\]  
Hence, in the weak formulation of \eqref{eq:weak:sol:conormal:0}, we are assuming that $\mathcal{D}_u = 0$. It is worth noting that if $a + n \geq 2$, due to the zero weighted capacity of the thin manifold, only solutions to \eqref{eq:weak:sol:conormal:0} make sense, since one could not impose any different boundary condition at $\Sigma_0$. Conversely, when $a + n \in (0, 2)$, the weighted capacity of $\Sigma_0$ is positive and locally finite, allowing the imposition of both inhomogeneous Dirichlet and inhomogeneous conormal boundary conditions, respectively $u=g$ and $\mathcal{D}_u = h $ on $\Sigma_0$.

\end{remark}

\begin{Definition}\label{def:weak:sol:hom:eps}
Let $a+n>0$, $R>0$,  $A \in C^1(B_R;\R^{d,d})$ satisfying \eqref{eq:unif:ell} and $0< \e\ll 1$. Let $f\in L^{p,a}(B_R\setminus\Sigma_\e^A)$, where $p= (2^*_a)'$ if $d>2$ or $p>1$ if $d=2$ and $F\in L^{q,a}(B_R\setminus\Sigma_\e^A)^d$ where $q\ge2$. We say that $u$ is a weak solution to
\begin{equation}\label{eq:weak:sol:conormal:eps}
\begin{cases}
-\dive(|y|^a A \nabla u)= |y|^a f+\dive(|y|^aF), & \text{in }B_R \setminus\Sigma_\e^A\\
(A\D u+F)\cdot\nu=0      & \text{ on } \partial\Sigma_\e^A\cap B_R
\end{cases}
\end{equation}
if $u\in H^{1,a}(B_R\setminus\Sigma_\e^A)$ and satisfies
\[
\int_{B_R\setminus\Sigma_\e^A}|y|^a A\D u\cdot \D\phi dz= \int_{B_R\setminus\Sigma_\e^A}|y|^a (f\phi-F\cdot\D\phi)dz,
\]
for every $\phi\in C_c^\infty(B_R).$

We say that $u$ is an entire solution to  
\begin{equation*}
\begin{cases}
-\dive(|y|^a A \nabla u)= |y|^a f+\dive(|y|^aF), & \text{in }\R^d \setminus\Sigma_\e^A\\
(A\D u+F)\cdot\nu=0     & \text{ on } \partial\Sigma_\e^A
\end{cases}
\end{equation*}
if $u$ is a weak solution to \eqref{eq:weak:sol:conormal:eps} for every $R>0$.

Moreover, we adopt the convention that solutions to \eqref{eq:weak:sol:conormal:eps} with $\e = 0$ coincide with those of \eqref{eq:weak:sol:conormal:0}.
\end{Definition}

\begin{remark}\label{R:existence:uniqueness}
    Fixed $ \Bar{u}\in H^{1,a}(B_R)$, a weak solutions to \eqref{eq:weak:sol:conormal:0} satisfying the boundary condition $u-\Bar{u}\in H^{1,a}_0(B_R)$ is a minimizer of the functional
\[
\mathcal{E}(v):=\int_{B_R} |y|^a \Big(\frac{A\D v\cdot\D v}{2}-fv+F\cdot\D v\Big)dz,
\]
over $X:=\{v\in H^{1,a}(B_R) \ \mid \ v-\Bar{u}\in H^{1,a}_0(B_R) \}$.

By the Poincar\'e inequality and the Sobolev type embedding, see Proposition \ref{P:Poincare} and Lemma \ref{L:Sstab} respectively, the assumption on the data allows us to obtain that the functional $\mathcal{E}$ is coercive, so by the Weierstrass theorem, we have existence and uniqueness of solutions to \eqref{eq:weak:sol:conormal:0} with prescribed trace on $\partial B_R$ (see also Lemma \ref{L:H^1_0:trace}).
\end{remark}

\subsection{Caccioppoli inequality and local boundedness of solutions}

In this section we provide local $L^2 \to L^\infty$ estimates for weak solutions to \eqref{eq:weak:sol:conormal:eps} that are uniform with respect to $\varepsilon$.  
We note that, since the weight $|y|^a$ is $2$-admissible, the local $L^\infty$ bounds for weak solutions to \eqref{eq:weak:sol:conormal:0} (when $\varepsilon = 0$ and $A \in L^\infty$) follows from \cite{FabKenSer82}.

Furthermore, assuming that the matrix $A \in C^1(B_R)$, where the radius $R>0$ is taken small enough in order to ensure that the $\varepsilon$-stable Sobolev embedding holds true (see Lemma \ref{L:Sstab}), we obtain $\varepsilon$-stable $L^\infty$ bounds for weak solutions to \eqref{eq:weak:sol:conormal:eps} (when $\varepsilon > 0$).  
This result is established through a standard argument based on an iteration technique involving the Caccioppoli-type inequality below and Sobolev embeddings.
The proof is omitted here, as it has been carried out in a similar context in \cite[\S 2.4]{Fio24}.

\begin{Lemma}[Caccioppoli inequality]\label{lemma:caccioppoli}
Let $a+n>0$, $R>0$, $0\le \e\ll 1$, $p\geq (2_*^a)'$ if $d+a_+>2$ or $p>1$ if $d+a_+=2$, $q\ge2$ and let $A \in C^{1}(B_R)$ satisfy \eqref{eq:unif:ell}. Let $f\in L^{p,a}(B_R\setminus\Sigma_\e^A)$, $F\in L^{q,a}(B_R\setminus\Sigma_\e^A)^d$, and let $u$ be a weak solution to \eqref{eq:weak:sol:conormal:0} if $\e=0$ or to \eqref{eq:weak:sol:conormal:eps} if $\e>0$. Then, there exists a constant $c>0$ depending only on $d$, $n,$ $a,$ $\lambda$ and $\Lambda$ such that for every $ 0 < R_1< R_2<R$ it holds 
\begin{equation}\label{eq:caccioppoli}
\begin{aligned}
    \int_{B_{R_1}\setminus\Sigma_\e^A}|y|^a |\D u|^2 dz \le c \Big(&
    {(R_2-R_1)^{-2}}  
    \|u\|^2_{L^{2,a}(B_{R_2}\setminus\Sigma_\e^A)}
    +
    \|f\|^2_{L^{p,a}(B_{R_2}\setminus\Sigma_\e^A)}+ 
    \|F\|^2_{L^{q,a}(B_{R_2}\setminus\Sigma_\e^A)}
    \Big) \,.
\end{aligned}
\end{equation}
In the case $d+a_+=2$ the constant $c$ depends also on $p$. Moreover, if $\e=0$ it is enough to assume that $A \in L^\infty(B_R)$.
\end{Lemma}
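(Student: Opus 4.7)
The argument is the classical Caccioppoli scheme, adapted to the weighted degenerate setting and made uniform in $\varepsilon$.

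The plan is to fix a radial cutoff $\eta \in C_c^\infty(B_{R_2})$ with $0 \leq \eta \leq 1$, $\eta \equiv 1$ on $B_{R_1}$, and $|\nabla \eta|\leq c/(R_2-R_1)$, and to insert $\phi = \eta^2 u$ into the weak formulation of Definition \ref{def:weak:sol:hom:0} or Definition \ref{def:weak:sol:hom:eps}. Since $\eta^2 u \in H^{1,a}$ but generally not in $C_c^\infty(B_R)$, a density step is needed: when $\varepsilon = 0$, Lemma \ref{L:H^1_0:trace} places the compactly supported function $\eta^2 u$ in $H^{1,a}_0(B_R)$, which is by definition the closure of $C_c^\infty(B_R)$ in the weighted norm; when $\varepsilon>0$, one instead approximates $u$ by its defining smooth representatives from $C^\infty(\overline{B_R\setminus\Sigma_\e^A})$ and multiplies by $\eta^2$. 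Crucially, no boundary contribution from $\partial\Sigma_\e^A$ arises, since the conormal boundary condition is already encoded in the weak formulation.

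Expanding $\nabla(\eta^2 u) = \eta^2 \nabla u + 2\eta u \nabla\eta$ and rearranging yields
\[
\int_{B_R\setminus\Sigma_\e^A}|y|^a \eta^2 A\nabla u \cdot \nabla u \,dz = \int_{B_R\setminus\Sigma_\e^A}|y|^a\bigl(f\eta^2 u - F \cdot \nabla(\eta^2 u)\bigr)dz - 2\int_{B_R\setminus\Sigma_\e^A}|y|^a \eta u\, A\nabla u \cdot \nabla\eta \,dz.
\]
On the left-hand side, the uniform ellipticity \eqref{eq:unif:ell} yields $\lambda\int|y|^a \eta^2 |\nabla u|^2$. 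For the right-hand side: \emph{(i)} the last term is bounded via Cauchy--Schwarz and Young's inequality with small parameter $\delta$, producing $\delta\int|y|^a\eta^2|\nabla u|^2$ (to be absorbed into the left) plus $C_\delta(R_2-R_1)^{-2}\int|y|^a u^2$; \emph{(ii)} the $f$-term is bounded by H\"older with exponents $(p,p')$ by $\|f\|_{L^{p,a}}\|u\|_{L^{p',a}}$ (using $\eta^2\leq 1$); \emph{(iii)} the $F$-term is bounded by H\"older with exponents $(q,q')$ by $\|F\|_{L^{q,a}}\|\nabla(\eta^2 u)\|_{L^{q',a}}$. Splitting $\nabla(\eta^2 u)$ produces the desired main piece $\|F\|_{L^{q,a}}\|\nabla u\|_{L^{q',a}}$ together with a subordinate cross-piece controlled by $\frac{c}{R_2-R_1}\|F\|_{L^{q,a}}\|u\|_{L^{q',a}}$, which is absorbed into $(R_2-R_1)^{-2}\|u\|_{L^{2,a}}^2$ via Young's inequality combined with the finite-weighted-measure H\"older bound $\|u\|_{L^{q',a}}\leq c\|u\|_{L^{2,a}}$, valid since $q'\leq 2$.

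Choosing $\delta$ small enough to absorb the gradient contribution into the left-hand side yields \eqref{eq:caccioppoli}. The only genuine technical point is the justification of $\eta^2 u$ as an admissible test function uniformly in $\varepsilon$; once this is granted, the remaining estimates are translation-invariant and produce constants depending only on $d,n,a,\lambda,\Lambda$. This is what ensures the uniformity in $\varepsilon$ that is needed for later applications. Finally, observe that throughout the argument only the $L^\infty$-bound on $A$ is used, so that in the case $\varepsilon=0$ the $C^1$ regularity can indeed be dropped, while for $\varepsilon>0$ the $C^1$ hypothesis is inherited from the very definition of solution and the well-posedness of the perforated boundary.
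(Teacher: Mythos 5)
The paper omits the proof of this lemma (referring to \cite{Fio24}), so there is no paper argument to compare against. Your overall scheme is the correct one: test the weak formulation with $\phi = \eta^2 u$, justify admissibility by density, expand, absorb the gradient term via Young, and apply H\"older on the data. The observations that no boundary term from $\partial\Sigma_\eps^A$ appears (the conormal condition is built into the weak formulation) and that only $\|A\|_{L^\infty}$ enters the estimates are both right.

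There is, however, a genuine gap in your treatment of the cross-piece $\frac{c}{R_2-R_1}\|F\|_{L^{q,a}}\|u\|_{L^{q',a}}$ coming from the $2\eta u\nabla\eta$ part of $\nabla(\eta^2 u)$. You claim it is "absorbed into $(R_2-R_1)^{-2}\|u\|_{L^{2,a}}^2$ via Young's inequality." But Young's inequality unavoidably produces the companion term: $\frac{c}{R_2-R_1}\|F\|\,\|u\| \le \frac{c_1}{(R_2-R_1)^2}\|u\|^2 + c_2\|F\|_{L^{q,a}}^2$, and the residual $\|F\|_{L^{q,a}}^2$ is not one of the three terms appearing on the right-hand side of \eqref{eq:caccioppoli}. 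It also cannot be majorized by $\|F\|_{L^{q,a}}\|\D u\|_{L^{q',a}}$ without a Poincar\'e inequality for $u$ that is unavailable here (no boundary condition). Moreover, the auxiliary embedding $\|u\|_{L^{q',a}}\le c\|u\|_{L^{2,a}}$ that you invoke has a constant $\mu_a(B_{R_2})^{1/q'-1/2}$ that depends on $R$, whereas the lemma asserts a constant depending only on $d,n,a,\lambda,\Lambda$. The scale-consistent form of the estimate is the one that retains the cross-piece $\frac{c}{R_2-R_1}\|F\|_{L^{q,a}}\|u\|_{L^{q',a}}$ on the right (this term scales like the others, unlike $\|F\|_{L^{q,a}}^2$). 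As written, your absorption step is not valid; either keep the cross-piece explicitly, or accept a $\|F\|_{L^{q,a}}^2$ term with an $R$-dependent constant, and note that \eqref{eq:caccioppoli} as stated appears to silently omit one of these.
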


\begin{Proposition}[$\varepsilon$-stable $L^\infty$ bounds]\label{prop:moser}
Let $a+n>0$, $0\leq \e\ll 1$, and $A \in C^1(B_R)$ satisfy \eqref{eq:unif:ell}, where $R>0$ is given by Lemma \ref{L:appendix3}. Furthermore, let $p>(a_++d)/{2}$, $q>a_++d$ and let $f\in L^{p,a}(B_R\setminus\Sigma_\e^A)$ and $F\in L^{q,a}(B_R\setminus\Sigma_\e^A)^d$ and let $u$ be a weak solution \eqref{eq:weak:sol:conormal:eps}.

Then, for every $0<r<R$, there exists a constant $c>0$ depending only on $d$, $n$, $a,$ $\lambda,$ $\Lambda,$ $p,$ $q,$ $r$, $\|A\|_{C^1(B_R)}$ and $R$ such that
  \[
      \|u\|_{L^\infty(B_r\setminus\Sigma_\e^A)}\le c\big(
      \|u\|_{L^{2,a}(B_{R}\setminus\Sigma_\e^A)}+
      \|f\|_{L^{p,a}(B_{R}\setminus\Sigma_\e^A)}+
      \|F\|_{L^{q,a}(B_{R}\setminus\Sigma_\e^A)}
      \big).
  \]
  In addition, if $\e=0$ and $u$ is a weak solution to to \eqref{eq:weak:sol:conormal:0}, the same estimate holds under the assumption that $A \in L^\infty(B_R)$ and the constant does not depend on $\|A\|_{C^1(B_R)}$.
\end{Proposition}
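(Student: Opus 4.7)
The plan is to perform a classical Moser iteration, feeding in the two $\varepsilon$-stable ingredients already established: the Caccioppoli inequality (Lemma \ref{lemma:caccioppoli}) and the Sobolev embedding in perforated domains (Lemma \ref{L:Sstab}). Since the constants in both results are independent of $\varepsilon$, the resulting $L^\infty$ bound will be automatically stable as $\varepsilon \downarrow 0$. A standard normalization by
\[
M := \|u\|_{L^{2,a}(B_R\setminus \Sigma_\e^A)} + \|f\|_{L^{p,a}(B_R\setminus \Sigma_\e^A)} + \|F\|_{L^{q,a}(B_R\setminus \Sigma_\e^A)}
\]
reduces the statement to showing $\|u/M\|_{L^\infty(B_r\setminus \Sigma_\e^A)} \le c$ when $M \le 1$.

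For the Moser step, for each $\beta \ge 1$ and radii $R_1 < R_2$ with a cutoff $\eta \in C^\infty_c(B_{R_2})$ satisfying $\eta \equiv 1$ on $B_{R_1}$ and $|\nabla \eta| \le 2/(R_2 - R_1)$, I would test the weak formulation with $\phi = \eta^2 u_k |u_k|^{2\beta-2}$, where $u_k = \mathrm{sign}(u)\min\{|u|,k\}$ is a standard truncation, and then pass to the limit $k\to\infty$. This produces a Caccioppoli-type estimate for $w = |u|^\beta$ of the form
\[
\int_{B_R\setminus \Sigma_\e^A}|y|^a \eta^2 |\D w|^2 \,dz \le \frac{c\beta^2}{(R_2-R_1)^2}\int_{B_{R_2}\setminus \Sigma_\e^A}|y|^a w^2 \,dz + c\beta^2 \mathcal R_\beta(f,F,u),
\]
where $\mathcal R_\beta$ is controlled via H\"older's inequality by $\|f\|_{L^{p,a}}$ and $\|F\|_{L^{q,a}}$ paired with suitable $L^{r,a}$-norms of powers of $|u|$. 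Combining with the $\varepsilon$-stable Sobolev embedding yields a reverse H\"older inequality of the form
\[
\|u\|_{L^{2^*_a\beta, a}(B_{R_1}\setminus \Sigma_\e^A)} \le \Big(\frac{c\beta}{R_2-R_1}\Big)^{1/\beta} \Big(\|u\|_{L^{2\beta,a}(B_{R_2}\setminus \Sigma_\e^A)} + 1\Big),
\]
with fixed gain $\chi = 2^*_a/2 > 1$ (any $\chi > 1$ when $a_++d = 2$, by Lemma \ref{L:Sstab} $ii)$).

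I would then iterate along $R_k = r + (R-r)/2^k$ and $\beta_k = \chi^k \beta_0$, with $\beta_0 \ge 1$ chosen so that the $L^{2\beta_0,a}$-norm can be controlled by $\|u\|_{L^{2,a}}$ through one initial Sobolev step. The assumptions $p > (a_++d)/2$ and $q > a_++d$ are exactly the integrability thresholds that keep the H\"older exponents appearing in $\mathcal R_\beta$ strictly below $2^*_a \beta$, so that the data terms can be reabsorbed into the Caccioppoli left-hand side via Young's inequality without compromising the geometric factor $\chi$. Since $\sum_k k\chi^{-k}$ converges, the product of iteration constants is finite, and we obtain
\[
\|u\|_{L^\infty(B_r\setminus \Sigma_\e^A)} \le c\|u\|_{L^{2\beta_0,a}(B_R\setminus \Sigma_\e^A)} + c,
\]
which, together with the normalization, concludes the proof. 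The case $\varepsilon = 0$ under merely $A \in L^\infty$ is identical, since Lemma \ref{lemma:caccioppoli} and Lemma \ref{L:Sstab} both remain valid without the $C^1$ hypothesis when no hole is present.

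The main technical obstacle is the bookkeeping of the data contributions through the iteration: one must verify at each step $k$ that Young's inequality with a well-chosen parameter $\theta_k$ shrinks the coefficient of $\|f\|_{L^{p,a}}$ and $\|F\|_{L^{q,a}}^2$ enough to be absorbed by the principal term, while the inverse powers $\theta_k^{-1}$ do not grow so fast as to break summability of the cumulative iteration constants. This is standard and is carried out in essentially the same setting in \cite[\S 2.4]{Fio24}, whose argument adapts verbatim thanks to the $\varepsilon$-stable functional inequalities of Section \ref{sec:3}.
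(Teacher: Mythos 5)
Your proposal follows precisely the route the paper takes: Moser iteration built on the $\varepsilon$-stable Caccioppoli inequality (Lemma \ref{lemma:caccioppoli}) and the $\varepsilon$-stable Sobolev embedding (Lemma \ref{L:Sstab}), with the same reduction to \cite[\S 2.4]{Fio24}, where the paper explicitly delegates the details. Your treatment of the $\varepsilon = 0$, $A \in L^\infty$ case also matches the paper's stated reason (Lemma \ref{lemma:caccioppoli} relaxes to $L^\infty$ coefficients when there is no hole), so the argument is correct and essentially identical to the intended one.
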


\subsection{Approximation results}

In this section, we establish two approximation results. The first one, inspired by \cite[Lemma 2.15]{SirTerVit21a} and \cite[Lemma 4.3]{AudFioVit24a}, asserts that any solution $ u $ of \eqref{eq:weak:sol:conormal:0} ($\varepsilon=0$) can be approximated by a family of solutions $ u_\varepsilon $ of the conormal problem on $\eps$-perforated domains \eqref{eq:weak:sol:conormal:eps} ($0 < \varepsilon \ll 1$). Here, we assume that the matrix $A \in C^1$, in order to have a $C^1$ boundary $\partial \Sigma_\e^A$, see Remark \ref{R:C1:boundary}.
The second result shows that any solution $ u $ of \eqref{eq:weak:sol:conormal:0} ($\varepsilon=0$) can be approximated by a family of solutions to the same problem but with smooth coefficients, via a standard mollification technique.

\begin{Lemma}\label{L:approximation}
    Let $a+n>0$, $R>0$ given by Lemma \ref{L:appendix3}, $r\in(0,R)$ and $A \in C^1(B_R)$ satisfying \eqref{eq:unif:ell}. Let $f\in L^{p,a}(B_R)$ where $p\ge (2^*_a)'$ if $d+a_+>2$ or $p>1$ if $d+a_+=2$, $F\in L^{q,a}(B_R)^d$ where $q\ge 2$ and let $u$ be a weak solution to \eqref{eq:weak:sol:conormal:0}.

    Then, there exists a family $\{u_\e\}_{0<\e\ll 1}$, such that $u_\e$ are weak solutions to
\[
\begin{cases}
-\dive(|y|^a A \nabla u_\e)= |y|^a f+\dive(|y|^aF), & \text{in }B_{r}\setminus  \Sigma_{\e}^A \\
(A\D u+F)\cdot\nu=0, & \text{on } \partial \Sigma_\e^A \cap B_{r},
\end{cases}
\]
\begin{equation}\label{eq:stima:u:e}
\|u_\e\|_{H^{1,a}(B_{r}\setminus\Sigma_{\e}^A)}\le c \big(\|u\|_{H^{1,a}(B_R)}+\|f\|_{L^{p,a}(B_R)}
+\|F\|_{L^{q,a}(B_R)}\big)\,,
\end{equation}
for some constant $c>0$ depending only on $d$, $n,$ $ a,$ $ \lambda,$ $ \Lambda$, $r$, $R$, $\|A\|_{C^{1}(B_R)}$ and there exists a sequence $\e_k\to0$ such that
\[
 u_{\e_k}\to u \text{ in } H^{1}_{\rm loc}(B_{r}\setminus\Sigma_0).
\]
Moreover, in the case $d+a_+=2$ the constant $c$ depends also on $p$.
\end{Lemma}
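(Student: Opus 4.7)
The plan is to construct $u_\e$ as the unique weak solution of the perforated conormal problem on $B_r \setminus \Sigma_\e^A$ sharing its trace with $u$ on $\partial B_r$, prove a uniform $H^{1,a}$ estimate, and then pass to the limit along $\e_k \to 0$. For $\e$ small enough that $\partial \Sigma_\e^A$ is a $C^1$ hypersurface (Remark \ref{R:C1:boundary}), I would define $u_\e$ as the unique minimizer of
\[
\mathcal{E}_\e(v) = \int_{B_r \setminus \Sigma_\e^A} |y|^a \Big( \tfrac12 A \nabla v \cdot \nabla v - f v + F \cdot \nabla v \Big) dz
\]
over the affine class $\{v \in H^{1,a}(B_r \setminus \Sigma_\e^A) \mid v - u \in \hat H^{1,a}(B_r \setminus \Sigma_\e^A)\}$. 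Existence and uniqueness follow by the direct method: the $\e$-stable Poincar\'e inequality (Proposition \ref{P:Poincare_eps}, $i)$) applied to $v - u$, which has zero trace on $\partial B_r$ by definition of $\hat H^{1,a}$, yields coercivity, and strict convexity yields uniqueness. Since test functions in $\hat H^{1,a}$ need not vanish on $\partial \Sigma_\e^A$, the Euler-Lagrange equations recover exactly the natural conormal condition there.

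The uniform bound \eqref{eq:stima:u:e} follows by testing the weak formulation for $u_\e$ against $u_\e - u$ and combining uniform ellipticity, Cauchy-Schwarz and Young's inequalities, and once more the $\e$-stable Poincar\'e inequality, which controls $\|u_\e - u\|_{L^{2,a}}$ by $\|\nabla(u_\e - u)\|_{L^{2,a}}$ with a constant independent of $\e$.

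For the limit, the key step is to construct an extension $\tilde u_\e \in H^{1,a}(B_r)$ with uniform bound $\|\tilde u_\e\|_{H^{1,a}(B_r)} \leq C$ and trace $u$ on $\partial B_r$. Concretely, after straightening $\partial \Sigma_\e^A$ to the cylinder $\{|y| = \e\}$ via the diffeomorphism $\Phi$ of Lemma \ref{L:appendix3}, I would reflect $u_\e$ inside the hole by a Kelvin-type inversion $y \mapsto \e^2 y/|y|^2$ on an outer collar of $\partial \Sigma_\e^A$ and glue it smoothly to a cut-off of its trace in the deep interior; the scaling between the Jacobian of the inversion and the weight $|y|^a$, combined with the $\e$-stable trace inequality of Lemma \ref{L:trace:e}, yields the required uniform bound. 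By the compact embedding of Lemma \ref{L:compactness}, a subsequence $\tilde u_{\e_k} \rightharpoonup v$ weakly in $H^{1,a}(B_r)$ and strongly in $L^{2,a}(B_r)$. Continuity of the trace operator (Lemma \ref{L:H^1_0:trace}) then gives $v - u \in H^{1,a}_0(B_r)$. Passing to the limit in the weak formulation for $u_{\e_k}$ tested against $\phi \in C_c^\infty(B_r)$, and noting that the integrals over the vanishing region $\Sigma_{\e_k}^A \cap B_r$ tend to zero thanks to $|\Sigma_{\e_k}^A \cap B_r| \to 0$ and the uniform $L^{2,a}$-bound on $\nabla \tilde u_{\e_k}$, shows that $v$ satisfies \eqref{eq:weak:sol:conormal:0}. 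The uniqueness of the Dirichlet problem (Remark \ref{R:existence:uniqueness}) then forces $v = u$.

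The upgrade to $H^1_{\rm loc}(B_r \setminus \Sigma_0)$ convergence is standard: on any compact $K \Subset B_r \setminus \Sigma_0$ the weight $|y|^a$ is bounded above and away from zero, so $u_{\e_k}$ and $u$ satisfy classical uniformly elliptic equations on an open neighborhood of $K$; a Caccioppoli-type computation for $u_{\e_k} - u$, combined with the weak $H^1$ and strong $L^2$ convergences already in hand, upgrades the convergence to strong $H^1(K)$. The main obstacle is the uniform construction of the extension $\tilde u_\e$: because the perforation collapses onto the singular/degenerate thin set $\Sigma_0$, preserving the $H^{1,a}$ norm through the extension requires a careful use of the geometry of $\partial \Sigma_\e^A$ (governed by $A_3$) together with the $\e$-stable functional inequalities developed in Section \ref{sec:3}.
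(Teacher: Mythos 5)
Your overall strategy — solve the perforated conormal problem with matching trace on $\partial B_r$, establish a uniform $H^{1,a}$ bound, extract a limit by compactness, and identify it with $u$ by uniqueness — is the same skeleton the paper uses, and the energy estimate via testing with $u_\e - u$ and the $\e$-stable Poincar\'e inequality is correct. The genuine divergence, and the place where your argument has a gap, is the extension step.

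You rely on constructing a uniformly bounded extension $\tilde u_\e\in H^{1,a}(B_r)$ of $u_\e$ across the hole, so that you can invoke the compact embedding of Lemma \ref{L:compactness} on the whole ball. You propose a Kelvin-type inversion on a collar of $\partial\Sigma_\e^A$ glued to ``a cut-off of its trace'' in the deep interior, but this is only sketched and the naive versions fail: a radially constant continuation into $\{|y|\lesssim\e\}$ has $H^{1,a}$ energy $\sim\e^2\int_0^{\e/2}r^{a+n-3}dr$, which diverges precisely in the mid-range $0<a+n<2$; a gluing to a weighted mean value instead requires a Poincar\'e/trace estimate on $\partial\Sigma_\e^A$ with the mean subtracted, which is not what Lemma \ref{L:trace:e} provides. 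You correctly flag this as ``the main obstacle,'' but as written the construction is not in hand, and for the general ellipticity matrix $A$ the collar geometry governed by $A_3$ adds further bookkeeping. The paper sidesteps this entirely: it first multiplies $u$ by a cut-off so the approximating solutions have zero Dirichlet data on $\partial B_R$ (which also removes the need to carry $u$'s trace through the argument), passes to a weak $H^1$-limit only on compacts of $B_R\setminus\Sigma_0$ (where the weight is uniformly elliptic), upgrades to strong $H^1_{\rm loc}$ convergence by testing with $\phi^2(u_\e-\bar u)$, and then proves the limit lies in $H^{1,a}(B_R)$ directly — combining Fatou's lemma, the Vitali convergence theorem, the $H=W$ characterization of Lemma \ref{L:Wchar}, and the $\eps$-stable trace inequality of Lemma \ref{L:trace:e} to handle the boundary term on $\partial\Sigma_{\e_k}^A$ when $a+n\in(0,2)$. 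No extension into the hole is ever constructed. This is the robust feature of the paper's proof that your route lacks.

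Your identification step is otherwise sound (the integral over $\Sigma_{\e_k}^A$ of $|y|^a A\nabla\tilde u_{\e_k}\cdot\nabla\phi$ vanishes because $\|\nabla\phi\|_{L^{2,a}(\Sigma_{\e_k}^A)}\to 0$, not merely because $|\Sigma_{\e_k}^A|\to 0$), and the final upgrade to $H^1_{\rm loc}(B_r\setminus\Sigma_0)$ convergence via a Caccioppoli-type test function matches the paper's Step 3. If you want to complete your version, the most direct fix is to abandon the extension and adopt the paper's Fatou/Vitali/trace argument for proving that the $H^1_{\rm loc}(B_r\setminus\Sigma_0)$-limit actually belongs to $H^{1,a}(B_r)$ with zero trace on $\partial B_r$.
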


\begin{proof}  
We consider the case $d + a_+ > 2$, as the other one can be treated analogously. The proof is divided into several steps.

\smallskip

\noindent\emph{Step 1. Cut-off solution $\tilde{u}$.}

\smallskip
Let us consider a cut-off function $\xi\in C_c^\infty(B_R)$ such that  
\[
   \xi=1 \text{ on } B_{r}\,,\qquad \supp(\xi)\subset B_{\frac{R+r}{2}}\,,\qquad 0\le\xi\le1\,,\qquad |\D\xi|\le c_0\,,
\]
for some $c_0>0$ depending only on $d$, $r$ and $R$. Define $\tilde{u}:=\xi u$, and notice that $\tilde u \in {H}^{1,a}_0(B_R)$ by construction.

Fix $\phi\in C_c^\infty(B_R)$. Then,
\begin{align*}
&\int_{B_R}|y|^a   A\D \tilde{u}\cdot\D\phi\,dz= \int_{B_R}|y|^a  \Big( \xi A\D {u} \cdot\D\phi +  u   A\D \xi \cdot\D\phi  \Big)\,dz \\
&= 
\int_{B_R}|y|^a  \Big( A\D {u} \cdot \D(\phi\xi) - 
 \phi A\D {u} \cdot \D \xi +
 u   A\D \xi \cdot\D\phi \Big)\, dz\\
&= 
\int_{B_R}|y|^a\Big( 
f\phi\xi - F\cdot \D (\phi\xi)
- 
 \phi A\D {u} \cdot \D \xi +
 u   A\D \xi \cdot\D\phi \Big)\, dz\\
 &= 
 \int_{B_R}|y|^a \Big( 
 f\phi\xi
 - \xi F\cdot \D \phi
 - \phi F\cdot \D \xi
 - 
 \phi A\D {u} \cdot \D \xi +
 u   A\D \xi \cdot\D\phi \Big)\, dz\, ,
\end{align*}
that is, $\tilde{u}$ is a weak solution to 
\begin{equation}\label{eq:cut:off:approximation}
\begin{cases}
    -\dive(|y|^a A \D\tilde{u})=|y|^a \tilde{f}+ \dive(|y|^a  \tilde{F})& \text{ in }B_R,\\
    u=0 & \text{ on }\partial B_R
\end{cases}
\end{equation}
where we have set
\[
\tilde{f}= f\xi-F\cdot\D\xi-A\D u\cdot \D\xi,
\qquad
\tilde{F}= F\xi-uA\D\xi\,.
\]

\smallskip

\noindent\emph{Step 2. Construction of the approximating solutions $u_{\e}$ in perforated domains.}

\smallskip

Let us consider the family of weak solution $\{u_\e\}_{0<\e\ll 1}$ to
\begin{equation}\label{eq:approximation:u:e}
\begin{cases}
-\dive(|y|^a A \nabla u_\e)= |y|^a \tilde{f}+\dive(|y|^a \tilde{F}), & \text{in }B_R\setminus \Sigma_{\e}^A\\
u_\e=0 & \text{on }\partial B_R \setminus \Sigma_\e^A\\
(A\D u_\e+\tilde{F})\cdot\nu=0 & \text{on } \partial\Sigma_\e^A\cap B_R.
\end{cases}
\end{equation}
By testing \eqref{eq:approximation:u:e} with $u_\e$, using \eqref{eq:unif:ell}, the Poincar\'e inequality \eqref{eq:poincare:eps:degenerate}, the Sobolev type embedding in Lemma \eqref{L:Sstab}, H\"older and Young inequalities, we get 
\begin{align*}
    &\lambda\int_{B_R\setminus\Sigma_\e^A}|y|^a|\D u_\e|^2\, dz \le \int_{B_R\setminus\Sigma_\e^A}|y|^a A\D u_\e\cdot\D u_\e\, dz\, 
    \\
    &= \int_{B_R\setminus\Sigma_\e^A} |y|^a \Big( f\xi u_\e
    -F\cdot \D\xi u_\e -A\D u \cdot \D \xi u_\e - F\xi \cdot \D u_\e - u A \D \xi \cdot \D u_\e
    \Big)\, dz\\
     &\le \Big(\int_{B_R\setminus\Sigma_\e^A}|y|^a |{f}|^p\, dz\Big)^{\frac1p}\Big(\int_{B_R\setminus\Sigma_\e^A}|y|^a |u_\e|^{p'}\, dz\Big)^{\frac{1}{p'}} \\
     &+
     c
     \Big(\int_{B_R\setminus\Sigma_\e^A}|y|^a |F|^q\, dz\Big)^{\frac1q}\Big(\int_{B_R\setminus\Sigma_\e^A}|y|^a (|u_\e|^{2}+|\D u_\e|^{2})\, dz\Big)^{\frac{1}{2}}
     \\
     &+ c
\Big(\int_{B_R\setminus\Sigma_\e^A}|y|^a (u^2+|\D u|^2)\, dz\Big)^{\frac12}\Big(\int_{B_R\setminus\Sigma_\e^A}|y|^a (|u_\e|^{2}+|\D u_\e|^{2})\, dz\Big)^{\frac{1}{2}}     
     \\
    &\le \frac{\lambda}{2} \int_{B_R\setminus\Sigma_\e^A}|y|^a|\D u_\e|^2\, dz+c
\big(\|u\|_{H^{1,a}(B_R)}+\|f\|_{L^{p,a}(B_R)}
+\|F\|_{L^{q,a}(B_R)}\big)^2\,,
\end{align*}
for some $c>0$ depending only on $d$, $n$, $a$, $\lambda$, $\Lambda$ and $R$. Hence, combining this inequality and the Poincar\'e inequality \eqref{eq:poincare:eps:degenerate}, we have
\begin{equation}\label{eq:stima:u:e:2}
    \int_{B_R\setminus\Sigma_\e^A}|y|^a(|u_\e|^2+|\D u_\e|^2)\, dz\le c
\big(\|u\|_{H^{1,a}(B_R)}+\|f\|_{L^{p,a}(B_R)}
+\|F\|_{L^{q,a}(B_R)}\big)^2\,.
\end{equation}

\smallskip

\noindent\emph{Step 3. Strong convergence of the sequence $ u_{\e_k} $ in $H^1_{\rm loc}(B_R\setminus \Sigma_0)$. }

\smallskip

Let us fix two compact sets $ K \subset K'\subset {B_R} \setminus \Sigma_0 $. Then, for every $\eps$ small enough, we have $ K' \subset {B_R} \setminus \Sigma_\eps^A $. Since $ |y|^a $ is uniformly bounded and bounded away from zero on $ K' $, by \eqref{eq:stima:u:e:2} we have that $\|u_\eps\|_{H^{1}(K')} \leq c$ for some constant $ c > 0 $ that does not depend on $\eps$. This implies that there exists a sequence $\eps_k \to 0^+$ and $\bar u \in H^1(K')$ such that
\begin{equation}\label{eq:weak:convergence}
    u_{\e_k}\rightharpoonup \bar{u} \ \text {weakly in } H^{1}(K'), \qquad u_{\e_k}\to \bar{u} \ \text{ strongly in } L^2(K').
\end{equation}
By testing the equation \eqref{eq:approximation:u:e} with $\phi^2(u_\e-\bar{u})$, where $\phi\in C_c^\infty(K')$, we obtain
\begin{align*}
\int_{K'}|y|^a \phi^2 A\nabla u_\e\cdot\nabla u_\e\, dz&=\int_{K'}|y|^a\Big( \phi^2  A\nabla u_\e\cdot\nabla \bar{u} -2(u_\e-\bar{u})\phi A\nabla u_\e\cdot\nabla \phi+\tilde{f}\phi^2(u_\e-\bar{u})\\
&\qquad-\phi^2 \tilde{F}\cdot \nabla(u_\e-\bar{u})-2\phi (u_\e-\bar{u})\tilde{F}\cdot\nabla\phi
\Big)\, dz.
\end{align*}
By \eqref{eq:weak:convergence} we can take the limit as $\e_k\to 0$ in the right hand side and obtain
\begin{equation}\label{eq:robba}
\lim_{\e_k\to 0} \int_{K'}|y|^a \phi^2 A\nabla u_{\e_k}\cdot\nabla u_{\e_k}\, dz=\int_{K'} |y|^a \phi^2 A\nabla \bar{u}\cdot\nabla \bar{u}\, dz\,.
\end{equation}

Fix $\delta >0$. Take $\phi \in C_c^\infty(K')$ such that $0\leq \phi\leq 1$, $\phi=1$ in $K$ and $|\supp(\phi)\setminus K| \ll 1$ so that, noticing that $|y|^a A \D \bar{u} \cdot \D \bar{u} \in L^1(B_R)$, we have $\||y|^a A \D \bar{u} \cdot \D \bar{u} \|_{L^1(\supp(\phi)\setminus K)}< \delta$.  Then, 
\begin{align*}
    c\int_{K}&|\D u_{\e_k}- \D \bar{u}|^2\,dz \le \int_{K} |y|^a A (\D u_{\e_k} - \D \bar{u}) \cdot (\D u_{\e_k} - \D \bar{u})\,dz \\ 
    &= \int_{K} |y|^a A \D u_{\e_k} \cdot \D u_{\e_k}+ |y|^a A \D \bar{u}\cdot\D \bar{u} - 2 |y|^a A  \D \bar{u} \cdot \D u_{\e_k}\,dz \\
    &\le \int_{K'} |y|^a A \D u_{\e_k} \cdot \D u_{\e_k}\, \phi^2 + |y|^a A \D \bar{u}\cdot \D \bar{u}\, \phi^2\,dz - 2 \int_{K} |y|^a A  \D \bar{u} \cdot \D u_{\e_k}\,dz \\
    &\leq \Big|\int_{K'} |y|^a A \D u_{\e_k} \cdot \D u_{\e_k}\, \phi^2\,dz-\int_{K'} |y|^a A \D \bar u \cdot \D \bar u\, \phi^2\,dz\Big| + 2\int_{K'\setminus K} |y|^a A \D \bar{u} \cdot \D \bar{u}\, \phi^2\,dz\\
    &\leq\Big|\int_{K'} |y|^a A \D u_{\e_k} \cdot \D u_{\e_k}\, \phi^2\,dz-\int_{K'} |y|^a A \D \bar u \cdot \D \bar u\, \phi^2\,dz\Big| + 2\delta \to  2\delta \,, \qquad \text{ as } \eps_k \to 0\,, 
\end{align*} 
thanks to \eqref{eq:robba}.
By to the arbitrariness of $\delta$, we infer that $ \nabla  u_{\e_k}  \to  \nabla \bar{u} $  in $L^2(K)^d$, and thus $u_{\e_k}\to \bar{u}$ strongly in $H^1(K)$. Finally, a standard diagonal argument yields that 
\begin{equation}\label{eq:strong:diagonal}
u_{\e_k}\to \bar{u}\quad \text{strongly in } H^1_{\rm loc}(B_R\setminus\Sigma_0).
\end{equation}

\smallskip

\noindent\emph{Step 4. The limit function $\bar{u}\in  H^{1,a}(B_R)$.}

\smallskip

Let us now prove that $\bar{u}\in  H^{1,a}(B_R)$. Let us fix $\phi\in C_c^\infty(B_R)$ if $a+n\in(0,2)$ or $\phi\in C_c^\infty(B_R\setminus\Sigma_0)$ if $a+n\geq 2$. For every $\e_k$ fixed, one has that $u_{\e_k}\in W^{1,1}_{\rm loc}(B_R\setminus\Sigma_{\e_k}^A)$, so it holds
\begin{equation}\label{eq:u:e:W11}   \int_{B_R\setminus\Sigma_{\e_k}^A}\partial_{z_\ell}u_{\e_k}\phi\, dz=- \int_{B_R\setminus\Sigma_{\e_k}^A}u_{\e_k}\partial_{z_\ell}\phi\, dz+\int_{\partial\Sigma_{\e_k}^A\cap B_R}u_{\e_k}\phi\, d\sigma\,,
\end{equation}
for every $\ell=1,\dots,d$.

If $a+n\geq2$, $\supp(\phi)\subset B_R\setminus\Sigma_{\e_k}^A$ for every $\e_k$ small enough, so
\[
\int_{\partial\Sigma_{\e_k}^A\cap B_R}u_{\e_k}\phi\, d\sigma =0\,.
\]
On the other hand, let us fix a measurable set $E\subset B_R$. By using \eqref{eq:stima:u:e:2} and observing that $|y|^{-a}\in L^\infty(\supp(\phi))$, we get
\begin{align*}
   \Big|\int_{E} \partial_{z_\ell}u_{\e_k}\phi\, dz\Big|&\le \Big(\int_{E\cap\supp(\phi)}|y|^a|\partial_{z_\ell}u_{\e_k}|^2\, dz\Big)^{\frac12}\Big(\int_{E\cap\supp(\phi)}|y|^{-a}|\phi|^2\, dz\Big)^{\frac12} \\ 
   & \leq C_\phi
\big(\|u\|_{H^{1,a}(B_R)}+\|f\|_{L^{p,a}(B_R)}
+\|F\|_{L^{q,a}(B_R)}\big)|E|^{\frac12}\,,
\end{align*}
where the constant $C_\phi$ depends on $\|\phi\|_{L^\infty}$ and $\supp(\phi)$, which implies that $\partial_{z_\ell}u_{\e_k}\phi$ is uniformly integrable. By the a.e. convergences $\D u_{\e_{k}}\to \D\bar{u}$ and $\chi_{B_R\setminus\Sigma_{\e_k}^A}\to \chi_{B_R}$, the Vitali convergence theorem yields
\[
\int_{B_R\setminus\Sigma_{\e_k}^A}\partial_{z_\ell}u_{\e_k}\phi\, dz\, \to \int_{B_R} \partial_{z_\ell}\bar{u}\phi\, dz\, .
\]
By employing a similar argument, one has that 
\[
\int_{B_R\setminus\Sigma_{\e_k}^A}u_{\e_k}\partial_{z_\ell}\phi\, dz\to \int_{B_R}\bar{u}\partial_{z_\ell}\phi\, dz\,,
\]
so, by taking the limit as $\e_k\to0^+$ in \eqref{eq:u:e:W11} we have
\begin{equation}\label{eq:u:e:W11:2}
\int_{B_R}\partial_{z_\ell}\bar{u}\phi\, dz=\int_{B_R}\bar{u}\partial_{z_\ell}\phi\, dz\,,
\end{equation}
which means that $\bar{u}\in W^{1,1}_{\rm loc}(B_R\setminus\Sigma_0)$.

If $a+n\in(0,2)$, by using Lemma \ref{L:trace:e} combined with \eqref{eq:stima:u:e:2}, we obtain
\begin{align*}
\Big|\int_{\partial \Sigma_{\e_k}^A\cap B_R}u_{\e_k}\phi\, d\sigma \Big|&\le 
\Big(
\int_{\partial \Sigma_{\e_k}^A\cap B_R}|y|^a 
|u_{\e_k}|^2\, d\sigma
\Big)^{\frac12}
\Big(
\int_{\partial \Sigma_{\e_k}^A\cap B_R} |y|^{-a}
|\phi|^2\, d\sigma
\Big)^{\frac12}\\
&\le c\, {\e_k}^{n-1} \Big(
\int_{B_R\setminus \Sigma_{\e_k}^A} 
|y|^a |\D u_{\e_k}|^2 \, dz
\Big)^{\frac12}
\Big(
\int_{B_R\setminus \Sigma_{\e_k}^A} 
|y|^{-a} |\D \phi|^2\, dz
\Big)^{\frac12}\\
&\le c\, {\e_k}^{n-1} 
\big(\|u\|_{H^{1,a}(B_R)}+\|f\|_{L^{p,a}(B_R)}
+\|F\|_{L^{q,a}(B_R)}\big)\,, 
\end{align*}
and taking the limit $\e_k\to0^+$ we get that the term  $\int_{\partial \Sigma_{\e_k}^A\cap B_R}u_{\e_k}\phi\, d\sigma$ vanishes.

Using the same argument as for the case $a + n \geq 2$ and noting that $|y|^{-a} \in L^1(B_R)$, we can take the limit as $\eps_k \to 0^+$ in \eqref{eq:u:e:W11}. This allows us to conclude that \eqref{eq:u:e:W11:2} holds true, which means that $\bar{u} \in W^{1,1}_{\rm loc}(B_R)$.

Now, using the Fatou Lemma and \eqref{eq:stima:u:e:2}, we have
\begin{align*}
    \int_{B_R}|y|^a&(|\bar{u}|^2+|\D \bar{u}|^2)\, dz=\int_{B_R}\lim_{\e_k\to0^+}|y|^a(|u_{\e_k}|^2+|\D u_{\e_k}|^2) \rchi_{B_R\setminus\Sigma_{\e_k}^A}\, dz\\
    &\le \liminf_{\e_k\to0^+} \int_{B_R\setminus\Sigma_{\e_k}^A} |y|^a(|u_{\e_k}|^2+|\D u_{\e_k}|^2)\, dz\le c
\big(\|u\|_{H^{1,a}(B_R)}+\|f\|_{L^{p,a}(B_R)}
+\|F\|_{L^{q,a}(B_R)}\big)^2\,,
\end{align*}
which means that $\|\bar u \|_{H^{1,a}(B_R)}< \infty$. Thus, by Lemma \ref{L:Wchar} and Lemma \ref{L:H=W}, we conclude that $\bar{u}\in H^{1,a}(B_R)$.

\smallskip

\noindent\emph{Step 5. $\bar{u}$ has zero trace on $\partial B_R$.}

\smallskip

Finally, we prove that $\bar{u}$ has zero trace on $\partial B_R$, meaning that $T_R \bar u = 0$. First, notice that for any fixed $\delta \ll 1$, it follows from \eqref{eq:strong:diagonal} that $u_{\e_k} \to \bar u$ strongly in $H^1(B_R \setminus \Sigma_\delta^A)$. Consequently, we also have $T_R u_{\e_k} \to T_R \bar u$ strongly in $L^{2}(\partial B_R \setminus \Sigma_\delta^A)$.
Since $T_R u_{\e_k} = 0$ by construction, we immediately get that $T_R \bar u = 0 $ $L^2$-a.e. in $\partial B_R \setminus \Sigma_0$. 

Now, let us fix $\nu >0$. Since, by Lemma \ref{L:H^1_0:trace}, it holds $|y|^a|T_R \bar u|^2  \in L^{1}(\partial B_R)$, the absolute continuity of the integral ensures that there exists a sufficiently small $\mu$ such that 
\[
\int_{\partial B_R \cap \Sigma_\mu^A}|y|^a |T_R \bar u|^2 ds \leq \nu\,. 
\]
Thus
\[
\int_{\partial B_R}|y|^a |T_R \bar u|^2 ds \leq \nu + \int_{\partial B_R\setminus\Sigma_\mu^A}|y|^a |T_R \bar u|^2 ds \leq\nu + c\int_{\partial B_R\setminus\Sigma_\mu^A}|T_R \bar u|^2 ds = \nu\,,
\]
where in the last equality holds because $T_R \bar u = 0 $ $L^2$-a.e. on $\partial B_R \setminus \Sigma_0$. Since $\nu$ is arbitrary, we conclude that $T_R \bar u  = 0$. Therefore, again by Lemma \ref{L:H^1_0:trace}, we have $u \in H^{1,a}_0(B_R)$.

\smallskip

\noindent\emph{Step 6. $\bar{u}=\tilde{u}$ and conclusion.}

\smallskip 

Next, we prove that $\bar{u}=\tilde{u}$. Let us test \eqref{eq:approximation:u:e} with $\phi \in C_c^\infty(B_R)$. Then, we have
\begin{equation}\label{eq:conv:bar:u:1}
    \int_{B_R\setminus\Sigma_{\e_k}^A}|y|^a A\D u_{\e_k}\cdot \D \phi\, dz \to 
\int_{B_R}|y|^a A\D \bar{u}\cdot \D \phi\, dz\,, \quad \text{ as } \e_k\to0^+.
\end{equation} In fact, let us fix $E\subset B_R$ measurable. By using \eqref{eq:unif:ell}, \eqref{eq:stima:u:e:2} and $|y|^a \in L^1(B_R)$, we have that 
\begin{align*}
   \int_{E\setminus\Sigma_{\e_k}^A}|y|^a |A\D u_{\e_k}\cdot \D \phi|\, dz
   &=
\int_{E}|y|^a | A\D u_{\e_k}\cdot \D \phi| \rchi_{E\setminus\Sigma_{\e_k}^A}\, dz
\\
    &\le c \|u_{\e_k}\|_{H^{1,a}(B_R\setminus\Sigma_{\e_k}^A)}\|\D\phi\|_{L^\infty(B_R)}\int_{E}|y|^a\, dz \le \delta(|E|)\,,
\end{align*}
where $\delta(|E|)\to 0$ as $|E|\to 0$. Combining this with the a.e. convergence $\D u_{\e_k}\to \D \bar{u}$ we can conclude that \eqref{eq:conv:bar:u:1} holds true by applying the Vitali convergence theorem.
With similar computations, we get
\begin{equation}\label{eq:conv:bar:u:2}
\int_{B_R\setminus\Sigma_{\e_k}^A}|y|^a (\tilde{f}\phi-\tilde{F}\cdot\D\phi)\, dz\to \int_{B_R}|y|^a (\tilde{f}\phi-\tilde{F}\cdot\D\phi)\, dz\,,\quad \text{as } \e_k \to 0^+ .
\end{equation}
Combining \eqref{eq:conv:bar:u:1} and \eqref{eq:conv:bar:u:2} we get
\[
\int_{B_R}|y|^a A\D\bar{u}\cdot\D\phi\, dz=\int_{B_R}|y|^a (\tilde{f}\phi-\tilde{F}\cdot\D\phi)\, dz\,,
\]
and since $\bar{u}\in {H}^{1,a}_0(B_R)$, we get that $\bar{u}$ is a weak solution to \eqref{eq:cut:off:approximation}. By uniqueness of solutions (see Remark \ref{R:existence:uniqueness}), we  get that $\bar{u}=\tilde{u}$ a.e. in $B_R$. Since  $\tilde u = u$, $\tilde{f}=f$ and $\tilde{F}=F$ in $B_r$, and by using \eqref{eq:stima:u:e:2},
\eqref{eq:strong:diagonal} we obtain that our statement holds true. \end{proof}

The next approximation lemma provides the convergence of solutions of equations with regularized coefficients through convolution with standard mollifiers.

\begin{Lemma}\label{L:approximation:smooth}
    Let $a+n>0$, $R>0$, $r\in(0,R)$, and $A \in C^0(B_{R})$ satisfying \eqref{eq:unif:ell}. 
    Let $f\in L^{p,a}(B_R)$ where $p\ge (2_*^a)'$ if $d+a_+>2$ or $p> 1$ if $d+a_+=2$, $F\in L^{q,a}(B_R)^d$ where $q\ge 2$, let $u$ be a weak solution to \eqref{eq:weak:sol:conormal:0}. For $\delta>0$, let $\{\rho_\delta\}$ be a family of smooth mollifiers and let us define $A_\delta:=A\ast \rho_\delta$. Then, there exists a family $\{u_\delta\}_{0<\delta\ll 1}$, such that $u_\delta$ are weak solutions to
\[
-\dive(|y|^a A_\delta \nabla u_\delta)= |y|^a f+\dive(|y|^aF), \quad \text{in }B_{r},
\]
\[
\|u_\delta\|_{H^{1,a}(B_{r})}\le c \big(\|f\|_{L^{p,a}(B_R)}
+\|F\|_{L^{q,a}(B_R)}+\|u\|_{H^{1,a}(B_R)}\big)
\,,
\]
for some constant $c>0$ depending only on $d$, $n,$ $ a,$ $ \lambda,$ $ \Lambda$, $r$, $R$, and there exists a sequence $\delta_k\to0$ such that
\[
 u_{\delta_k}\to u \text{ in } H^{1,a}(B_{r}).
\]
Moreover, in the case $d+a_+=2$ the constant $c$ depends also on $p$.
\end{Lemma}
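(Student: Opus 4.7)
The strategy mirrors the proof of Lemma \ref{L:approximation}, but is simpler since no perforation is involved and both $u$ and the approximants lie in the same weighted Sobolev space. First, I would reduce to a zero boundary value problem via cutoff, exactly as in Step 1 of the proof of Lemma \ref{L:approximation}: pick $\xi\in C_c^\infty(B_R)$ with $\xi=1$ on $B_r$ and $\supp(\xi)\subset B_{(R+r)/2}$, and set $\tilde u := \xi u$. Then $\tilde u \in H^{1,a}_0(B_R)$ is a weak solution to $-\dive(|y|^a A\D \tilde u) = |y|^a \tilde f + \dive(|y|^a \tilde F)$ in $B_R$, where $\tilde f = f\xi - F\cdot \D\xi - A\D u\cdot \D\xi$ and $\tilde F = F\xi - uA\D\xi$, and the $L^{2,a}$ norms of $\tilde f, \tilde F$ are controlled in terms of $\|u\|_{H^{1,a}(B_R)}$, $\|f\|_{L^{2,a}(B_R)}$ and $\|F\|_{L^{2,a}(B_R)}$. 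I would then extend $A$ continuously to $\R^d$ while preserving the ellipticity bounds \eqref{eq:unif:ell} (e.g., via a Tietze-type extension), so that for every $\delta>0$ small enough the convolution $A_\delta = A\ast \rho_\delta$ is a continuous uniformly elliptic matrix on $\R^d$ with the same constants $\lambda,\Lambda$.

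Next, define $u_\delta \in H^{1,a}_0(B_R)$ as the unique weak solution (see Remark \ref{R:existence:uniqueness}) to
\[
-\dive(|y|^a A_\delta \D u_\delta) = |y|^a \tilde f + \dive(|y|^a \tilde F) \quad \text{in } B_R,
\]
with zero trace on $\partial B_R$. Testing with $u_\delta$ itself and applying Cauchy-Schwarz, Young and the Poincar\'e inequality (Proposition \ref{P:Poincare}) yield a uniform bound $\|u_\delta\|_{H^{1,a}(B_R)}\le c$, with $c$ depending only on $d,n,a,\lambda,\Lambda,r,R$ and on $\|u\|_{H^{1,a}(B_R)}$, $\|f\|_{L^{2,a}(B_R)}$, $\|F\|_{L^{2,a}(B_R)}$.

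The core step is strong convergence. Subtracting the two equations and testing with the admissible test function $u_\delta - \tilde u \in H^{1,a}_0(B_R)$ gives
\[
\int_{B_R}|y|^a A_\delta \D(u_\delta - \tilde u)\cdot \D(u_\delta - \tilde u)\,dz = -\int_{B_R}|y|^a (A_\delta - A)\D\tilde u\cdot \D(u_\delta - \tilde u)\,dz.
\]
The left-hand side is bounded below by $\lambda \|\D(u_\delta - \tilde u)\|_{L^{2,a}(B_R)}^2$, while Cauchy-Schwarz bounds the right-hand side by $\|A_\delta - A\|_{L^\infty(B_{(R+r)/2})}\|\D\tilde u\|_{L^{2,a}(B_R)}\|\D(u_\delta - \tilde u)\|_{L^{2,a}(B_R)}$, since $\D\tilde u\equiv 0$ outside $\overline{B_{(R+r)/2}}$. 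Uniform continuity of $A$ on the compact set $\overline{B_{(R+r)/2}}$, which is strictly contained in $B_R$, gives $\|A_\delta - A\|_{L^\infty(B_{(R+r)/2})}\to 0$ as $\delta\to 0$; combined with the Poincar\'e inequality this yields $u_{\delta_k}\to \tilde u$ strongly in $H^{1,a}(B_R)$ along any sequence $\delta_k\to 0$, and restricting to $B_r$, where $\tilde u = u$, concludes the proof. The only genuine obstacle is the handling of the extension of $A$ and the $L^\infty$-convergence $A_\delta\to A$ on $\supp(\xi)$, both of which are standard consequences of mollification theory once $A$ is uniformly continuous on a compact neighborhood of $\supp(\xi)$ contained in $B_R$.
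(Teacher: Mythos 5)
Your proof is correct and establishes the result by a cleaner route than the paper's. The cutoff reduction (Step~1) is identical. The convergence argument, however, differs: the paper first invokes weak compactness via Lemma~\ref{L:compactness} to extract a weak/strong limit $v$, shows $v$ solves the limiting equation, identifies $v=\tilde u$ by uniqueness, and only then upgrades to strong convergence of the gradients by testing with $u_\delta - v$. You instead exploit that the limit candidate $\tilde u$ is known a priori, subtract the two weak formulations, and test directly with $u_\delta - \tilde u \in H^{1,a}_0$ (admissible by density of $C^\infty_c$ in $H^{1,a}_0$ and continuity of the bilinear form on $H^{1,a}$). This yields the quantitative estimate $\lambda\|\D(u_\delta-\tilde u)\|_{L^{2,a}}\le\|A_\delta-A\|_{L^\infty(\supp\xi)}\|\D\tilde u\|_{L^{2,a}}$ and hence strong convergence of the full family, not just a subsequence. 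Your approach avoids the compactness machinery entirely, is shorter, and gives a rate. One small remark: the Tietze extension of $A$ to $\R^d$ is a viable way to ensure $A_\delta$ is defined on all of $B_R$, but it is not strictly needed; the paper sidesteps the issue by posing the approximating problem on an intermediate ball $B_{R'}$ with $r<R'<R$, so that $A_\delta$ is already defined on $\overline{B_{R'}}$ for $\delta$ small (note $A_\delta$ automatically inherits the same ellipticity constants by averaging, so no separate preservation argument is required). Either device works.
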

\begin{proof}
We consider the case $d + a_+ > 2$, as the other one can be treated analogously.
Let us fix $0<r<R'<R$ and consider a smooth cut-off function $\xi\in C_c^\infty(B_R)$ such that
\[
\xi = 1 \text{ in }B_{r},\quad 0\le\xi\le1,\quad \supp(\phi)\subset B_{R'}.
\]
Arguing as in Lemma \ref{L:approximation}, the function $\tilde u:= \xi u$ is a weak solution to
\begin{equation}\label{eq:u:smooth:cut:off}
\begin{cases}
-\dive(|y|^a A \D \tilde u)=|y|^a \tilde{f}+ \dive(|y|^a \tilde{F}), & \text{ in }B_{R'}\\
\tilde u=0, & \text{ on }\partial {B_{R'}},
\end{cases}
\end{equation}
where, we set
\[
\tilde{f}= f\xi-F\cdot\D\xi-A\D u\cdot \D\xi,
\qquad
\tilde{F}= F\xi-uA\D\xi\,.
\]

For $\delta>0$, let $\{\rho_\delta\}$ be a family of smooth mollifiers and let us define $A_\delta:=A\ast \rho_\delta$, which is a uniformly elliptic matrix in ${B_{R'}}$ (choosing $\delta$ small enough).
For every $0<\delta\ll1$, recalling Remark \ref{R:existence:uniqueness}, let $u_\delta$ be the unique weak solution to 
\begin{equation}\label{eq:u_delta:smooth}
\begin{cases}
-\dive(|y|^a A_\delta \D u_\delta)=|y|^a \tilde{f}+ \dive(|y|^a \tilde{F}), & \text{ in }{B_{R'}}\\
u_\delta=0, & \text{ on }\partial {B_{R'}}.
\end{cases}
\end{equation}
By testing \eqref{eq:u_delta:smooth} with $u_\delta$ and arguing as in Lemma \ref{L:approximation}, we obtain
\[
\|u_\delta\|_{H^{1,a}({B_{R'}})}\le c\big(\|u\|_{H^{1,a}({B_{R}})}+\|{f}\|_{L^{p,a}({B_{R}})}+\|{F}\|_{L^{q,a}({B_{R}})}\big),
\]
for some $c>0$ depending only on $d$, $\lambda$, $\Lambda$ and $R$. Hence, $\{u_\delta\}$ is uniformly bounded in $H^{1,a}_0({B_{R'}})$ and by using Lemma \ref{L:compactness}, we get
\begin{equation}\label{eq:v_delta:convergence:smooth}
u_\delta \to v \text{ strongly in } L^{2,a}({B_{R'}})\,,\qquad
\D u_\delta \rightharpoonup \D v \text{ weakly in } L^{2,a}({B_{R'}})^d\,,
\end{equation}
for some $v\in H^{1,a}_0(B_R)$.
Taking $u_\delta-v$ as test function in \eqref{eq:u_delta:smooth}, using \eqref{eq:v_delta:convergence:smooth} and recalling that $A_\delta \to A$ uniformly, we get
\[
\int_{{B_{R'}}}|y|^a A_\delta \D u_\delta \cdot \D u_\delta = \int_{{B_{R'}}}|y|^a  \big(A_\delta \D u_\delta \cdot \D v + 
\tilde f (u_\delta-v)
- \tilde F\cdot \D (u_\delta-v)
\big)\to \int_{{B_{R'}}} |y|^a A \D v \cdot \D v.
\]
Hence, since $A_\delta$ satisfies \eqref{eq:unif:ell}, we have that $\D u_\delta\to \D v$ strongly in $L^{2,a}({B_{R'}})^d$.

Finally, let us fix $\phi \in C_c^\infty({B_{R'}})$. Then, 
\[
\int_{{B_{R'}}} |y|^a \big(
\tilde f\phi
- \tilde F\cdot \D \phi
\big)=
\int_{{B_{R'}}}|y|^a A_\delta \D u_\delta\cdot \D\phi \to 
\int_{{B_{R'}}}|y|^a A \D v \cdot \D\phi \,,
\]
that is, $v$ is a weak solution to \eqref{eq:u:smooth:cut:off} and by uniqueness of solutions (see Remark \ref{R:existence:uniqueness}), it follows that $ v=\tilde u$. Next, since $\tilde{f}=f$, $\tilde{F}=F$ and $\tilde{u}=u$ in $B_{r}$, we have proved that $u_\delta\to u$ in $H^{1,a}(B_{r})$ and $u_\delta$ solves 
\[
-\dive(|y|^a A_\delta \D u_\delta)=|y|^a {f}+ \dive(|y|^a {F}) \ \text{ in }B_{r}.
\]
The proof is complete.
\end{proof}

\section{Liouville theorems}\label{sec:5}

The aim of this section is to prove the Liouville Theorem \ref{L:Liouville} for entire solutions to \eqref{eq:Lioueq}, see Definition \ref{def:weak:sol:hom:eps}. First, we need some preliminary results. The first result addresses the unweighted tangential variables $ x \in \mathbb{R}^{d-n} $, for which the operator is invariant under translation. Its proof uses a standard difference quotients technique and involves an iterative argument based on the Caccioppoli inequality in Lemma \ref{lemma:caccioppoli} (see for example \cite[Corollary 4.2, Lemma 4.3]{TerTorVit24a}), and is therefore omitted here.

\begin{Proposition}\label{P:diffquot}
Let $u$ be an entire solution to \eqref{eq:Lioueq}. Then
\begin{itemize}
\item[$i)$] for every $j= 1, \ldots, d-n$, the weak derivative ${\partial_{x_j} u}$ is also an entire solution to  \eqref{eq:Lioueq};
\item[$ii)$] if there exists constants $c, \gamma >0$ such that
\[
|u(z)| \leq c(1+|z|^\gamma)\,,
\]
then $u$ is a polynomial of degree at most $\lfloor\gamma\rfloor$ in the variable $x \in \R^{d-n}$, with coefficients depending only on the variable $y \in \R^n$.
\end{itemize}
\end{Proposition}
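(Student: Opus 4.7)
\emph{Proof plan.} The plan is to exploit the translation invariance of \eqref{eq:Lioueq} in the tangential $x$-variables, which holds precisely because $A$ has constant coefficients and the perforation $\Sigma_\eps^A = \{A_3^{-1} y \cdot y \le \eps^2\}$ depends only on $y$. Both parts follow from this invariance, combined with the a priori estimates of Lemma \ref{lemma:caccioppoli} and Proposition \ref{prop:moser}.

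For part $i)$, fix $j \in \{1,\dots, d-n\}$ and $h \neq 0$, and consider the translate $u_h(z) := u(z + h e_{x_j})$. By the translation invariance noted above, $u_h$ is again an entire solution to \eqref{eq:Lioueq}, and hence so is the difference quotient $D_j^h u := (u_h - u)/h$. Applying the Caccioppoli inequality of Lemma \ref{lemma:caccioppoli} on concentric balls $B_R \subset B_{2R}$ yields
\[
\|\nabla D_j^h u\|_{L^{2,a}(B_R \setminus \Sigma_\eps^A)} \le \frac{c}{R}\|D_j^h u\|_{L^{2,a}(B_{2R} \setminus \Sigma_\eps^A)} \le \frac{c}{R} \|\partial_{x_j} u\|_{L^{2,a}(B_{3R} \setminus \Sigma_\eps^A)},
\]
where the second inequality is the standard uniform-in-$h$ bound for difference quotients, legitimate since $u \in H^{1,a}_{\mathrm{loc}}$. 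Up to subsequences, $D_j^h u \rightharpoonup \partial_{x_j} u$ weakly in $H^{1,a}(B_R \setminus \Sigma_\eps^A)$ as $h \to 0$; passing to the limit in the weak formulation for $D_j^h u$ shows that $\partial_{x_j} u$ is an entire solution to \eqref{eq:Lioueq} as well, since $R>0$ was arbitrary.

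For part $ii)$, I would iterate $i)$ in combination with a Cauchy-type gradient growth estimate. The homogeneous equation $-\operatorname{div}(|y|^a A \nabla u)=0$ is invariant under the rescaling $z \mapsto R z$, which sends $\Sigma_\eps^A$ to the \emph{smaller} set $\Sigma_{\eps/R}^A$. Coupling the Caccioppoli inequality with the $\eps$-stable $L^\infty$-bound of Proposition \ref{prop:moser} — applied to $\partial_{x_j} u$, which is itself a solution by $i)$ — and exploiting this scale invariance (so that $\eps/R \le \eps \ll 1$ remains in the stability regime), one obtains
\[
\|\partial_{x_j} u\|_{L^\infty(B_{R/4} \setminus \Sigma_\eps^A)} \le \frac{c}{R}\,\|u\|_{L^\infty(B_R \setminus \Sigma_\eps^A)}, \qquad R \ge 1,
\]
with $c$ independent of $R$. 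Iterating this estimate $k$ times, for every $\alpha \in \N^{d-n}$ with $|\alpha| = k$,
\[
\|\partial_x^\alpha u\|_{L^\infty(B_{R/4^k} \setminus \Sigma_\eps^A)} \le \frac{c^k}{R^k}\,\|u\|_{L^\infty(B_R \setminus \Sigma_\eps^A)} \le C_k \, R^{\gamma - k},
\]
where the growth hypothesis \eqref{eq:Lgrowth} has been used. Choosing $k := \lfloor \gamma \rfloor + 1 > \gamma$ and letting $R \to \infty$ at a fixed $z_0 \in \R^d \setminus \Sigma_\eps^A$ forces $\partial_x^\alpha u(z_0) = 0$ for every such $\alpha$. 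A standard distributional argument then gives $u(x,y) = \sum_{|\beta| \le \lfloor \gamma \rfloor} x^\beta g_\beta(y)$ for suitable coefficient functions $g_\beta$ depending on $y$ alone.

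The main technical point is the uniformity in $R$ of the gradient estimate above: it relies crucially on the $\eps$-stability content of Proposition \ref{prop:moser}, together with the observation that the rescaled perforation parameter $\eps/R$ \emph{decreases} with $R$, keeping us in the small-$\eps$ regime where the estimates are uniform. In particular, no stability is needed in the opposite direction $\eps \to \infty$, which is fortunate since such stability is not available.
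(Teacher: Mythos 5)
Your proof is correct and follows precisely the route the paper indicates (translation invariance of the operator and of $\Sigma_\e^A$ in the $x$-variables, difference quotients plus the Caccioppoli inequality of Lemma~\ref{lemma:caccioppoli} for part $i)$, then an iterated Cauchy-type gradient estimate obtained by coupling Caccioppoli with the $\eps$-stable $L^\infty$ bound of Proposition~\ref{prop:moser} for part $ii)$); the paper declares this proof standard and omits it, so you are supplying exactly the intended argument. The observation that the rescaled perforation parameter $\eps/R$ decreases as $R\to\infty$, keeping the estimates in the uniform small-$\eps$ regime, is the right way to justify the scale-invariance of the constants.
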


In the following crucial lemmas we provide a characterization of the solutions to \eqref{eq:Lioueq} with polynomial growth at infinity in the case $ n = d $.
The first lemma gives us a basis of $L^2(\mathbb{S}^{n-1})$ which depends on the matrix $A$. 

\begin{Lemma}\label{L:spectral}
Let $A \in \R^{n,n}$ be a constant matrix which satisfies \eqref{eq:unif:ell}. The following holds:
\begin{enumerate}
\item[$i)$] there exists an increasing, diverging sequence of eigenvalues $\{\mu_k(A)\}_{k\geq 0}$, corresponding to critical levels of the quotient 
\[
\mathcal{R}(g)=\frac{\int_{\mathbb{S}^{n-1}}|A^{\frac12}\sigma|^a |\nabla_\sigma g|^2 d\sigma }{\int_{\mathbb{S}^{n-1}}|A^{\frac12}\sigma|^a | g|^2 d\sigma }\,.
\]
Each eigenvalue has finite multiplicity, denoted by $m(\mu_k(A))$;
\item[$ii)$] let $g_{k,j}$, for $j = 1, \ldots, m(\mu_k(A))$, be the normalized eigenfunctions associated to $\mu_k(A)$. The eigenfunctions $g_{k,j}$ satisfy the following properties:
\begin{equation}\label{eq:Leigprop}
\begin{gathered}
\int_{\mathbb{S}^{n-1}}|A^{\frac12}\sigma|^a\nabla_\sigma g_{k,j}\cdot\nabla_\sigma \eta\, d \sigma = \mu_k(A) \int_{\mathbb{S}^{n-1}}|A^{\frac12}\sigma|^a g_{k,j}\eta\, d \sigma\,, \qquad \eta \in H^1(\mathbb{S}^{n-1})\,,\\
\int_{\mathbb{S}^{n-1}}|A^{\frac12}\sigma|^a g_{k,j}g_{k', j'}\, d\sigma \, = \delta_{kk'}\delta_{jj'}\,,
\end{gathered}
\end{equation}
where $\delta_{ij}$ is the Kronecker delta. 

Moreover, ${g_{k,j}}$ provides a basis of $L^2(\mathbb{S}^{n-1})$, which is also orthonormal with respect to the scalar product $\langle u, v \rangle_{L^2(\mathbb{S}^{n-1})} := \int_{\mathbb{S}^{n-1}}|A^{\frac12}\sigma|^a\,u\,v\, d\sigma$. Thus, for every $u\in L^2(\mathbb{S}^{n-1})$ it holds
\[
u(\sigma) = \sum_{k=0}^\infty\sum_{j=1}^{m(\mu_k(A))}\Big(\int_{\mathbb{S}^{n-1}}|A^{\frac12}\sigma|^a u(\eta)g_{k,j}(\eta)d\eta\Big)g_{k,j}(\sigma)\,; 
\]
\item[$iii)$] the eigenvalue $\mu_0(A) = 0$ is simple and $g_0 = g_{0,1}$ is constant;
\item[$iv)$] the first nontrivial eigenvalue $\mu_1(A)$ is such that
\[\mu_1(A) \geq \begin{cases}
\displaystyle \Big(\frac{\lambda}{\Lambda}\Big)^{\frac{|a|}{2}}(n-1) & \text{ if }n \geq 3\,,\\
 \displaystyle\Big({\frac{4}{\pi}}\arctan 
 \Big(\frac{\lambda}{\Lambda}\Big)^{\frac{|a|}{4}} 
 \Big)^2\, & \text{ if }n =2\,. 
\end{cases}\]
\end{enumerate}
\end{Lemma}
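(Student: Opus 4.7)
The plan is to handle parts (i)--(iii) via the standard compact self-adjoint framework, and part (iv) by a direct comparison with the isotropic Laplace--Beltrami spectrum; the $n=2$ case requires a separate one-dimensional refinement which is where the main difficulty lies.

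For (i)--(ii), note that the weight $W(\sigma):=|A^{1/2}\sigma|^a$ is bounded above and below by positive constants on $\mathbb{S}^{n-1}$: using $\lambda\le|A^{1/2}\sigma|^2\le\Lambda$ one checks in both cases $a\ge 0$ and $a<0$ that
\[
\frac{\min_{\sigma\in\mathbb{S}^{n-1}}W}{\max_{\sigma\in\mathbb{S}^{n-1}}W}\;\ge\;\bigl(\lambda/\Lambda\bigr)^{|a|/2}.
\]
Hence the weighted $L^2$ and $H^1$ norms on $\mathbb{S}^{n-1}$ are equivalent to the unweighted ones. The bilinear form associated with the numerator of $\mathcal{R}$ is symmetric and continuous on $H^1(\mathbb{S}^{n-1})$ and coercive on the orthogonal complement (with respect to $\langle\cdot,\cdot\rangle_{L^2}$) of the constants. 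The Rellich--Kondrachov compact embedding $H^1(\mathbb{S}^{n-1})\hookrightarrow L^2(\mathbb{S}^{n-1})$ makes the associated inverse operator compact and self-adjoint, so the classical spectral theorem yields a discrete non-negative sequence $\{\mu_k(A)\}_{k\ge 0}$ with finite multiplicities and an orthonormal basis $\{g_{k,j}\}$ satisfying \eqref{eq:Leigprop}. Item (iii) is then immediate: $\mathcal{R}(g)=0$ forces $\nabla_\sigma g=0$, and by connectedness of $\mathbb{S}^{n-1}$ (for $n\ge 2$), $g$ is constant, so $\mu_0(A)=0$ is simple.

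For (iv) in the case $n\ge 3$, I will exploit the centered variational characterization
\[
\mu_1(A)\;=\;\inf_{g\text{ non-constant}}\frac{\displaystyle\int_{\mathbb{S}^{n-1}} W\,|\nabla_\sigma g|^2\,d\sigma}{\displaystyle\int_{\mathbb{S}^{n-1}} W\,(g-\bar g_W)^2\,d\sigma},
\]
where $\bar g_W:=\bigl(\int W g\bigr)/\bigl(\int W\bigr)$ is the $W$-weighted mean. Since $\bar g_W$ is the minimizer of $c\mapsto \int W(g-c)^2$, the denominator is bounded above by $\int W(g-\bar g)^2\le \max W\cdot\int (g-\bar g)^2$, with $\bar g$ the unweighted mean, while the numerator is bounded below by $\min W\cdot\int|\nabla_\sigma g|^2$. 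Plugging in the sharp Poincar\'e--Wirtinger inequality on $\mathbb{S}^{n-1}$, $\int|\nabla_\sigma g|^2\ge (n-1)\int(g-\bar g)^2$, and using $\min W/\max W\ge (\lambda/\Lambda)^{|a|/2}$ yields precisely $\mu_1(A)\ge (\lambda/\Lambda)^{|a|/2}(n-1)$.

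For $n=2$, after diagonalizing $A$ by an orthogonal change of variables (which preserves $\mathcal{R}$), we may write $W(\theta)=(\alpha\cos^2\theta+\beta\sin^2\theta)^{a/2}$ on $\mathbb{S}^1\simeq[0,2\pi)$, so $\mu_1(A)$ is the first nontrivial eigenvalue of the periodic Sturm--Liouville problem $-(Wg')'=\mu W g$. The argument from the $n\ge 3$ case still applies and yields the weaker bound $(\lambda/\Lambda)^{|a|/2}$, which agrees with $(4\arctan 1/\pi)^2=1$ at $\lambda=\Lambda$ but is strictly suboptimal otherwise. To obtain the sharp $\arctan$-formula, the plan is to perform a change of variable of the form $\tan\psi=(\beta/\alpha)^{|a|/4}\tan\theta$ (possibly followed by a Liouville normalization $u=W^{1/2}g$), chosen so that the Rayleigh quotient is transformed into a weighted quotient on $[0,L]$ whose first nonzero eigenvalue is computable by the classical Wirtinger inequality, with $L$ and the resulting constant depending explicitly on $(\lambda/\Lambda)^{|a|/4}$. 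The careful identification of this sharp 1D constant and the verification that the transformed problem genuinely has $(\pi/L)^2$ as its first eigenvalue is the main obstacle; once the substitution is in place, a direct optimization in the parameter $t=(\lambda/\Lambda)^{|a|/4}\in(0,1]$ produces exactly the stated expression $\bigl(\tfrac{4}{\pi}\arctan t\bigr)^2$.
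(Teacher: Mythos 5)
Parts (i)--(iii) and the $n\ge 3$ case of (iv) are correct and match the paper's approach in substance. The paper establishes (i)--(ii) by noting the norm equivalence on $H^1(\mathbb{S}^{n-1})$ induced by $\lambda\le|A^{1/2}\sigma|^2\le\Lambda$ and invoking the Hilbert--Schmidt theorem, and (iii) trivially; you use the inverse compact self-adjoint operator, which is the same framework. For (iv) with $n\ge 3$ the paper's proof also reduces to the unweighted Poincar\'e--Wirtinger inequality $\mu_1(\mathbb{I})=n-1$, paying a factor $m=(\lambda/\Lambda)^{|a|/2}$ for replacing $W$ by a constant in the Rayleigh quotient; the paper uses the eigenfunction $\psi$ of $\mu_1(A)$ and the minimality of the weighted mean $\langle\psi\rangle_A$ to compare against $\psi-\langle\psi\rangle_{\mathbb{I}}$, while you phrase the same inequality through the centered characterization and the pointwise bound $\min W/\max W\ge m$. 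These are cosmetically different presentations of the identical estimate.

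The genuine gap is the $n=2$ case, which you flag yourself. Your sketch (orthogonal diagonalization, a tangent substitution $\tan\psi=(\beta/\alpha)^{|a|/4}\tan\theta$, a Liouville normalization, and an identification of the transformed first eigenvalue with a one-dimensional Wirtinger constant) is the right family of ideas, but you explicitly do not carry out the sharp-constant identification, and the formula $\bigl(\tfrac4\pi\arctan(\lambda/\Lambda)^{|a|/4}\bigr)^2$ is precisely the nontrivial quantitative output of that computation. The paper does not redo this work either: it simply cites \cite[Lemma 1]{PicSpa72} (Piccinini--Spagnolo), where exactly this Sturm--Liouville lower bound on $\mathbb{S}^1$ with a bounded-ratio weight is established. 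So your proposal as written is incomplete for $n=2$; the most economical fix is to invoke the Piccinini--Spagnolo lemma directly rather than re-derive the one-dimensional constant.
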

\begin{proof}
By condition \eqref{eq:unif:ell}, we have that 
\begin{equation}\label{eq:sigmaell}
\lambda \leq |A^{\frac12}\sigma|^2  \leq \Lambda\,, \quad \text{ for every } \sigma \in \mathbb{S}^{n-1}\,.
\end{equation}
Thus
\[
\int_{\mathbb{S}^{n-1}}|A^{\frac12}\sigma|^a(|\nabla_\sigma g|^2 + | g|^2) d\sigma 
\]
provides a norm equivalent to the standard one in $H^1(\mathbb{S}^{n-1})$. 
Keeping this in mind, we obtain $i)$, $ii)$ by a standard application of Hilbert-Schmidt theorem. 

Since $iii)$ is trivial, it remains to prove $iv)$. Given $g \in H^1(\mathbb{S}^{n-1})$, let us denote 
\[
\langle g\rangle_A =  \big(\int_{\mathbb{S}^{n-1}}|A^{\frac12}\sigma|^a d\sigma\big)^{-1}\int_{\mathbb{S}^{n-1}}|A^{\frac12}\sigma|^a g\,d\sigma\,,
\]
and recall that
\begin{equation}\label{eq:meanmin}
\inf_{\xi \in \R} \int_{\mathbb{S}^{n-1}}|A^{\frac12}\sigma|^a | g - \xi|^2 d\sigma = \int_{\mathbb{S}^{n-1}}|A^{\frac12}\sigma|^a | g - \langle g\rangle_A|^2 d\sigma\,.
\end{equation}
We have that 
\begin{equation}\label{eq:LBvar}
\mu_1(A) = \inf_{g \in H^1(\mathbb{S}^{n-1})\setminus\{0\}, \langle g \rangle_A = 0}\frac{\int_{\mathbb{S}^{n-1}} |A^{\frac12}\sigma|^a|\nabla_\sigma g|^2 d\sigma }{\int_{\mathbb{S}^{n-1}} |A^{\frac12}\sigma|^a| g|^2 d\sigma }\,.
\end{equation}
Notice that the eigenvalues $\mu_k({\mathbb{I}})$ of the Laplace-Beltrami operator on the sphere are well known, and in particular $\mu_1({\mathbb{I}}) = n-1$. 
Now, set
\[
m:= \min\Big\{\Big(\frac{\lambda}{\Lambda}\Big)^{\frac{a}{2}}, \Big(\frac{\Lambda}{\lambda}\Big)^{\frac{a}{2}}\Big\} = \Big(\frac{\lambda}{\Lambda}\Big)^{\frac{|a|}{2}}
\]
and let $\psi \in H^1(\mathbb{S}^{n-1})$ be an eigenfunction associated to $\mu_1(A)$. Notice that $\langle \psi\rangle_A = 0$. Thus, by \eqref{eq:meanmin} we have that
\[
\int_{\mathbb{S}^{n-1}}|A^{\frac12}\sigma|^a | \psi |^2 d\sigma \leq \int_{\mathbb{S}^{n-1}}|A^{\frac12}\sigma|^a| \psi - \langle \psi\rangle_\mathbb{I}|^2 d\sigma
\]
As a consequence, thanks to \eqref{eq:sigmaell} we have
\[
\begin{aligned}
\mu_1(A) = \frac{\int_{\mathbb{S}^{n-1}}|A^{\frac12}\sigma|^a|\nabla_\sigma \psi|^2 d\sigma }{\int_{\mathbb{S}^{n-1}}|A^{\frac12}\sigma|^a | \psi|^2 d\sigma } &\geq \frac{\int_{\mathbb{S}^{n-1}} |A^{\frac12}\sigma|^a|\nabla_\sigma (\psi- \langle\psi\rangle_\mathbb{I})|^2 d\sigma }{\int_{\mathbb{S}^{n-1}}|A^{\frac12}\sigma|^a| \psi-\langle\psi\rangle_\mathbb{I}|^2 d\sigma }\\
&\geq m\frac{\int_{\mathbb{S}^{n-1}} |\nabla_\sigma (\psi - \langle\psi\rangle_\mathbb{I})|^2 d\sigma }{\int_{\mathbb{S}^{n-1}}| \psi -\langle\psi\rangle_\mathbb{I} |^2 d\sigma } \geq  m(n-1)\,,
\end{aligned}
\]
where in the last inequality we used that $\langle\psi - \langle\psi\rangle_\mathbb{I}\rangle_\mathbb{I}=0$ and \eqref{eq:LBvar} with $A = \mathbb{I}$. 

The improved estimate in the case $n=2$ is a consequence of \cite[Lemma 1]{PicSpa72}.
\end{proof}

\begin{Lemma}\label{L:Liouvillen}
Let $a+n>0$, $\eps \geq 0$, and let $A \in \R^{n,n}$ be a constant matrix which satisfies \eqref{eq:unif:ell}. Let $\mu_k(A)$, $g_{k, j}$ be as in Lemma \ref{L:spectral}, and define
\[
\gamma_k^\pm = \frac{2 -a-n \pm \sqrt{(a+n-2)^2 + 4 \mu_k(A)}}{2}\,.
\]
Let $u$ be such that $u \in H^{1,a}(B_R\setminus \Sigma^A_\eps)$ for any $R>0$ and  
\begin{equation*}
\int_{\R^n \setminus \Sigma^A_\eps}|y|^{a} A\nabla u \cdot \nabla \phi = 0\,, \quad \text{ for any }\quad \phi \in C^{\infty}_c(\R^n)\,.
\end{equation*}
Assume there exist constants $c, \gamma >0$ such that
\begin{equation}\label{eq:nLiobound}
|u(y)| \leq c(1 + |y|^\gamma)\,.
\end{equation}
Then there exist $C \in \R$, $\hat k = \hat k(\gamma) \in \N$ such that 
\begin{equation}\label{eq:Lepspos}
u(y) = C + \sum_{k=1}^{\hat k} \sum_{j=1}^{m(\mu_k^A)} (c_{k,j}^+ |A^{-\frac12}y|^{\gamma_k^+} + c_{k,j}^- |A^{-\frac12}y|^{\gamma_k^-})g_{k,j}\Big(\frac{A^{-\frac12}y}{|A^{-\frac12}y|}\Big)\,,
\end{equation}
for some constants $c_{k,j}^+, c_{k,j}^- \in \R$. In particular, if $\eps = 0$, then
\begin{equation}\label{eq:Lepszero}
u(y) = C + \sum_{k=1}^{\hat k} \sum_{j=1}^{m(\mu^A_k)} c_{k,j}^+ |A^{-\frac12}y|^{\gamma_k^+}g_{k,j}\Big(\frac{A^{-\frac12}y}{|A^{-\frac12}y|}\Big)\,.
\end{equation}
Moreover, if $\gamma < \gamma_1^+$, then $u$ is constant. 
\end{Lemma}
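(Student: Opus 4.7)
The strategy is to reduce to an isotropic problem on a round hole by the linear change of variables $\eta := A^{-1/2}y$, and then to carry out a separation of variables in spherical coordinates using the weighted $L^2$-orthonormal basis $\{g_{k,j}\}$ of Lemma \ref{L:spectral}, whose eigenvalues $\mu_k(A)$ produce precisely the indicial exponents $\gamma_k^\pm$ after a one-dimensional Euler ODE analysis.

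Setting $\tilde u(\eta) := u(A^{1/2}\eta)$, the identities $\nabla_y u(A^{1/2}\eta) = A^{-1/2}\nabla_\eta \tilde u(\eta)$ and $\{A^{-1}y\cdot y\leq \eps^2\}=\{|\eta|\leq \eps\}$ transform \eqref{eq:Lioueq} into the weak equation
\begin{equation*}
\int_{\R^n\setminus B_\eps}|A^{1/2}\eta|^a\,\nabla_\eta \tilde u\cdot \nabla_\eta \phi\,d\eta =0, \qquad \phi \in C^\infty_c(\R^n),
\end{equation*}
where the fact that test functions do not need to vanish on $\partial B_\eps$ encodes the radial Neumann condition $\partial_r \tilde u = 0$ on $\partial B_\eps$ when $\eps>0$ (and reduces to no condition when $\eps = 0$ beyond $H^{1,a}_{\mathrm{loc}}$-integrability at the origin). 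The bound \eqref{eq:nLiobound} becomes $|\tilde u(\eta)|\leq c(1+|\eta|^\gamma)$ with a constant depending only on $\lambda,\Lambda$. For each $r>\eps$, I would expand the spherical slice in the basis of Lemma \ref{L:spectral},
\begin{equation*}
\tilde u(r\sigma) = \sum_{k\geq 0}\sum_{j=1}^{m(\mu_k(A))} c_{k,j}(r)\, g_{k,j}(\sigma), \qquad c_{k,j}(r):= \int_{\mathbb{S}^{n-1}}|A^{1/2}\sigma|^a\, \tilde u(r\sigma)\, g_{k,j}(\sigma)\,d\sigma.
\end{equation*}
Testing the weak equation against $\phi(\eta)=\varphi(|\eta|)\, g_{k,j}(\eta/|\eta|)$ with $\varphi \in C_c^\infty((\eps,\infty))$ and using \eqref{eq:Leigprop} together with polar coordinates, one finds that each $c_{k,j}$ solves weakly, hence classically on $(\eps,\infty)$, the Euler equation
\begin{equation*}
r^2 c_{k,j}''(r) + (n-1+a)\, r\, c_{k,j}'(r) - \mu_k(A)\, c_{k,j}(r)=0,
\end{equation*}
whose indicial roots are exactly $\gamma_k^\pm$, giving $c_{k,j}(r)=c_{k,j}^+ r^{\gamma_k^+}+c_{k,j}^- r^{\gamma_k^-}$, to be supplemented by a logarithmic correction in the borderline resonance $k=0$, $a+n=2$.

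To conclude I would impose the remaining constraints. When $\eps>0$, the Neumann condition $c_{k,j}'(\eps)=0$ forces the non-constant branch of the $k=0$ mode to vanish (since $\gamma_0^+\gamma_0^-=0$ and exactly one of them equals $2-a-n\neq 0$), leaving a pure constant $C$, while for $k\geq 1$ it only imposes a linear relation between $c_{k,j}^+$ and $c_{k,j}^-$, consistent with \eqref{eq:Lepspos}. When $\eps=0$, local $H^{1,a}$-integrability at the origin rules out each $r^{\gamma_k^-}$ with $k\geq 1$ (the relations $\gamma_k^-<\gamma_k^+$ and $\gamma_k^++\gamma_k^-=2-a-n$ give $\gamma_k^-<(2-a-n)/2$, which makes $\int_0^1 r^{a+n-3+2\gamma_k^-}dr=\infty$), and the $k=0$ mode reduces to a constant because $r^{2-a-n}$ is not a weak solution across $\Sigma_0=\{0\}$ but produces a Dirac-type source on $\Sigma_0$, as in the computation of Remark \ref{rem:conormal}. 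Finally, Cauchy--Schwarz applied to $c_{k,j}(r)$ together with $|\tilde u|\leq c(1+|\eta|^\gamma)$ yields $|c_{k,j}(r)|\leq c'(1+r^\gamma)$; since $\gamma_k^+\to\infty$ by Lemma \ref{L:spectral}, this forces $c_{k,j}^+=0$ whenever $\gamma_k^+>\gamma$, truncating the sum at a finite $\hat k=\hat k(\gamma)$. If moreover $\gamma<\gamma_1^+$, then $c_{k,j}^+=0$ for every $k\geq 1$, and the Neumann relation at $\partial B_\eps$ (resp.\ $H^{1,a}$-integrability at the origin) then propagates $c_{k,j}^-=0$ as well, so that $u\equiv C$.

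The main technical obstacle I foresee is the rigorous derivation of the ODE for each mode: rather than differentiating the series $\tilde u=\sum c_{k,j}\, g_{k,j}$ termwise, one has to work entirely at the level of the weak formulation by inserting separated test functions, turning a $d$-dimensional degenerate PDE into a weighted one-dimensional Euler ODE. A secondary subtlety is the propagation $c_{k,j}^+=0\Rightarrow c_{k,j}^-=0$ in the conormal setting $\eps>0$ in the last step, which uses crucially that $\gamma_k^-\neq 0$ for $k\geq 1$; the logarithmic resonance $a+n=2$ at $k=0$ is treated by hand in that single mode.
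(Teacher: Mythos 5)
The proposal follows the same route as the paper: the linear change of variables $\eta = A^{-1/2}y$ to isotropise the coefficient and round out the hole, the spectral decomposition along the weighted orthonormal basis $\{g_{k,j}\}$ from Lemma~\ref{L:spectral}, a reduction to an Euler ODE for each radial mode via separated test functions, and then elimination of the unwanted branches by the growth bound, the local $H^{1,a}$-integrability at the origin, and the conormal boundary information on $\partial B_\eps$. The indicial roots $\gamma_k^\pm$ appear in exactly the same way.

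The only places where your exposition deviates from the paper's, and where you should tighten the write-up, are the treatment of the $k=0$ mode when $\eps=0$ and the precise mechanism by which the sum truncates when $\eps>0$. For $k=0$, $\eps=0$ you invoke the Dirac-mass interpretation of $-\dive(|y|^a\nabla|y|^{2-a-n})$; this is the right picture for $a+n\in(0,2)$, but for $a+n\geq 2$ the relevant branch ($r^{2-a-n}$, or $\log r$ in the resonant case $a+n=2$) is already excluded by $u\in H^{1,a}(B_R)$, i.e.\ by the same integrability computation you use for $k\geq 1$ — and this case is not covered by the Dirac argument. The paper avoids the case split by testing the one-dimensional weak identity \eqref{eq:modeeq} with $f\in C^\infty_c([\eps,\infty))$, $f(\eps)=1$, which yields $c_2(a+n-2)=0$ (or $c_2=0$ directly when $a+n=2$) in one line for all parameter regimes. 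Similarly, when $\eps>0$ your sentence on truncation only kills the $c^+_{k,j}$ for $\gamma_k^+>\gamma$; to actually truncate the sum in \eqref{eq:Lepspos} you then need, for those $k$, to use the relation $v'_{k,j}(\eps)=0$ (which with $v_{k,j}=c^-_{k,j}r^{\gamma_k^-}$ and $\gamma_k^-\neq 0$ forces $c^-_{k,j}=0$), a step you state only in the final $\gamma<\gamma_1^+$ paragraph but that is needed already for the general truncation. Neither issue is a conceptual gap, but both should be spelled out.
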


\begin{proof}
Since the proof we assume $d = n$, we will simply write $B_R$ instead of $ B_R^n$ to denote general balls in $\R^{n}$. Let us define $ v(\xi) = u(A^{\frac{1}{2}}\xi)$. Performing some standard computations using the change of variables $\xi = A^{-\frac12}y$ and the uniform ellipticity of $A$, we see that $v \in H^{1,a}(B_R\setminus \Sigma_\eps)$ for any $R>0$ and satisfies
\begin{equation}\label{eq:LiouVeq}
\int_{\R^n \setminus \Sigma_\eps}|A^\frac12 \xi|^{a} \nabla v \cdot \nabla \phi = 0\,, \quad \text{ for any }\quad \phi \in C^{\infty}_c(\R^n)\,.
\end{equation}
The remaining part of the proof is divided in three steps. 

\smallskip

\noindent \emph{Step 1: Spectral decomposition.}

\smallskip

In the following we extensively use polar coordinates $\xi = r \sigma$, where $r = |\xi| >0$ and $\sigma = |\xi|^{-1}\xi \in \mathbb{S}^{n-1}$. 
Since $v \in H^{1,a}(B_R \setminus \Sigma_\eps)$ for any $R>0$, then by the Fubini-Tonelli theorem it holds that $v(r\cdot) \in H^1(\mathbb S^{n-1})$ for a.e. $r > \eps$. Thus, for a.e. $r > \eps$, we can decompose $v(r\cdot)$ via the basis $\{g_{k,j}\}$ (see Lemma \ref{L:spectral}, $ii)$), obtaining
\[
v(r\sigma) = \sum_{k=0}^{\infty} \sum_{j=1}^{m(\mu_k^A)} v_{k,j}(r)g_{k,j}(\sigma)\,,
\]
where the coefficients $v_{k,j}$ depend on $r \in [\eps, \infty)$, and are such that
 \begin{equation}\label{eq:ukjdef}
 v_{k,j}(r) = \int_{\mathbb{S}^{n-1}}|A^{\frac12}\sigma|^a u(r\sigma)g_{k,j}d\sigma\,.
 \end{equation}
 
By previous discussions, the functions $v_{k,j}:[\eps, \infty)\to \R$ are well defined for a.e. $r \geq \eps$. Moreover, using once again that $v \in H^{1,a}(B_R \setminus \Sigma_\eps)$ for any $R>0$ we can see that 
 \begin{equation}\label{eq:ukjprop}
 v_{k,j} \in H^1((\eps, R), r^{a+n-1}dr) \quad \text{for any }R>0\,,\qquad
v'_{k,j}(r) = \int_{\mathbb{S}^{n-1}}|A^{\frac12}\sigma|^a \nabla v(r\sigma)\cdot \sigma g_{k,j}(\sigma)d\sigma\,.
 \end{equation}
 
 \smallskip

\noindent \emph{Step 2: Solutions of an associated ODE.}

 \smallskip

Let us rewrite \eqref{eq:LiouVeq} in polar coordinates as 
 \begin{equation}\label{eq:Lpolar}
\int_\eps^\infty \int_{\mathbb{S}^{n-1}}r^{a+n-1}|A^{\frac12}\sigma|^a\Big(\partial_r v \partial_r \phi + r^{-2}\nabla_\sigma v \cdot \nabla_\sigma \phi\Big)dr\, d\sigma\, = 0\,, \quad \text{ for any }\phi \in C^{\infty}_c(\R^n)\,.
\end{equation}
Now, let us take any $f \in C^\infty_c([0, \infty))$ such that 
\begin{equation}\label{eq:Ltextcond}
\mu_k(A) \int_{\eps}^\infty r^{a + n - 3}|f|^2 dr < \infty\,.
\end{equation}
Notice that \eqref{eq:Ltextcond} is always satisfied if $\eps >0$, or if $k = 0$ (by Lemma \ref{L:spectral}, $iii)$). Thanks to \eqref{eq:Leigprop} and \eqref{eq:Ltextcond} we have that $\phi(r\sigma) = f(r) g_{k,j}(\sigma) \in H^{1,a}(B_R \setminus \Sigma_\eps)$ for any $R>0$. Moreover, $\phi$ is compactly supported, and we can use it as a test function in \eqref{eq:Lpolar}.  Recalling also \eqref{eq:ukjdef} and \eqref{eq:ukjprop}, we obtain that $v_{k,j}$ satisfies 
\begin{equation}\label{eq:modeeq}
\int_\eps^\infty r^{a+n-1} (f' v_{k,j}' + r^{-2}\mu_k(A)  f v_{k,j})dr = 0\,, \quad \text{ for every } f \in C^\infty_c([\eps, \infty)) \text{ satisfying }\eqref{eq:Ltextcond}\,.
\end{equation}
Now, let us take any $\varphi \in C^\infty_c(\R)$, and test \eqref{eq:modeeq} with $f(r) = \varphi (\log(r))$. Notice that such $f$ is admissible. 
Performing the change of variables $r= e^\tau$, we find that the function $w_{k,j}(\tau) = v_{k,j}(e^\tau)$ satisfies
\[
\int_{\log\eps}^\infty e^{\tau(a+n-2)} (w'_{k,j}\varphi'+ \mu_k(A) w_{k,j}\varphi)\,d\tau = 0\,, \quad \text{ for all } \varphi \in C^\infty_c(\R)\,. 
\]
Equivalently, we have that $w_{k,j}$ is a solution to the elementary equation 
\[
w''_{k,j} + (a + n-2)w'_{k,j} - \mu_k(A)w_{k,j} = 0 \quad \text{ in }\quad [\log \eps, \infty)\,.
\]
Recalling that the multiplicity of the first eigenvalue $m(\mu_0(A))=1$, one has that 
\begin{equation}\label{fomr_v0}
v_{0,1}(r)=\begin{cases}
c_1+c_2 r^{2-a-n}, & \text{ if } 2-a-n\not=0,\\
c_1+c_2 \log r, & \text{ if } 2-a-n=0,
\end{cases}
\end{equation}
for some constants $c_1,c_2 \in \R$.
Furthermore, defining
\[
\gamma_k^\pm = \frac{2 - a-n \pm \sqrt{(a+n-2)^2 + 4 \mu_k(A)}}{2}\,,
\]
we readily get that $v_{k,j}$ is in the form 
\[
    v_{k,j}(r) = c_{k,j}^+ r^{\gamma_k^+} + c_{k,j}^- r^{\gamma_k^-},  \quad \text{ if }k\ge 1\,,
\]
for some constant $c_{k,j}^+, c_{k,j}^- \in \R$. 
We point out that 
\begin{equation}\label{eq:gammaprop}
\gamma_0^\pm = \pm (2-a-n)_\pm\,,\quad \text{ and }\quad \gamma^+_k >0\,, \quad \gamma^-_k < 0 \quad 
\text{if } k\geq 1\,.
\end{equation}

\smallskip

\noindent \emph{Step 3: Conclusion.}

\smallskip

First, let us show that $v_{0,1}$ is constant. 
Since $\mu_0(A)=0$, condition \eqref{eq:Ltextcond} is always satisfied. Thus, we can take $f\in C^\infty_c([\eps, \infty))$ such that $f(\eps) = 1$ in \eqref{eq:modeeq} and recalling \eqref{fomr_v0}, we find
\[
0 =\int_\eps^\infty r^{a+n-1} (f' v_{0,1}' + r^{-2}\mu_0(A)  f v_{0,1})\,dr = c_2(2 - a-n)\int_\eps^\infty f'\,dr = c_2(a+n-2)\,,
\]
if $2-a-n\not=0$, which implies that $c_2 =0$. Instead, if $2-a-n=0$, we find 
\[
0 =\int_\eps^\infty r^{a+n-1} (f' v_{0,1}' + r^{-2}\mu_0(A)  f v_{0,1})\,dr = c_2\int_\eps^\infty f'\,dr = -c_2\,,
\]
and we conclude as well.

Let us now focus on the case $k \geq 1$. First, let us assume $\eps =0$. By \eqref{eq:ukjprop} we have that $v'_{k,j} \in L^2((0, 1), r^{a+n-1} dr)$. As a consequence, 
\[
(c_{k,j}^-\gamma_k^-)^2 \int_0^1 r^{a + n -3 + 2 \gamma_k^-} dr < \infty\,.
\] 
Thanks to \eqref{eq:gammaprop}, this readily implies $c_{k,j}^- = 0$. Now, by condition \eqref{eq:nLiobound} and thanks to \eqref{eq:unif:ell} we have that 
\[
|v_{k,j}|\leq \int_{\mathbb S^{n-1}}|A^{\frac12}\sigma|^a|v(r\sigma)||g_{k,j}|d\sigma \leq c(1 + r^\gamma)\,.
\]
Therefore, since for every $\gamma$ there exists $\hat k$ such that $\gamma_{\hat k}^+ > \gamma$, we easily infer that $c_{k,j}^+ = 0$ for every $k \geq \hat k$, that is, 
$v_{k,j} \equiv 0$ for every $k \geq \hat k$. 
Recalling that $u(y) = v(A^{-\frac12}y)$, we readily get \eqref{eq:Lepszero}.

Let now consider the case $\eps >0$. Arguing as before, we find that condition \eqref{eq:nLiobound} implies that  there exists $\hat k$ such that for every $k \geq \hat k$ it holds $\gamma_{k}^+ > \gamma$ and 
\[
v_{k,j} = c_{k,j}^- r^{\gamma_k^-}.
\]
Let $k \geq \hat k$ be fixed. Since $\eps >0$, condition \eqref{eq:Ltextcond} is always satisfied. Thus we can take $f\in C^\infty_c([\eps, \infty))$ such that $f(\eps) = 1$ in \eqref{eq:modeeq} and we find, integrating by parts and performing some standard computations
\[
c_{k,j}^-\gamma_k^- \eps^{a + n - 2 + \gamma_k^-}\, = 0\,.
\]
Since $k\geq1$, then $\gamma_k^- \neq 0$ and we infer that $c_{k,j}^- = 0$, that is, $v_{k,j} \equiv 0$. Also in this case we use $u(y) = v(A^{-\frac12}y)$ to complete the proof of \eqref{eq:Lepspos}. 

Finally, we notice that if $\gamma < \gamma^+_1$ then in both cases it holds $v_{k,j} \equiv 0$ for every $k \geq 1$. Since we have already proved that $v_{0,1}$ is constant, then $v$ is also constant, and $u$ too. 
\end{proof}

\begin{proof}[Proof of Theorem \ref{L:Liouville}] Let $A$ be as in the assumptions, and recall the notation \eqref{Blocks}.
We first assume that $\gamma < 1$. Thus, by Proposition \ref{P:diffquot} we have that $u = u(y)$ does not depend on $x$, which implies that $u$ is an entire solution to \eqref{eq:Lioueq} with $d=n$ and $A=A_3$. Hence, $u$ satisfies the assumptions of Lemma \ref{L:Liouvillen}, and using that $\gamma < \gamma^+_1$, we conclude that $u$ must be constant.

\smallskip

Let now assume that $\gamma <2$. By Proposition \ref{P:diffquot} we have that $u$ is a polynomial of degree at most $1$ in the $x$ variable; that is, $u$ can be written in the form
\[
u(x, y) = u^0(y) + \sum_{j=1}^{d-n}u^j(y) x_j\,.
\]
First, we notice that by \eqref{eq:Lgrowth} it holds
\[
|u^0(y)| = |u(0, y)| \leq c(1 + |y|^\gamma)\,.
\]
Thus
\begin{equation}\label{eq:uigrowth}
|u^j(y)| = |u(e_{x_j}, y) - u^0(y)| \leq c(1 + |y|^\gamma)\,;
\end{equation}
that is, each component $u^0$, $u^j$ still satisfies the growth condition \eqref{eq:Lgrowth}. 

On the other hand, we have that $u^j(y) = {\partial_{x_j} u}$ for $j=1, \ldots, d-n$ and thus, again by Proposition \ref{P:diffquot}, they are entire solutions to \eqref{eq:Lioueq}. Keeping \eqref{eq:uigrowth} into account, and using that any $u^j$ does not depend on $x$, we can argue as in the previous part of the proof and conclude that they are constant. 

Summing up, we proved that there exists $\alpha = \alpha(u) \in \R^{d-n}$ constant such that 
\[
u(x, y) = u^0(y) + \alpha \cdot x\,.
\]
We claim that the function $u^* $ given by
\[
u^*(x, y) = \alpha \cdot x - (A_3^{-1}A_2^\top \alpha) \cdot y
\]
is an entire solution to \eqref{eq:Lioueq}.
Indeed, we can easily compute 
\[
A \nabla u^* = (A_1 \alpha - A_2 A_3^{-1}A_2^\top\alpha, 0),
\]
and thus for every $\phi \in C^\infty_c(\R^d)$ it holds
\[
\int_{\R^d \setminus \Sigma^A_\eps} |y|^a A \nabla u^*\cdot \nabla \phi \, dz = \int_{|A^{-\frac12}_3 y| \geq \eps} |y|^a (A_1 \alpha - A_2 A_3^{-1}A_2^\top\alpha) \cdot \Big(\int_{\R^{d-n}}\nabla_x \phi dx\Big)dy = 0\,,
\]
where in the last equality we used the divergence theorem.  

By linearity, the function $\hat u = u - u^*$ is an entire solution to \eqref{eq:Lioueq}. Moreover, since we are assuming $\gamma \in [1, 2)$, it holds
\[
|\hat u| \leq |u| + |u^*| \leq c(1 + |z|^\gamma) + c|z| \leq c(1 + |z|^\gamma)\,,
\]
that is, $\hat u$ satisfies \eqref{eq:Lgrowth}. Since by construction $\hat u=\hat u(y)$ does not depend on $x$, we can argue as in the $\gamma <1$ case and infer that $\hat u$ is constant, by Lemma \ref{L:Liouvillen} and using that $\gamma < \gamma^+_1$. In conclusion, we showed that 
\[
u = \hat u + u^* = c + \alpha\cdot x + \beta \cdot y\,,
\]
as needed. 
\end{proof}

\section{Stable regularity estimates in perforated domains}\label{sec:6}

In this section we prove Theorem \ref{T:0:1:alpha:eps}, namely, the stable regularity estimates in perforated domains for weak solutions to \eqref{approxPDEA}. %that is, for the problem \begin{equation*} \begin{cases} -\mathrm{div}(|y|^aA\nabla u)=|y|^af+\mathrm{div}(|y|^aF) &\mathrm{in \ } B_1\setminus\Sigma^A_\varepsilon\\ (A\nabla u+F)\cdot\nu=0 &\mathrm{on \ } B_1\cap\partial\Sigma^A_\varepsilon. \end{cases} \end{equation*}
For simplicity, we set this section in the unit ball $B_1$; however, the analysis should be carried out in a smaller ball $B_R$, where $R > 0$ is specified in Proposition \ref{prop:moser}.
We divide the proof into two parts, obtaining first the H\"older estimate for solutions and then the H\"older estimate for their gradient.

\subsection{Stable H\"older estimates}

\begin{proof}[Proof of Theorem \ref{T:0:1:alpha:eps}, i), stable $C^{0,\alpha}$-estimates] Let $u_\e$ be a family of solutions to \eqref{approxPDEA}.
Since the weight $|y|^a$ is uniformly elliptic away from $\Sigma_0$, and since the boundary $\partial\Sigma_\e^a$ is of class $C^1$ by the assumption $A\in C^1$ with $\|A\|_{C^{1,\omega}(B_1)}\leq L$ for some modulus of continuity $\omega$ (see Remark \ref{R:C1:boundary}), well-known results in elliptic regularity imply that for every $0<\eps \ll 1$, there exists a constant $C_\eps$ (depending only on $d$, $n$, $a$, $\lambda$, $\Lambda$, $p$, $q$, $\alpha$, $L$ and $\eps$) such that, for any solution $u_\eps$ to \eqref{approxPDEA}, the following estimate holds:
\begin{equation}\label{eq:alpha:RR}
    \|u_\e\|_{C^{0,\alpha}(B_{1/2}\setminus\Sigma_\e^A)}\le C_\eps\big(
    \|u_\e\|_{L^{2,a}(B_1\setminus\Sigma_\e^A)}
    +
    \|f\|_{L^{p,a}(B_{1})}
    +
    \|F\|_{L^{q,a}(B_{1})}
    \big).
\end{equation}
Our goal is to show that $C_\eps$ remains uniformly bounded as $\eps \to 0$. The proof proceeds by contradiction and is divided into several steps.

\smallskip

\noindent \emph{Step 1. Contradiction argument and blow-up sequences.}

\smallskip

 Assume, for the sake of contradiction, that there exist two sequences ${\eps_k}$ and ${u_k}$ such that $\eps_k \to 0$ as $k \to \infty$, the functions $u_k$ are non trivial, satisfy \eqref{approxPDEA} with $\eps = \eps_k$ and it holds that
\begin{equation}\label{eq:contradiction1}
      \|u_k\|_{C^{0,\alpha}(B_{1/2}\setminus\Sigma_{\eps_k}^A)}\geq k\big(
    \|u_k\|_{L^{2,a}(B_1\setminus\Sigma_{\eps_k}^A)}
    +
    \|f\|_{L^{p,a}(B_{1})}
    +
    \|F\|_{L^{q,a}(B_{1})}
    \big).
\end{equation}
By Proposition \ref{prop:moser}, inequality \eqref{eq:contradiction1} implies the existence of a constant $c > 0$, independent of $k$, such that
\[
      [u_k]_{C^{0,\alpha}(B_{1/2}\setminus\Sigma_{\eps_k}^A)}\geq c\, k\|u_k\|_{L^{\infty}(B_{3/4}\setminus\Sigma_{\eps_k}^A)}\,.
\]
Now, let $\eta \in C^\infty_c(B_{3/4})$ be a function satisfying $0 \leq \eta \leq 1$ and $\eta = 1$ on $B_{1/2}$.
We define 
\[
M_k = [\eta u_k]_{C^{0,\alpha}(B_{1}\setminus\Sigma_{\eps_k}^A)}\,,
\]
and observe that
\begin{equation}\label{eq:alpha:RR:contradiction2}
M_k \geq [u_k]_{C^{0,\alpha}(B_{1/2}\setminus\Sigma_{\eps_k}^A)} \geq c\, k \|u_k\|_{L^{\infty}(B_{3/4}\setminus\Sigma_{\eps_k}^A)}\,. 
\end{equation}
Consider two sequences of points $z_k=(x_k,y_k), \hat{z}_k=(\hat{x}_k,\hat{y}_k)\in B_{1}\setminus\Sigma_{\e_k}^A$ such that
\begin{equation}\label{eq:points:seminorm:alpha}
    \frac{|(\eta u_k)(z_k)- (\eta u_k)(\hat{z}_k)|}{|z_k-\hat{z}_k|^\alpha}\geq \frac12 M_k\,.
\end{equation}
Without loss of generality, we may assume that $z_k \in B_{3/4}$. Define $r_k= |z_k-\hat{z}_k|$. From \eqref{eq:alpha:RR:contradiction2} and \eqref{eq:points:seminorm:alpha} we obtain 
\[
c k \|u_k\|_{L^{\infty}(B_{3/4}\setminus\Sigma_{\eps_k}^A)} \leq \frac{|(\eta u_k)(z_k)- (\eta u_k)(\hat{z}_k)|}{r_k^\alpha} \leq \frac{2\|u_k\|_{L^{\infty}(B_{3/4}\setminus\Sigma_{\eps_k}^A)}}{r_k^\alpha}\,.
\]
This implies 
\[
r_k\le c k^{-\frac{1}{\alpha}}\to0\,,\quad \text{ as } k\to \infty\,.
\]
Let us define $d_k={\rm dist}(z_k, \partial \Sigma_{\eps_k}^A)$. From now on we distinguish three distinct cases:
\smallskip
\begin{itemize}
    \item[$\,$] \textbf{Case 1:} 
    $\quad\displaystyle \frac{d_k}{r_k}\to \infty \text{ as } k\to\infty\,,$

\item[$\,$] \textbf{Case 2:} $\quad\displaystyle\frac{d_k}{r_k}\leq c \text{ uniformly in } k\,,
     \quad \text{ and } \quad \displaystyle{
    \frac{|y_k|}{r_k}\to \infty  \text{ as } k\to\infty\,.}$

\item[$\,$] \textbf{Case 3:}
    $\quad
    \displaystyle{
    \frac{|y_k|}{r_k}\le c \text{ uniformly in } k\,.}$
\end{itemize}
\smallskip
Define
\[
Z_k = (x_k, Y_k) = \Big(x_k, \tau \frac{y_k}{|y_k|}\Big)\,,
\]
where $\tau$ is chosen such that $Z_k \in \partial \Sigma_{\eps_k}^A$. Such a point exists, is unique, and satisfies $\tau < |y_k|$, thanks to Lemma \ref{L:normal} point $ii)$. Therefore,
\[
|y_k|>  |y_k| - \tau = |y_k - Y_k| = |z_k - Z_k|\geq d_k\,.
\]
As a result, along a suitable subsequence, the three cases are mutually exclusive and collectively exhaustive, covering all possible scenarios. Heuristically, in {\bf Case 1}, the blow-up scale does not capture $ \partial \Sigma_{\varepsilon_k}^A $; in {\bf Case 2}, $ \partial \Sigma_{\varepsilon_k}^A $ is visible, but $ \Sigma_0 $ is not; and in {\bf Case 3}, both $ \Sigma_0 $ and $ \partial \Sigma_{\varepsilon_k}^A $ are visible (although they may or may not coincide in the rescaled limit).

 Let $z_k^0 = (x_k^0,y_k^0)$ denote a chosen projection of $z_k$ onto $\partial \Sigma_{\e_k}^A$, and define 
\[
\tilde{z}_k=(\tilde{x}_k,\tilde{y}_k) = \begin{cases}
(x_k,y_k),&\text{ in }\textbf{Case 1},\\
(x_k^0,y_k^0),&\text{ in }\textbf{Case 2},\\
(x_k,0),&\text{ in }\textbf{Case 3}.\\
\end{cases}
\]
We observe that, by construction, $|z_k - \tilde z_k| \leq cr_k$.

\begin{center}
\includegraphics[page=1,scale=1]{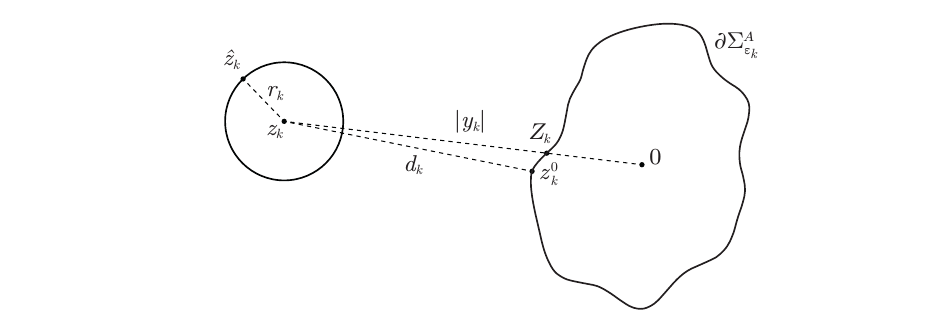}
\captionof{figure}{This image describes {\bf Case 1} when $n=d=2$. In this case we have $d_k/r_k\to\infty$ and $\tilde z_k=z_k$.}
\end{center}

\begin{center}
\includegraphics[page=1,scale=1]{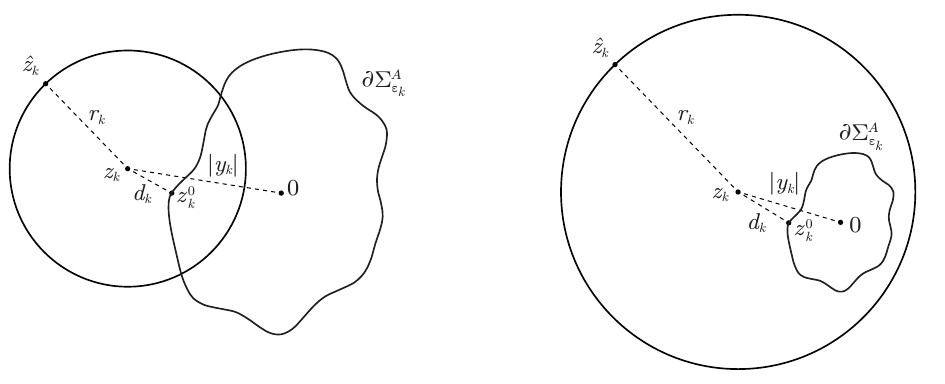}
\captionof{figure}{The images on the left and on the right describe respectively {\bf Case 2} and {\bf Case 3} when $n=d=2$. In {\bf Case 2} we have $d_k/r_k\leq c$, $|y_k|/r_k\to\infty$ and $\tilde z_k=z^0_k$. In {\bf Case 3} we have $|y_k|/r_k\leq c$ and $\tilde z_k=0$.}
\end{center}

We introduce the sequence of rescaled domains
\[
   \Omega_k=\frac{B_{1}\setminus\Sigma_{\e_k}^A-\tilde{z}_k}{r_k}=\Big\{z=(x,y)\in \R^d \mid|\tilde{z}_k+r_kz|<1, A_3^{-1}(\tilde{z}_k+r_k z)(\tilde{y}_k+r_ky)\cdot (\tilde{y}_k+r_ky)\geq \e_k^2\Big\}\,,
\]
and the limit blow-up domain 
\begin{equation}\label{eq:limit:domain:def}
    \Omega_\infty = \{z = (x,y) \in \R^d \mid \text{ exists }\hat k, r>0 \text{ s.t. } B_r(z) \subset \Omega_k \text{ for every }k\geq \hat k\}\,.
\end{equation}
Note that, by this definition, $\Omega_\infty$ is an open set.

Then, we introduce the sequences of functions
\begin{equation*}
    v_k(z)=\frac{(\eta u_k)(\tilde z_k+r_kz)-(\eta u_k)(z_k)}{r_k^\alpha M_k }, \quad \text{ for }z\in \Omega_k\,,
\end{equation*}
and 
\begin{equation*}
    w_k(z)=\frac{\eta( z_k)(u_k(\tilde z_k+r_kz)-u_k(z_k))}{r_k^\alpha M_k}, \quad \text{ for }z\in \Omega_k\,.
\end{equation*}

\smallskip

\noindent \emph{Step 2. Blow-up limit domains.}

\smallskip

 Our goal is to characterize $\Omega_\infty$ along a suitable subsequence. As previously noted, the three distinct regimes described in {\bf Cases 1, 2, 3} correspond to different scenarios, leading to distinct limit blow-up domains. In {\bf Case 1}, we \emph{do not see} the rescaled boundary of the perforation at any scale of the blow-up. Thus, the hole moves farther and farther away from the blow-up centers and the limit domain $\Omega_\infty$ coincides with the entire space $\R^d$. In {\bf Case 2}, we \emph{see} the rescaled boundary of the perforation at all scales, whereas the rescaled center of the perforation remains invisible and increasingly distant. As a result, the rescaled boundary flattens progressively and the limit domain $\Omega_\infty$ becomes a half-space. Finally, in {\bf Case 3}, both the rescaled boundary and the center of the perforation are visible at all scales, leading, as limit domain, to a (potentially) perforated space. In the next part, we will rigorously justify this visual intuition, suggested by Figures 1 and 2.

Notice that for every $z \in \R^d$ we have $|\tilde z_k + r_kz|<1$ for sufficiently large $k$. Indeed, since $|z_k - \tilde z_k|\leq cr_k$ and $r_k \to 0$, then 
\[
|\tilde{z}_k + r_k z|\le |z_k| + |z_k - \tilde {z}_k|+r_k|z| \leq \frac34 + cr_k < 1\,.
\]
Thus, to characterize $\Omega_\infty$, it suffices to determine, for a given $z \in \R^d$, whether a neighborhood of the point $z_k + r_kz$ lies inside or outside $\Sigma^A_{\eps_k}$. 

We claim that $\Omega_\infty = \R^d$ in \textbf{Case 1}, that is, for every $z \in \R^d$, a neighborhood of $z_k + r_kz$ lies outside of $\Sigma^A_{\eps_k}$ for sufficiently large $k$. 
Indeed, let $z \in \R^d$ be fixed, and assume by contradiction that $z_k + r_kz \in \Sigma^A_{\eps_k}$. 
Since $z_k \not \in \Sigma^A_{\eps_k}$, there exists a point $P_k = t_k z_k + (1-t_k)(z_k + r_kz)$ with $t_k \in [0,1)$, such that $P_k \in \partial \Sigma_{\eps_k}^A$. As a consequence,
\[
\infty \leftarrow \frac{d_k}{r_k} \leq \frac{|z_k - P_k|}{r_k} = \frac{|(1-t_k)r_k z|}{r_k} \le |z|\,, 
\]
which leads to a contradiction. 

\smallskip

Let us now consider \textbf{Case 2}. Recall that $\tilde z_k = z_k^0$ and $d_k = |z_k - z_k^0| \leq c\,r_k$. Furthermore, note that by the uniform ellipticity condition,  
\[
\infty \leftarrow\frac{|y_k|}{r_k} \leq \frac{|y_k - y_k^0|}{r_k} + \frac{|y_k^0|}{r_k} \leq \frac{d_k}{r_k} + c\frac{\eps_k}{r_k} \leq c\Big(1 + \frac{\eps_k}{r_k}\Big) \,.
\]
Therefore, in this case, we have ${\eps_k}/{r_k} \to \infty$ as $k \to \infty$.

Since $z_k^0 \in \partial \Sigma_{\eps_k}^A$, a point $z \in \R^d$ satisfies $z_k^0 + r_k z \not\in \Sigma_{\eps_k}^A$ if and only if
\begin{equation}\label{eq:case2equiv}
\frac{\sqrt{A_3^{-1}(z_k^0 + r_k z)(y_k^0 + r_k y)\cdot (y_k^0 + r_k y)} - \sqrt{A_3^{-1}(z_k^0)y_k^0\cdot y_k^0}}{r_k} > 0\,.
\end{equation}
To proceed, we must analyze the behavior of \eqref{eq:case2equiv} as $k \to \infty$.

We define the function $\Psi: \R^d \to \R$ as 
\[
\Psi(z) = \sqrt{A_3^{-1}(z)y\cdot y}\,.
\]
It follows that
\[
\Psi(z)=\e_k, \quad \text{ for every } z \in \partial\Sigma_{\e_k}^A\,.
\]
Moreover, the gradient of $\Psi$ is given by
\[
\nabla \Psi (z) = \frac{(0, A_3^{-1}(z) y)}{\Psi(z)} + \frac{G(z)}{\Psi(z)}\,,
\]
where the function $G: \R^d \to \R^d$ is given by 
\[
G(z) = \frac12\big(\sum_{i,j=1}^n \partial_{z_\ell}b_{i,j}(z)y_iy_j\big)_{\ell=1, \ldots, d}\,, \quad \text{ with } \quad A_3^{-1}(z) = (b_{i,j}(z) )_{i,j = 1, \ldots, n}.
\]
As a result, $\Psi \in C^{0,1}(B_1) \cap C^{1}(B_1 \setminus \Sigma_0)$, and it satisfies $\|\nabla \Psi\|_{L^\infty(B_1)} \le cL$.
Next we show that, for every $ z \in \R^d$, it holds
\begin{equation}\label{eq:dom:bordo:exp}
    \frac{\Psi(z_k^0+r_k z)-\Psi(z_k^0)}{r_k}- \frac{A_3^{-1}(z_k^0) y_k^0}{\Psi(z_k^0)} \cdot y \to 0, \quad\text{as }k\to\infty.
\end{equation}
By the mean value theorem, there exists a point $z^*_k=(x_k^*,y_k^*) \in \R^d$ of the form $z_k^* = z_k^0 + \theta_k r_k z$ with $\theta_k \in [0,1]$, such that
\begin{equation}\label{eq.dom:**0}
\frac{\Psi(z_k^0+r_k z) - \Psi(z_k^0)}{r_k} = \nabla \Psi (z_k^*)\cdot z = \frac{A_3^{-1}(z_k^*)y_k^*}{\Psi(z_k^*)}\cdot y + \frac{G(z_k^*)}{\Psi(z_k^*)}\cdot z\,.
\end{equation}
Using \eqref{eq:unif:ell}, we estimate
\begin{equation}\label{eq.dom:**1}
    \Big|\frac{G(z_k^*)}{\Psi(z_k^*)}\cdot z\Big| \le cL |y_k^*| \leq  c L (|y_k^0| + cr_k) \le c (\e_k + r_k)\,,
\end{equation}
and 
\begin{align}\label{eq.dom:**2}
\begin{split}
    &\Big|
    \frac{A_3^{-1}(z_k^*)y_k^*}{\Psi(z_k^*)} - \frac{A_3^{-1}(z_k^0)y_k^0}{\Psi(z_k^0)}
    \Big| \le 
    \Big|
    \frac{A_3^{-1}(z_k^*)y_k^*}{\Psi(z_k^*)} - \frac{A_3^{-1}(z_k^0)y_k^*}{\Psi(z_k^0)}
    \Big| + 
     \Big|
    \frac{A_3^{-1}(z_k^0)y_k^*}{\Psi(z_k^0)} - \frac{A_3^{-1}(z_k^0)y_k^0}{\Psi(z_k^0)}
    \Big|\\
    &\le
    \Big|
    \frac{A_3^{-1}(z_k^*)y_k^*}{\Psi(z_k^*)} - \frac{A_3^{-1}(z_k^0)y_k^*}{\Psi(z_k^*)}
    \Big|
    +
    \Big|
    \frac{A_3^{-1}(z_k^0)y_k^*}{\Psi(z_k^*)} - \frac{A_3^{-1}(z_k^0)y_k^*}{\Psi(z_k^0)}
    \Big|+  \frac{c}{\e_k} |y_k^*-y_k^0| \\
   & \le  c|z_k^*-z_k^0|\frac{|y_k^*|}{|\Psi(z_k^*)|} + c|y_k^*| 
   \frac{|\Psi(z_k^0)-\Psi(z_k^*)|}{|\Psi(z_k^0)||\Psi(z_k^*)|}
   + c\frac{r_k}{\e_k} \le  c r_k + c\frac{r_k}{\e_k}.
    \end{split}
\end{align}
Combining \eqref{eq.dom:**0}, \eqref{eq.dom:**1}, and \eqref{eq.dom:**2}, we obtain
\[
\Big|\frac{\Psi(z_k^0+r_k z)-\Psi(z_k^0)}{r_k}- \frac{A_3^{-1}(z_k^0) y_k^0}{\Psi(z_k^0)} \cdot y\Big| \leq c (r_k + \eps_k + \frac{r_k}{\eps_k})\to 0\,, 
\]
since ${r_k}/{\eps_k} \to 0$. This completes the proof of \eqref{eq:dom:bordo:exp}.

Next we define, up to subsequences (recall that $z_k^0 \in B_1$), 
\[
\bar{e}=\lim_{k\to\infty}\frac{A_3^{-1}(z_k^0)y_k^0}{|A_3^{-1}(z_k^0)y_k^0|} \in \mathbb{S}^{n-1},\quad \text{ and } \quad  \Pi_{\bar e}=\{z=(x,y) \mid \bar{e}\cdot y > 0 \}.
\]
We claim that $\Omega_\infty = {\Pi}_{\bar e}$, which corresponds to a half-space. 

Fix $z \in \Pi_{\bar e}$. Suppose by contradiction that \eqref{eq:case2equiv} does not hold. Then, by \eqref{eq:dom:bordo:exp}, we have
\[
    0\ge \frac{\Psi(z_k^0)}{|A_3^{-1}(z_k^0)y_k^0|}\frac{\Psi(z_k^0+r_k z)-\Psi(z_k^0)}{r_k} = o(1) + \frac{A_3^{-1}(z_k^0)y_k^0}{|A_3^{-1}(z_k^0)y_k^0|}\cdot y\to \bar{e}\cdot y >0\,,
\]
where we used that, by the uniform ellipticity condition,
\[
\frac{\lambda}{\Lambda^{\frac12}} \leq \frac{\Psi(z_k^0)}{|A_3^{-1}(z_k^0)y_k^0|} \leq \frac{\Lambda}{\lambda^{\frac12}}\,.
\]
Therefore we reach a contradiction, and we infer that $\Pi_{\bar e} \subset \Omega_\infty$. 

Now, fix $z \in \R^d \setminus \overline{\Pi}_{\bar e}$, that is,  such that $\bar{e}\cdot y < 0$ and assume that \eqref{eq:case2equiv} holds. Then,
\[
    0\le\frac{\Psi(z_k^0)}{|A_3^{-1}(z_k^0)y_k^0|}\frac{\Psi(z_k^0+r_k z)-\Psi(z_k^0)}{r_k}\to \bar{e}\cdot y <0\,,
\]
which is a contradiction. Hence $\Omega_\infty \subset \Pi_{\bar e}$ and the claim is proved. 

\smallskip

Finally, let us consider \textbf{Case 3}. Recall that in this case $\tilde{z}_k=(x_k,0)$, and $|y_k| \leq cr_k$. Moreover, we note that 
\[
\e_k < 
|A_3^{-\frac12}(z_k)y_k|
\le
\lambda^{-\frac12}|y_k| \leq cr_k\,,
\]
which implies that $\eps_k/r_k \leq c$ in this case. 
Thus we define the following limits
\[
(\bar{x},0) = \lim_{k\to \infty} (x_k,0),\quad
\bar{A}=A(\bar{x},0) = \lim_{k\to \infty} A(\tilde{z}_k),\quad
\bar{\e} = \lim_{k\to \infty}\frac{\e_k}{r_k}\in[ 0, \infty)\, .
\]
We claim that $\Omega_\infty = \R^d \setminus \Sigma_{\bar{\e}}^{\bar{A}}$. Let us fix $z\in\R^d \setminus \Sigma_{\bar{\e}}^{\bar{A}}$, that is, such that $\bar{A}_3^{-1}y\cdot y > \bar{\e}^2$. Assume by contradiction that $\tilde z_k + r_k z \in \Sigma_{\eps_k}^A$, that is, 
\[
A_3^{-1}(\tilde z_k + r_kz)r_ky\cdot r_k y\le \e_k^2\,.
\] 
Dividing the previous inequality by $r_k^2$ and taking the limit as $k \to \infty$, we obtain
\[
\bar{\e}^2 < \bar{A}_3^{-1}y\cdot y \le \bar{\e}^2\,,
\]
a contradiction. Thus $\R^d \setminus \Sigma_{\bar{\e}}^{\bar{A}} \subseteq \Omega_\infty$. By performing similar computations, we get that every $z\in \Sigma_{\bar{\e}}^{\bar{A}}$ does not belong to $\Omega_k$. Therefore, $\Omega_\infty \subseteq {\R^d \setminus \Sigma_{\bar{\e}}^{\bar{A}} }$ and the claim is proved. 
To summarize, we have shown that
\begin{equation}\label{eq:limit:domain*}
    \Omega_\infty=\begin{cases}
        \R^d, & \text{in }\textbf{Case 1},\\
        \Pi_{\bar e}, & \text{in }\textbf{Case 2},\\
        \R^d \setminus\Sigma_{\bar{\e}}^{\bar{A}}, & \text{in }\textbf{Case 3}.
    \end{cases}
\end{equation}

\smallskip

\noindent \emph{Step 3. H\"older estimates and convergence of the blow-up sequences.}

\smallskip

Let $z,z'\in \Omega_k$. It holds
\[
|v_k(z)-v_k(z')|=\frac{|(\eta u_k)(\tilde z_k+r_kz)-(\eta u_k)(\tilde z_k+r_kz')|}{r_k^\alpha M_k }\le |z-z'|^\alpha\,.
\]
Therefore,
\begin{equation}\label{eq:alpha:estimate}
[v_k]_{C^{0,\alpha}(\Omega_k)}\le 1\,.
\end{equation}

Now, fix a compact set $K \subset \Omega_\infty$, and observe that $K \subset \Omega_k$ for sufficiently large $k$. Let $z \in K$, and consider the sequence of points
\[
\xi_k =\frac{{z}_k-\tilde{z}_k}{r_k}\,.
\]
Note that $\xi_k \in \Omega_k$, $|\xi_k| \leq c$ uniformly in $k$, and $v_k(\xi_k) = 0$ for every $k$. Then, by \eqref{eq:alpha:estimate}, we obtain
\[
|v_k(z)| = |v_k(z) - v_k(\xi_k)|\leq |z-\xi_k|^\alpha \le c(K)\,,
\]
which implies that $\|v_k\|_{C^{0,\alpha}(K)} \leq c(K)$.
Hence, we can apply the Arzel\'a-Ascoli theorem to conclude that $v_k\to\bar{v}$ uniformly in $K$. By an exhaustion of $\Omega_\infty$ via compact subsets $K\subset\Omega_\infty$ and a standard diagonal argument, we extend $\bar v$ to the entire $ \Omega_\infty$ and obtain that $\bar{v}$ satisfies
\begin{equation}\label{eq:alpha:v}
[\bar v]_{C^{0,\alpha}( \Omega_\infty)}\le 1\,.
\end{equation}

Let us now show that $w_k \to \bar v$ uniformly on compact sets. 
Fix a compact set $K\subset \Omega_\infty$. We claim that
\begin{equation}\label{eq:samelimit}
\sup_{z \in K}|v_k(z) - w_k(z) | \to 0 \quad \text{ as }k \to \infty. 
\end{equation}
Let $z \in K \subset \Omega_\infty$ be fixed. Since $z \in \Omega_k$ for large $k$, we have $\tilde z_k + r_k z \in B_\tau \setminus \Sigma^A_{\eps_k}$ for some $\tau <4/5$ depending only on $K$. Thus, recalling also that $|z_k - \tilde z_k| \leq cr_k$, we get
\[
\begin{aligned}
|v_k(z) - w_k(z) | = \frac{|u(\tilde z_k + r_kz)||\eta(\tilde z_k + r_kz) - \eta( z_k)|}{r_k^\alpha M_k}
\leq \frac{c\|u\|_{L^\infty(B_\tau\setminus \Sigma^A_{\eps_k})}(|z_k - \tilde z_k| + r_k |z|)}{r_k^\alpha M_k} \leq ck^{-1} r_k^{1-\alpha}\,,
\end{aligned}
\]
where in the last inequality we used \eqref{eq:alpha:RR:contradiction2}. Since $\alpha <1$, we immediately get \eqref{eq:samelimit}.
Therefore, since $v_k \to \bar v$ uniformly, we conclude that $w_k \to \bar v$ uniformly as well. 

\smallskip

\noindent \emph{Step 4. The limit function is not constant.}

\smallskip 

\noindent Let us consider the sequences of points
\[
\xi_k=\frac{{z}_k-\tilde{z}_k}{r_k}, \quad 
\hat\xi_k=\frac{\hat{z}_k-\tilde{z}_k}{r_k}\,.
\]
As already pointed out, $\xi_k \in \Omega_k$, $|\xi_k| \leq c$ uniformly in $k$, and $v_k(\xi_k) = 0$ for every $k$. In fact, it also holds $\hat \xi_k \in \Omega_k$ for every $k$, and since $|\xi_k-\hat\xi_k|=1$, we also have  $|\hat\xi_k| \leq c$ uniformly in $k$.
Moreover, by \eqref{eq:points:seminorm:alpha} (recall that $r_k = |z_k - \hat z_k|$) we get
\[
|v_k(\hat\xi_k)| = |v_k(\xi_k)-v_k(\hat\xi_k)|=\frac{|\eta u_k({z_k})-\eta u_k(\hat{z_k})|}{r_k^\alpha M_k}\ge \frac12\,.
\]
Now, using that $|\xi_k|, |\hat \xi_k| \leq c$, one can see that there exist $\xi, \hat\xi \in \bar \Omega_\infty$ such that $\xi_k\to\xi$, $\hat \xi_k\to \hat\xi$ and $\xi\not=\hat\xi$. Thus, by a simple continuity argument, $\bar v \le \delta $ in a neighbourhood of $\xi$, and $\bar v \geq 1/2-\delta$ in a neighbourhood of $\hat \xi$, for some small $\delta\in(0,1/10)$. Therefore $\bar v$ is not constant.

\smallskip

\noindent\emph{Step 5. The limit function is an entire solution to a homogeneous equation.} 
\smallskip

\noindent Let us denote $A_k(z)=A(\tilde z_k+r_k z)$. Since $\tilde z_k \in B_1$ and $A\in C^1(B_1)$, up to consider a  subsequence the Arzel\'a-Ascoli theorem yields that 
\[
\bar z =\lim_{k\to \infty}\tilde{z}_k \, \quad \text{ and }\quad
\bar{A}=A(\bar{z})=\lim_{k\to\infty}A_k(z)\,,
\]
where $\bar A$ is a constant coefficients symmetric matrix satisfying \eqref{eq:unif:ell}. Next, we define
\[
    \rho_k(y)=\begin{cases}
       \displaystyle{ \frac{|\tilde y_k+r_k y|}{|\tilde y_k|}}, & \text{ in } \textbf{Case 1} \text{ and }\textbf{Case 2},\\
        |y|, & \text{ in } \textbf{Case 3},
    \end{cases}
\]
noticing that, in \textbf{Case 1} and \textbf{Case 2}, $r_k/|\tilde y_k| \to 0$, and thus $\rho_k\to 1$ a.e. in every compact set $K\subset\R^d$.

Let us fix $\phi\in C_c^\infty(\R^d)$, and notice that for $k$ large, ${\rm spt}( \phi)\subset \frac{B_{1}-\tilde z_k}{r_k}$. Since $u_k$ is solution to \eqref{approxPDEA} with $\eps = \eps_k$, we obtain that $w_k$ satisfies
\begin{align}\label{eq:sol:vk}
\begin{aligned}
\int_{\Omega_k}\rho_k^a(y) A_k(z)\nabla w_k(z)\cdot \nabla {\phi}(z)\, dz &=\frac{\eta(z_k)r_k^{2-\alpha}}{M_k} \int_{\Omega_k}\rho_k^a(y) f(\tilde z_k+r_kz){\phi}(z)\, dz \\
& - 
\frac{\eta(z_k)r_k^{1-\alpha}}{M_k}
 \int_{\Omega_k}\rho_k^a(y) F(\tilde z_k+r_kz)\cdot \nabla {\phi}(z)\,dz\, .
 \end{aligned}
\end{align}
First, we prove that the right-hand side of \eqref{eq:sol:vk} vanishes as $k\to\infty$. We compute 
\[
\begin{aligned}
    \Big|\int_{\Omega_k}\rho_k^a(y) f(\tilde z_k+r_kz){\phi}(z)  dz    \Big|
    &\le \Big(
    \int_{\Omega_k} \rho_k^a(y)|f(\tilde z_k+r_kz)|^p  dz  
    \Big)^\frac{1}{p}
    \Big(
    \int_{\supp(\phi)} \rho_k^a(y)|\phi(z)|^{p'}  dz  
    \Big)^{\frac{1}{p'}} \\
    &\le c\|\phi\|_{L^\infty(\R^d)} \Big(
    \int_{\Omega_k} \rho_k^a(y)|f(\tilde z_k+r_kz)|^p  dz  
    \Big)^\frac{1}{p}\,.
\end{aligned}
\]
By \eqref{eq:contradiction1} and \eqref{eq:alpha:RR:contradiction2}, and using that $r_k \leq c |\tilde y_k|$ in \textbf{Case 1} and \textbf{Case 2}, we get 
\[
    \Big(
    \int_{\Omega_k} \rho_k^a(y)|f(\tilde z_k+r_kz)|^p  dz  
    \Big)^\frac{1}{p} \leq ck^{-1}r_k^{-\frac{d + a_+}{p}}M_k\,.
\]
Therefore, thanks to the assumption $\alpha\le 2-\frac{d+a_+}{p}$, we get
\[
\Big|
\frac{\eta(z_k) r_k^{2-\alpha}}{M_k} \int_{\Omega_k}\rho_k^a(y) f(\tilde z_k+r_kz){\phi}(z) dz
\Big|\le c\|\phi\|_{L^\infty(\R^d)}k^{-1}r_k^{2-\alpha-\frac{d+a_+}{p}}\to0\,.
\]
Performing similar computations, we see  that
\[
\Big|\frac{\eta(z_k)r_k^{1-\alpha}}{M_k}
 \int_{\Omega_k}\rho_k^a(y) F(\tilde z_k+r_kz)\cdot \nabla {\phi}(z)\,dz\Big| \leq c\|\D \phi\|_{L^{2}(\R^d, \rho_k^a dz)}k^{-1}r_k^{1-\alpha-\frac{d+a_+}{q}}\to 0\,,
\]
thanks to the assumption $\alpha\le 1-\frac{d+a_+}{q}$.
Thus, the right hand side in \eqref{eq:sol:vk} vanishes as $k \to \infty$. 

It is useful to keep explicit track of the dependence of $\phi$ in the previous computation. Let $R$ be such that $\supp(\phi)\subset R$. Previous part of the proof can be reformulated in the following way: there exists $\delta_k>0$ such that $\delta_k \to 0$ and 
\begin{equation}\label{eq:c0:similcaccioppoli}
\int_{\Omega_k}\rho_k^a(y) A_k(z)\nabla w_k(z)\cdot \nabla {\phi}(z)\, dz \leq \delta_k (\|\phi\|_{L^\infty(\R^d)} + \|\nabla\phi\|_{L^2(\R^d, \rho_k^a dz)})\,.
\end{equation}
We are now in position to show that the left hand side of \eqref{eq:sol:vk} satisfies
\begin{equation}\label{eq:l.h.s.0}
\int_{\Omega_k\cap \supp(\phi)}\rho_k^a A_k\nabla w_k\cdot \nabla {\phi} \, dz \to \int_{\Omega_\infty\cap \supp(\phi)}\bar{\rho}^a\, \bar {A}\nabla \bar{v}\cdot \nabla \phi\, dz\,,
\end{equation}
where
\[
\bar{\rho}(y) = \begin{cases}
    1 &\text{ in }\textbf{Case 1} \text{ and }\textbf{Case 2}\,,\\
    |y| &\text{ in }\textbf{Case 3}\,.
\end{cases}
\]
First, observe that since $u_k \in H^{1,a}(B_1 \setminus \Sigma^A_{\eps_k})$, it follows that $w_k \in H^1(\Omega_k, \rho_k^a dz)$. 
Fix $R>0$. By \emph{Step 3}, the sequence ${w_k}$ is uniformly bounded in $L^\infty(\Omega_k \cap B_{2R})$.
Let $\varphi \in C^\infty_c(B_{2R})$ be such that $0 \leq \varphi \leq 1$ and $\varphi = 1$ in $B_R$. Testing \eqref{eq:c0:similcaccioppoli} with $\phi = w_k \varphi^2$, and applying \eqref{eq:unif:ell} along with H\"older and Young inequalities, standard computations yield
\[
\int_{\Omega_k}\rho_k^a|\nabla (\varphi^2w_k)|^2\, dz \leq c (\|w_k\|^2_{L^\infty(\Omega_k \cap B_{2R})} + 1)\,.
\]
%\[
%\int_{\Omega_k}\rho_k^a \varphi^2 |\nabla w_k|^2\, dz + \int_{\Omega_k}\rho_k^a w_k\varphi A_k\nabla w_k\cdot \nabla \varphi \, dz\leq c\delta_k (\|\varphi^2w_k\|_{L^\infty(\R^d)} + \left(\int_{\Omega_k}\rho_k^a|\nabla (\varphi^2w_k)|^2\, dz\right)^{1/2})\,.
%\]
%By some standard computations we get 
%\[
%\int_{\Omega_k}\rho_k^a|\nabla (\varphi^2w_k)|^2\, dz \leq c \big(\|w_k\|^2_{L^\infty(\Omega_k \cap B_{2R})} + \int_{\Omega_k}\rho_k^a \varphi^2 |\nabla w_k|^2\, dz\big)\,,
%\]
%and, by H\"older and Young inequalities, 
%\[
%\big|\int_{\Omega_k}\rho_k^a w_k\varphi A_k\nabla w_k\cdot \nabla \varphi \, dz\big| \leq  \frac12\int_{\Omega_k}\rho_k^a \varphi^2|\nabla w_k|^2\, dz + c\|w_k\|^2_{L^\infty(\Omega_k \cap B_{2R})}\,.
%\]
%Therefore, recalling that $\delta_k \to 0 $ as $k \to \infty$, we get  
%\[
%\int_{\Omega_k}\rho_k^a \varphi^2|\nabla w_k|^2\, dz \leq c\|w_k\|^2_{L^\infty(\Omega_k \cap B_{2R})}\,. 
%\]
Using the uniform $L^\infty$--bound of $\{w_k\}$  in $\Omega_k \cap B_{2R}$, and the fact that $\varphi=1$ in $B_R$, we conclude that $\{w_k\}$ is uniformly bounded in $H^1(\Omega_k\cap B_{R},\rho_k^adz)$.
Finally, arguing as in Lemma \ref{L:approximation} with minor adjustments, and using that $A_k(z)\to\bar{A}$, $\rho_k^a\to\bar{\rho}^a$ almost everywhere, we conclude that \eqref{eq:l.h.s.0} holds, and that $\bar{v}$ belongs to $H^{1}({\Omega}_\infty\cap B_R,\Bar{\rho}^adz)$ for every $R>0$.
Summing up, we proved that 
\[
\int_{\Omega_\infty}\bar{\rho}^a \bar{A}\nabla \bar{v}\cdot \nabla \phi\, dz = 0 \quad \text{ for every } \phi \in C^\infty_c(\R^d)\,.
\]
Hence, recalling the definition of the limit blow-up domain $\Omega_\infty$, see \eqref{eq:limit:domain*}, we infer that $\bar{v}$ is an entire solution
to:
\begin{itemize}
\item[\,] {\bf Case 1} 
\[
-\dive(\bar{A}\nabla\bar{v})=0,\quad\text{in }\R^d\,;
\]
\item[\,] \textbf{Case 2}
\[
\begin{cases}
-\dive(\bar{A}\nabla\bar{v})=0,&\text{ in }\Pi_{\bar{e}}\,\\
\bar A\nabla \bar v \cdot \nu = 0 & \text{ on }\partial \Pi_{\bar{e}};
\end{cases}
\]
\item[\,] \textbf{Case 3}, if $\bar{\e}=0$
\[
-\dive(|y|^a\bar{A}\nabla\bar{v})=0, \quad\text{ in }\R^d\,,
\]
\item[\,] \textbf{Case 3}, if $\bar \eps >0$
\[
\begin{cases}
-\dive(|y|^a\bar{A}\nabla\bar{v})=0 & \text{in }\R^d\setminus\Sigma_{\bar{\e}}^{\bar A}\,,\\
\bar{A}\nabla\bar{v}\cdot \nu =0  & \text{on } \Sigma_{\bar{\e}}^{\bar A}\,.
\end{cases}
\]
\end{itemize}

\smallskip

\noindent\emph{Step 6. Liouville theorems and contradiction.}

\smallskip

From \eqref{eq:alpha:v} we have that $\bar v $ satisfies 
$$|\bar{v}(z)|\le C(1+|z|)^\alpha,$$
for every $z\in \Omega_\infty$, where $\alpha<\min\{\alpha_*,1\}$, by the assumption \eqref{eq:0:alpha:reg1}.
By invoking appropriate Liouville-type Theorems we get that $\bar v$ must be constant. In \textbf{Case 1}, we use the classical Liouville Theorem in the whole space; in \textbf{Case 2}, we use the Liouville Theorem in an half space with an homogeneous conormal boundary condition (for instance, see \cite[Theorem 1.6]{TerTorVit24a}); in \textbf{Case 3}, we use Theorem \ref{L:Liouville}.
Hence, we reach a contradiction, since $\bar v$ is not constant by \emph{Step 4}.
Thus, \eqref{eq:alpha:RR} holds with a constant $C$ uniformly bounded in $\eps$. The proof is complete.
\end{proof}

\begin{remark}\label{R:stability:eigenvalue}
We point out that our theory can be extended to equations including lower order terms. In particular, we highlight its connection to the study of eigenvalue problems for the Laplacian in domains with small holes (see \cite{FelLivOgn25,Oza85,RauTay75} and the references therein) - which corresponds to our case $d=n$. More generally, for $2\le n\le d$, let us consider the eigenvalue problem
\begin{equation}\label{eq:eigenvalue}
\begin{cases}
        -\Delta u_\e = \lambda_\e u_\e,&\text{ in } \Omega\setminus \Sigma_\e,\\
    \D u_\e\cdot \nu  = 0,&\text{ on }\partial \Sigma_\e \cap \Omega.
\end{cases}
\end{equation}
Theorem \ref{T:0:1:alpha:eps}, \emph{i)}, which establishes local $\e$-stable H\"older estimates, extends naturally to equations like \eqref{eq:eigenvalue}, yielding
\[
\|u_\e\|_{C^{0,\alpha}(K\setminus \Sigma_\e)} \le C \lambda_\e \|u_\e\|_{L^2(\Omega \setminus \Sigma_\e)},
\]
for every $\alpha \in (0,1)$ and compact set $K \subset \Omega$. The constant $C>0$ depends only on $d$, $n$, $\alpha$, $\dist(K,\partial \Omega)$. 
\end{remark}

\subsection{Stable Schauder estimates}

\begin{proof}[Proof of Theorem \ref{T:0:1:alpha:eps}, ii), stable $C^{1,\alpha}$-estimates] Let $u_\e$ be a family of solutions to \eqref{approxPDEA}.
Since the weight $|y|^a$ is uniformly elliptic away from $\Sigma_0$, well-known results in elliptic regularity imply that for every $0 < \eps \ll1$ (so that $\partial \Sigma_\eps^A \cap B_1$ is of class $C^{1,\alpha}$ being $A\in C^{1,\alpha}$ with $\|A\|_{C^{1,\alpha}(B_1)}\leq L$, see Remark \ref{R:C1:boundary}), there exists a constant $C_\eps$ (depending only on $d$, $n$, $a$, $\lambda$, $\Lambda$, $p$, $\alpha$, $L$ and $\eps$) such that for any solution $u_\eps$ to \eqref{approxPDEA} the following estimate holds
\begin{equation}\label{eq:c1:alpha:RR}
    \|u_\e\|_{C^{1,\alpha}(B_{1/2}\setminus\Sigma_\e^A)}\le C_\eps\big(
    \|u_\e\|_{L^{2,a}(B_1\setminus\Sigma_\e^A)}
    +
    \|f\|_{L^{p,a}(B_{1})}
    +
    \|F\|_{C^{0,\alpha}(B_{1})}
    \big).
\end{equation}
Our goal is to show that $C_\eps$ remains uniformly bounded as $\eps \to 0$. The proof proceeds by contradiction and is divided into several steps.

\smallskip

\noindent \emph{Step 1. Contradiction argument, preliminary estimates and blow-up sequences.} 

\smallskip

Assume, for the sake of contradiction, that there exist two sequences ${\eps_k}$ and ${u_k}$ such that $\eps_k \to 0$ as $k \to \infty$, the functions $u_k$ are non trivial, satisfy \eqref{approxPDEA} with $\eps = \eps_k$ 
and it holds that
\begin{equation}\label{eq:c1:contradiction1}
      \|u_k\|_{C^{1,\alpha}(B_{1/2}\setminus\Sigma_{\eps_k}^A)}\geq k\big(
    \|u_k\|_{L^{2,a}(B_1\setminus\Sigma_{\eps_k}^A)}
    +
    \|f\|_{L^{p,a}(B_{1})}
    +
    \|F\|_{C^{0,\alpha}(B_{1})}
    \big).
\end{equation}
Consider $\zeta_k \in B_{1/2}\setminus\Sigma_{\eps_k}^A$ such that
\begin{equation*}
\frac12\|\nabla u_k\|_{L^{\infty}(B_{1/2}\setminus\Sigma_{\eps_k}^A)} \leq |\nabla u_k(\zeta_k)|\,,
\end{equation*}
and compute 
\begin{equation*}
\begin{aligned}
|\nabla u_k(\zeta_k)|^2 &= \Big(\int_{B_{1/2}\setminus\Sigma_{\eps_k}^A}|y|^a dz\Big)^{-1}\int_{B_{1/2}\setminus\Sigma_{\eps_k}^A}|y|^a|\nabla u_k(\zeta_k)|^2 dz\\
 &\leq c \Big( \int_{B_{1/2}\setminus\Sigma_{\eps_k}^A}|y|^a|\nabla u_k(z) - \nabla u_k(\zeta_k)|^2 dz + \int_{B_{1/2}\setminus\Sigma_{\eps_k}^A}|y|^a|\nabla u_k|^2 dz\Big)\\
  &\leq c \Big( [\nabla u_k]^2_{C^{0,\alpha}(B_{1/2}\setminus\Sigma_{\eps_k}^A)} + \int_{B_{1/2}\setminus\Sigma_{\eps_k}^A}|y|^a|\nabla u_k|^2 dz\Big) \,.
\end{aligned}
\end{equation*}
Combining the last two inequalities with  the Caccioppoli inequality \eqref{eq:caccioppoli},
we conclude that there exists a constant $c>0$ independent on $k$ such that 
\begin{equation*}
\|\nabla u_k\|_{L^{\infty}(B_{1/2}\setminus\Sigma_{\eps_k}^A)} \leq c\big(  [\nabla u_k]_{C^{0,\alpha}(B_{1/2}\setminus\Sigma_{\eps_k}^A)} +
    \|u_k\|_{L^{2,a}(B_1\setminus\Sigma_{\eps_k}^A)}
    +
    \|f\|_{L^{p,a}(B_{1})}
    +
    \|F\|_{C^{0,\alpha}(B_{1})}
    \big)\,.
\end{equation*}
Therefore, by also applying Proposition \ref{prop:moser}, we deduce that inequality \eqref{eq:c1:contradiction1} implies the existence of a constant $c > 0$, independent of $k$, such that
\begin{equation*}
      [\nabla u_k]_{C^{0,\alpha}(B_{1/2}\setminus\Sigma_{\eps_k}^A)}\geq c\, k \Big(\|u_k\|_{L^{2,a}(B_1\setminus\Sigma_{\eps_k}^A)}
    +
    \|f\|_{L^{p,a}(B_{1})}
    +
     \|F\|_{C^{0,\alpha}(B_{1})}\Big)\,.
\end{equation*}
Now, let $\eta \in C^\infty_c(B_{3/4})$ be a function such that $0 \leq \eta \leq 1$ and $\eta = 1$ on $B_{1/2}$.
We define 
\[
M_k = [\nabla(\eta u_k)]_{C^{0,\alpha}(B_{1}\setminus\Sigma_{\eps_k}^A)}\,,
\]
and observe that
\begin{equation}\label{eq:c1:RR:contradiction2}
M_k \geq [\nabla u_k]_{C^{0,\alpha}(B_{1/2}\setminus\Sigma_{\eps_k}^A)} \geq c\, k \Big(\|u_k\|_{L^{2,a}(B_1\setminus\Sigma_{\eps_k}^A)}
    +
    \|f\|_{L^{p,a}(B_{1})}
    +
     \|F\|_{C^{0,\alpha}(B_{1})}\Big)\,. 
\end{equation}
It is worth noting that, by combining \eqref{eq:c1:RR:contradiction2} with Theorem \ref{T:0:1:alpha:eps}, \emph{i)}, for every $\tau <1$ and $\beta \in (0,1)$ it holds
\begin{equation}\label{eq:c1:c0alphest}
\|u_k\|_{C^{0,\beta}(B_\tau \setminus \Sigma_{\eps_k}^A)} \leq c k^{-1}M_k\,.
\end{equation}
Additionally, since $\eta u_k\equiv0$ in $B_1\setminus B_{3/4}$, we also have
\begin{equation}\label{eq:c1:linfest}
\|\nabla(\eta u_k)\|_{L^\infty(B_{1}\setminus\Sigma_{\eps_k}^A)} \leq c M_k\,.
\end{equation}
As a consequence of \eqref{eq:c1:RR:contradiction2}, \eqref{eq:c1:c0alphest}, and  \eqref{eq:c1:linfest}, we immediately infer that
\begin{equation}\label{eq:c1:bcestrough}
\|\eta A \nabla u_k + \eta F\|_{C^{0,\alpha}(B_1 \setminus \Sigma_{\eps_k}^A)} \leq c M_k\,.
\end{equation}
Crucially, the boundary condition satisfied by $u_k$ on $\partial \Sigma_{\e_k}^A$ gives us an estimate on the $y$-components of the field $\eta A \nabla u_k + \eta F$. In fact, by Lemma \ref{L:BC} and \eqref{eq:c1:bcestrough}, for every point $z_k^* \in \partial \Sigma_{\e_k}^A \cap B_1$ and for every $i=1,\dots,n$, it holds
\begin{equation}\label{eq:BC:quantitative:1}
    | (\eta A\nabla u_k+\eta F)(z_k^*)\cdot e_{y_i}| \leq  c\, \eps_k^\alpha \|\eta A\nabla u_k+\eta F\|_{C^{0,\alpha}(B_1 \setminus \Sigma_{\eps_k}^A)}\,\le c\, \e_k^\alpha M_k .
\end{equation}

Next, consider two sequences of points $z_k=(x_k,y_k), \hat{z}_k=(\hat{x}_k,\hat{y}_k)\in B_{1}\setminus\Sigma_{\e_k}^A$ such that
\begin{equation}\label{eq:c1:points:seminorm:alpha}
    \frac{|\nabla(\eta u_k)(z_k)- \nabla(\eta u_k)(\hat{z}_k)|}{|z_k-\hat{z}_k|^\alpha}\geq \frac12 M_k\,.
\end{equation}
Without loss of generality, we can always assume that $z_k \in B_{3/4}$.
Let $r_k= |z_k-\hat{z}_k|$. Unlike the proof of Theorem \ref{T:0:1:alpha:eps}, \emph{i)}, establishing that $r_k \to 0$ as $k\to \infty$ is not straightforward here, so we have to proceed more carefully. Let us denote $d_k={\rm dist}(z_k, \partial \Sigma_{\eps_k}^A)$. From now on, we distinguish three cases. 
\smallskip
\begin{itemize}
    \item[$\,$] \textbf{Case 1:} 
    $\quad \displaystyle \frac{d_k}{r_k}\to \infty \text{ as } k\to\infty\,,$

\item[$\,$] \textbf{Case 2:} $\quad \displaystyle\frac{d_k}{r_k}\leq c \text{ uniformly in } k\,,
     \quad \text{ and } \quad \displaystyle{
    \frac{|y_k|}{r_k}\to \infty  \text{ as } k\to\infty\,.}$

\item[$\,$] \textbf{Case 3:}
    $\quad 
    \displaystyle{
    \frac{|y_k|}{r_k}\le c \text{ uniformly in } k\,.}$
\end{itemize}
\smallskip
We refer to the proof of Theorem \ref{T:0:1:alpha:eps}, \emph{i)} for remarks on these cases. We recall that $d_k \leq |y_k|$, and we note that in {\bf Case 1} and {\bf Case 2}, $ r_k \to 0 $.

Let $z_k^0 = (x_k^0,y_k^0)$ denote a chosen projection of $z_k$ onto $\partial \Sigma_{\e_k}^A$, and define
\[
\tilde{z}_k = \begin{cases}
z_k,&\text{ in }\textbf{Case 1},\\
z_k^0,&\text{ in }\textbf{Case 2}\\
(x_k, 0) &\text{ in }\textbf{Case 3}.\\
\end{cases}
\]
We observe that, by construction, $|z_k - \tilde z_k| \leq cr_k$. Moreover, in {\bf Case 2} and {\bf Case 3} it holds
\[
|z_k^0| \leq |z_k| + d_k \leq \frac34 + cr_k\,,
\]
so that for sufficiently large $k$ there exists $\tau <4/5 $ such that $z_k^0 \in B_\tau\setminus \Sigma_{\eps_k}^A$.

As done in Theorem \ref{T:0:1:alpha:eps}, \emph{i)} let us define  the sequence of domains
\[
   \Omega_k=\frac{B_{1}\setminus\Sigma_{\e_k}^A-\tilde{z}_k}{r_k}
\]
and  the limit domain $\Omega_\infty$ as in \eqref{eq:limit:domain:def}. Next we introduce the points 
\[
\xi_k = \frac{z_k- \tilde z_k}{r_k}\,.
\]
Notice that $\xi_k \in \Omega_k$ for every $k$, and $|\xi_k| \leq c$. Thus, up to subsequences, $\xi_k \to \xi \in \overline\Omega_\infty$. 

Finally, we introduce the sequences of functions
\[
    v_k(z)=\frac{\eta(\tilde z_k + r_k z)(u_k(\tilde z_k+r_kz)- u_k(z_k)) - \eta(z_k)\nabla u_k(z_k)\cdot r_k(z - \xi_k)}{r_k^{1+\alpha} M_k }, \quad \text{ for }z\in \Omega_k\,,
\]
and 
\[
    w_k(z)=\frac{\eta(\tilde z_k)(u_k(\tilde z_k+r_kz)-u_k(z_k))-P_k\cdot r_k(z - \xi_k)}{r_k^{1+\alpha} M_k}, \quad \text{ for }z\in \Omega_k\,.
\]
where 
\[
P_k = 
\begin{cases}
\eta(z_k)\nabla u_k(z_k),&\text{ in }\textbf{Case 1},\\
\eta(z_k^0)\nabla u_k(z_k^0),&\text{ in }\textbf{Case 2},\\
A^{-1}(\tilde z_k)((\eta A \nabla u_k)_1(z_k^0) - (\eta F)_2(z_k^0)),&\text{ in }\textbf{Case 3}.
\end{cases}
\]
Thanks to \eqref{eq:c1:RR:contradiction2}, \eqref{eq:c1:c0alphest}, and  \eqref{eq:c1:linfest}, we readily infer  
\begin{equation}\label{eq:c1:pkbound}
|P_k| \leq \|\nabla(\eta u_k)\|_{L^{\infty}(B_1 \setminus \Sigma_{\eps_k}^A) } + c \|u_k\|_{L^{\infty}(B_\tau \setminus \Sigma_{\eps_k}^A) } + c\|F\|_{L^{\infty}(B_1)} \leq cM_k\,,
\end{equation}
where, in {\bf Case 2} and {\bf Case 3}, we used that $z_k^0 \in B_\tau \setminus \Sigma_{\eps_k}^A$ for some $\tau <4/5$.

\smallskip

\noindent \emph{Step 2. H\"older gradient estimates and convergence of $v_k$.}

\smallskip

Let $z, z' \in \Omega_k$. We have 
\[
\begin{aligned}
|\nabla v_k(z) - \nabla v_k(z')| &\leq \frac{|\nabla(\eta u_k) (\tilde z_k + r_k z) - \nabla(\eta u_k) (\tilde z_k + r_k z')|}{r_k^\alpha M_k}
+ \frac{|u(z_k)||\nabla\eta(\tilde z_k + r_k z) - \nabla\eta (\tilde z_k + r_k z')|}{r_k^\alpha M_k}\\
& \leq |z-z'|^\alpha + c\frac{\|u\|_{L^\infty(B_{3/4}\setminus \Sigma_{\eps_k}^A)}}{M_k}r_k^{1-\alpha} |z-z'| \leq (1 + ck^{-1}|z-z'|^{1-\alpha})|z-z'|^\alpha\,,
\end{aligned}
\] 
where the last inequality follows from \eqref{eq:c1:c0alphest}. 
Therefore, for every fixed compact set $K \subset \R^d$, we can choose $k$ sufficiently large so that 
$
[\nabla v_k]_{C^{0,\alpha}(\Omega_k \cap K)} \leq 2\,.
$
Moreover, since $\nabla v_k(\xi_k) = 0$ and $|\xi_k| \leq c$, it follows that 
\begin{equation}\label{eq:d:vk:right:expansion}
    \|\D v_k\|_{L^\infty(\Omega_k\cap K)}= \sup_{z\in \Omega_k\cap K}|\D v_k(z)-\D v_k(\xi_k)|\le 2 \sup_{z\in \Omega_k\cap K}|z-\xi_k|^\alpha\le c(K).
\end{equation}

Next, we prove that for every compact set
$K\subset \Omega_\infty$, it holds $\|v_k\|_{L^\infty(K)}\le c(K)$. We claim that there exists a compact set $K' \supset K$ such that, for every $z \in K$ and all sufficiently large $k$, there exists a curve $\gamma_k$, depending on $z$, satisfying
\begin{equation}\label{eq:c1,alpha:curve:connecting}
    \gamma_k :(0,1)\to \Omega_k, \quad \gamma_k(0)=\xi_k,\quad \gamma_k(1)=z, \quad  \|\gamma_k\|_{C^{0,1}(0,1)}\le c(1+|z|), \quad \supp(\gamma_k)\subset \Omega_k\cap K'\,,
\end{equation}
for some constant $c > 0$ independent of $z$ and $k$. Assuming the claim, we use \eqref{eq:d:vk:right:expansion}, \eqref{eq:c1,alpha:curve:connecting}, and $v_k(\xi_k) = 0$ to obtain
\begin{equation}\label{eq:v_k:Linfty:1+a}
    |v_k(z)|:= \Big|\int_0^1 \D v_k(\gamma_k(t))\cdot \gamma_k'(t)\,dt\Big| \le \|\D v_k\|_{L^\infty(\Omega_k\cap K')}\|\gamma_k\|_{C^{0,1}(0,1)}\le c(1+|z|^{1+\alpha}) \le c(K),
\end{equation}
which implies $\|v_k\|_{L^\infty(K)}\le c(K)$.

We now prove the claim. In \textbf{Case 1} we take $\gamma_k$ to be the straight-line segment from $z$ to $\xi_k$. Since $\Omega_\infty = \R^d$ (see Theorem \ref{T:0:1:alpha:eps}, \emph{i)}, \emph{Step 2}), it follows that $\gamma_k \subset \Omega_k$ for sufficiently large $k$. The set $K'$ can then be chosen accordingly, noting also that $|\xi_k|\leq c$.

In \textbf{Case 2}, consider the point $\xi_k^*:=\xi_k+ y_k/|y_k|$. By Lemma \ref{L:normal}, $ii)$, the segment joining $\xi_k$ and $\xi_k^*$ lies entirely within $\Omega_k$.
We next show that $\xi_k^* \in \Omega_\infty=\{ y\cdot \bar e>0\}$.
Recalling that $\xi_k \to \xi \in \{ y\cdot \bar e\geq0\}$ and $A_3^{-1}(z_k^0)y_k^0/|A_3^{-1}(z_k^0)y_k^0|\to \bar e$, we compute, for sufficiently large $k$,
\begin{align*}
    \bar{e}\cdot \Big(\xi_k +\frac{y_k}{|y_k|} \Big)&\ge  
    \bar{e}\cdot \Big(\frac{y_k}{|y_k|} \Big)-|\xi_k-\xi| 
  \ge
    \frac{A_3^{-1}(z_k^0)y_k^0}{|A_3^{-1}(z_k^0)y_k^0|} \cdot\frac{y_k}{|y_k|} -
    \Big|\bar{e}-\frac{A_3^{-1}(z_k^0)y_k^0}{|A_3^{-1}(z_k^0)y_k^0|} \Big| +o(1)\\
    &\ge   \frac{A_3^{-1}(z_k^0)y_k^0}{|A_3^{-1}(z_k^0)y_k^0|}\cdot\frac{y_k^0}{|y_k^0|}-  \Big|
    \frac{y_k}{|y_k|} - \frac{y_k^0}{|y_k^0|}
    \Big|+o(1)
    \ge \frac{\lambda}{\Lambda} - \Big|
    \frac{y_k|y_k^0|-|y_k|y_k^0}{|y_k^0||y_k|}
    \Big| + o(1)  \\
    &\ge \frac{\lambda}{\Lambda} - 2
    \frac{|y_k-y_k^0|}{|y_k|} + o(1) = \frac{\lambda}{\Lambda} - 2 \frac{d_k}{r_k} \frac{r_k}{|y_k|}+ o(1) \ge \frac{\lambda}{\Lambda} +o(1)\,,
\end{align*}
where we used the uniform ellipticity condition \eqref{eq:unif:ell} and the assumptions of \textbf{Case 2}, namely, $d_k/r_k\le c$ and $|y_k|/r_k\to\infty$. Therefore, for sufficiently large $k$, we have that $\xi_k^* \cdot \bar e \ge c\ge 0$, which implies that $\xi_k^* \in \Omega_\infty$. Since $\Omega_\infty$ is convex, the segment connecting $z$ and $\xi_k^*$ lies entirely in $\Omega_\infty$, hence in $\Omega_k$ for sufficiently large $k$. We thus define $\gamma_k$ as the concatenation of the segment from $\xi_k$ to $\xi_k^*$, and the segment from $\xi_k^*$ to $z$, and we easily see that $\gamma_k$ satisfies \eqref{eq:c1,alpha:curve:connecting}.

We now focus on \textbf{Case 3}. Given a point $\zeta \in \Omega_k$, let $\zeta^{**}$ denote its projection onto the cylinder 
\[
\mathcal{C}_k:= \frac{\partial \Sigma_{2 \Lambda^{1/2}\eps_k} - \tilde z_k}{r_k} = \{z \in \R^d \mid |y| = 2\Lambda^{1/2} {\e_k}/{r_k}\}\,.
\]
Note that, by Lemma \ref{L:normal}, $ii)$, the segment joining $\zeta$ and $\zeta^{**}$ lies entirely within $\Omega_k$, at least for sufficiently large $k$ (otherwise it may intersect the spherical part of $\partial \Omega_k$). Now, fix $z \in K \subset \Omega_\infty$. Consider a path contained in $\mathcal{C}_k \cap \Omega_k$ that connects $z^{**}$ to $\xi_k^{**}$. This path consist of a translation in the $x$-variable, followed by a geodesic on the $n$-dimensional sphere $\partial B_{2\Lambda {\e_k}/{r_k}}$. We then define $\gamma_k$ as the concatenation of the segment connecting $z$ to $z^{**}$, the aforementioned path connecting $z^{**}$ and $\xi_k^{**}$ within $\mathcal{C}_k$, and the segment connecting $\xi_k^{**}$ to $\xi_k$. Since in \textbf{Case 3} we have $\e_k/r_k\le c$, it is straightforward to verify that $\gamma_k$ satisfies \eqref{eq:c1,alpha:curve:connecting}. To conclude, it suffices to choose a compact set $K' \supset K$ large enough so that $\supp(\gamma_k) \subset K'\cap \Omega_k$ for every $z \in K$ and all sufficiently large $k$. The claim is proved. 

Summing up, we have shown that for every $K\subset \Omega_\infty$ it holds $\|v_k\|_{C^{1,\alpha}(K)} \leq c(K)$. Applying Arzel\'a-Ascoli theorem, together with an exhaustion of $\Omega_\infty$ by compact subsets and a standard diagonal argument, we conclude that, for every $\gamma\in(0,\alpha)$, it holds $v_k \to \bar v $ in $C^{1, \gamma}_{\rm loc}(\Omega_\infty)$. Moreover, using \eqref{eq:v_k:Linfty:1+a} we obtain the growth estimate
\begin{equation}\label{eq:growth:c1}
|\bar v(z)| \leq c( 1+ |z|^{1+\alpha}).
\end{equation}

\smallskip

\noindent \emph{Step 3. Convergence of $w_k$.}

\smallskip

First, we show that for sufficiently large $k$, there exists a constant $c>0$ such that
\begin{equation}\label{eq:c1:diffbound}
\frac{|\eta(z_k) \nabla u_k (z_k)  - P_k|}{r^\alpha_kM_k} \leq c\,.
\end{equation}
In {\bf Case 1}, this result is straightforward. In {\bf Case 2}, recall that $z_k, z_k^0 \in B_\tau \setminus \Sigma_{\eps_k}^A$ for some $\tau<4/5$ and $|z_k-z_k^0| \le c r_k$. By \eqref{eq:c1:c0alphest} and \eqref{eq:c1:linfest}, we have 
\begin{equation}\label{eq:c1:tech666}
\begin{aligned}
\frac{|(\eta\nabla u_k)(z_k)  - (\eta\nabla u_k)(z_k^0)|}{r^\alpha_kM_k} &\leq  \frac{|\nabla (\eta u_k)(z_k)  - \nabla(\eta u_k)(z_k^0)|}{r^\alpha_kM_k} + \frac{|(u_k\nabla \eta)(z_k)  - (u_k\nabla \eta)(z_k^0)|}{r^\alpha_kM_k}\\
&\leq \frac{|z_k - z_k^0|^\alpha}{r_k^\alpha} + c\frac{\|u_k\|_{C^{0,\alpha}(B_\tau \setminus \Sigma_{\eps_k}^A)}}{M_k}\frac{|z_k - z_k^0|^\alpha}{r_k^\alpha} \leq c\,.
\end{aligned}
\end{equation}
For {\bf Case 3}, note first that $|z_k^0 - \tilde z_k| \leq d_k + |y_k| \leq cr_k$, $\eps_k\leq cr_k$, and
\[
A^{-1}(\tilde z_k)(\eta A \nabla u_k)(z_k^0) - P_k=(\eta A\nabla u_k + \eta F)_2(z_k^0).
\]
Therefore, using \eqref{eq:c1:tech666}, \eqref{eq:c1:linfest} and \eqref{eq:BC:quantitative:1}, we have
\[
\begin{aligned}
\frac{|(\eta \nabla u_k) (z_k)  - P_k|}{r^\alpha_kM_k}\leq \frac{|(\eta\nabla u_k)(z_k)  - (\eta\nabla u_k)(z_k^0)|}{r^\alpha_kM_k} + \frac{|A^{-1}(z_k^0) - A^{-1}(\tilde z_k)||(\eta A \nabla u_k)(z_k^0)|}{r^\alpha_kM_k}& \\
+ \frac{|A^{-1}(\tilde z_k)(\eta A \nabla u_k)(z_k^0) - P_k |}{r^\alpha_kM_k}\leq c\Big( 1 + \frac{|(\eta A \nabla u_k + \eta F)_2(z_k^0)|}{r^\alpha_kM_k}\Big)\leq c\Big( 1 + \Big(\frac{\eps_k}{r_k} \Big)^\alpha\Big) &\leq c\,.
\end{aligned}
\]
Thus, in all three cases, \eqref{eq:c1:diffbound} is satisfied. 
As a consequence, up to subsequences, we can define 
\begin{equation}\label{eq:c1:wlim}
V = \lim_{k \to \infty}\frac{\eta(z_k) \nabla u_k (z_k)  - P_k}{r_k^\alpha M_k} \qquad \text{ and }\qquad \bar w(z) = \bar v(z) - V\cdot(z- \xi)\,.
\end{equation}
where $\xi \in \bar \Omega_\infty$ is such that $\xi_k \to \xi$. 
To conclude, we show that $w_k \to \bar w$ uniformly on every compact set $K \subset \Omega_\infty$.
Indeed, we have
\[
\begin{aligned}
&\sup_{z \in K}|v_k(z) - V\cdot (z-\xi) - w_k(z)| \leq \sup_{z \in K}\frac{|\eta(\tilde z_k + r_k z)- \eta(\tilde z_k)||u_k(\tilde z_k+r_kz)- u_k(z_k)|}{r_k^{1+\alpha} M_k } \\
&+ \Big|\frac{\eta(z_k) \nabla u_k (z_k)  - P_k}{r_k^\alpha M_k} - V \Big||z-\xi_k|+ |V||\xi -\xi_k| 
\leq c \Big(\frac{\|u_k\|_{C^{0,\alpha}(B_\tau \setminus \Sigma_{\eps_k}^A)}}{M_k} + o(1)\Big) \leq c(k^{-1} + o(1)) \to 0\,.
\end{aligned}
\]
Finally, we observe that from \eqref{eq:growth:c1} and \eqref{eq:c1:wlim} it follows that 
\begin{equation}\label{eq:growth:c1:w}
|\bar w(z)| \leq c( 1 + |z|^{1+\alpha})\,.
\end{equation}

\smallskip

\noindent \emph{Step 4. The limit function is not linear.}

\smallskip

Let us consider the sequences of points
\[
\xi_k=\frac{{z}_k-\tilde{z}_k}{r_k}, \quad 
\hat \xi_k=\frac{\hat{z}_k-\tilde{z}_k}{r_k}.
\]
As previously noted, $\xi_k \in \Omega_k$, $|\xi_k| \leq c$ uniformly in $k$, and $v_k(\xi_k) = \nabla v_k(\xi_k) =  0$ for every $k$. In fact, it also holds that $\hat \xi_k \in \Omega_k$ for every $k$, and since $|\xi_k-\hat \xi_k|=1$, we have  $|\hat\xi_k| \leq c$ uniformly in $k$. 
Moreover, by \eqref{eq:c1:points:seminorm:alpha} (recall that $r_k = |z_k - \hat z_k|$) we get
\[
|\nabla v_k(\hat\xi_k)| \geq \frac{|\nabla (\eta u_k)({z_k})-\nabla(\eta u_k)(\hat{z_k})|}{r_k^\alpha M_k} - \frac{|u(z_k)||\nabla\eta(z_k) - \nabla\eta (\hat z_k)|}{r_k^\alpha M_k} \geq \frac12 - ck^{-1} > \frac14\,.
\]

 Since $\xi_k \to \xi$, $\hat \xi_k\to \hat \xi$ with $\xi, \hat \xi \in \bar \Omega_\infty$ and $\xi\not=\hat \xi$, a simple continuity argument implies that $\nabla\bar v \le \delta $ in a neighbourhood of $\xi$, and $\nabla\bar v \geq 1/4-\delta$ in a neighbourhood of $\hat \xi$, for some small $\delta\in(0,1/10)$. Therefore $\nabla\bar v$ is not constant and, by \eqref{eq:c1:wlim}, we conclude that $\nabla \bar w$ is not constant either. 

\smallskip

\noindent \emph{Step 5. $r_k \to 0$ in \textbf{Case 3}}.

\smallskip

Assume by contradiction that, up to a subsequence, $r_k \to \bar{r} \in (0,2]$. Since $\bar r >0$, it is straightforward to verify that $\Omega_\infty = B_{1/\bar r}(\varsigma)\setminus \Sigma_0$ for some $\varsigma \in \R^d$. 
Let $K \subset \Omega_\infty$ be a compact set. For every $z \in K$, using \eqref{eq:c1:c0alphest}, we estimate
\[
\Big|\frac{\eta(\tilde z_k + r_k z)(u_k(\tilde z_k+r_kz)- u_k(z_k))}{r_k^{1+\alpha} M_k}\Big|  \leq 2\frac{\|u\|_{L^\infty(B_{3/4}\setminus \Sigma_{\eps_k}^A)}}{r_k^{1+\alpha}M_k} \leq ck^{-1}\,,
\]
where in the last inequality we used that $r_k \geq c >0$ for sufficiently large $k$.  
Moreover, from \eqref{eq:c1:c0alphest} and \eqref{eq:c1:linfest}, we obtain
\[
\Big|\frac{\eta(z_k)\nabla u_k(z_k)}{r_k^{\alpha} M_k }\Big|\le \frac{c}{\bar r^\alpha} \leq c\,.
\]
Hence, recalling the definition of $v_k$ and thanks to \emph{Step 2}, we infer that there exists $W \in \R^d$ such that 
\[
\bar v(z) = -W \cdot (z-\xi) \,.
\]
This contradicts the fact, established in \emph{Step 4}, that $\nabla \bar{v}$ is not constant. Therefore, we conclude that $r_k \to 0$. 
At this point, arguing as in Theorem \ref{T:0:1:alpha:eps}, \emph{i)}, \emph{Step 2}, we find that the limit domain $\Omega_\infty$ is given by \eqref{eq:limit:domain*}.

\smallskip

\noindent\emph{Step 6. The limit function is an entire solution to a homogeneous equation.}

\smallskip

Let us denote 
$A_k(z)=A(\tilde z_k+r_k z)$. Since $\tilde z_k \in B_{3/4}$, and $A\in C^{1,\alpha}(B_1)$, we have, up to subsequences
\[
\bar z = (\bar x, \bar y) = \lim_{k\to \infty} (\tilde x_k, \tilde y_k)\, \quad \text{ and }\quad
\bar{A}=A(\bar{z})=\lim_{k\to\infty}A_k(z)\,,
\]
where $\bar A$ is a symmetric matrix with constant coefficients that satisfies \eqref{eq:unif:ell}.
Next, let us define
\[
    \rho_k(y)=\begin{cases}
       \displaystyle{ \frac{|\tilde y_k+r_k y|}{|\tilde y_k|}}, & \text{ in } \textbf{Case 1} \text{ and }\textbf{Case 2},\\
        |y|, & \text{ in } \textbf{Case 3},
    \end{cases}
\]
noting that, in \textbf{Case 1} and \textbf{Case 2}, $\rho_k\to 1$ uniformly on every compact set $K\subset\R^d$ as $k\to \infty$.

Let us fix $\phi\in C_c^\infty(\R^d)$, and observe that for sufficiently large $k$, ${\rm spt}( \phi)\subset \frac{B_{1}-\tilde z_k}{r_k}$. Since $u_k$ is solution to \eqref{approxPDEA}, we find that $w_k$ satisfies
\begin{equation}\label{eq:sol:wk}
\int_{\Omega_k}\rho_k^a(y) A_k(z)\nabla w_k(z)\cdot \nabla {\phi}(z)\, dz = I_1 - I_2 - I_3 - I_4\,,
\end{equation}
where
\[
\begin{aligned}
& I_1 = \frac{r_k^{1-\alpha}}{M_k} \int_{\Omega_k}\rho_k^a(y) \eta(\tilde z_k)f(\tilde z_k+r_kz){\phi}(z)\, dz\,,\\
& I_2 =\frac{r_k^{-\alpha}}{M_k} \int_{\Omega_k}\rho_k^a(y) \eta(\tilde z_k)\big(F(\tilde{z}_k+r_k z)-F(\tilde{z}_k) \big)\cdot \nabla\phi(z)\,dz\,,\\
& I_3 = \frac{r_k^{-\alpha}}{M_k} \int_{\Omega_k}\rho_k^a(y)
(A(\tilde{z}_k+r_k z)-A(\tilde{z}_k))P_k\cdot\nabla\phi(z)\,dz\,,\\
& I_4 = \frac{r_k^{-\alpha}}{M_k} \int_{\Omega_k}\rho_k^a(y)
(A(\tilde z_k)P_k + \eta(\tilde z_k) F(\tilde z_k))\cdot\nabla\phi(z)\,dz\,.
\end{aligned}
\]
Following the same argument as in \emph{Step 5} of the proof of Theorem \ref{T:0:1:alpha:eps}, \emph{i)}, we observe that as $k\to \infty$, 
\[
|I_1| \leq c \|\phi\|_{L^\infty(\R^d)} k^{-1}r_k^{1-\alpha - \frac{d+a_+}{p}}\to 0\,,
\] 
due to the assumption $\alpha \le 1-\frac{d+a_+}{p}$.

\smallskip

Next, using \eqref{eq:c1:RR:contradiction2}, we find that as $k \to \infty$, 
\[
\begin{aligned}
|I_2| \leq \frac{r_k^{-\alpha}}{M_k} \int_{\Omega_k\cap\supp(\phi)}\rho_k^a(y) \big|F(\tilde{z}_k+r_k z)-F(\tilde{z}_k) \big||\nabla \phi(z)|\,dz\le\|\nabla\phi\|_{L^1(\R^d, \rho_k^a dz)}\frac{\|F\|_{C^{0,\alpha}(B_1)}}{M_k} \le c k^{-1}\to0\,.
\end{aligned}
\]

\smallskip

For $I_3$, using \eqref{eq:c1:pkbound}, we obtain
\[
\begin{aligned}
|I_3| \leq  \frac{r_k^{-\alpha}}{M_k} \int_{\Omega_k\cap \supp(\phi)}\rho_k^a(y)
|A(\tilde{z}_k+r_k z)-A(\tilde{z}_k)||P_k||\nabla \phi(z)|\,dz
 \leq \|\nabla\phi\|_{L^1(\R^d, \rho_k^a dz)}Lr_k^{1-\alpha} \frac{|P_k|}{M_k} 
\leq c r_k^{1-\alpha} \to 0\,.
\end{aligned}
\]
Finally, it remains to prove that $I_4$ vanishes as $k \to \infty$. 

Let us first examine \textbf{Case 1}. Here, recall that $\tilde{z}_k=z_k$, $r_k/|y_k|\to0$, $\rho_k^a\to 1$, and $\Omega_\infty = \R^d$. Notice that $|y_k+r_k y|\ge c|y_k|$ for sufficiently large $k$. Using this, we estimate
\[
|\nabla\rho_k^a(y)|=\Big|
\frac{a r_k\rho_k^a(y) (y_k+r_k y)}{|y_k+r_ky|^2}
\Big|
\le c\frac{r_k}{|y_k|}\rho_k^a(y)\,.
\]
Moreover, we know that $\supp (\phi) \subset\subset\Omega_k$ for sufficiently large $k$. Integrating by parts, we obtain 
\[
\begin{aligned}
|I_4| &= \frac{r_k^{-\alpha}}{M_k} \Big|\int_{\supp(\phi)}\rho_k^a(y)(\eta A \nabla u_k+\eta F)({z}_k)\cdot\nabla\phi(z)\, dz\Big|\\
    &= \frac{r_k^{-\alpha}}{M_k} \Big|\int_{\supp(\phi)}\nabla\rho_k^a(y)\cdot (\eta A \nabla u_k+\eta F)({z}_k)\phi(z)\, dz\Big|\le c \|\phi\|_{L^{\infty}(\R^d)}\frac{r_k^{1-\alpha}}{|y_k|M_k} | (\eta A \nabla u_k+\eta F)_2({z}_k)|\,.
\end{aligned}
\]
As pointed out in Theorem \ref{T:0:1:alpha:eps}, \emph{i)}, \emph{Step 1}, there exists $Z_k= (x_k, Y_k) \in \partial \Sigma_{\eps_k}^A$ such that $|Y_k| < |y_k|$ and $y_k \cdot Y_k = |y_k||Y_k|$. Moreover, it holds
\[
d_k \leq |z_k - Z_k| \leq |y_k|\,,
\] 
and, by construction, $Z_k \in B_{3/4}$. By using \eqref{eq:BC:quantitative:1} and \eqref{eq:c1:bcestrough}, we get
\[
\begin{aligned}
|(\eta A \nabla u_k+\eta F)_2({z}_k)| &\leq |(\eta A \nabla u_k+\eta F))({z}_k) - (\eta A \nabla u_k+\eta F)({Z}_k)| + |(\eta A \nabla u_k+\eta F)_2(Z_k)|\\
&\leq M_k|z_k - Z_k|^\alpha  + cM_k\eps_k^\alpha  \leq   cM_k |y_k|^\alpha\,.
\end{aligned}
\]
As a consequence we have
\[
|I_4|\le c\Big(\frac{r_k}{|y_k|}\Big)^{1-\alpha} \to 0 \quad \text{ as }k \to \infty\,,
\]
as needed. 

\smallskip

Let us now turn to {\bf Case 2}. Here, recall that $\tilde z_k = z_k^0$, where $z_k^0$ is such that $d_k = {\rm dist }(z_k, \partial \Sigma_{\eps_k}^A) = |z_k - z_k^0|\le cr_k$, $ |y_k|/r_k \to \infty$ and $\rho_k^a \to 1$.
Additionally, since $|y_k| \leq |z_k - z_k^0| + |y_k^0| \leq cr_k + |y_k^0|$, 
it follows that both $|y_k^0|/r_k \to \infty$ and $\eps_k/r_k \to \infty$ in this case.

Integrating by parts we find 
\[
\begin{aligned}
I_4 = \frac{r_k^{-\alpha}}{M_k} \int_{ \Omega_k}\nabla\rho_k^a(y)
\cdot (\eta A\nabla u_k+ \eta F)_2(z_k^0)\phi(z)\,dz 
+ \frac{r_k^{-\alpha}}{M_k} \int_{ \partial\Omega_k}\rho_k^a(y)
(\eta A\nabla u_k+ \eta F)(z_k^0)\cdot\nu(z_k^0 + r_k z)\phi(z)\,dz\,,
\end{aligned}
\]
where $\nu(z)$ is the normal vector to $\partial \Sigma_{\eps_k}^A$.  Using \eqref{eq:BC:quantitative:1} and arguing as in  \textbf{Case 1}, we readily obtain 
\[
\Big|\frac{r_k^{-\alpha}}{M_k} \int_{ \Omega_k}\nabla\rho_k^a(y)
\cdot (\eta A\nabla u_k+ \eta F)_2(z_k^0)\phi(z)\,dz\Big| \leq c \|\phi\|_{L^\infty(\R^d)}\Big(\frac{r_k}{|y_k|}\Big)^{1-\alpha} \to 0. 
\]
Next, using that $u_k$ satisfy a conormal boundary condition on $\partial \Omega_k$, we compute 
\[
\begin{aligned}
&( \eta A \nabla u_k+ \eta F)(z_k^0)\cdot\nu(z_k^0 + r_k z) = (\eta A \nabla u_k+ \eta F)(z_k^0)\cdot(\nu(z_k^0 + r_k z)- \nu(z_k^0) )\\
&= (\eta A \nabla u_k+ \eta F)_2(z_k^0)\cdot(N(z_k^0 + r_k z)-N(z_k^0))+ (\eta A \nabla u_k+ \eta F)(z_k^0)\cdot(\tilde \nu(z_k^0 + r_k z)-\tilde \nu(z_k^0))\,,
\end{aligned}
\]
where $N(z)$ and $\tilde \nu(z)$ are as in \eqref{eq:N+tilde:nu}.

As in \cite{AudFioVit24a, SirTerVit21a}, here we split the proof into two parts. Assume to restart the present proof, keeping exactly the same assumptions, but aiming to prove estimates in the form  
\begin{equation}\label{eq:suboptimal}
\|u_\e\|_{C^{1,\alpha'}(B_{1/2}\setminus\Sigma_\e^A)}\le C\big(
\|u_\e\|_{L^{2,a}(B_{1}\setminus\Sigma_\e^A)}
+\|f\|_{L^{p,a}(B_{1})}
+\|F\|_{C^{0,\alpha}(B_1)}
\big)\,,
\end{equation}
with a suboptimal exponent $\alpha' < \alpha$. Then, at this point of the proof, using \eqref{eq:c1:bcestrough}, \eqref{eq:BC:quantitative:1} and Lemma \ref{L:normal}, that is, $\|N\|_{C^{0,1}(\partial\Sigma_{\e_k}^A\cap B_1)}\le c\e_k^{-1}$ and $\|\tilde{\nu}\|_{C^{0,\alpha}(\partial\Sigma_{\e_k}^A\cap B_1)}\le c$, we would obtain 
\begin{equation}\label{eq:badest}
\begin{aligned}
&|( \eta A \nabla u_k+ \eta F)(z_k^0)\cdot\nu(z_k^0 + r_k z) |\leq cM_k\frac{r_k}{\eps_k^{1-\alpha}} + cM_k r_k^{\alpha}\,, 
\end{aligned}
\end{equation}
which implies that 
\[
\begin{aligned}
\Big|\frac{r_k^{-\alpha'}}{M_k} \int_{\partial\Omega_k}\rho_k^a(y)
(\eta A\nabla u_k+ \eta F)(z_k^0)\cdot\nu(z_k^0 + r_k z)\phi(z)\,dz\Big| \leq c\|\phi\|_{L^\infty(\R^d)} \Big(\Big(\frac{r_k}{\eps_k}\Big)^{1-\alpha'} + r_k^{\alpha-\alpha'}\Big) \to 0\,.
\end{aligned}
\]
Therefore, we conclude that $|I_4| \to 0$, as required. 
From this point on, we can complete the proof of the theorem and obtain stable $C^{1,\alpha'}$ estimates. This, in turn, implies a uniform $L^\infty$ bound for $\D u_k$ in $B_{3/4} \setminus\Sigma_{\e_k}$.

We can now return to our previous computation with $\alpha' = \alpha$. Thanks to \eqref{eq:suboptimal} and \eqref{eq:alpha:RR:contradiction2}, we get 
\[
\|\eta A\D u_k + \eta F\|_{L^\infty(B_{1})}\le c k^{-1}M_k\,.
\]
Thus we can improve \eqref{eq:badest} and get 
\[
\begin{aligned}
&|( \eta A \nabla u_k+ \eta F)(z_k^0)\cdot\nu(z_k^0 + r_k z) |\leq cM_k\frac{r_k}{\eps_k^{1-\alpha}} + cM_k k^{-1} r_k^{\alpha}\,. 
\end{aligned}
\]
As a consequence, 
\[
\begin{aligned}
\Big|\frac{r_k^{-\alpha}}{M_k} \int_{\partial\Omega_k}\rho_k^a(y)
(\eta A\nabla u_k+ \eta F)(z_k^0)\cdot\nu(z_k^0 + r_k z)\phi(z)\,dz\Big| \leq c\|\phi\|_{L^\infty(\R^d)} \Big(\Big(\frac{r_k}{\eps_k}\Big)^{1-\alpha} + k^{-1}\Big) \to 0\,,
\end{aligned}
\]
and we can conclude that $|I_4|\to0$.

\smallskip

Finally, we address {\bf Case 3}. Recall that $\tilde z_k = (x_k, 0)$, $\rho_k = |y|$, and $\eps_k \leq c|y_k| \leq c r_k$.  Let us call
\[
I_4' = \frac{r_k^{-\alpha}}{M_k} \int_{\Omega_k}|y|^a
(A(\tilde z_k)P_k + (\eta F)(z_k^0))\cdot\nabla\phi(z)\,dz\,.
\]
We have 
\[
\begin{aligned}
\Big| I_4 - I_4'\Big| &=  \Big|\frac{r_k^{-\alpha}}{M_k} \int_{\Omega_k}|y|^a
((\eta F)(\tilde z_k) -  (\eta F)(z_k^0))\cdot\nabla\phi(z)\,dz\Big|\\
&\leq c\|\nabla\phi\|_{L^1(\R^d, |y|^a dz)}\frac{\|F\|_{C^{0,\alpha}(B_1)}}{M_k}|\tilde z_k - z_k^0|^\alpha r_k^{-\alpha} \leq ck^{-1} \to 0\,,
\end{aligned}
\]
where we used that in {\bf Case 3} it holds
\[
|\tilde z_k - z_k^0| \leq |\tilde z_k - z_k| + |z_k - z_k^0| =  |y_k| + d_k \leq cr_k\,.
\]
Due to our choice of $P_k$, an integration by parts yields
\begin{align*}
    I'_4 = \frac{r_k^{-\alpha}}{M_k} \int_{\Omega_k}|y|^a ( \eta A \nabla u_k + \eta F)_1(z_k^0)\cdot\nabla\phi(z)\,dz 
    = \frac{r_k^{-\alpha}}{M_k} \int_{\partial \Omega_k}|y|^a (\eta A \nabla u_k + \eta F)_1(z_k^0)\cdot \nu(\tilde z_k + r_k z)\phi(z)\,dz\,.
\end{align*}
Recall that at a point $z \in \partial \Sigma_{\eps_k}^A$, the normal vector is given by $\nu(z) = (0, N(z)) + \tilde \nu(z)$, see \eqref{eq:N+tilde:nu}, where $|\tilde \nu(z)| \leq c \eps_k$. Therefore, by \eqref{eq:c1:bcestrough}, we get
\[
\begin{aligned}
|I'_4| \leq \frac{r_k^{-\alpha}}{M_k} \int_{\partial \Omega_k}|y|^a |(\eta A \nabla u_k + \eta F)_1(z_k^0)||\tilde \nu(\tilde z_k + r_k z)||\phi(z)|\,dz
\leq c\Big(\frac{\eps_k}{r_k}\Big) r_k^{1-\alpha} \|\phi\|_{L^\infty(\R^d)}\int_{\supp(\phi) \cap \partial {\Omega_k}}|y|^a \,dz\,.
\end{aligned}
\]
Notice that there exists $\bar{R} >0$ such that 
\[
\supp(\phi) \cap \partial {\Omega_k} = \supp(\phi) \cap \Big\{\sqrt{A_3^{-1}(\tilde z_k + r_kz) y\cdot y} = \frac{\e_k}{r_k}\Big\}\subset (B_{\bar{R}}^{d-n}\times \R^n) \cap \frac{\partial \Sigma_{\eps_k}^A- \tilde z_k}{r_k}\,.
\]
Thus, denoting $E_k =B^{d-n}_{r_k\bar R}(-r_k^{-1}\tilde z_k) \times \R^n$, we have
\[
\int_{\supp(\phi) \cap \partial {\Omega_k}}|y|^a \,dz \leq \int_{(B_{\bar{R}}^{d-n}\times \R^n) \cap \frac{\partial \Sigma_{\eps_k}^A- \tilde z_k}{r_k}}|y|^a \,dz = r_k^{1-d-a}\int_{E_k \cap \partial \Sigma_{\eps_k}^A} |y|^a dz\,.
\]
On the other hand, using Lemma \ref{L:appendix3}, $iv)$ and performing the change of variables $(x, \tau) = \Phi(z)$ (for instance see \cite[\S 11]{Maggi}), we obtain 
\[
\begin{aligned}
\int_{E_k\cap \partial \Sigma_{\eps_k}}|\tau|^a d\sigma(x, \tau) = \int_{E_k\cap \partial \Sigma_{\eps_k}^A}(A_3^{-1}(z)y \cdot y)^{\frac{a}{2}}|\det  (\Pi_{\eps_k} \circ J_\Phi^\top J_\Phi\circ\Pi_{\eps_k})| d\sigma(z) \geq c\int_{E_k\cap \partial \Sigma_{\eps_k}^A}|y|^a d\sigma(z)\,. 
\end{aligned}
\]
Summing up, we have that
\[
\int_{\supp(\phi) \cap \partial {\Omega_k}}|y|^a \,dz \leq cr_k^{1-d-a}\int_{E_k\cap \partial \Sigma_{\eps_k}}|\tau|^a d\sigma(x, \tau) = c r_k^{1-n-a}\int_{B^n_{\eps_k}}|\tau|^a d\sigma(\tau) =  c\Big(\frac{\eps_k}{r_k}\Big)^{a+n-1}\,.
\]
Therefore 
\[
|I'_4| \leq c\Big(\frac{\eps_k}{r_k}\Big)^{a+n} r_k^{1-\alpha} \leq c r_k^{1-\alpha} \to 0 \,,
\]
which implies that $|I_4| \to 0$ also in {\bf Case 3}. Hence, the right hand side of \eqref{eq:sol:wk} vanishes as $k\to\infty$.

\smallskip

From this point on, arguing as in Theorem \ref{T:0:1:alpha:eps}, \emph{i)}, \emph{Step 5}, we obtain that the left hand side of \eqref{eq:sol:vk} satisfies
\[
\int_{\Omega_k\cap \supp(\phi)}\rho_k^a A_k\nabla w_k\cdot \nabla {\phi} \, dz \to \int_{\Omega_\infty\cap \supp(\phi)}\bar{\rho}^a\, \bar {A}\nabla \bar{w}\cdot \nabla \phi\, dz\,,
\]
where
\[
\bar{\rho}(y) = \begin{cases}
    1 &\text{ in }\textbf{Case 1} \text{ and }\textbf{Case 2}\,,\\
    |y| &\text{ in }\textbf{Case 3}\,,
\end{cases}
\]
and $\bar{w}$ belongs to $H^{1}({\Omega}_\infty\cap B_R,\bar{\rho}^adz)$ for every $R>0$. Hence, recalling the definition of the limit domain $\Omega_\infty$, see \eqref{eq:limit:domain*}, we infer that $\bar{w}$ is an entire solution to:
\begin{itemize}
\item[\,] {\bf Case 1}
\[
-\dive(\bar{A}\nabla\bar{w})=0,\quad\text{in }\R^d\,;
\]
\item[\,] \textbf{Case 2}
\[
\begin{cases}
-\dive(\bar{A}\nabla\bar{w})=0,&\text{ in }\Pi_{\bar{e}}\,\\
\bar A\nabla \bar w \cdot \nu = 0 & \text{ on }\partial \Pi_{\bar{e}};
\end{cases}
\]
\item[\,] \textbf{Case 3}, if $\bar{\e}=0$
\[
-\dive(|y|^a\bar{A}\nabla\bar{w})=0, \quad\text{ in }\R^d\,,
\]
\item[\,] \textbf{Case 3}, if $\bar \eps >0$
\[
\begin{cases}
-\dive(|y|^a\bar{A}\nabla\bar{w})=0 & \text{in }\R^d\setminus\Sigma_{\bar{\e}}^{\bar A}\,,\\
\bar{A}\nabla\bar{w}\cdot \nu =0  & \text{on } \Sigma_{\bar{\e}}^{\bar A}\,.
\end{cases}
\]
\end{itemize}

\smallskip

\noindent\emph{Step 7. Liouville theorems and contradiction.}

\smallskip

Since $\bar{w}$ satisfies the growth condition \eqref{eq:growth:c1:w}, with $1+\alpha<\min\{2,\alpha_*\}$, by the assumption \eqref{eq:1:alpha:reg:1}, by invoking the classical Liouville Theorem in \textbf{Case 1}, the Liouville Theorem in an half space with an homogeneous conormal boundary condition (for instance, see \cite[Theorem 1.6]{TerTorVit24a}) in \textbf{Case 2}, and the Liouville Theorem \ref{L:Liouville} in \textbf{Case 3}, we get that $\bar{w}$ must be a linear function: this is a contradiction with the \emph{Step 3}, since $\nabla\bar{w}$ is not constant. Thus, \eqref{eq:c1:alpha:RR} holds with a constant $C$ uniformly bounded in $\eps$. Moreover, recalling \eqref{eq:BC:quantitative:1}, we get that \eqref{eq:BC:stable:1} holds true. The proof is complete.\end{proof}

\begin{remark}\label{R:C2:fails}
In certain ranges of the parameters $a$ and $n$, the exponent $\alpha_*$ given by \eqref{alphastar2} can be greater than $2$ (see Remark \ref{rem:alphastar}). Hence, the regularity of solutions to \eqref{generalPDE} is expected to be higher than $C^{1,\alpha}$ ($C^{2,\alpha}$ or even more).
In this remark, we give a simple example which shows that our method fails to prove higher order Schauder estimates which are stable in $\e$ with respect to domain perforation. 

For $a+n\not=2$ such that $\alpha_*>2$, let us consider the family of functions 
\[
u_\e(x,y):=(a+n)x_1^2 - {|y|^2} + \frac{2}{2-a-n}\e^{a+n}{|y|^{2-a-n}},
\]
which are solutions to 
\[
\begin{cases}
    -\dive(|y|^a \D u_\e)=0, & \text{in }B_1\setminus\Sigma_\e,\\
    \D u_\e \cdot \nu =0, & \text{on }\partial \Sigma_\e \cap B_1,
\end{cases}
\]
and satisfy $\|u_\e\|_{L^\infty(B_1\setminus\Sigma_\e)}\le c$.
Fixed $i,j \in \{1,\dots,n\}$ such that $i\not =j$, we compute 
$$\partial_{y_i y_j}^2 u_\e (x,y) = -2(a+n)\e^{a+n}|y|^{-2-a-n}y_i y_j\,.$$
Let us fix the points
$$z_1 = \frac{\e}{\sqrt{2}} (e_{y_i} + e_{y_j}), \quad 
 z_2 = \frac{\e^\beta}{\sqrt{2}} (e_{y_i} + e_{y_j}),$$ 
where $\beta \in (0,1)$. One has that $z_1,z_2 \in \overline{ B_{1/2}\setminus \Sigma_\e}$ and, for every $\alpha \in (0,1)$, we have
\begin{align*}
    \frac{|\partial_{y_i y_j}^2 u_\e (z_1)-\partial_{y_i y_j}^2 u_\e (z_2)|}{|z_1-z_2|^\alpha} = (a+n)\frac{1-\e^{(a+n)(1-\beta)}}{\eps^{\alpha \beta}|1-\eps^{1-\beta}|^{\alpha}}
    \to \infty\,,\quad\text{as }\e \to 0\,,
\end{align*}
since $1-\beta>0$ and $a+n>0$.
Then, we have proved that $\e$-uniform $C^{2,\alpha}$ estimates fail.

\end{remark}

\section{A priori estimates and proofs of the main results}\label{sec:7}

In this section we prove a priori regularity estimates, which imply stable estimates with respect to standard smoothing of data. This leads to the proof of the main Theorems \ref{T:0:alpha} and \ref{T:1:alpha}.

\subsection{A priori estimates}

\begin{Proposition}[A priori estimates]\label{P:a:priori}
    Let $a+n>0$. Let $A$ be a uniformly elliptic matrix satisfying \eqref{eq:unif:ell} and \eqref{eq:unif:ell:loc} with constants $0<\lambda\leq\lambda_*\leq\Lambda_*\leq\Lambda$.
    Let $\alpha_*=\alpha_*(n,a,\lambda_*/\Lambda_*)$ be defined as in \eqref{alphastar2}.   
    Then the following points hold true:

    \begin{itemize}
        \item[i)] Let $p>(d+a_+)/2$, $q>d+a_+$. Let $\alpha\in(0,1)$ satisfying \eqref{eq:0:alpha:reg1}. Let $A$ be $C^{0}$ with $\|A\|_{C^{0,\omega}(B_1)}\le L$ for some given modulus of continuity $\omega$, $f\in L^{p,a}(B_1)$ and $F\in L^{q,a}(B_1)^d$. Let $u\in C^{0,\alpha}(B_{1})$ be a weak solution to \eqref{generalPDE} in $B_1$.
        Then, there exist a constant $C>0$ depending only on  $d$, $n$, $a$, $\lambda$, $\Lambda$, $p$, $q$ $\alpha$ and $L$ such that
        \begin{equation*}
        \|u\|_{C^{0,\alpha}(B_{1/2})}\le C\big(
        \|u\|_{L^{2,a}(B_{1})}
        +\|f\|_{L^{p,a}(B_{1})}
        +\|F\|_{L^{q,a}(B_{1})}
        \big).
        \end{equation*}             
        \item[ii)] Let $p>d+a_+$. Let assume that $\alpha_*>1$. Let $\alpha\in(0,1)$ satisfying \eqref{eq:1:alpha:reg:1}. Let $A$ be $C^{0,\alpha}$ with $\|A\|_{C^{0,\alpha}(B_1)}\le L$, $f\in L^{p,a}(B_1)$ and $F\in C^{0,\alpha}(B_1)$. Let $u\in C^{1,\alpha}(B_{1})$ be a weak solution to \eqref{generalPDE} in $B_1$ such that 
    \begin{equation}\label{eq:c1:boundary:condition*}
    (A\D u + F)\cdot e_{y_i}=0,\quad\text{ on } \Sigma_0\cap B_{1},\quad\text{for every } i=1,\dots,n.
    \end{equation}
    Then, there exist a constant $C>0$ depending only on  $d$, $n$, $a$, $\lambda$, $\Lambda$, $p$, $\alpha$ and $L$ such that
    \begin{equation*}
    \|u\|_{C^{1,\alpha}(B_{1/2})}\le C\big(
    \|u\|_{L^{2,a}(B_{1})}
    +\|f\|_{L^{p,a}(B_{1})}
    +\|F\|_{C^{0,\alpha}(B_1)}
    \big).
\end{equation*}
\end{itemize}

\end{Proposition}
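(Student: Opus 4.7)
The plan is to prove both parts by a contradiction/blow-up argument closely modeled on the proof of Theorem \ref{T:0:1:alpha:eps}, but carried out directly on balls $B_1$ rather than on perforated domains. Since $u$ is already assumed to enjoy the target $C^{0,\alpha}$ or $C^{1,\alpha}$ regularity, the blow-up sequences are well-defined without perforation, and the three-case analysis of Theorem \ref{T:0:1:alpha:eps} simplifies: only the analogues of \textbf{Case 1} (blow-up center staying far from $\Sigma_0$ relative to the blow-up scale) and \textbf{Case 3} (blow-up center approaching $\Sigma_0$ at the same scale) survive; the intermediate \textbf{Case 2}, tied to the shrinking perforation $\partial\Sigma_\e^A$, disappears entirely. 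This eliminates exactly the step that required the extra regularity $C^{1}$ (resp.\ $C^{1,\alpha}$) of $A$ in Theorem \ref{T:0:1:alpha:eps}, allowing us to work under the optimal assumption $A\in C^0$ (resp.\ $A\in C^{0,\alpha}$).

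For part \emph{i)}, assume by contradiction the existence of sequences $\{A_k\},\{f_k\},\{F_k\},\{u_k\}$ satisfying the hypotheses with $[u_k]_{C^{0,\alpha}(B_{1/2})}$ exceeding $k$ times the right-hand side. Fix $\eta\in C_c^\infty(B_{3/4})$, $\eta=1$ on $B_{1/2}$, set $M_k=[\eta u_k]_{C^{0,\alpha}(B_1)}$, and pick $z_k,\hat z_k$ nearly achieving this seminorm; put $r_k=|z_k-\hat z_k|\to 0$, and choose the base point $\tilde z_k=z_k$ if $|y_k|/r_k\to\infty$ or $\tilde z_k=(x_k,0)$ otherwise. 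Introduce
\begin{equation*}
w_k(z)=\frac{\eta(\tilde z_k)\bigl(u_k(\tilde z_k+r_k z)-u_k(z_k)\bigr)}{r_k^\alpha M_k}\,,
\end{equation*}
extract $C^{0,\alpha}$-locally uniform convergence $w_k\to\bar w$, and verify via Caccioppoli and Lemma \ref{L:compactness} that $\bar w$ is a nontrivial entire solution on $\R^d$ (resp.\ across $\Sigma_0$ on $\R^d$) with constant-coefficient limit matrix $\bar A$, weight $1$ (resp.\ $|y|^a$), and growth $|\bar w(z)|\le c(1+|z|^\alpha)$, with $\alpha<\alpha_*$. The classical Liouville theorem or Theorem \ref{L:Liouville} (with $\e=0$) forces $\bar w$ to be constant, a contradiction.

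For part \emph{ii)}, perform the analogous blow-up one derivative higher with $M_k=[\nabla(\eta u_k)]_{C^{0,\alpha}(B_1)}$ and
\begin{equation*}
w_k(z)=\frac{\eta(\tilde z_k)\bigl(u_k(\tilde z_k+r_kz)-u_k(z_k)\bigr)-P_k\cdot r_k(z-\xi_k)}{r_k^{1+\alpha}M_k}\,,
\end{equation*}
choosing $P_k=\eta(z_k)\nabla u_k(z_k)$ in the away-from-$\Sigma_0$ regime and $P_k=A^{-1}(\tilde z_k)\bigl((\eta A\nabla u_k)_1(\tilde z_k)-(\eta F)_2(\tilde z_k)\bigr)$ in the near-$\Sigma_0$ regime, $\tilde z_k=(x_k,0)\in\Sigma_0$. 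Here the effective boundary condition \eqref{eq:c1:boundary:condition*} plays the role that \eqref{eq:BC:stable:1} played in Theorem \ref{T:0:1:alpha:eps}: it guarantees $(A(\tilde z_k)P_k+\eta F(\tilde z_k))\cdot e_{y_i}=0$ at $\tilde z_k$, so the boundary-type remainder $I_4$ from the proof of Theorem \ref{T:0:1:alpha:eps}, \emph{ii)}, is not present at all (no $\partial\Sigma_\e^A$ exists). Passing to the limit, $\bar w$ becomes a nontrivial entire solution with growth $|\bar w(z)|\le c(1+|z|^{1+\alpha})$, $1+\alpha<\min\{2,\alpha_*\}$, and Theorem \ref{L:Liouville} forces it to be affine, contradicting the fact that $\nabla\bar w$ is not constant by construction of the blow-up points.

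The hard part is the $C^{1,\alpha}$ case, specifically the near-$\Sigma_0$ blow-up. The point is that a weak solution across $\Sigma_0$ only enjoys the weighted conormal condition \eqref{conormalweighted}, which is too weak to determine $\nabla u$ pointwise on $\Sigma_0$ and would leave a stray affine error in the linearization $P_k$. The a priori hypothesis \eqref{eq:c1:boundary:condition*} is precisely what upgrades this weighted condition to a pointwise flux identity on $\Sigma_0$, so that $P_k$ can be calibrated to annihilate the boundary defect in the weak formulation after rescaling. Once this is in place, the remaining technicalities — dominated-convergence passage to the limit in the weak formulation with only continuous $A$, and the treatment of the weight factor $\nabla\rho_k^a$ term in the away-from-$\Sigma_0$ regime — are handled exactly as in the proof of Theorem \ref{T:0:1:alpha:eps}, \emph{ii)}, with the simplifications due to the absence of perforation.
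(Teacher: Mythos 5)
Your proposal is correct and follows essentially the same route as the paper's proof: a contradiction/blow-up argument on the unperforated ball, with only two blow-up regimes (the analogue of the perforated proof's Case~2 disappears), the boundary condition \eqref{eq:c1:boundary:condition*} replacing the quantitative flux bound \eqref{eq:BC:stable:1} to annihilate the boundary-type remainder $I_4$, and a two-step (suboptimal-then-optimal exponent) treatment of the $A$-oscillation term, finishing with the Liouville theorems. Note that your choice of $P_k$ in the near-$\Sigma_0$ regime, $P_k=A^{-1}(\tilde z_k)\bigl((\eta A\nabla u_k)_1(\tilde z_k)-(\eta F)_2(\tilde z_k)\bigr)$, coincides identically with the paper's simpler $(\eta\nabla u_k)(\tilde z_k)$ precisely because of the boundary condition, so the two formulations are the same.
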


We provide the proof only for the estimates in $ C^{1,\alpha} $-spaces, as the proof for the other case follows the same argument and is easier to establish.

\begin{proof}[Proof of Proposition \ref{P:a:priori}, ii)]
The proof is similar to that of Theorem \ref{T:0:1:alpha:eps}, \emph{ii)}, so we omit some details.

By contradiction, let us suppose that there exist sequences $\{u_k\}_k$, $\{f_k\}_k$, $\{F_k\}_k$ and $\{A_k\}_k$, such that $A_k \in C^{0,\alpha}$ with $\|A_k\|_{C^{0,\alpha}(B_1)}\le L$, $f_k\in L^{p,a}(B_1)$, $F_k\in C^{0,\alpha}(B_1)$, and every $u_k$ is a weak solution to 
\[
-\dive(|y|^a A_k\D u_k) = |y|^a f_k +\dive(|y|^aF_k), \quad \text{ in } B_1 ,
\]
which satisfies the boundary condition \eqref{eq:c1:boundary:condition*} and 
\begin{equation*}
\|u_k\|_{C^{1,\alpha}(B_{1/2})}> k\big(
\|u_k\|_{L^{2,a}(B_{1})}
+\|f_k\|_{L^{p,a}(B_{1})}
+\|F_k\|_{C^{0,\alpha}(B_1)}
\big).
\end{equation*}
Let us fix a smooth cut-off function $\eta\in C_c^\infty(B_{3/4})$ such that $\eta=1$ in $B_{1/2}$ and $0\le \eta \le 1$. Then, arguing as in \emph{Step 1} in Theorem \ref{T:0:1:alpha:eps}, \emph{ii)}, we have 
\begin{equation*}
M_k=[\nabla (\eta u_k)]_{C^{0,\alpha}(B_{1})}\geq [\nabla u_k]_{C^{0,\alpha}(B_{1/2})} \geq ck\big(
\|u_k\|_{L^{2,a}(B_{1})}
+\|f_k\|_{L^{p,a}(B_{1})}
+\|F_k\|_{C^{0,\alpha}(B_1)})\,.
\end{equation*}
Take two sequences of points $z_k=(x_k,y_k),\hat{z}_k=(\hat{x}_k,\hat{y}_k) \in B_1$ such that 
\[
\frac{|\D (\eta u_k)(z_k)-\D (\eta u_k)(\hat z_k)|}{|z_k-\hat{z}_k|^\alpha}\ge \frac12 M_k,
\]
and define $r_k=|z_k-\hat{z}_k|$. Without loss of generality, we can assume that $z_k \in B_{3/4}$.
From now on, we distinguish two cases:
\smallskip
\begin{itemize}
    \item[$\,$] \textbf{Case 1:} 
    $\quad\displaystyle \frac{|y_k|}{r_k}\to \infty \text{ as } k\to\infty\,,$

\item[$\,$] \textbf{Case 2:}  $\quad\displaystyle \frac{|y_k|}{r_k}\le c\, \text{ uniformly in } k\,.$
\end{itemize}
\smallskip
Unlike the proofs of the stable estimates in Section \ref{sec:6}, here we do not introduce any perforation around $ \Sigma_0 $. Consequently, we have only two blow-up regimes: in {\bf Case 1}, the blow-up scale does not capture $\Sigma_0$, whereas in {\bf Case 2}, it does.

Let $\zeta_k=(x_k,0)$ be the projection of $z_k$ onto $\Sigma_0$. Let us define
\[
\tilde{z}_k=(\tilde{x}_k,\tilde{y}_k)=\begin{cases}
z_k, & \text{ in }  \textbf{Case 1},  \\
\zeta_k, & \text{ in } \textbf{Case 2},
\end{cases}\qquad \xi_k=\frac{z_k-\tilde{z}_k}{r_k}\,,
\]
the sequence of rescaled domains
\[
\Omega_k=\frac{B_1-\tilde{z}_k}{r_k},
\]
and the sequences of functions  
\[
\begin{gathered}
v_k(z)= \frac{
\eta ( \tilde{z}_k+r_kz) \big(u_k( \tilde{z}_k+r_kz) - u_k (\tilde{z}_k)\big) - \eta(z_k) \D u_k({z}_k)\cdot r_k (z-\xi_k)
}{ r_k^{1+\alpha} M_k}\,, \quad \text{ for }z\in \Omega_k\,,\\
w_k(z)=\frac{\eta(\tilde{z}_k) (u_k( \tilde{z}_k+r_kz) -  u_k (\tilde{z}_k)) - (\eta \D  u_k) (\tilde{z}_k)\cdot r_k z}{ r_k^{1+\alpha}M_k}\,, \quad \text{ for }z\in \Omega_k\,,
\end{gathered}
\]
and set the limit blow-up domain $\Omega_\infty$ as in \eqref{eq:limit:domain:def}. 

\smallskip

By arguing as in Theorem \ref{T:0:1:alpha:eps}, \emph{ii), Step 2} one has that $ [v_k]_{C^{1,\alpha}(K\cap \Omega_k)}\le2 $ and
$\|v_k\|_{C^{1,\alpha}(K)}\le c(K)$ for every compact set $K\subset \Omega_\infty$. Therefore, by applying the Arzel\'a-Ascoli theorem, we infer that $v_k \to \bar{v}$ in $C^{1,\gamma}_{\rm loc}({\Omega}_\infty)$ for every $\gamma\in (0,\alpha)$ and
$$|\bar{v}| \le c (1+|z|^{1+\alpha}).$$
Moreover, employing a similar argument as in Theorem \ref{T:0:1:alpha:eps}, \emph{ii), Step 3}, it follows that also $w_k \to \bar{v}$ uniformly on compact set of $\Omega_\infty$.
Next, repeating the argument of Theorem \ref{T:0:1:alpha:eps}, \emph{ii), Step 4, Step 5}, we conclude that $\D \bar{v}$ is not constant and $r_k\to0$, which immediately implies that $\Omega_\infty=\R^d$ in both \textbf{Case 1} and \textbf{Case 2}.

\smallskip

Finally, we prove that $\bar{v}$ is an entire solution to a homogeneous equation with constant coefficients.
Since $\|A_k\|_{C^{0,\alpha}(B_1)}\le L$, we have that $A_k(\tilde{z}_k+r_k z)\to A(\bar{z})=\bar{A}$, which is a constant matrix satisfying \eqref{eq:unif:ell} and $\bar{z}=\lim_{k\to\infty}\tilde{z}_k$.
Let us define
\[
\rho_k^a(y)=\begin{cases}
\displaystyle{\frac{|{y}_k+r_k y|^a}{|y_k|^a}}, & \text{ in } \textbf{Case 1},\\
|y|^a, &  \text{ in } \textbf{Case 2}.
\end{cases}
\]
Fixed $\phi\in C_c^\infty(\R^d)$, we have that
\[
\int_{\Omega_k}\rho_k^a(y)A_k(\tilde{z}_k+r_k z)\D w_k(z)\cdot \D \phi (z)dz = I_1-I_2-I_3-I_4\,,
\]
where
\[
\begin{aligned}
&I_1 =  \frac{r_k^{1-\alpha} }{M_k} \int_{\Omega_k}\rho_k^a(y) \eta(\tilde{z}_k)f_k(\tilde{z}_k+r_k z)\phi(z)dz\\
&I_2 =  \frac{r_k^{-\alpha}}{M_k} \int_{\Omega_k}\rho_k^a(y) \eta(\tilde{z}_k)\big(
F_k(\tilde{z}_k+r_k z)-F_k(\tilde{z}_k) \big) \cdot\D \phi (z)dz\\
&I_3 =  \frac{r_k^{-\alpha}}{M_k} \int_{\Omega_k}\rho_k^a(y)
\eta(\tilde{z}_k)\big(A_k(\tilde{z}_k+r_k z)-A_k(\tilde{z}_k)\big)\D u_k(\tilde{z}_k)\cdot\D \phi(z)  dz  \\
&I_4 = \frac{r_k^{-\alpha}}{M_k} \int_{\Omega_k}\rho_k^a(y)\eta(\tilde{z}_k)
\big(
A_k\D u_k + F_k
\big) (\tilde{z}_k)\cdot\D\phi(z)dz\,.
\end{aligned}
\]
The terms $I_1$, $I_2$ vanishes as $k\to\infty$ exactly as in Theorem \ref{T:0:1:alpha:eps}, \emph{Step 6}.

To show that the term $I_3$ vanishes, we proceed in two steps, as in the proof of Theorem \ref{T:0:1:alpha:eps}, \emph{ii), Step 6}. First, we establish local $C^{1,\alpha'}$ estimates for some suboptimal $\alpha'\in(0,\alpha)$. Once this is done, we can then obtain the regularity with the optimal exponent $\alpha$, using as additional information a $L^\infty$ bound for the gradient of the solutions.  
Hence, in the first step, we have 
\begin{align*}
|I_3|&= \Big|
\frac{r_k^{-\alpha'}}{M_k} \int_{\Omega_k}\rho_k^a(y)
\eta(\tilde{z}_k)(A_k(\tilde{z}_k+r_k z)-A_k(\tilde{z}_k))\D u_k(\tilde{z}_k)\cdot\D \phi(z)  dz  
 \Big|\\
 &\le  \frac{ c L r_k^{\alpha-\alpha' }( \|\eta u_k\|_{L^\infty(B_1)} + \| \D (\eta u_k)\|_{L^\infty(B_1)}) }{M_k}  \le c L r_k^{\alpha-\alpha'} \to 0, \quad \text{as }k \to \infty.
\end{align*}
Next, restarting the proof with the optimal exponent $\alpha$, a priori estimates with a suboptimal exponent imply that $\|\D (\eta u_k)\|_{L^\infty(B_1)} \le M_k/k$ and we can conclude in the following way
\[
|I_3| \le \frac{ c L( \|\eta u_k\|_{L^\infty(B_1)} + \| \D (\eta u_k)\|_{L^\infty(B_1)}) }{M_k} \le cL k^{-1} \to 0, \quad \text{as }k \to \infty.
\]

Finally, we show that the term $I_4$ goes to zero. 
Let us consider the \textbf{Case 1}, recalling that $\tilde{z}_k=z_k$, $r_k/|y_k|\to0$, $\rho_k^a\to 1$ and $\zeta_k=(x_k,0) \in \Sigma_0 \cap B_1$.
Arguing as in the proof of Theorem \ref{T:0:1:alpha:eps}, \emph{ii), Step 6}, and using the boundary condition \eqref{eq:c1:boundary:condition*}, we have that
\[
|(\eta A_k\D u_k+\eta F_k)_2({z}_k)|= |(\eta A_k\D u_k+\eta F_k)_2({z}_k)-(\eta A_k\D u_k+\eta F_k)_2({\zeta}_k)|\le c M_k|y_k|^\alpha,
\]
and thus
\[
 |\D \rho_k^a(y) \cdot (\eta A_k\D u_k+\eta F_k)({z}_k)
| \le c \frac{r_k}{|y_k|^{1-\alpha}}M_k.
\]
Hence, by using the divergence theorem, one has that 
\begin{align*}
|I_4|= \frac{r_k^{-\alpha}}{M_k} 
\Big| \int_{\Omega_k}\D\rho_k^a(y)\cdot (\eta A_k\D u_k + \eta F_k
) ({z}_k) \phi(z)  dz  \Big|\le c\Big(\frac{r_k}{|y_k|}\Big)^{1-\alpha}\to0,\quad \text{as }k\to\infty.
\end{align*}

In \textbf{Case 2}, 
since $u_k$ satisfies \eqref{eq:c1:boundary:condition*} and $\tilde{z}_k=(x_k,0)$,
the divergence theorem allows us to conclude directly that $I_4=0$. In fact,
\[
\int_{\Omega_k}\rho_k^a(y)
(
A_k\D u_k + F_k
) (\tilde{z}_k)\cdot\D\phi(z)dz = - 
\int_{\Omega_k}\D \rho_k^a(y) \cdot 
(
A_k\D u_k + F_k ) (\tilde{z}_k)\phi(z)dz=0.
\]
From this point on, arguing as in Theorem \ref{T:0:1:alpha:eps} \emph{i), Step 6}, we obtain that 
\[
\int_{\Omega_k}\rho_k^a(y) A_k(\tilde{z}_k+r_k z)\D w_k (z)\cdot \D {\phi}(z)dz\to \int_{\R^d}\bar{\rho}^a(y) \bar{A}\D \bar{v}(z)\cdot \D \phi(z)dz,
\]
where 
\[
\bar{\rho}(y)=\begin{cases}
1, & \text{ in }\textbf{Case 1},\\
|y|, & \text{ in }\textbf{Case 2}.
\end{cases}
\]
Then, we have proved that $\bar{v}$ is an entire solution
to
$$-\dive(\bar{A}\D\bar{v})=0,\quad\text{in }\R^d,\quad \text{in } \textbf{Case 1}$$
and $\bar{v}$ is an entire solution to 
$$-\dive(|y|^a\bar{A}\D\bar{v})=0, \quad\text{in }\R^d,\quad \text{in } \textbf{Case 2}.$$
Finally, as in Theorem \ref{T:0:1:alpha:eps}, \emph{ii), Step 7}, by invoking appropriate Liouville type theorems we get a contradiction and the thesis follows.
\end{proof}

\subsection{Proof of Theorems \ref{T:0:alpha} and \ref{T:1:alpha}}

We will prove Theorem \ref{T:1:alpha} only, as the proof of Theorem \ref{T:0:alpha} follows by a similar argument.

\begin{proof}[Proof of Theorem \ref{T:1:alpha}]

We divide the proof into two steps.

\smallskip

\noindent
\emph{Step 1.} First, we prove that if the matrix $A\in C^{1,\alpha}(B_1)$, then $u\in C^{1,\alpha}(B_{3/4})$ and satisfies \eqref{eq:c1:boundary:condition} in $B_{3/4}$.

Let $u$ be a weak solution to \eqref{generalPDE}. By applying Lemma \ref{L:approximation} there exists a family of functions $\{u_{\e}\}_{0 < \e\ll1}$, such that every $u_\e$ is a weak solution to
     \begin{equation*}
    \begin{cases}
        -\dive(|y|^a A\D u_\e)=|y|^a f +\dive(|y|^a F), & \text{ in } B_{4/5}\setminus\Sigma_\e^A\\
        (A\D u_\e + F)\cdot \nu = 0, & \text{ on } \partial \Sigma_\e^A \cap B_{4/5},
    \end{cases}
    \end{equation*}
and a sequence $\e_k\to0$ such that $u_{\e_k} \to u$ in $H^{1}_{\rm loc}(B_{4/5}\setminus\Sigma_0)$.

By applying Theorem \ref{T:0:1:alpha:eps}, \emph{ii)} to the sequence $\{u_{\e_k}\}$, combined with \eqref{eq:stima:u:e} and \eqref{eq:caccioppoli}, we get
\begin{equation*}
\|u_{\e_k}\|_{C^{1,\alpha}(B_{3/4}\setminus\Sigma_{\e_k}^A)}\le c \big(
\|u\|_{L^{2,a}(B_{1})}
+\|f\|_{L^{p,a}(B_{1})}
+\|F\|_{C^{0,\alpha}(B_1)}
\big),
\end{equation*}
for some $c>0$ depending only on $d$, $n$, $a$, $\lambda$, $\Lambda$, $p$, $\alpha$ and $L$. By using the Arzel\'a-Ascoli Theorem and the a.e. convergence $u_{\e_k}\to u$, we get that $u_{\e_k}\to u$ in $C_{\rm loc}^{1,\gamma}(B_{3/4}\setminus\Sigma_0)$, for every $\gamma\in(0,\alpha)$. Moreover, by taking $z,z'\in B_{3/4}\setminus\Sigma_0$ such that $z\not=z'$, we have that
\begin{align*}
&\frac{|\D u(z)-\D u(z')|}{|z-z'|^\alpha}=\lim_{\e_k\to0^+}\frac{|\D u_{\e_k}(z)-\D u_{\e_k}(z')|}{|z-z'|^\alpha}\le c \big(
\|u\|_{L^{2,a}(B_{1})}
+\|f\|_{L^{p,a}(B_{1})}
+\|F\|_{C^{0,\alpha}(B_1)}
\big).
\end{align*}
Hence, by taking the supremum, we get
\begin{align*}
    [\D v]_{C^{0,\alpha}(B_{3/4}\setminus\Sigma_0)}=\sup_{z,z' \in B_{3/4}\setminus\Sigma_0}\frac{|\D u(z)-\D u(z')|}{|z-z'|^\alpha}
    \le c \big(
\|u\|_{L^{2,a}(B_{1})}
+\|f\|_{L^{p,a}(B_{1})}
+\|F\|_{C^{0,\alpha}(B_1)}
\big).
\end{align*}
By continuity, we can extend $v$ in the whole $B_{3/4}$ in such a way that $[\D v]_{C^{0,\alpha}(B_{3/4})}=[\D v]_{C^{0,\alpha}(B_{3/4}\setminus\Sigma_0)}$, hence, we have that $u\in C^{1,\alpha}(B_{3/4})$.

\smallskip

Further, let us prove that $u$ satisfies the boundary condition \eqref{eq:c1:boundary:condition}.
Fix $z=(x,y)\in B_{3/4} \setminus \Sigma_0$ and let $z^\e$ be a chosen projection of $z$ onto $\partial\Sigma_\e^A\cap B_{3/4}$. For every $i=1,\dots,n$, by using \eqref{eq:BC:stable:1}, we get
\[
\begin{aligned}
    \big|(A\D u+F)(z) &\cdot  e_{y_i}\big|\le \big|(A\D u)(z)-(A\D u_{\e})(z)\big| + \big|(A\D u_{\e} + F)(z)-(A\D u_{\e}+ F)(z^{\e})\big|\\
   &  +\big|(A\D u_{\e}+F)(z^{\e})\cdot e_{y_i} \big|\\
    &\le \Lambda \big|\D u(z)-\D u_\e(z)\big|+
    c\big(
\|u\|_{L^{2,a}(B_{1})}
+\|f\|_{L^{p,a}(B_{1})}
+\|F\|_{C^{0,\alpha}(B_1)}
\big)\big(|z-z^\e|^\alpha + \e^\alpha\big).
\end{aligned}
\]
By taking the limit as $\e \to 0^+$ in the previous inequality, we get
\[
\big|(A\D u+F)(z) \cdot  e_{y_i}\big| \le c\big(
\|u\|_{L^{2,a}(B_{1})}
+\|f\|_{L^{p,a}(B_{1})}
+\|F\|_{C^{0,\alpha}(B_1)}\big)|y|^\alpha,
\]
therefore, by taking $|y|\to 0$ it follows that $(A\D u+F)(x,0) \cdot  e_{y_i} = 0$, that is, \eqref{eq:c1:boundary:condition} holds true.

\smallskip
\noindent
\emph{Step 2.} Finally, we prove that if $A\in C^{0,\alpha}(B_1)$, then  $u\in C^{1,\alpha}(B_{1/2})$, satisfies \eqref{eq:stima:c1} and \eqref{eq:c1:boundary:condition}.

Let $u$ be a weak solution to \eqref{generalPDE}. 
For $\delta>0$, let $\{\rho_\delta\}_{\delta>0}$ be a family of smooth mollifiers and let us define $A_\delta=A*\rho_\delta$. Choosing $\delta$ small enough, we have that $A_\delta \in C^\infty(B_{4/5})$, satisfies \eqref{eq:unif:ell} and $\|A_\delta\|_{C^{0,\alpha}(B_{4/5})}\le \|A\|_{C^{0,\alpha}(B_{1})}$. By using the approximation Lemma \ref{L:approximation:smooth}, we have that there exists a family $\{u_\delta\}$ such that every $u_\delta$ is a weak solution to 
\[
-\dive(|y|^a A_\delta \D u_\delta)=|y|^a {f}+ \dive(|y|^a {F}), \text{ in }B_{4/5},
\]
satisfies
\[
\|u_\delta\|_{H^{1,a}(B_{4/5})}\le c(\|u\|_{H^{1,a}(B_1)}+\|{f}\|_{L^{2,a}(B_1)}+\|{F}\|_{L^{2,a}(B_1)}),
\]
and, up to consider a subsequence, $u_\delta\to u$ in $H^{1,a}(B_{4/5})$.

By applying \emph{Step 1}, we have that $u_\delta\in C^{1,\alpha}(B_{3/4})$ and satisfies \eqref{eq:c1:boundary:condition}. Hence, by using Proposition \ref{P:a:priori}, \emph{ii)},
it follows that
\begin{equation}\label{eq:v_delta:schauder}
\|u_\delta\|_{C^{1,\alpha}(B_{1/2})}\le c\big(
\|u\|_{L^{2,a}(B_{1})}
+\|f\|_{L^{p,a}(B_{1})}
+\|F\|_{C^{0,\alpha}(B_1)}
\big),
\end{equation} 
for some $c>0$, depending only on $d$, $n$, $a$, $\lambda$, $\Lambda$, $p$, $\alpha$ and $L$.
Hence, we can apply the Arzel\'a-Ascoli theorem to get that $u_\delta\to u$ in $C^{1,\gamma}(B_{1/2})$ (for every $\gamma\in(0,\alpha)$) and passing to the limit as $\delta\to 0 $ in \eqref{eq:v_delta:schauder} we obtain that $u$ satisfies \eqref{eq:stima:c1}. Moreover, by \emph{Step 1} every $u_\delta$ satisfies \eqref{eq:c1:boundary:condition}, so 
$u$ also satisfies \eqref{eq:c1:boundary:condition}, and our statement follows.
\end{proof}

\section{Equations degenerating on curved manifolds}\label{sec:8}

This section is devoted to the extension of Theorems \ref{T:0:alpha} and \ref{T:1:alpha} to a class of equations whose weights are degenerate/singular on lower-dimensional curved manifolds.

Let $2\le n<d$ and consider a $(d-n)$-dimensional $C^1$-manifold $\Gamma$ which is locally embedded in $\R^d$ and parametrized by $\varphi\in C^1(\Sigma_0\cap B_1;\R^n)$, in the sense that, up to perform a dilation, a translation and a rotation, one has
\begin{equation}\label{eq:parametrization}
\Gamma\cap B_1=\{(x,y) \in \R^d \mid y =\varphi(x)\}\cap B_1,\quad 0\in \Gamma, \quad\varphi(0)=0,\quad J_\varphi(0)=0.
\end{equation}
Let us define the diffeomorphism 
\begin{equation}\label{eq:diffeomorphism}
    \Phi(x,y):=(x,y+\varphi(x)),
\end{equation}
which satisfies $\Phi(\Sigma_0\cap B_R)\subset\Gamma\cap B_1$ for some $R>0$, $\Phi(0)=\Phi^{-1}(0)=0$ and the Jacobian associated to $\Phi$ is
\[
J_\Phi(x,y)=\left(\begin{array}{cc}
       \mathbb{I}_{d-n} & {\bf 0}  \\ 
       J_\varphi(x) & \mathbb{I}_{n}\\ 
  \end{array}\right), \quad  |\det J_\Phi|\equiv1, \quad J_\Phi(0)=\mathbb{I}_d.
\]
Next, we define the class of admissible weights with respect to the parametrization $\varphi$.

\begin{Definition}[$\alpha$-defining function]\label{D:admissible:weight}
    Let $2\le n <d$, $\alpha\in[0,1)$ and $\varphi\in C^{1,\alpha}(\Sigma_0\cap B_1;\R^n)$ be a parametrization in the sense of \eqref{eq:parametrization}.
We say that $\delta$ is an $\alpha$-defining function with respect to the parametrization $\varphi$ if $\delta \in C^{0,1}(B_1)$ and the two following condition holds true:
\begin{itemize}
\item[i)] there exist two constants $0<c_0\le c_1$ such that
\[
c_0\le \frac{ \delta}{\dist_\Gamma}  \le c_1;
\]
\item[ii)] 
\begin{equation}\label{eq:rho:delta}
\tilde \delta (x,y):=\frac{\delta(\Phi(x,y))}{|y|}\in C^{0,\alpha}(B_1).
\end{equation}
\end{itemize}
\end{Definition}

\begin{remark}\label{R:tilde:A}
Let us consider a variable coefficient matrix $A\in C^0(B_1)$ satisfying the global uniform ellipticity condition \eqref{eq:unif:ell} with ellipticity constants $0<\lambda\leq\Lambda$ and satisfying the restricted-to-$\Gamma$ uniform ellipticity condition
\begin{equation}\label{eq:unif:ell:Gamma}
    \lambda_*|\xi|^2\le A(z)\xi\cdot\xi\le\Lambda_*|\xi|^2,\qquad\text{for a.e. } z\in B_1\cap\Gamma \text{ and for all } \xi\in \R^d,
\end{equation}
for some constants $0<\lambda\leq\lambda_*\leq\Lambda_*\leq\Lambda$. Given a $0$-defining function $\delta$, let $\tilde{\delta}$ as in \eqref{eq:rho:delta} and define the matrix
\begin{equation}\label{eq:tilde:A}
    \Tilde{A}:=\tilde{\delta}^a (J_\Phi^{-1})(A\circ \Phi) (J_\Phi^{-1})^\top,
\end{equation}
which is symmetric, continuous and satisfies the global uniform ellipticity condition \eqref{eq:unif:ell}. The latter condition follows by $i)$ and $ii)$ in Definition \ref{D:admissible:weight}, which implies $\tilde{\delta}\ge c >0$. In addition, since $J_\Phi(0)=\mathbb{I}$, one has that $\tilde{A}(0) = \tilde{\delta}^a(0) A(0)$ has ellipticity constants restricted to $\Sigma_0$ given by
\[
0<\tilde{\delta}^a(0)\lambda_*\leq\tilde{\delta}^a(0)\Lambda_*.
\]
By using the continuity of $\tilde A$, for every $\sigma\in(0,1)$ there exists a small radius $\bar{r}\in(0,1)$ such that $\tilde A$ has ellipticity constants restricted to $B_{\bar{r}}\cap \Sigma_0$ given by 
\[
 \tilde\delta^a(0)\lambda_* - \sigma \le  \tilde\delta^a(0)\Lambda_* + \sigma.
\]
That is, in a local scale, $\tilde{A}$ and $A$ have the same ellipticity ratio restricted to the thin space, up to an error
\[
\frac{\tilde\delta^a(0)\lambda_* - \sigma}{\tilde\delta^a(0)\Lambda_* + \sigma} = \frac{\lambda_*}{\Lambda_*} + \tilde \sigma,
\]
where $\tilde \sigma>0$ is arbitrarily small.

\end{remark}

As done in Section \ref{S:sobolev:spaces}, we can define the spaces
$L^p(B_1,\delta^a)$, $L^p(B_1,\delta^a)^d$ and $H^1(B_1,\delta^a)$. Then, we provide the definition of weak solution to \eqref{eq:weak:solution:curve}.

\begin{Definition}\label{D:weak:solution:curve}
    Let $a+n>0$, $A$ satisfying \eqref{eq:unif:ell} and $\delta$ be a $0$-defining function in the sense of Definition \ref{D:admissible:weight}. Let $f\in L^{(2^*_a)'}(B_1,\delta^a)$ and $F\in L^{2}(B_1,\delta^a)^d$. We say that $u$ is a weak solution to \eqref{eq:weak:solution:curve}
if $u\in H^{1}(B_1,\delta^a)$ and satisfies
\begin{equation*}
\int_{B_1}\delta^a A\D u\cdot \D\phi dz= \int_{B_1}\delta^a (f\phi-F\cdot\D\phi) dz,
\end{equation*}
for every $\phi\in C_c^\infty(B_1).$
\end{Definition}

The main results of this section are the following corollaries, which establish local $C^{0,\alpha}$ and $C^{1,\alpha}$ estimates for weak solutions to \eqref{eq:weak:solution:curve}.

\begin{Corollary}\label{C:0:alpha:curve}
    Let $a+n>0$, $p>{(d+a_+)}/{2}$ and $q>d+a_+$. Let $f\in{L^{p}(B_{1},\delta^a)} $, $F\in{L^{q}(B_{1},\delta^a)}^d$, $A \in C^{0}(B_1)$ be a continuous matrix satisfying \eqref{eq:unif:ell} and \eqref{eq:unif:ell:Gamma} with constants $0<\lambda\leq\lambda_*\leq\Lambda_*\leq\Lambda$, $\alpha_*=\alpha_*(n,a,\lambda_*/\Lambda_*)$ defined in \eqref{alphastar2} and $\alpha$ satisfying \eqref{eq:0:alpha:reg1}.  
    Let $\varphi\in C^1(\Sigma_0\cap B_1;\R^n)$ be a parametrization in the sense of \eqref{eq:parametrization}. Let $\delta$ be a $0$-defining function with respect to $\varphi$ in the sense of Definition \ref{D:admissible:weight} and set $\tilde{\delta} \in C^{0}(B_1)$ as in \eqref{eq:rho:delta}.
    
Let $u$ be a weak solution to \eqref{eq:weak:solution:curve} in the sense of Definition \ref{D:weak:solution:curve} and
let us suppose that there exists a modulus of continuity $\omega$ such that
$$\|A\|_{C^{0,\omega} (B_1)}+\|\tilde \delta\|_{C^{0,\omega} (B_1)}+\|\varphi\|_{C^{1,\omega} (\Sigma_0\cap B_1;\R^n)}\le L.$$
Then, there exists $\bar{r}\in (0,1/2)$, depending only on $\alpha$ and $L$, such that $u\in C^{0,\alpha}(B_{\bar{r}})$ and there exists a constant $c>0$, depending only on $d$, $n$, $a$, $ \lambda$, $\Lambda$, $p$, $q$, $\alpha$ and $L$ such that
\begin{equation*}
\|u\|_{C^{0,\alpha}(B_{\bar{r}})}  \le c\big(
\|u\|_{L^{2}(B_{1},\delta^a)}
+\|f\|_{L^{p}(B_{1},\delta^a)}
+\|F\|_{L^{q}(B_{1},\delta^a)}
\big).
\end{equation*}
\end{Corollary}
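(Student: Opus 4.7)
The plan is to straighten the curved manifold $\Gamma$ into the flat manifold $\Sigma_0$ via the $C^1$-diffeomorphism $\Phi$ defined in \eqref{eq:diffeomorphism}, and then invoke Theorem \ref{T:0:alpha} on the transformed problem. Fix $r_0 \in (0,1)$ so that $\Phi(\overline{B_{r_0}}) \subset B_1$ and set $\tilde u := u \circ \Phi$ on $B_{r_0}$. A direct change of variables $z = \Phi(w)$ in the weak formulation, using $|\det J_\Phi|\equiv 1$, the chain rule $\nabla u = J_\Phi^{-\top}\nabla\tilde u$, and the identity $\delta(\Phi(w)) = \tilde\delta(w)|y|$, shows that $\tilde u$ is a weak solution in the sense of Definition \ref{def:weak:sol:hom:0} to
\begin{equation*}
-\mathrm{div}(|y|^a \tilde A\,\nabla \tilde u) = |y|^a \tilde f + \mathrm{div}(|y|^a \tilde F) \quad\text{in } B_{r_0},
\end{equation*}
where $\tilde A$ is the matrix in \eqref{eq:tilde:A}, $\tilde f = \tilde\delta^a (f \circ \Phi)$, and $\tilde F = \tilde\delta^a J_\Phi^{-1}(F \circ \Phi)$. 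Since $\tilde\delta \in C^0(B_1)$ is bounded from above and below away from zero by Definition \ref{D:admissible:weight}, the weighted norms of $\tilde u, \tilde f, \tilde F$ on $B_{r_0}$ are controlled, with constants depending only on $L$, by the corresponding $\delta^a$-weighted norms of $u, f, F$ on $B_1$.

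By Remark \ref{R:tilde:A}, $\tilde A$ is continuous, satisfies the global uniform ellipticity \eqref{eq:unif:ell} with constants depending only on $\lambda, \Lambda, L$, and at the origin its restricted-to-$\Sigma_0$ ellipticity ratio equals $\lambda_*/\Lambda_*$ (because $J_\Phi(0) = \mathbb{I}_d$). Since the exponent $\alpha_*(n,a,\cdot)$ in \eqref{alphastar2} is continuous and monotone increasing in the ellipticity ratio, and since condition \eqref{eq:0:alpha:reg1} requires the strict inequality $\alpha < \alpha_*(n,a,\lambda_*/\Lambda_*)$, the continuity of $\tilde A$ and $\tilde\delta$ allows me to pick $\bar r \in (0, r_0/2)$, depending only on $\alpha$ and $L$, such that on $B_{2\bar r}$ the restricted ellipticity ratio $\tilde\lambda_*/\tilde\Lambda_*$ of $\tilde A$ is so close to $\lambda_*/\Lambda_*$ that $\alpha < \alpha_*(n,a,\tilde\lambda_*/\tilde\Lambda_*)$ still holds.

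After a dilation from $B_{2\bar r}$ to $B_1$, which preserves the structure of the equation and affects constants only through $\bar r$, Theorem \ref{T:0:alpha} applied to $\tilde u$ yields
\begin{equation*}
\|\tilde u\|_{C^{0,\alpha}(B_{\bar r})} \leq c\bigl(\|\tilde u\|_{L^{2,a}(B_{2\bar r})} + \|\tilde f\|_{L^{p,a}(B_{2\bar r})} + \|\tilde F\|_{L^{q,a}(B_{2\bar r})}\bigr),
\end{equation*}
for a constant $c$ depending only on $d, n, a, \lambda, \Lambda, p, q, \alpha, L$. Finally, since $\Phi^{-1}$ is Lipschitz with norm bounded by $L$, the function $u = \tilde u \circ \Phi^{-1}$ inherits the $C^{0,\alpha}$ regularity with the same exponent on $\Phi(B_{\bar r})$. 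Shrinking $\bar r$ once more (still depending only on $\alpha, L$) to ensure $B_{\bar r} \subset \Phi(B_{\bar r})$, and combining with the control on the data from the first step, gives the stated estimate.

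The main technical point is Step 2: the transformation $A \mapsto \tilde\delta^a J_\Phi^{-1} A J_\Phi^{-\top}$ not only rotates the coefficient matrix but also modifies it by the factor $\tilde\delta^a$, so its restricted ellipticity ratio varies across $\Sigma_0$. The flexibility afforded by the strict inequality in \eqref{eq:0:alpha:reg1} and the continuity of $\alpha_*$ in the ellipticity ratio is exactly what is needed to absorb this fluctuation by choosing $\bar r$ small enough, which is why the conclusion must be localized on a small ball $B_{\bar r}$ rather than $B_{1/2}$.
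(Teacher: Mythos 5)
Your proof is correct and follows essentially the same route as the paper's: flatten $\Gamma$ via $\Phi$, transfer the equation to $\tilde u = u\circ\Phi$ with coefficient matrix $\tilde A = \tilde\delta^a J_\Phi^{-1}(A\circ\Phi)J_\Phi^{-\top}$, observe via $J_\Phi(0)=\mathbb I_d$ and Remark~\ref{R:tilde:A} that the restricted ellipticity ratio of $\tilde A$ at the origin is still $\lambda_*/\Lambda_*$, shrink $\bar r$ using continuity and the strict inequality in \eqref{eq:0:alpha:reg1}, and invoke Theorem~\ref{T:0:alpha} before composing back with the bi-Lipschitz $\Phi^{-1}$. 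The paper only writes out the $C^{1,\alpha}$ counterpart (Corollary~\ref{C:1:alpha:curve}) and remarks that the $C^{0,\alpha}$ case is analogous; your argument is exactly that analogue.
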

    
\begin{Corollary}\label{C:1:alpha:curve}
Let $a+n>0$ and $p>{d+a_+}$. Let $A$ be a matrix satisfying \eqref{eq:unif:ell} and \eqref{eq:unif:ell:Gamma} with constants $0<\lambda\leq\lambda_*\leq\Lambda_*\leq\Lambda$, $\alpha_*=\alpha_*(n,a,\lambda_*/\Lambda_*)$ defined in \eqref{alphastar2} and $\alpha$ satisfying \eqref{eq:1:alpha:reg:1}.    
    Let $\varphi\in C^{1,\alpha}(\Sigma_0\cap B_1;\R^n)$ be a parametrization in the sense of \eqref{eq:parametrization}. Let $\delta$ be a $\alpha$-defining function with respect to $\varphi$ in the sense of Definition \ref{D:admissible:weight} and set $\tilde{\delta} \in C^{0,\alpha}(B_1)$ as in \eqref{eq:rho:delta}.
Let us suppose that $A$ is $C^{0,\alpha}$-continuous, $f\in{L^{p}(B_{1},\delta^a)} $, $F\in C^{0,\alpha}(B_1) $. 

Let $u$ be a weak solution to \eqref{eq:weak:solution:curve} in the sense of Definition \ref{D:weak:solution:curve} and let us suppose that 
$$\|A\|_{C^{0,\alpha} (B_1)}+\|\tilde \delta\|_{C^{0,\alpha} (B_1)}+\|\varphi\|_{C^{1,\alpha} (\Sigma_0\cap B_1;\R^n)}\le L.$$
Then, there exists $\bar{r}\in(0,1/2)$, depending only on $\alpha$ and $L$, such that $u\in C^{1,\alpha}(B_{\bar{r}})$ and there exists a constant $C>0$, depending only on $d$, $n$, $a$, $ \lambda$, $ \Lambda$, $p$, $\alpha$ and $L$ such that
\begin{equation}\label{eq:cor:c1}
\|u\|_{C^{1,\alpha}(B_{\bar{r}})}  \le C\big(
\|u\|_{L^{2}(B_{1},\delta^a)}
+\|f\|_{L^{p}(B_{1},\delta^a)}
+\|F\|_{C^{0,\alpha}(B_1)}
\big).
\end{equation}
Moreover, denoting by $T_z\Gamma$ the tangent space to $\Gamma$ at the point $z\in\Gamma$, we have that $u$ satisfies the following boundary condition for every $z\in \Gamma\cap B_{\bar{r}}$,
\begin{equation}\label{eq:BC:curve}
(A\D u +F) (z) \cdot \nu(z)= 0,\quad \text{ for every } \nu(z)\perp T_z{\Gamma}.
\end{equation}
\end{Corollary}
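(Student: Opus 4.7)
The plan is to flatten the characteristic manifold via the $C^{1,\alpha}$-diffeomorphism $\Phi$ from \eqref{eq:diffeomorphism}, which locally sends $\Sigma_0$ onto $\Gamma$, and to reduce the statement to Theorem \ref{T:1:alpha}. Set $v := u \circ \Phi$ on $\Phi^{-1}(B_r)$ for a suitable small $r > 0$. A standard change of variables in the weak formulation of \eqref{eq:weak:solution:curve}---using $|\det J_\Phi|\equiv 1$ together with the identity $\delta\circ\Phi = |y|\,\tilde\delta$ from \eqref{eq:rho:delta}---shows that $v$ satisfies
\begin{equation*}
    \int |y|^a\, \tilde A\, \D v \cdot \D \psi \, dz = \int |y|^a (\tilde f \,\psi - \tilde F \cdot \D\psi)\, dz, \qquad \psi \in C_c^\infty(\Phi^{-1}(B_r)),
\end{equation*}
where $\tilde A$ is the matrix defined in \eqref{eq:tilde:A}, $\tilde f := \tilde\delta^{a} (f\circ\Phi)$ and $\tilde F := \tilde\delta^{a} J_\Phi^{-1}(F\circ\Phi)$. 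Under our assumptions on $A$, $F$, $\varphi$ and $\tilde\delta$, one has $\tilde A \in C^{0,\alpha}$, $\tilde F \in C^{0,\alpha}$ and $\tilde f \in L^{p,a}$, with norms bounded in terms of $L$.

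By Remark \ref{R:tilde:A}, $\tilde A$ is globally uniformly elliptic and, since $J_\Phi(0)=\mathbb{I}$, its restricted-to-$\Sigma_0$ ellipticity ratio at $0$ is exactly $\lambda_*/\Lambda_*$. By continuity of $\tilde A_3$ at $0$, for any $\tilde\sigma>0$ one can find a radius $\bar r \in (0,1/2)$, depending only on $\alpha$ and $L$, such that the block $\tilde A_3$ on $B_{\bar r}\cap\Sigma_0$ satisfies \eqref{eq:unif:ell:loc} with ellipticity ratio $\geq \lambda_*/\Lambda_* - \tilde\sigma$. Since the map $\lambda_*/\Lambda_* \mapsto \alpha_*$ given by \eqref{alphastar2} is continuous and the condition $\alpha < \alpha_*-1$ is strict, choosing $\tilde\sigma$ sufficiently small ensures that hypothesis \eqref{eq:1:alpha:reg:1} of Theorem \ref{T:1:alpha} still holds for the rescaled matrix $\tilde A(\bar r\,\cdot)$ on $B_1$. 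This is the main technical point: quantifying how much the blow-up exponent $\alpha_*(\tilde A)$ deviates from $\alpha_*(A)$ and exploiting the openness of \eqref{eq:1:alpha:reg:1}.

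Applying Theorem \ref{T:1:alpha} (after rescaling $B_{\bar r}$ to $B_1$) to $v$ yields $v \in C^{1,\alpha}(B_{\bar r/2})$ together with the effective conormal condition $(\tilde A \D v + \tilde F)\cdot e_{y_i} = 0$ on $\Sigma_0 \cap B_{\bar r/2}$ for every $i=1,\dots,n$, and the estimate
\[
\|v\|_{C^{1,\alpha}(B_{\bar r/2})} \le c\bigl(\|v\|_{L^{2,a}(B_{\bar r})} + \|\tilde f\|_{L^{p,a}(B_{\bar r})} + \|\tilde F\|_{C^{0,\alpha}(B_{\bar r})}\bigr).
\]
Since $\Phi$ is a $C^{1,\alpha}$-diffeomorphism fixing the origin, the norms on the right-hand side are controlled by the corresponding norms of $u$, $f$, $F$ in terms of weighted integrals against $\delta^a$ (via \eqref{eq:rho:delta} and the bounds on $\varphi$, $\tilde\delta$). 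Pulling back $u = v\circ\Phi^{-1}$ gives $u \in C^{1,\alpha}$ on a (possibly smaller) ball, establishing \eqref{eq:cor:c1} after a final relabeling of $\bar r$.

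To recover the geometric boundary condition \eqref{eq:BC:curve}, we rewrite the identity $\tilde A \D v + \tilde F = \tilde\delta^{a} J_\Phi^{-1}\bigl[(A \D u + F)\circ\Phi\bigr]$ as
\[
(\tilde A \D v + \tilde F)(z)\cdot e_{y_i} = \tilde\delta^{a}(z)\,(A\D u + F)(\Phi(z))\cdot J_\Phi^{-\top}(z)\, e_{y_i},
\]
for $z=(x,0)\in \Sigma_0\cap B_{\bar r/2}$. The columns $J_\Phi(x,0) e_{x_j} = (e_{x_j}, \partial_{x_j}\varphi(x))$, $j=1,\dots,d-n$, span the tangent space $T_{\Phi(z)}\Gamma$; hence the vectors $\{J_\Phi^{-\top}(x,0) e_{y_i}\}_{i=1}^{n}$, being orthogonal to all such tangent vectors, form a basis of the orthogonal complement of $T_{\Phi(z)}\Gamma$. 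The $n$ conormal conditions from Theorem \ref{T:1:alpha} are thus equivalent to $(A\D u + F)(\Phi(z))\cdot \nu = 0$ for every $\nu \perp T_{\Phi(z)}\Gamma$, which is precisely \eqref{eq:BC:curve}.
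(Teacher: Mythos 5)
Your proof is correct and follows essentially the same route as the paper's: flatten via $\Phi$, change variables to get $v = u\circ\Phi$ solving the flat problem with coefficient matrix $\tilde A$, use Remark \ref{R:tilde:A} together with continuity and the strictness of \eqref{eq:1:alpha:reg:1} to shrink $\bar r$ so that Theorem \ref{T:1:alpha} applies, then pull back. Your derivation of \eqref{eq:BC:curve} — identifying $\{J_\Phi^{-\top}(x,0)e_{y_i}\}_i$ as a basis of $(T_{\Phi(z)}\Gamma)^\perp$ via the duality with the tangent frame $\{J_\Phi(x,0)e_{x_j}\}_j$ — is in fact more explicit than the paper's, which only states "composing back with $\Phi^{-1}$."
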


Since the proofs of Corollaries \ref{C:0:alpha:curve} and \ref{C:1:alpha:curve} are quite similar, we merely provide the proof of the Corollary \ref{C:1:alpha:curve}. Additionally, we refer to \cite{Fio24}, where the same results are proved in a closely related context.
\begin{proof}
The proof relies on making changes of variables to reduce the problem to the \emph{flat} case, where the weight becomes $|y|^a$. The desired result then follows by applying Theorem \ref{T:1:alpha}.

Let $\Phi$ be as in \eqref{eq:diffeomorphism}, and define $\tilde{u}=u\circ\Phi$. Let $\bar{r} \in (0,1)$ to be chosen later, and let $\phi \in C_c^\infty(B_{2\bar{r}})$. Using the change of variable $z=\Phi(z)$, we obtain that 
\begin{equation*}
0=\int_{B_1}\delta^a \big(A\D u\cdot\D\phi-f\phi+F\cdot\D\phi\big) dz
=\int_{B_1}|y|^a\big(\tilde A\D\tilde u\cdot\D\tilde\phi-\tilde f\tilde\phi+\tilde F\cdot\D\tilde \phi\big) dz,
\end{equation*}
where $\tilde{A}$ is defined in \eqref{eq:tilde:A} and 
\[\tilde{f}:= \tilde{\delta}^a f\circ\Phi \in L^{p,a}(B_{2\bar{r}}) ,\quad \tilde{F}:=  \tilde{\delta}^a (J_\Phi^{-1}) F\circ\Phi \in C^{0,\alpha}(B_{2\bar{r}}),  \quad \tilde{\phi}:= \phi\circ\Phi.
\]
By using the properties of the defining function (see Definition \ref{D:admissible:weight}), one has that
$\tilde{A}\in C^{0,\alpha}(B_{2\bar{r}})$. Moreover, by Remark \ref{R:tilde:A}, for every $\sigma>0$ there exists $\bar{r}>0$ such that  $\tilde{A}$ has ellipticity constant restricted to $B_{2\bar{r}}\cap \Sigma_0$ given by 
\[0<\tilde{\delta}^a(0)\lambda_*-\sigma\le\tilde{\delta}^a(0)\Lambda_*+\sigma.\] 
Let us consider $\bar{r}$ small enough such that
\[
\alpha< \alpha_*\Big(n,a,\frac{\tilde{\delta}^a(0)\lambda_*-\sigma}{\tilde{\delta}^a(0)\Lambda_*+\sigma}\Big)-1,
\]
where $\alpha_*$ is defined in \eqref{alphastar2}. 
This choice is always possible since $\tilde{A}(0)$ has ellipticity constant $\tilde{\delta}^a(0)\lambda_*\le\tilde{\delta}^a(0)\Lambda_*$ on $\Sigma_0$ and $\alpha <\alpha_*(n,a,\lambda_*/\Lambda_*)-1$ (see Remark \ref{R:tilde:A}).

Resuming, we have shown that $\tilde{u}$ is a weak solution to 
\[
-\dive(|y|^a\tilde{A}\D \tilde{u})= |y|^a \tilde f+\dive(|y|^a\tilde{F}),\quad \text{ in }B_{2\bar{r}}.
\]
By the choice of $\bar{r}$, after rescaling the domain, the assumptions of Theorem \ref{T:1:alpha} are satisfied. Hence, we have that $\tilde{u}$ satisfies \eqref{eq:stima:c1} and \eqref{eq:c1:boundary:condition} and composing back with $\Phi^{-1}$ we get that $u$ satisfies \eqref{eq:cor:c1} and \eqref{eq:BC:curve} in $B_{\bar{r}}$. 
\end{proof}

\appendix
\section{Geometry of perforated domains}\label{sec:A}

In this section, we work in the unit ball $B_1$ to simplify the notation, but the same results hold at any scale $R > 0$, in which case the constants involved also depend on $R$.

Let $ A \in C^1(B_1; \R^{d,d}) $ (see Remark \ref{R:A3regularity} for the sharp requirement) be a symmetric matrix satisfying the uniform ellipticity condition \eqref{eq:unif:ell}, and recall the notation \eqref{Blocks}.
The block $A_3$ is still symmetric and satisfies
\[
 \lambda|\zeta|^2 \le A_3(z)\zeta \cdot \zeta \le \Lambda|\zeta|^2\,, \quad \text{ for a.e. } z \in B_1 \quad \text{ and every }\zeta \in \R^n \,,
\]
where $\lambda$ and $\Lambda$ are the uniform ellipticity constants of $A$.
By the ellipticity condition, $ A_3 $ is invertible, and $ A_3^{-1} \in C^1(B_1; \R^{n,n}) $. The matrix $ A_3^{-1} $ is also symmetric and satisfies
\begin{equation}\label{eq:ueQ}
\Lambda^{-1}|\zeta|^2 \le A_3^{-1}(z)\zeta \cdot \zeta \le \lambda^{-1}|\zeta|^2\,, \quad \text{ for a.e. } z \in B_1 \quad \text{ and every }\zeta \in \R^n \,.
\end{equation}
The square root of $ A_3^{-1} $, denoted by $ A_3^{-\frac12} $, is well-defined and belongs to $ C^1(B_1; \R^{n,n}) $ (see for instance \cite[Theorem 2.5]{DiPaPu22}). It is also symmetric and satisfies
\begin{equation}\label{eq:ueP}
 \Lambda^{-\frac12}|\zeta|^2\le A_3^{-\frac12}(z)\zeta\cdot\zeta\le\lambda^{-\frac12}|\zeta|^2\,, \text{ for a.e. } z \in B_1 \quad \text{ and every }\zeta \in \R^n \,.
\end{equation}
We will frequently use the following properties, which follow directly from \eqref{eq:ueQ} and  \eqref{eq:ueP}.
\begin{equation}\label{eq:app:prop}
\begin{gathered}
|A_3^{-\frac12}(z) y| = \sqrt{A_3^{-1}(z) y \cdot y}\,,\quad
\Lambda^{-\frac12}|y| \le |A_3^{-\frac12}(z)y|\le \lambda^{-\frac12}|y|\,, \quad
\Lambda^{-1}|y| \le |A_3^{-1}(z)y|\le \lambda^{-1}|y|\,.
 \end{gathered}
\end{equation}
In the following lemma, we describe some properties of the $(\eps, A)$-neighborhood of $\Sigma_0$
\[
\Sigma^A_\e=\{z \mid A_3^{-1}(z)y \cdot y\le\e^2\}\,,
\]
and its boundary 
\[
\partial \Sigma^A_\e=\{z \mid A_3^{-1}(z)y \cdot y=\e^2\}\,.
\]

\begin{Lemma}\label{L:normal}
Let $ A \in C^1(B_1; \R^{d,d}) $ and $L:=\|A_3^{-1}\|_{C^1(B_1)}$. There exists $\eps_0 >0$, depending only on $d$, $n$, $L$, $\lambda$ and $\Lambda$, such that for every $\eps \in (0, \eps_0]$ the following properties hold:
\begin{enumerate}
\item[$i)$] the normal vector $\nu(z)$ to $\partial \Sigma^A_\e$ at $z\in B_1 \cap \partial \Sigma_\eps^A$ is well-defined. In particular,
\begin{equation}\label{eq:N+tilde:nu}
    \nu(z) = (0, N(z)) + \tilde \nu(z)\,,
\end{equation}
where 
\begin{equation}\label{eq:N}
N(z) = \frac{A_3^{-1}(z)y}{|A_3^{-1}(z)y|} \in \mathbb{S}^{n-1}\,, \quad \text{and}\quad |\tilde \nu(z)| \leq c\eps\,,
\end{equation}
%and  \[ |\tilde \nu(z)| \leq c\eps\,, \]
for some constant $c = c(d, n, \Lambda, L, \eps_0)>0 $;
\item[$ii)$] it holds
\begin{equation}\label{eq:nuyest}
\nu(z) \cdot \frac{y}{|y|} \geq \frac{\lambda}{2\Lambda}\,;
\end{equation}
\item[$iii)$] there exists a constant $c>0$ depending only on $L$, $d$, $n$, $\lambda$ and $\Lambda$ such that
\[
[N]_{C^{0,1}(B_1 \cap \partial \Sigma^A_\e)} \leq c \eps^{-1}\,;
\]
\item[$iv)$] if in addition $ A \in C^{1,\beta}(B_1; \R^{d,d}) $ for some $\beta \in(0,1)$, then 
\[
[\tilde \nu]_{C^{0,\beta}(B_1 \cap \partial \Sigma^A_\e)}\leq c\,,
\]
where $c>0$ depends only on $\|A_3^{-1}\|_{C^{1,\beta}(B_1)}$, $d$, $n$, $\lambda$ and $\Lambda$.
\end{enumerate}
\end{Lemma}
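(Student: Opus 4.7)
\medskip

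\noindent \textbf{Proof proposal.} The plan is to realize the boundary $\partial \Sigma^A_\varepsilon$ as the level set $\{g = \varepsilon^2\}$ of the smooth function $g(z) := A_3^{-1}(z)y\cdot y$, so that $\nu = \nabla g / |\nabla g|$, and then to compare $\nabla g$ with its principal part $2(0,A_3^{-1}y)$. Writing $A_3^{-1}(z) = (b_{ij}(z))_{i,j}$ and differentiating componentwise gives the decomposition
\[
\nabla g(z) = 2\bigl(0,\, A_3^{-1}(z)y\bigr) + E(z), \qquad E_\ell(z) := \sum_{i,j}(\partial_{z_\ell} b_{ij})(z)\, y_i y_j,
\]
with the pointwise bound $|E(z)| \leq cL|y|^2$. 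On $\partial\Sigma^A_\varepsilon$, the ellipticity bounds \eqref{eq:app:prop} yield $\lambda^{1/2}\varepsilon \leq |y| \leq \Lambda^{1/2}\varepsilon$, hence $|2A_3^{-1}y| \geq 2\Lambda^{-1}\lambda^{1/2}\varepsilon$ while $|E| \leq cL\Lambda\varepsilon^2$. Choosing $\varepsilon_0$ small (depending only on the quoted constants) forces $|\nabla g| \geq c\varepsilon$, so the normal is well-defined and well-posedness in $i)$ follows. The quantitative estimate $|\tilde\nu|\leq c\varepsilon$ then comes from direct algebra: using $\bigl||\nabla g| - |2A_3^{-1}y|\bigr| \leq |E| \leq c\varepsilon^2$, one checks
\[
\tilde \nu \;=\; \frac{E}{|\nabla g|} \;+\; (0,N)\,\frac{2|A_3^{-1}y| - |\nabla g|}{|\nabla g|},
\]
and both summands are of order $\varepsilon^2/\varepsilon = \varepsilon$.

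For $ii)$, I compute $\nu\cdot(0,y)/|y|$ as $\bigl(2A_3^{-1}y\cdot y + E\cdot(0,y)\bigr)/(|\nabla g|\,|y|)$. The ellipticity of $A_3^{-1}$ bounds the numerator below by $2\Lambda^{-1}|y|^2 - cL|y|^3$ and the denominator above by $(2\lambda^{-1}|y| + cL|y|^2)\,|y|$. Taking $\varepsilon_0$ small makes the ratio $\geq \lambda/(2\Lambda)$. For $iii)$, since $A_3^{-1}y$ is $C^1$ in $z$ with norm controlled by $L$ and $|A_3^{-1}y| \geq c\varepsilon$ on the boundary, the quotient rule applied to $N$ directly yields $|\nabla N|\leq c\varepsilon^{-1}$, giving the Lipschitz bound on $\partial\Sigma^A_\varepsilon \cap B_R$.

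The delicate part is $iv)$: a naive bound on $[\tilde\nu]_{C^{0,\beta}}$ would pick up a factor $\varepsilon^{-\beta}$ from the $C^{0,1}$ blow-up of $N$, so the claim relies on systematic cancellation. The plan is to estimate both summands of the identity for $\tilde \nu$ displayed above and exploit that their numerators are small of order $\varepsilon^2$ with $C^{0,\beta}$ seminorm already of order $\varepsilon$. Concretely, writing $E_\ell(z) = (\partial_{z_\ell}b_{ij})(z)y_iy_j$ as a product of a $C^{0,\beta}$ factor and a smooth polynomial in $y$ of size $\varepsilon^2$, one gets $\|E\|_\infty\leq c\varepsilon^2$ and $[E]_{C^{0,\beta}(B_R\cap \partial\Sigma^A_\varepsilon)}\leq c\varepsilon$. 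The same kind of bound holds for $|\nabla g|$, with $\|\nabla g\|_\infty \leq c\varepsilon$, $\min|\nabla g|\geq c\varepsilon$, and $[|\nabla g|]_{C^{0,\beta}}\leq c$. A Taylor expansion of the square root shows $2|A_3^{-1}y| - |\nabla g| = -N\cdot E_y + O(\varepsilon^3)$, yielding the improved estimate $\|2|A_3^{-1}y| - |\nabla g|\|_\infty\leq c\varepsilon^2$ with $C^{0,\beta}$ seminorm of order $\varepsilon$ (using that $[N]_{C^{0,\beta}}\leq c\varepsilon^{-\beta}$ combined with the smallness $\|E_y\|_\infty\leq c\varepsilon^2$ gives a control of order $\varepsilon^{2-\beta}$, dominated by the main order $\varepsilon$). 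Plugging these into the standard quotient estimate
\[
[fg^{-1}]_{C^{0,\beta}} \;\leq\; \frac{[f]_{C^{0,\beta}}}{\min|g|} + \frac{\|f\|_\infty\,[g]_{C^{0,\beta}}}{(\min|g|)^2},
\]
each of the two summands in $\tilde \nu$ is controlled by $\frac{c\varepsilon}{c\varepsilon} + \frac{c\varepsilon^2\cdot c}{(c\varepsilon)^2} \leq c$, uniformly in $\varepsilon$. The factor $(0,N)$ multiplying the second term contributes a further correction $[N]_{C^{0,\beta}}\|\cdot\|_\infty\leq c\varepsilon^{-\beta}\cdot c\varepsilon = c\varepsilon^{1-\beta}$, which is still uniformly bounded.

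The main obstacle is precisely this bookkeeping in $iv)$: one must keep careful track of the competing scalings $\varepsilon^2$ (smallness of $E$ and of $2|A_3^{-1}y|-|\nabla g|$), $\varepsilon$ (Hölder seminorms of these quantities and the lower bound on $|\nabla g|$), and $\varepsilon^{-\beta}$ (Hölder seminorm of $N$), so that the final product of ratios is $O(1)$. Once this balance is established, the four statements of the lemma follow by direct computation.
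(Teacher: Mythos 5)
Your proposal is correct and takes essentially the same approach as the paper: both realize $\partial\Sigma^A_\eps$ as a level set of $g(z)=A_3^{-1}(z)y\cdot y$, decompose $\nu = \nabla g/|\nabla g|$ into $(0,N)$ plus a remainder built from the correction $E$ (the paper's $2G$) and the scalar $2|A_3^{-1}y|-|\nabla g|$ (the paper's $2H$), and in part $iv)$ exploit the same balance of scales $\eps^2$, $\eps$, $\eps^{-\beta}$ via a Taylor/power-series expansion of the square root. The differences are notational — you keep $\nabla g$ explicit while the paper packages these quantities as $G$ and $H$, and you prove $ii)$ by direct computation rather than by citing $|\tilde\nu|\le c\eps$ — but the decomposition, the quotient estimates, and the cancellation mechanism are the same.
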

\begin{proof}
Define the function  
\begin{equation}\label{PsiA1}
\Psi(z) = \sqrt{A_3^{-1}(z)y\cdot y}\,.
\end{equation}
From \eqref{eq:app:prop}, we have $\Psi \in C^{0,1}(B_1) \cap C^1(B_1 \setminus \Sigma_0)$, and $\Psi(x,y) = 0$ if and only if $y=0$. 

Since $\partial \Sigma^A_\eps = \{\Psi(z) = \eps\}$, the normal vector $\nu(z)$ at a point $z \in \partial \Sigma_\eps^A$ is given by 
\[
\nu(z) = \frac{\nabla \Psi(z)}{|\nabla \Psi(z)|} = \frac{\nabla (A_3^{-1}(z) y \cdot y)}{|\nabla (A_3^{-1}(z) y \cdot y)|}\,.
\]
Let $A_3^{-1}(z) = (b_{ij}(z) )_{i,j = 1, \ldots, n}$. We define the functions $G: B_1 \to \R^d$ and $H: B_1 \to \R$ as follows
\[
G(z) = \frac12\big(\sum_{i,j=1}^n \partial_{z_\ell}b_{ij}(z)y_iy_j\big)_{\ell=1, \ldots, d}\,,\qquad
H(z) = |A_3^{-1}(z)y|- |(0, A_3^{-1}(z)y) + G(z)| \,.
\]
Using this notation, the normal vector can be expressed as
\[
\nu(z)= \frac{(0, A_3^{-1}(z)y) + G(z)}{|A_3^{-1}(z)y| - H(z)}\,.
\]
There exists a constant $c_1 = c_1(d, n)>0$ such that 
\begin{equation}\label{eq:app:pass44}
|H(z)| \leq |G(z)| \leq c_1 L |y|^2\,, \quad \text{ for every $z \in B_1$}\,.
\end{equation}
Fix $\eps_0$ such that $\Lambda^{-1}\lambda^{\frac12} - c_1L\Lambda\eps_0 >0$. Thus, if $\eps \leq \eps_0$ and $z \in B_1 \cap \partial \Sigma_\eps^A$, thanks to \eqref{eq:app:prop} and \eqref{eq:app:pass44} we have
\begin{equation}\label{eq:app:pass55}
|A_3^{-1}(z)y| - H(z)\geq \Lambda^{-1}|y| - cL|y|^2 \geq \eps(\Lambda^{-1}\lambda^{\frac12} - cL\Lambda\eps) \geq c\eps\,.
\end{equation}
Therefore, $\nu$ is well-defined.

Define $\tilde \nu(z)  := \nu(z)  - (0, N(z))$, where $N(z) $ is as in \eqref{eq:N}. Then
\[
\tilde \nu(z) = \frac{H(z)N(z) }{|A_3^{-1}(z)y| - H(z)} + \frac{G(z)}{|A_3^{-1}(z)y| - H(z)} \,.
\]
It follows that 
\[
|\tilde \nu(z)| \leq c\eps\,,
\]
where $c = c(d, n, \Lambda,\lambda, L, \eps_0) >0$. Thus $i)$ is proved. 

\smallskip

To prove $ii)$, we compute 
\[
\nu \cdot \frac{y}{|y|} = N(z) \cdot \frac{y}{|y|} + \tilde \nu \cdot \frac{y}{|y|} \geq \frac{\lambda}{\Lambda} - |\tilde \nu|\,.
\]
Taking if necessary a smaller $\eps_0$, so that $|\tilde \nu| \leq {\lambda}/{2\Lambda}$, we obtain \eqref{eq:nuyest}.

\smallskip

For $iii)$, note that for every $z=(x, y), z'= (x', y') \in B_1$ it holds
\begin{equation}\label{eq:app:pass11}
|A_3^{-1}(z) y - A_3^{-1}(z') y' | \leq |A_3^{-1}(z) - A_3^{-1}(z')||y| + |A_3^{-1}(z')| |y-y'| \leq 2L|z-z'|\,.
\end{equation}
Therefore, for all $z, z' \in B_1\cap \partial \Sigma^A_\e$, we have
\[
|N(z) - N(z')| \leq 2 \frac{|A_3^{-1}(z) y - A_3^{-1}(z') y' |}{|A_3^{-1}(z') y' |} \leq c \eps^{-1} |z-z'|\,,
\]
where $c >0$ depends only on $L$, $\Lambda$, $\lambda$.

\smallskip

Finally, we prove $iv)$. Assume that $A_3^{-1}\in C^{1,\beta}(B_1; \R^{n,n})$. Using similar computations as before, we obtain 
\begin{equation}\label{eq:app:pass33}
|G(z) - G(z') | \leq c\eps|z-z'|^\beta\,, \quad \text{ for every }\quad z, z'\in B_1\cap \partial \Sigma_\eps^A\,,
\end{equation}
where $c$ depends on $d$, $n$, $\lambda$, $\Lambda$, $\|A^{-1}_3\|_{C^{1,\beta}(B_1)}, \eps_0$.

Define the function
\[
h(z) = 2 \frac{N(z) \cdot G(z)}{|A_3^{-1}(z)y|} + \frac{|G(z)|^2}{|A_3^{-1}(z)y|^2}\,.
\]
By combining \eqref{eq:app:pass44}, \eqref{eq:app:pass11}, \eqref{eq:app:pass33}, and $iii)$, we deduce 
\begin{equation}\label{eq:app:pass77}
|h(z)| \leq c|y|\, \text{ for all } z \in B_1\,, \quad \text{and }\quad |h(z)- h(z')| \leq c|z - z'|^\beta\, \text{ for all } z,z' \in B_1\,.
\end{equation}
Thus, when $\eps$ is sufficiently small, we can apply the Taylor expansion for $(1 + t)^{\frac12}$ to $|(0, A_3^{-1}(z)y) + G(z)|$, yielding for every $z \in B_1\cap \partial \Sigma_\eps^A$,
\[
H(z) = |A_3^{-1}(z)y| \sum_{j=1}^\infty c_j h^j(z)\,,
\]
where $c_j$ are the coefficients of the expansion. Using \eqref{eq:app:pass44} and \eqref{eq:app:pass77}, we infer that  
\begin{equation}\label{eq:app:pass22}
|H(z) - H(z')| \leq c \eps |z-z'|^\beta\,.
\end{equation}
Finally, combining \eqref{eq:app:pass44}, \eqref{eq:app:pass55}, \eqref{eq:app:pass11}, \eqref{eq:app:pass33} and \eqref{eq:app:pass22}, we obtain 
\[
|\tilde \nu (z) - \tilde \nu(z)| \leq c(|z-z'| + \eps^{-1}|H(z) - H(z')| + \eps^{-1}|G(z) - G(z')|) \leq c |z-z'|^\beta\,.
\] 
\end{proof}

\begin{Lemma} \label{L:BC}
Let $G: B_1 \setminus \Sigma_{\eps}^A \to \R^d$ be such that $G \in C^{0,\alpha}(B_1 \setminus \Sigma_{\eps}^A)$ for some $\alpha\in(0,1)$, and $G(z) \cdot \nu(z) = 0$ for every $z \in B_1 \cap \partial \Sigma_\e^A $. Then, there exist a constant $c>0$ depending only on $d$, $n$, $\lambda$, $\Lambda$, $\alpha$ and $L:=\|A_3^{-1}\|_{C^1(B_1)}$ such that for every $0<\eps \ll 1$  and $i=1,\dots,n$, it holds
\begin{equation*}
    \|G\cdot e_{y_i}\|_{L^{\infty}(B_{1/2}\cap \partial\Sigma_{\eps}^A)}\le c \e^\alpha \|G\|_{C^{0,\alpha}(B_{1/2}\setminus\Sigma_{\eps}^A)}\,.
\end{equation*}
\end{Lemma}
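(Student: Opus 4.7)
The plan is to exploit the normal decomposition from Lemma \ref{L:normal}. For every $z \in B_1 \cap \partial \Sigma_\e^A$ one has $\nu(z) = (0, N(z)) + \tilde\nu(z)$, with $N(z) \in \mathbb{S}^{n-1}$ and $|\tilde\nu(z)| \leq c\e$. Writing $G = (G_x, G_y)$ in block form, since $(0,N(z))$ has vanishing $x$-components, the boundary condition $G(z) \cdot \nu(z) = 0$ rewrites as $G_y(z)\cdot N(z) = -G(z)\cdot \tilde\nu(z)$, whence
\[
|G_y(z) \cdot N(z)| \leq c\e \|G\|_{L^\infty(B_{1/2}\setminus\Sigma_\e^A)} \leq c\e \|G\|_{C^{0,\alpha}(B_{1/2}\setminus\Sigma_\e^A)}.
\]
This controls only the component of $G_y$ along the single direction $N(z)$. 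To recover each scalar component $G \cdot e_{y_i}$ separately, I will evaluate this identity at $n$ auxiliary points of $\partial \Sigma_\e^A$ whose associated normals span $\R^n$, and then invert the resulting linear system.

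Fix $z_0 = (x_0, y_0) \in B_{1/2}\cap \partial\Sigma_\e^A$ and let $\{e_1,\dots,e_n\}$ be the canonical basis of $\R^n$. For each $k = 1,\dots,n$, set $z_k := (x_0, \tau_k e_k)$, where $\tau_k > 0$ is chosen so that $z_k \in \partial\Sigma_\e^A$, i.e. $\tau_k^2 \, A_3^{-1}(x_0, \tau_k e_k) e_k \cdot e_k = \e^2$; by the intermediate value theorem and the ellipticity bounds such a $\tau_k$ exists and satisfies $\tau_k \in [\e\Lambda^{-1/2}, \e\lambda^{-1/2}]$, so $|z_k - z_0| \leq c\e$ and $z_k \in B_1 \cap \partial \Sigma_\e^A$ for $\e$ small. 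Applying the above bound at $z = z_k$ and using the H\"older continuity of $G$ to replace $G_y(z_k)$ by $G_y(z_0)$, one obtains
\[
|G_y(z_0) \cdot N(z_k)| \leq |G_y(z_k) \cdot N(z_k)| + [G]_{C^{0,\alpha}}|z_k - z_0|^\alpha \leq c\e^\alpha \|G\|_{C^{0,\alpha}(B_{1/2}\setminus\Sigma_\e^A)}.
\]
Let $M$ be the $n\times n$ matrix whose $k$-th row is $N(z_k)$: the above reads $|M G_y(z_0)| \leq c\e^\alpha \|G\|_{C^{0,\alpha}}$. Since $N(z_k) \to A_3^{-1}(x_0, 0) e_k / |A_3^{-1}(x_0, 0) e_k|$ as $\e \to 0$, the limit matrix has rows equal to the normalized columns of the symmetric positive definite matrix $A_3^{-1}(x_0, 0)$, whose determinant and column norms are controlled by $\lambda, \Lambda$ alone. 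Combining this with the Lipschitz dependence of $N$ from Lemma \ref{L:normal}, one gets $|\det M| \geq c(\lambda, \Lambda, n) > 0$ uniformly for $\e$ small and $x_0$ in the relevant projection. Inverting then yields $|G_y(z_0)| \leq c\e^\alpha \|G\|_{C^{0,\alpha}}$, which is the required pointwise bound on each $G \cdot e_{y_i}$.

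The main obstacle is precisely the uniform invertibility of $M$ as $\e \to 0^+$, but this reduces to a purely algebraic statement about the symmetric positive definite matrix $A_3^{-1}(x_0, 0)$, namely that its normalized columns are linearly independent with a quantitative lower bound on the determinant depending only on $\lambda$, $\Lambda$, $n$. A minor technical point is that the auxiliary points $z_k$ may slightly exit $B_{1/2}$ when $z_0$ lies near $\partial B_{1/2}$; this is harmless, since $z_k \in B_{1/2 + c\e}$ and one can either shrink the base ball and absorb the loss in the constant, or work with a slightly enlarged reference set where $\|G\|_{C^{0,\alpha}}$ is controlled up to a universal factor.
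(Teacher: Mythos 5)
Your proof is a genuinely different and arguably more elementary route than the paper's. The paper constructs, at each base point, an \emph{almost-orthonormal} family of normal directions $\tau_j$ by iteratively picking $y_j$ so that $\sigma_j$ is orthogonal to the vectors $A_3^{-2}\bar y_i$ already chosen; invertibility of the resulting Gram matrix then follows because it is an $O(\e)$-perturbation of the identity. You instead use the canonical $\R^n$-directions $\tau_k e_k$ and observe that the associated normals tend, as $\e\to 0^+$, to the normalized columns of $A_3^{-1}(x_0,0)$, whose determinant is controlled purely by the ellipticity ratio: since $M_\infty = D\,A_3^{-1}$ with $D=\operatorname{diag}(|A_3^{-1}e_k|^{-1})$, one has $\det M_\infty = \det(A_3^{-1})/\prod_k|A_3^{-1}e_k|\ge \Lambda^{-n}/\lambda^{-n}=(\lambda/\Lambda)^n>0$. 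This is a clean algebraic observation that bypasses the iterative orthogonalization and is simpler to state. Both approaches prove the lemma.

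Two small points where you are a bit loose. First, the invocation of the ``Lipschitz dependence of $N$ from Lemma~\ref{L:normal}'' is not quite the right tool: that lemma only gives $[N]_{C^{0,1}}\le c\e^{-1}$, and your auxiliary points are $O(\e)$ apart, so that estimate yields only an $O(1)$ bound. Moreover you are comparing $N(x_0,\tau_k e_k)$ with its $\e\to0$ limit, which is not of the form $N(z')$ for a point $z'\in\partial\Sigma_\e^A$, so the boundary Lipschitz estimate does not even apply. What you actually need is the plain $C^1$-regularity of $A_3^{-1}$: writing $N(x_0,\tau_k e_k) = A_3^{-1}(x_0,\tau_k e_k)e_k/|A_3^{-1}(x_0,\tau_k e_k)e_k|$ and using $\|A_3^{-1}(x_0,\tau_k e_k)-A_3^{-1}(x_0,0)\|\le L\tau_k\le cL\e$ together with the uniform lower bound $|A_3^{-1}e_k|\ge\Lambda^{-1}$ gives $|N(x_0,\tau_k e_k)-M_\infty e_k|\le c\e$, hence $\det M\ge (\lambda/\Lambda)^n/2$ for $\e$ small. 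Second, the boundary-escape issue you acknowledge at the end is real, and the intended statement of the lemma puts both norms on $B_{1/2}$. Rather than appealing vaguely to ``shrinking the ball and absorbing the constant'' (which produces an $\e$-dependent loss) or ``a slightly enlarged reference set'' (which changes the statement), the clean fix is the one the paper uses: replace $x_0$ by a contracted point $\bar x=(1-2\delta_1\e)x_0$ with $\delta_1>\Lambda^{1/2}$, so that all the auxiliary points $(\bar x,\tau_k e_k)$ land inside $B_{1/2}$ while remaining at distance $O(\e)$ from $z_0$, and the H\"older-continuity estimate goes through without changing the domain of the norm.
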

\begin{proof}
We recall that by Lemma \ref{L:normal}, for $\eps \leq \eps_0$, the normal vector to the surface $\partial \Sigma_\e^A$ at $z = (x, y) \in B_1 \cap \partial \Sigma_\e^A$ is given by $\nu(z) = (0, N(z)) + \tilde \nu(z)$, where $ \tilde \nu(z) \leq c\eps$ and
\[
N(x, y) =  \frac{A_3^{-1}(z) y}{|A_3^{-1}(z)y|} \in \R^n\,.
\] 
In what follows, we show that there exist $\delta_1 = \delta_1(\eps_0, \Lambda)$, $\delta_2 = \delta_2 (L, \Lambda)$, such that for every $x \in B_{1/2}^{d-n}$ and for $\eps \leq \eps_0$ there exist $\bar x \in B_{1/2}^{d-n}$ and $n$ vectors $\{y_{j}\}_{j=1,\dots,n}\subset\R^n$ (depending on $\bar x$) such that, defined $\tau_j = N(\bar x, y_j)$, it holds
\[
|x - \bar x| \leq \delta_1\eps\,,\qquad 
(\bar x,y_{j})\in B_{1/2}\cap \partial\Sigma_{\e}^A\,,\qquad
|\tau_i\cdot \tau_j| \leq \delta_2 \eps\,, \text{ for every } i\neq j\,.
\]
Note that this implies that ${\rm span}\{\tau_j\} = \R^n$. 
Take $\delta_1 >0$ such that $\Lambda^{\frac12} < \delta_1 < (2\eps_0)^{-1}$ and define 
\[
\bar x = (1 - 2\delta_1 \eps) x. 
\]
By construction $1 -2\delta_1 \eps >0$ and $|x-\bar x| \leq \delta_1 \eps$. Moreover, let $y \in \R^d$ be such that $(\bar x, y) \in \partial \Sigma_\eps^A$. It holds
\[
|(\bar x, y)| \leq |\bar x| + |y| = (1-2\delta_1 \eps)|x| + |y| \leq \frac{1}{2} - \big( \delta_1 - \Lambda^{\frac12})\eps < \frac12\,.
\]
Hence $(\bar x, y) \in B_{1/2}\cap\partial \Sigma_\eps^A$, as needed. 

Now, let us fix any point $(\bar x, y_1)  \in \partial\Sigma_{\e}^A$, and define $\tau_1 = N(\bar x, y_1)$. One can choose  $\sigma_{2}\in \mathbb{S}^{n-1}$ such that $\sigma_2 \cdot A_3^{-2}(\bar x, y_1)y_1 = 0$ (where $A^{-2}_3 = (A^{-1}_3)^2 = (A_3^2)^{-1}$), and define $y_2 = r_2 \sigma_2$, where $r_2$ is chosen in such a way that $(\bar x, y_2) \in \partial \Sigma_\eps^A$.

To show that such $r_2$ exists, consider the function $f(r) = \Psi(x, r\sigma)$, where $\Psi$ is defined in \eqref{PsiA1}. Since $f$ is continuous, $f(0) = 0$, and $\lim_{r \to \infty}f(r) = \infty$, there exists $r_2$ such that $f(r_2) = \eps$, that is, $y_2 = r_2 \sigma$. 

Let us define $\tau_2 = N(\bar x, y_2)$. Note that, by construction, 
\[
A_3^{-1}(\bar x, y_1) y_1 \cdot A_3^{-1}(\bar x, y_1) y_2 = r_2 A_3^{-2}(\bar x, y_1) y_1 \cdot \sigma_2 = 0\,.
\]
Therefore
\[
\begin{aligned}
    |\tau_1 \cdot \tau_2| = |N(\bar x,y_1)\cdot N(\bar x,y_2)| &= \frac{|A_3^{-1}(\bar x,y_1) y_1\cdot A_3^{-1}(\bar x, y_2) y_2|}{|A_3^{-1}(\bar x,y_1) y_1||A_3^{-1}(\bar x, y_2) y_2|}\\
    &\le\frac{|A_3^{-1}(\bar x, y_2) - A_3^{-1}(\bar x, y_1)| |y_2|}{|A_3^{-1}(\bar x, y_2) y_2|} \leq L\Lambda|y_2 - y_1| \leq \delta_2\eps\,,
\end{aligned}
\]
where $\delta_2$ depends only on $L$, $\Lambda$. 

Next, fix ${y}_{3}$ such that $(x,y_{3})\in\partial\Sigma_{\e}^A$ and ${y}_{3}$ is orthogonal to both $A_3^{-2}(\bar x, y_1)y_1$ and $A_3^{-2}(\bar x, y_2)y_2$, and define $\tau_3 = N(\bar x, y_3)$. Performing the same computation as before, we find that $\tau_3\cdot \tau_1 \leq \delta_2\eps$, $\tau_3\cdot \tau_2 \leq \delta_2\eps$. To conclude, it suffices to iterate this argument a finite number of times.

We are now in position to prove the lemma. Fix $z = (x, y) \in B_{1/2}\cap\partial \Sigma_{\eps}^A$. Recall that $\tau_j = N(\bar x, y_j)$, where $(\bar x, y_j) \in B_{1/2}\cap\partial \Sigma_{\eps}^A$. Let $T = (t_{ij})$ denote the matrix with entries $t_{ij} = \tau_i \cdot \tau_j$. Let $\|\cdot \|_{\R^{n,n}}$ be a chosen matrix norm. By construction, we have $\|T - \mathbb{I}_n\|_{\R^{n,n}} \leq c\eps$, where $c$ depends only on $n$, $\delta_2$. Thus, for $\eps$ sufficiently small, $\det T >0$ and $T$ is invertible. 

Fix $i \in \{1, \ldots, n\}$. We can decompose $e_{y_i} = \sum_{j=1}^n \alpha_j \tau_j$ with respect to the basis ${\tau_j}$, where the coefficients $\alpha = (\alpha_j)_{j=1, \ldots n}$ satisfy the linear system $T\alpha = \beta$, with $\beta = (e_{y_i}\cdot \tau_j)_{j=1, \ldots, n}$. Since $T$ is invertible, it follows that $|\alpha_i| \leq c$, where $c$ does not depend on $\eps$. Therefore, we have
\[
\begin{aligned}
|G(z) \cdot e_{y_i}| &= |G(z) \cdot \sum_{j=1}^n\alpha_j\tau_j| \leq c\sum_{j=1}^n |G(z) \cdot (0,  N(\bar x, y_j))| \\
&\leq c \Big(\sum_{j=1}^n |G(z) \cdot \nu(\bar x, y_j)| + \sum_{j=1}^n |G(z) \cdot \tilde \nu(\bar x, y_j)|\Big)\,.
\end{aligned}
\]
Next, using that $G(z) \cdot \nu(z) = 0$ for every $z \in B_{1/2} \cap \partial \Sigma_{\eps}^A $, and that $ \tilde \nu(z) \leq c\eps$, we get 
\[
\begin{aligned}
|G(z) \cdot e_{y_i}|& \leq c\Big( \sum_{j=1}^n |(G(z) - G(\bar x, y_j))  \cdot \nu(\bar x, y_j)| +\eps \|G\|_{L^\infty(B_{1/2}\setminus\Sigma_{\eps}^A)}\Big)\\
& \leq c\Big([G]_{C^{0,\alpha}(B_{1/2}\setminus\Sigma_{\eps}^A)} \sum_{j=1}^n(\delta_1 \e+|y - y_j|)^\alpha + \eps \|G\|_{L^\infty(B_{1/2}\setminus\Sigma_{\eps}^A)}\Big)\,,
\end{aligned}
\]
where we used that, by construction, $(\bar x, y_i) \in B_{1/2}\cap \partial \Sigma_\eps^A$.
Finally, since $|y|, |y_j| \leq c \eps$ by the uniform ellipticity condition, we obtain 
\[
|G(z) \cdot e_{y_i}| \leq c \eps^\alpha \|G\|_{C^{0,\alpha}(B_{1/2}\setminus\Sigma_{\eps}^A)}\,.
\]
\end{proof}

\begin{Lemma}\label{L:appendix3}
Let $ A \in C^1(B_1; \R^{d,d}) $, and let $\Phi \in C^1(B_1; \R^d)$ given by 
\[
\Phi(z) = (x, A_3^{-\frac12}(z)y)\,.
\]
Then, there exists $0 < R\leq 1 $  which depends on $L:=\|A\|_{C^1(B_1)}$, $\lambda$, $\Lambda$, $d$, $n$ such that the following holds: 
\begin{enumerate}
\item[$i)$] the function $\Phi: B_{ R} \to \R^d$ is injective. Moreover, there exists $c>0$ such that
\begin{equation}\label{eq:app:inj}
c^{-1}|z_1 - z_2| \leq |\Phi(z_1) - \Phi(z_2)| \leq c |z_1 - z_2|\,, \quad \text{ for every }z_1, z_2 \in B_{ R};
\end{equation}
\item[$ii)$] $\Phi: B_{ R} \to \Phi(B_{ R})$ is a $C^1$-diffeomorphism and it holds 
\[
\frac12 \Lambda^{-\frac{n}{2}} \leq \det  J_\Phi(z) \leq \frac32\lambda^{-\frac{n}{2}}\,;
\]
\item[$iii)$] the matrix $J_\Phi^\top J_\Phi$ is symmetric and  uniformly elliptic. In particular
\[
\frac{1}{2}\min\{1, \Lambda^{-1}\}|\zeta|^2 \leq J_\Phi^\top J_\Phi(z) \zeta \cdot \zeta \leq 2\max\{1, \lambda^{-1}\} |\zeta|^2\quad \text{ for every } \zeta \in \R^d\,;
\] 
\item[$iv)$] Let $\eps < \eps_0$, where $\eps_0$ is as in Lemma \ref{L:normal}. Given $z \in B_R \cap\partial \Sigma_\eps^A$, consider the projection map $ \Pi_\eps(z): \R^d \to T_z\partial \Sigma^A_\eps \simeq \R^{d-1}$ given by
\[
\Pi_\eps(z) \xi = \xi - (\xi \cdot \nu(z)) \nu(z) \,,\quad \xi \in \R^d\,.
\]
Then 
\[
\frac{(2^{-1}\min\{1, \Lambda^{-1}\})^d}{2\max\{1, \lambda^{-1}\}} \leq \det  (\Pi_\eps \circ J_\Phi^\top J_\Phi\circ\Pi_\eps) \leq \frac{(2\max\{1, \lambda^{-1}\})^d}{2^{-1}\min\{1, \Lambda^{-1}\}}\,.
\]

\end{enumerate}
\end{Lemma}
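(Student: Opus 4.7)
The plan is to compute $J_\Phi$ explicitly, observe that at points with $y = 0$ it reduces to a simple block-diagonal form governed by $A_3^{-\frac12}$, and then obtain all four statements by a continuity/perturbation argument for $R$ small enough.

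First I would compute $J_\Phi$ directly. Writing $B(z) := A_3^{-\frac12}(z)$ and using that the first $d-n$ components of $\Phi$ are the identity in $x$, a direct differentiation of $\Phi(x,y) = (x, B(z)y)$ yields
\[
J_\Phi(z) = \begin{pmatrix} \mathbb{I}_{d-n} & 0 \\ M_1(z) & B(z) + M_2(z) \end{pmatrix},
\]
where $M_1(z)$, $M_2(z)$ are matrices whose entries are linear combinations of components of $y$ with coefficients in $C^0(B_1)$ (coming from partial derivatives of $B$). In particular $M_1(x,0) = M_2(x,0) = 0$, so
\[
J_\Phi(x,0) = \begin{pmatrix} \mathbb{I}_{d-n} & 0 \\ 0 & A_3^{-\frac12}(x,0) \end{pmatrix}, \qquad J_\Phi^\top J_\Phi(x,0) = \begin{pmatrix} \mathbb{I}_{d-n} & 0 \\ 0 & A_3^{-1}(x,0) \end{pmatrix}.
\]
By \eqref{eq:ueQ} and \eqref{eq:ueP}, the eigenvalues of $J_\Phi^\top J_\Phi(x,0)$ lie in the interval $[\min\{1, \Lambda^{-1}\}, \max\{1, \lambda^{-1}\}]$ and $\det J_\Phi(x,0) = \det A_3^{-\frac12}(x,0) \in [\Lambda^{-n/2}, \lambda^{-n/2}]$. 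Since $M_1, M_2$ vanish on $\Sigma_0$ and are continuous, and since $B \in C^1$, both $J_\Phi^\top J_\Phi$ and $\det J_\Phi$ are continuous and reach their ``flat'' values on $\Sigma_0$. A uniform continuity argument then gives $R_0 \leq 1$ such that for every $z \in B_{R_0}$ the bounds in $(ii)$ and $(iii)$ hold; this immediately proves these two points.

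For $(i)$, the upper bi-Lipschitz bound is a direct consequence of $(iii)$ and the fundamental theorem of calculus along the segment from $z_1$ to $z_2$, which is contained in $B_R$ by convexity. For the lower bound and for injectivity, I would introduce the auxiliary map $\Psi(z) := J_\Phi(0)^{-1}(\Phi(z) - \Phi(0))$, for which $J_\Psi(0) = \mathbb{I}_d$. By continuity of $J_\Psi$, one can shrink $R \leq R_0$ further so that $\|J_\Psi(z) - \mathbb{I}_d\| \leq \frac12$ on $B_R$. Writing $\Psi(z_1) - \Psi(z_2) = (z_1 - z_2) + \int_0^1 (J_\Psi(z_2 + t(z_1-z_2)) - \mathbb{I}_d)(z_1 - z_2) \, dt$, the triangle inequality gives $|\Psi(z_1) - \Psi(z_2)| \geq \frac12 |z_1 - z_2|$, which combined with the operator-norm bound on $J_\Phi(0)$ gives the lower inequality in \eqref{eq:app:inj} and, in particular, injectivity on $B_R$.

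Finally, $(iv)$ follows from $(iii)$ by a linear algebra argument. Set $c_1 := \frac12 \min\{1,\Lambda^{-1}\}$ and $c_2 := 2\max\{1, \lambda^{-1}\}$. The restriction of the symmetric operator $J_\Phi^\top J_\Phi$ to the $(d-1)$-dimensional tangent space $T_z \partial\Sigma^A_\eps$ has eigenvalues contained in $[c_1, c_2]$, hence its determinant (which is what $\det(\Pi_\eps \circ J_\Phi^\top J_\Phi \circ \Pi_\eps)$ denotes, computed with respect to an orthonormal basis of $T_z\partial\Sigma^A_\eps$) lies in $[c_1^{d-1}, c_2^{d-1}]$; the weaker bounds $c_1^d/c_2$ and $c_2^d/c_1$ stated in $(iv)$ follow since $c_1 \leq c_2$. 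The main technical obstacle I foresee is the choice of $R$ in $(i)$: one must verify that the perturbative linearization around $z = 0$ yields a genuinely global (not merely local) bi-Lipschitz estimate on $B_R$; the argument via $\Psi$ above is precisely what makes this global.
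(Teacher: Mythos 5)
Your proposal is correct, and the overall strategy — compute $J_\Phi$ explicitly, observe its clean block form on $\Sigma_0$, and then choose $R$ small so that the $y$-order perturbations are tame — is the same as the paper's. Two local sub-arguments differ. For part $(i)$, you linearize at the origin via the auxiliary map $\Psi := J_\Phi(0)^{-1}(\Phi - \Phi(0))$ and invoke a quantitative inverse-function-theorem argument; the paper instead bounds $|\Phi(z_1)-\Phi(z_2)|$ directly, obtaining the lower estimate from the reverse triangle inequality $|P(z_1)y_1 - P(z_2)y_2| \ge |P(z_1)(y_1-y_2)| - |(P(z_1)-P(z_2))y_2| \ge \Lambda^{-1/2}|y_1-y_2| - L|z_1-z_2||y_2|$ and absorbing the cross term for $R$ small. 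Both are valid; yours is the abstract perturbation-around-$J_\Phi(0)$ route, the paper's is a bare-hands coefficient estimate. For part $(iv)$, you use the fact that the compression of a symmetric matrix $M$ to a subspace has spectrum contained in $[\lambda_{\min}(M), \lambda_{\max}(M)]$, hence $\det \in [c_1^{d-1}, c_2^{d-1}]$, and note this implies the stated bounds since $c_1 \le c_2$; the paper instead invokes the Cauchy interlacing theorem $\mu_{i+1}(M) \le \tilde\mu_i(N) \le \mu_i(M)$ to bound $\det N$ between $\det M / \mu_1(M)$ and $\det M / \mu_d(M)$. Your argument is cleaner and in fact delivers a sharper interval than what is asked for. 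Parts $(ii)$ and $(iii)$ coincide in substance with the paper's, which is merely more explicit about the perturbation bounds ($\|P^{-1}P_y\| \le cL^2|y|$ for the determinant, $\|M_2\| \le c|y|$ for the ellipticity) whereas you package them into a uniform continuity statement; one should just make sure the "flat value" being compared against is $J_\Phi(x,0)$, which depends on $x$, rather than a fixed reference value, but since the perturbation terms vanish linearly in $|y|$ with $C^0$ coefficients, your argument is sound.
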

\begin{proof}
Let us prove $i)$. To ease the notations, we call $P = A_3^{-\frac12}$. Denote $P=(p_{ij})_{i,j=1,\ldots n}$ and $L = \| P\|_{C^{1}(B_1)}$. Let $z_1, z_2 \in B_{R_1}$, where $0 <R_1 \leq 1$ will be specified later. Obviously 
\[
\begin{aligned}
|\Phi(z_1) - \Phi(z_2)| &\leq |x_1 - x_2| + |(P(z_1) - P(z_2))y_1| + |P(z_2)(y_1 - y_2)| \\
&\leq |x_1 - x_2| + L|z_1 - z_2| +  \lambda^{-\frac12}|y_1 - y_2| \leq c|z_1 - z_2|\,,
\end{aligned}
\]
where $c >0$ depends only on $L$, $\lambda$. 

By similar computations,
\[
|P(z_1)y_1-P(z_2)y_2|\ge |P(z_1)(y_1-y_2)|-|(P(z_1)-P(z_2))y_2| \ge \Lambda^{-\frac{1}{2}}|y_1-y_2|-L|z_1-z_2||y_2|,
\]
hence, 
\[
|\Phi(z_1) - \Phi(z_2)|\ge \min\{1,\Lambda^{-\frac{1}{2}}\}|z_1-z_2|-LR_1|z_1-z_2|\ge c^{-1}|z_1-z_2|,
\]
by choosing $R_1>0$ small enough, such that $\min\{1,\Lambda^{-\frac{1}{2}}\}-LR_1>c^{-1}$. Then,  \eqref{eq:app:inj} follows and the map $\Phi$ is injective.

\smallskip

Let us prove $ii)$. The Jacobian of $\Phi$ is given by
\[
J_\Phi  = 
\begin{pmatrix}
\mathbb{I}_{d-n} & 0 \\
P_x & P + P_y
\end{pmatrix},
\]
where
\[
\begin{gathered}
P_x  = (p^x_{ij})_{i= 1,\ldots n, j = 1 \ldots, d-n}\, \qquad p^x_{ij} = \sum_{k=1}^n \partial_{x_j}p_{i,k}(z)y_k\,,\\
P_y  = (p^y_{ij})_{i,j = 1, \ldots, n}\, \qquad p^y_{ij} = \sum_{k=1}^n \partial_{y_j}p_{i,k}(z)y_k\,.
\end{gathered}
\]
We compute (recall that $P = A_3^{-\frac12}$ is invertible)
\begin{equation}\label{eq:detPhifor}
\det  J_\Phi = \det (P + P_y) = \det (P)\det (\mathbb{I}_n + P^{-1}P_y)\,.
\end{equation}
First we notice that, thanks to \eqref{eq:ueP}, it holds
\begin{equation}\label{eq:detPest}
 \Lambda^{-\frac{n}{2}}\le \det P(z)\le\lambda^{-\frac{n}{2}}\,.
\end{equation}
Let now $\|\cdot \|_{\R^{n,n}}$ be a chosen matrix norm. We have 
\[
\|P^{-1}P_y\|_{\R^{n,n}} \leq \|P^{-1}\|_{\R^{n,n}} \|P_y\|_{\R^{n,n}}  \leq  cL^2 |y|\,.
\]
Since the determinant is continuous with respect to any matrix norm, we infer that there exists $R_2 \leq R_1$ such that for every $z \in B_{R_2}$ it holds 
\begin{equation}\label{eq:detPvar}
\frac12 \leq \det (\mathbb{I}_n + P^{-1}P_y) \leq \frac32\,.
\end{equation}
By \eqref{eq:detPhifor}, \eqref{eq:detPest} and \eqref{eq:detPvar} we readily get that 
\begin{equation}\label{eq:detPhi}
\frac12 \Lambda^{-\frac{n}{2}} \leq \det  J_\Phi(z) \leq \frac32\lambda^{-\frac{n}{2}}
\end{equation}
holds for every $z \in B_{R_2}$, thus proving $ii)$.

\smallskip

Let now prove $iii)$. Call $M = J_\Phi^\top J_\Phi$. Obviously, $M$ is symmetric and thanks to \eqref{eq:detPhi} is invertible in $B_{R_2}$. Moreover, via a straightforward computation (recall that $P$ is symmetric and $P^2 = A_3^{-1}$) we find that $M = M_1 + M_2$ where 
\[
M_1 = \begin{pmatrix}
\mathbb{I}_{d-n} & 0 \\
0 & A_3^{-1}
\end{pmatrix}
\quad \text{ and } \quad M_2 = \begin{pmatrix}
P_x^\top P_x & (P + P_y)^\top P_x \\
P_x^\top(P + P_y) & P^\top P_y + P_y^\top P + P_y^\top P_y 
\end{pmatrix}\,.
\]
We immediately see that there exists a constant $c = c(L) >0$ such that 
\(
\|M_2\|_{\R^{d,d}} \leq c|y|\,.
\)
Moreover, thanks to \eqref{eq:ueQ} we have
\[
\min\{1, \Lambda^{-1}\}|\zeta|^2 \leq M_1(z) \zeta \cdot \zeta \leq \max\{1, \lambda^{-1}\}|\zeta|^2\,.
\]
Thus
\[
(\min\{1, \Lambda^{-1}\} - c|y|)|\zeta|^2 \leq M(z) \zeta \cdot \zeta \leq (\max\{1, \lambda^{-1}\} + c|y|)|\zeta|^2\,.
\]
Hence, there exists $R \leq R_2$ such that for every $z \in B_{R}$ the matrix $M$ satisfies 
\[
\frac{1}{2}\min\{1, \Lambda^{-1}\}|\zeta|^2 \leq M(z) \zeta \cdot \zeta \leq 2\max\{1, \lambda^{-1}\} |\zeta|^2\quad \text{ for every } \zeta \in \R^d\,.
\] 

\smallskip

As for $iv)$, let us call $N = \Pi_\eps \circ M \circ \Pi_\eps$. We can represent $\Pi_\eps$ via a $\R^{d-1, d}$ matrix $Q$. Thus, $N = QMQ^\top \in \R^{d-1, d-1}$. By the Cauchy interlacing theorem (also known as Poincar\'e separation theorem), the eigenvalues $\{\mu_j\}_{j=1,\ldots, d}$ of $M$ are related to the eignevalues $\{\tilde\mu_i\}_{i = 1, \ldots, d-1}$ of $N$ via the formula
\[
\mu_{i+1} \leq  \tilde \mu_i \leq \mu_{i}\,.
\]
Therefore, using $iii)$, we infer
\[
\frac{(2^{-1}\min\{1, \Lambda^{-1}\})^d}{2\max\{1, \lambda^{-1}\}} \leq \frac{\det M}{\mu_1} \leq \det  N \leq \frac{\det M}{\mu_d} \leq \frac{(2\max\{1, \lambda^{-1}\})^d}{2^{-1}\min\{1, \Lambda^{-1}\}}\,.
\]
This completes the proof of $iv)$ and of the lemma.
\end{proof}

%\section*{Acknowledgment}
%We would like to thank Susanna Terracini for many fruitful discussions on the topic.
%The authors are research fellows of Istituto Nazionale di Alta Matematica INDAM group GNAMPA and supported by the GNAMPA project E5324001950001 \emph{PDE ellittiche che degenerano su variet\`a di dimensione bassa e frontiere libere molto sottili}. G.C. is supported by the PRIN project 20227HX33Z \emph{Pattern formation in nonlinear phenomena}. S.V. is supported by the PRIN project 2022R537CS \emph{$NO^3$ - Nodal Optimization, NOnlinear elliptic equations, NOnlocal geometric problems, with a focus on regularity}.

\section*{Acknowledgment}
We would like to thank Susanna Terracini for many fruitful discussions on the topic.
The authors are research fellows of INDAM group GNAMPA and supported by the GNAMPA project E5324001950001. G.C. is supported by the PRIN project 20227HX33Z. S.V. is supported by the PRIN project 2022R537CS.

\end{document}